\documentclass[11pt]{amsart}
	
	\usepackage[utf8]{inputenc}
	\usepackage{amsmath}
	\usepackage{amsfonts}
	\usepackage{amssymb}
	\usepackage{amsthm}
	\usepackage{mathtools}
	\usepackage[margin=3.0cm]{geometry}
	\usepackage{tikz-cd}
	\usepackage{enumitem}
	\usepackage{hyperref}
	\usepackage{placeins}

	
	\usepackage{pgfplots}
	
	\usepackage{grffile}
	\pgfplotsset{compat=1.15}
	\usetikzlibrary{plotmarks}
	\usetikzlibrary{arrows.meta}
	\usepgfplotslibrary{patchplots, groupplots}

	\definecolor{mycolor1}{RGB}{57,106,177}

	\newdimen\tikzwidth
	\newdimen\tikzheight
	\tikzwidth=8cm
	\tikzheight=8cm

	\pgfplotsset{
    compat=newest,
    scale only axis,
    width=\tikzwidth, height=\tikzheight,
    legend style={font=\footnotesize,},
    legend cell align={left},
    xticklabel style={font=\footnotesize,},
    yticklabel style={font=\footnotesize,},
    x tick label style={
        /pgf/number format/.cd,
        fixed,
        fixed zerofill,
        precision=4,
        /tikz/.cd
    }
    hugeData/.style={
	    every axis plot/.append style={each nth point=100, filter discard warning=false},
    }
}
	
		
\usepackage{xcolor}
\usepackage[most]{tcolorbox}
\usepackage{listings}

\definecolor{white}{rgb}{1,1,1}
\definecolor{mygreen}{rgb}{0,0.4,0}
\definecolor{light_gray}{rgb}{0.97,0.97,0.97}
\definecolor{mykey}{rgb}{0.117,0.403,0.713}

\tcbuselibrary{listings}
\newlength\inwd
\setlength\inwd{1.3cm}

\newcounter{ipythcntr}
\renewcommand{\theipythcntr}{\texttt{[\arabic{ipythcntr}]}}

\newtcblisting{pyin}[1][]{%
  sharp corners,
  enlarge left by=\inwd,
  width=\linewidth-\inwd,
  enhanced,
  boxrule=0pt,
  colback=light_gray,
  listing only,
  top=0pt,
  bottom=0pt,
  overlay={
    \node[
      anchor=north east,
      text width=\inwd,
      font=\footnotesize\ttfamily\color{mykey},
      inner ysep=2mm,
      inner xsep=0pt,
      outer sep=0pt
      ] 
      at (frame.north west)
      {\refstepcounter{ipythcntr}\label{#1}In \theipythcntr:};
  }
  listing engine=listing,
  listing options={
    aboveskip=1pt,
    belowskip=1pt,
    basicstyle=\footnotesize\ttfamily,
    language=Python,
    keywordstyle=\color{mykey},
    showstringspaces=false,
    stringstyle=\color{mygreen}
  },
}
\newtcblisting{pyprint}{
  sharp corners,
  enlarge left by=\inwd,
  width=\linewidth-\inwd,
  enhanced,
  boxrule=0pt,
  colback=white,
  listing only,
  top=0pt,
  bottom=0pt,
  overlay={
    \node[
      anchor=north east,
      text width=\inwd,
      font=\footnotesize\ttfamily\color{mykey},
      inner ysep=2mm,
      inner xsep=0pt,
      outer sep=0pt
      ] 
      at (frame.north west)
      {};
  }
  listing engine=listing,
  listing options={
      aboveskip=1pt,
      belowskip=1pt,
      basicstyle=\footnotesize\ttfamily,
      language=Python,
      keywordstyle=\color{mykey},
      showstringspaces=false,
      stringstyle=\color{mygreen}
    },
}
\newtcblisting{pyout}[1][\theipythcntr]{
  sharp corners,
  enlarge left by=\inwd,
  width=\linewidth-\inwd,
  enhanced,
  boxrule=0pt,
  colback=white,
  listing only,
  top=0pt,
  bottom=0pt,
  overlay={
    \node[
      anchor=north east,
      text width=\inwd,
      font=\footnotesize\ttfamily\color{mykey},
      inner ysep=2mm,
      inner xsep=0pt,
      outer sep=0pt
      ] 
      at (frame.north west)
      {\setcounter{ipythcntr}{\value{ipythcntr}}Out#1:};
  }
  listing engine=listing,
  listing options={
      aboveskip=1pt,
      belowskip=1pt,
      basicstyle=\footnotesize\ttfamily,
      language=Python,
      keywordstyle=\color{mykey},
      showstringspaces=false,
      stringstyle=\color{mygreen}
    },
}
						

	\hypersetup{colorlinks = true, linkbordercolor = {white}, linkcolor = {blue}, citecolor = {blue}}
	\usetikzlibrary{matrix}
	

	\numberwithin{subsection}{section}
	\setcounter{tocdepth}{1}
	

\newtheoremstyle{1}
{6pt} 
{0pt} 
{\itshape} 
{} 
{\bfseries} 
{.} 
{.5em} 
{} 

\newtheoremstyle{2}
{6pt} 
{0pt} 
{} 
{} 
{\bfseries} 
{.} 
{.5em} 
{} 

\theoremstyle{1}
	
	\newtheorem{theorem}{Theorem}[section]
	\newtheorem*{theorem*}{Theorem}
	\newtheorem{lemma}[theorem]{Lemma}
	\newtheorem{proposition}[theorem]{Proposition}
	\newtheorem{corollary}[theorem]{Corollary}
	\newtheorem{definition}[theorem]{Definition}

\theoremstyle{2}
	\newtheorem{example}[theorem]{Example}
	\newtheorem{notation}[theorem]{Notation}
	
	\newtheorem{rmrk}[theorem]{Remark}

	\DeclareMathOperator{\Z}{\mathbb{Z}}
	
	\DeclareMathOperator{\Q}{\mathbb{Q}}
	\DeclareMathOperator{\N}{\mathbb{N}}
	\DeclareMathOperator{\F}{\mathbb{F}}

	\DeclareMathOperator{\Spec}{Spec}

	\DeclareMathOperator{\Sch}{Sch}

	\DeclareMathOperator{\Ext}{Ext}
	\DeclareMathOperator{\Lie}{Lie}
	\DeclareMathOperator{\Rep}{Rep}

	\DeclareMathOperator{\GL}{GL}
	\DeclareMathOperator{\Sp}{Sp}
	
	\DeclareMathOperator{\SL}{SL}
	
	\DeclareMathOperator{\ZipFlag}{-ZipFlag}
	\DeclareMathOperator{\Zip}{-Zip}
	\DeclareMathOperator{\Sh}{Sh}

	\DeclareMathOperator{\gr}{gr}

	\DeclareMathOperator{\dR}{dR}
	\DeclareMathOperator{\std}{std}
	\DeclareMathOperator{\Sym}{Sym}
	\DeclareMathOperator{\Proj}{Proj}

	\DeclareMathOperator{\tor}{tor}
	\DeclareMathOperator{\sub}{sub}
	
	\DeclareMathOperator{\Brh}{Brh}
	\DeclareMathOperator{\Lc}{\mathcal{L}}
	\DeclareMathOperator{\amp}{ample}
	\DeclareMathOperator{\van}{van}
	\DeclareMathOperator{\Ha}{Ha}
	
	\DeclareMathOperator{\ch}{ch}
	\DeclareMathOperator{\divi}{div}
	\DeclareMathOperator{\red}{red}
	\DeclareMathOperator{\Orb}{Orb}


\title[Vanishing results over the Siegel modular variety]{Vanishing results for the coherent cohomology of automorphic vector bundles over the Siegel variety in positive characteristic}

\author{Thibault Alexandre}

\begin{document}
\begin{abstract}
We prove vanishing results for the coherent cohomology of the good reduction modulo $p$ of the Siegel modular variety with coefficients in some automorphic bundles. We show that for an automorphic bundle with highest weight $\lambda$ near the walls of the anti-dominant Weyl chamber, there is an integer $e \geq 0$ such that the cohomology is concentrated in degrees $[0, e]$. The accessible weights with our method are not necessarily regular and not necessarily $p$-small. Since our method is technical, we also provide an algorithm written in SageMath that computes explicitly the vanishing results.
\end{abstract}
\maketitle
\tableofcontents

\section{Introduction}\label{part1}
\subsection{History and motivation}
The cohomology of automorphic vector bundles on Shimura varieties has played important roles in the study of arithmetic properties of automorphic representations as explained in \cite{MR808114}. Let $p$ be a prime number and $N \geq 3$ be an integer such that $p \nmid N$. Consider the Siegel modular variety $\Sh$ of level $N$ and genus $g \geq 1$ over $\F_p$. It is defined as the fine moduli space of abelian schemes of dimension $g$ over $\F_p$ with a principal polarization and a basis of their $N$-torsion. This scheme is smooth and not proper over $\F_p$ but we can consider a smooth toroidal compactification $\Sh^{\tor}$ as defined in \cite{MR1083353}. This article is concerned with the coherent cohomology of $\Sh^{\tor}$. We consider automorphic vector bundles that are defined as the contracted product of the Hodge vector bundle $\Omega$ with an algebraic representation of $\GL_g$. Over a field of characteristic $p >0$, there are two\footnote{In the context of a highest weight category (see \cite[3.7]{riche:tel-01431526} for a general introduction to this notion), one is also concerned with the simple module $L(\lambda)$ and the tilting module $T(\lambda)$.} important indecomposable (but not necessarily irreducible) algebraic representations of highest weight $\lambda$: the standard module $\Delta(\lambda)$ and the costandard module $\nabla(\lambda)$. Note that these two algebraic representations are isomorphic and irreducible when the weight $\lambda$ is $p$-small\footnote{It means that $\forall \alpha \in \phi^{+} \ \langle \lambda + \rho, \alpha^{\vee} \rangle \leq p$ where $\rho$ is the half-sum of positive roots.} for $\GL_g$. We use the same notation to denote the corresponding automorphic vector bundles on the Siegel modular variety. To better understand the coherent cohomology of $\Sh^{\tor}$, it is convenient to know that all but certain cohomological degrees must be zero. In their articles \cite{MR2913102} and \cite{MR3055995}, Lan and Suh prove many vanishing results for the coherent cohomology of PEL Shimura varieties. Let $W$ denote the Weyl group of $\Sp_{2g}$ and $I$ denote the type of the parabolic subgroup $P \subset \Sp_{2g}$ that stabilizes the Hodge filtration on the Siegel upper half-plane $\mathbb{H}_g$. In the Siegel case, Lan and Suh were able to access automorphic bundles $\Delta(\lambda)^{\vee}$ in all Weyl chambers as long as the weight $\lambda$ can be written
\begin{equation}\label{eq3}
\lambda = w \cdot \mu + \underline{k}
\end{equation}
where $w$ is an element of the minimal left coset representatives $\prescript{I}{}{W}$ of type $I$, $\mu$ is a sufficiently regular weight\footnote{See \cite[Def. 7.18]{MR2913102}.} which is $p$-small for $\Sp_{2g}$ and such that $|\mu|_{\text{re},+}\footnote{See \cite[7.22]{MR2913102}.} < p$ and $\underline{k}$ is a positive parallel weight\footnote{It means of the form $(k,k,\cdots,k)$ for some $k > 0$.}. They use results from \cite{MR1944175} on dual Bernstein-Gelfand-Gelfand complexes and a geometric plethysm that imposes many restrictions on the size of the weight compared to $p$. We note that for such a weight $\lambda$, we have $\Delta(\lambda)^{\vee} = \nabla(-w_0\lambda) = \Delta(-w_0\lambda)$. As there is only a finite number of $p$-small characters for $\Sp_{2g}$, their method accesses only a finite number of weights up to positive parallel weights.
\subsection{Main results}\label{sect1}
Let $D_{\red}$ denote the boundary of the toroidal compactification and let $\nabla^{\sub}({\lambda})$ denote the subcanonical extension $\nabla({\lambda})(-D_{\red})$ of the costandard automorphic vector bundle of highest weight $\lambda$. Our main result is a general recipe to produce new vanishing results from old ones. Namely, we define a non-decreasing function $g_{I_0,e}$ on the power set of characters
\begin{equation*}
g_{I_0,e} : \mathcal{P}(X^*) \rightarrow \mathcal{P}(X^*)
\end{equation*}
that depends on a subset $I_0 \subset I$ where $I$ is the type of the parabolic subgroup of $\Sp_{2g}$ that stabilises the Hodge filtration and an integer $0 \leq e \leq d-1$ where $d= g(g+1)/2$ is the dimension of $\Sh^{\tor}$. Note that the definition of $g_{I_0,e}$ is technical and not very helpful because it is a byproduct of our method which relies on the partial degeneration of multiple spectral sequences. We describe these spectral sequences and give the exact definition of $g_{I_0,e}$ in the overview of the strategy.
\begin{theorem*}[Theorem \ref{th1}]
Assume that $p > g^2$. Let $\mathcal{C}$ be a set of characters $\lambda$ for which the cohomology $H^i(\Sh^{\tor},\nabla^{\sub}({\lambda}))$ is concentrated in degrees $[0,e+1]$.Then, the image of $\mathcal{C}$ by the function $g_{I_0,e}$ is a set of characters $\lambda$ for which the cohomology $H^i(\Sh^{\tor},\nabla^{\sub}({\lambda}))$ is concentrated in degrees $[0,e]$.
\end{theorem*}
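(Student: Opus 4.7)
The plan is to lift the computation of $H^{\bullet}(\Sh^{\tor}, \nabla^{\sub}(\mu))$ to a flag space $\pi \colon Y^{\tor} \to \Sh^{\tor}$ attached to the parabolic $P$, filter line bundles on $Y^{\tor}$ in a way controlled by the subset $I_0 \subset I$, and then descend the resulting complexes to $\Sh^{\tor}$ via spectral sequences. The function $g_{I_0,e}$ should be defined precisely as the list of those weights $\mu$ for which such a construction outputs only terms indexed by characters in $\mathcal{C}$; the theorem then asserts that the hypothesised vanishing on $\mathcal{C}$ propagates to a vanishing in one fewer cohomological degree at each $\mu \in g_{I_0,e}(\mathcal{C})$.

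\textbf{Key steps.} First, for each $\mu \in g_{I_0,e}(\mathcal{C})$, I would construct a Koszul-type filtration of $\Lc(\mu)(-D_Y)$ along the $I_0$-directions of the flag $Y^{\tor}$, whose graded pieces have the form $\Lc(\lambda')(-D_Y)$ with $\lambda'$ obtained from $\mu$ by the action of minimal coset representatives in some $\prescript{I_0}{}{W}$. Second, pushing forward by $R\pi_*$, a relative Bott-type formula identifies the higher direct images with costandard bundles $\nabla^{\sub}(\lambda)$ on $\Sh^{\tor}$; the combinatorial definition of $g_{I_0,e}$ is designed so that all the weights $\lambda$ produced lie in $\mathcal{C}$. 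The hypothesis $p > g^2$ intervenes here to keep the weights involved outside the $p$-singular walls, so that the relative Bott formula produces a direct sum of $\nabla^{\sub}$'s as in characteristic zero, with no $p$-modular correction or nontrivial extension.

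\textbf{Assembling the vanishing.} One then combines two spectral sequences: the filtration spectral sequence on the Koszul-filtered line bundle on $Y^{\tor}$, and the Leray spectral sequence of $\pi$ applied to each graded piece. Chasing through them expresses $H^{\bullet}(\Sh^{\tor}, \nabla^{\sub}(\mu))$ in terms of the $H^{\bullet}(\Sh^{\tor}, \nabla^{\sub}(\lambda))$ for $\lambda \in \mathcal{C}$, with a combinatorial degree shift dictated by the lengths of the Weyl group elements arising in the Bott formula and the Koszul filtration. By assumption each input cohomology is concentrated in $[0,e+1]$, and the shift is arranged so that this translates into concentration in $[0,e]$ for the output. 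Partial degeneration on a well-chosen page is what upgrades this to a genuine vanishing statement rather than a bound on the support of a possibly larger abutment.

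\textbf{Main obstacle.} The principal difficulty will be proving the required partial degeneration, i.e.\ ruling out differentials that could land in degree $e+1$ of the abutment. This demands a fine analysis of the supports of the $E_r$-pages as functions of the weights appearing, and it is precisely the combinatorial content that the technical definition of $g_{I_0,e}$ encodes; one should expect the condition to be phrased as a list of avoidance rules for weights on each page. A subsidiary but still nontrivial point is the compatibility of the Koszul filtration with the boundary twist $-D_Y$, and of $\pi^* D_{\red}$ with $D_Y$, which relies on choosing the toroidal compactification of $Y^{\tor}$ in a manner compatible with $\Sh^{\tor}$.
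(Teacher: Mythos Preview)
Your outline misses the central engine of the argument and misidentifies the role of $p > g^2$.

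\textbf{Where the vanishing actually comes from.} In the paper, the starting point is not a Koszul filtration of $\mathcal{L}(\mu)$ but the logarithmic Kodaira--Nakano vanishing theorem on $Y^{\tor}_{I_0}$. The definition of $g_{I_0,e}$ includes the condition that an auxiliary weight $\lambda' = \lambda + 2\rho_{I_0} - \mu^{d-e}_{\binom{d}{d-e}}$ lies in $\mathcal{C}_{\amp,I_0}$, meaning $\mathcal{L}_{\lambda'}$ is $D$-ample on $Y^{\tor}_{I_0}$ (this $D$-ampleness is established earlier via generalized Hasse invariants). Kodaira--Nakano then gives $H^i(Y^{\tor}_{I_0}, \Omega^{d_0-e}_{Y^{\tor}_{I_0}}(\log D_{\red}) \otimes \mathcal{L}^{\sub}_{\lambda'}) = 0$ for $i > e$. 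The object being filtered is $\Omega^{d_0-e}_{Y^{\tor}_{I_0}}(\log D_{\red})$, via $F_k = \pi^*\Omega^{d_0-e-k}_{\Sh^{\tor}}(\log D_{\red}) \wedge \Omega^k_{Y^{\tor}_{I_0}}(\log D_{\red})$, not the line bundle itself. Your proposal has no analogue of this ampleness input, so there is no source of vanishing to feed into the spectral sequences.

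\textbf{The role of $p > g^2$ and the absence of any Bott formula.} The bound $p > g^2 \geq d_0$ is used so that (i) the Esnault--Viehweg version of Kodaira--Nakano applies, and (ii) $\Lambda^n \Sym^2 \std_L$ is a direct summand of a tensor power and hence has a $\nabla$-filtration (this needs $p > d$). It is \emph{not} used to keep weights $p$-regular; the paper emphasizes that the accessible weights are typically not $p$-small. Crucially, no relative Bott formula is invoked: Borel--Weil--Bott fails in characteristic $p$, and the paper uses only Kempf's vanishing for $I_0$-dominant weights (the condition $\lambda' - 2\rho_{I_0} \in X^*(P_0)^+$ in the definition of $g_{I_0,e}$ is there precisely to guarantee all weights entering $R\pi_*$ are dominant). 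Your plan to push forward via a Bott-type computation producing $R^i\pi_*$ in higher degrees would break in positive characteristic. Finally, the degree shift $[0,e+1] \to [0,e]$ does not come from Weyl lengths; it comes from the shape of the filtration spectral sequence: the hypothesis kills the degree-$(i+1)$ terms on the $E_2$-page for the lower graded pieces, which forces the relevant differentials out of the top graded piece to vanish, and since the abutment vanishes in degree $i$ by Kodaira--Nakano one extracts the vanishing of the top piece in degree $i$. A second $\nabla$-filtration (via Mathieu's theorem) of the resulting tensor product on $\Sh^{\tor}$ then isolates $\nabla^{\sub}(\lambda)$.
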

\vspace{0.2cm}
Moreover, in the extreme cases $e = 0$ and $e = d-1$, our method produces new vanishing results without any prior knowledge. These results can then be used in the other cases $0 < e < d-1$. We illustrate our results in the special case $g = 2$, $p = 5$ with the following figure.
\begin{figure}[h]
\centering
\begin{tikzpicture}[scale = 1.1]
\begin{axis}[
 axis lines=middle,
	grid=both,
	grid style={black!10},
	xmin=-44,
	xmax=5,
	ymin=-44,
	ymax=5,
	legend style={at={(0.1,0.65)},anchor=west, font = \tiny},
 	xlabel=$k_2$,
	ylabel=$k_1$,
	minor tick num=9,
	x axis line style = {stealth-},
	y axis line style = {stealth-},
	xticklabel shift={0.0cm},
	xlabel style={yshift=-7.2cm},
	yticklabel shift={-0.9cm},
	ylabel style={xshift=-7.2cm},
]

\addplot [only marks,
	color=green,
	mark=x,
	mark options={scale=0.7, fill=white}]
	table{results/g2p5_psmall.txt};
	\addlegendentry[align = left]{$p$-small weights for $\Sp_4$}
	 
\addplot [only marks,
	color=black,
	mark=o,
	mark options={scale=0.6, fill=white}]
	table{results/g2p5_0.txt};
	\addlegendentry{Concentrated in $[0]$}

\addplot [only marks,
	color=blue,
	mark=triangle,
	mark options={scale=0.6, fill=white}]
	table{results/g2p5_1.txt};
	\addlegendentry{Concentrated in $[0,1]$}

\addplot [only marks,
	color=red,
	mark=square,
	mark options={scale=0.6, fill=white}]
	table{results/g2p5_2.txt};
	\addlegendentry{Concentrated in $[0,2]$}

\draw[scale=0.5, domain=-70:70, smooth, variable=\x, black] plot ({\x}, {\x});

\node at (-2.8,1.3) {\tiny $(0,0)$};
\node at (-8.1,-4.0) {\tiny $(-4,-4)$};

\end{axis}

\end{tikzpicture}
\caption{$g = 2$, $p = 5$.}
\end{figure}
\FloatBarrier
The accessible weights with this method are not necessarily regular and not necessarily $p$-small (even up to a positive parallel weight) but they belong to the anti-dominant Weyl chamber\footnote{It corresponds to the dominant Weyl chamber in the work of Lan and Suh.}. Since the definition of the function $g_{I_0,e}$ is hard to grasp, we have implemented on SageMath an algorithm that compute the vanishing results with our method. Our method produces vanishing results for automorphic bundles $\nabla(\lambda)$ where $\lambda$ is not necessarily of the form $w \cdot \mu + \underline{k}$ as in equation \eqref{eq3}. In particular, we have no reason to expect that $\Delta(\lambda) = \nabla(\lambda)$. The $p$-smallness restriction is replaced with a much weaker restriction coming from the theory of $G\Zip$ called orbitally $p$-closeness. We note that in the special case $g =2$, $p = 5$, the only $p$-small character for $\Sp_4$ is $(0,0)$, which means that the method of Lan and Suh is only able to access weights of the form $w\rho-\rho + \underline{k}$.
\subsection{Overview of the strategy}
The first step is to consider the flag bundle $\pi : Y^{\tor}_{I_0} \rightarrow \Sh^{\tor}$ over the Siegel modular variety that parametrizes refinements of type $I_0 \subset I$ of the Hodge filtration of the universal semi-abelian scheme on $\Sh^{\tor}$. Let $d_0$ denote the dimension of $Y^{\tor}_{I_0}$ over $\mathbb{F}_p$. Let $P_0$ denote the parabolic subgroup of $\Sp_{2g}$ of type $I_0$. For each character $\lambda \in X^*(P_0)$, we have a line bundle $\mathcal{L}_{\lambda}$ on $Y^{\tor}_{I_0}$ such that
\begin{equation*}
\pi_*\mathcal{L}_{\lambda} = \nabla(\lambda).
\end{equation*}
Following the result from \cite{StrohPrep}, the second step is to use a result of \cite{MR3989256} about the existence of generalized Hasse invariants on the stack $\Sp_{2g}\ZipFlag^{\mu,I_0}$ to prove that certain line bundles $\mathcal{L}_{\lambda}$ are $D$-ample\footnote{See definition \ref{def5}.} on $Y^{\tor}_{I_0}$ where $D$ is a certain effective Cartier divisor whose associated reduced divisor is the boundary $D_{\red}$. The third step is to use a logarithmic version of the Kodaira-Nakano vanishing in positive characteristic due to Esnault and Viehweg to see that under the hypothesis $p > d_0 := \dim Y^{\tor}_{I_0}$, we have
\begin{equation*}
H^i(Y^{\tor}_{I_0}, \Omega^{d_0-e}_{Y^{\tor}_{I_0}}(\log D_{\red}) \otimes \mathcal{L}^{\sub}_{\lambda}) = 0,
\end{equation*}
for all $i > e$ and all $\lambda$ that admits generalized Hasse invariants. The fourth step is to filter the bundle
\begin{equation*}
\Omega^{d_0-e}_{Y^{\tor}_{I_0}}( \log D_{\red}) \otimes \mathcal{L}^{\sub}_{\lambda}
\end{equation*}
with an increasing filtration $F_\bullet$:
\begin{equation*}
F_k = \pi^*\Omega^{d_{0}-e-k}_{\Sh^{\tor}} (\log D_{\red}) \wedge \Omega^k_{Y^{\tor}_{I_0}} (\log D_{\red}) \otimes \mathcal{L}^{\sub}_{\lambda}
\end{equation*}
and then consider the corresponding spectral sequence. It is a spectral sequence starting at the second page $E_2^{i,j}$ whose limit is zero when $i+j > e$ by the logarithmic Kodaira-Nakano vanishing considered above. In general, it is impossible to extract information on the second page of a spectral sequence whose limit is zero. However, if we can show that the second page degenerates (at least partially), then we can deduce that some terms $E_2^{i,j}$ must be zero\footnote{In the case $e = 0$, the spectral sequence is concentrated on one row which explains why we do not need any prior vanishing results.}. The aim is to determine the vanishing results needed to ensure the partial degeneration of this spectral sequence. From the partial degeneration, we can deduce new vanishing results. Our method is technical as it involves recursively an unknown number of spectral sequences. Moreover, in the course of the argument, we are forced to contemplate tensor products of automorphic bundles $\nabla(\lambda) \otimes \nabla(\mu)$. To relate the cohomology of this tensor product to the cohomology of other automorphic bundles, we consider the spectral sequence associated to a $\nabla$-filtration\footnote{See definition \ref{def6}.} whose existence is ensured by \cite{MR1072820} and, like before, we determine the vanishing results needed to ensure its partial degeneration. The definition of the function $g_{I_0,e}$ on the power set of characters is a byproduct of our method that relies on the partial degeneration of relevant spectral sequences. More precisely, let $\mathcal{C}_{\amp,I_0}$ denote the set of characters such that $\Lc_{\lambda}$ is $D$-ample on $Y_{I_0}^{\tor}$, $r_0$ denote the relative dimension of $\pi : Y_{I_0}^{\tor} \rightarrow \Sh^{\tor}$, $(\mu^{k}_{j})_j$ denote the set of weights of the $\GL_g$-module $\Lambda^k\Sym^2\std$\footnote{Actually, to follow the convention in definition \ref{def3}, we need to twist these weights by $w_0w_{0,\GL_g}$ and assume they are ordered in a way that $w_0w_{0,\GL_g}(\mu^{n}_{d \choose n})$ is the highest weight.}, $s_M = \sum_{\alpha \in M} \alpha$ for any $M \subset \phi^+$ and $\rho_{I_0} = \frac{1}{2} \sum_{\alpha \in \phi_L^{+} \backslash \phi_{I_0}^{+}} \alpha$. For any set $\mathcal{C}$ of characters, we define
\begin{equation*}
g_{I_0,e}(\mathcal{C}) := \mu^{d-e}_{d \choose d-e} + X^*(P_0)^+ \cap (-2\rho_{I_0} + \mathcal{C}_{\amp,I_0}) \cap \bigcap_{k,j,M} (s_M-2\rho_{I_0} - \mu^{d-e+k}_j + \mathcal{C}),
\end{equation*}
where the last intersection is taken over the set of $k,j,M$ where $0 \leq k \leq e$, \newline$1 \leq j \leq {d \choose d-e+k}$ and $M \subset \phi_L^+-\phi_{I_0}^+$ such that $|M| = r_{0}-k$ with the exception of $j= {d \choose d-e}$ when $k=0$.  
\subsection{Organization of the paper}
In section \ref{part2}, we recall some results of algebraic representation of reductive groups in positive characteristic. In section \ref{part3}, we recall the definition of the Siegel modular variety and the different automorphic vector bundles. In section \ref{part4}, we recall how the theory of $G\Zip$ can be used to study the EO stratification of the Siegel modular variety. In particular, we recall the main result of \cite{MR3989256} about generalized Hasse invariants. In section \ref{part5}, we prove that line bundles of weight $\lambda$ on the flag bundle over the Siegel modular variety that admit generalized Hasse invariants are $D$-ample. We also recall a logarithmic version of the Kodaira-Nakano vanishing in positive characteristic due to Esnault and Viehweg. In section \ref{part6}, we present our general method for producing new vanishing results and we give more details in the case $g = 2$. In section \ref{part7}, we explain how to compute new vanishing results with an algorithm written in SageMath and we plot the results we have obtained in some special cases with $g = 2$ and $g =3$. See \href{https://github.com/ThibaultAlexandre/vanishing-results-over-the-siegel-variety}{github.com/ThibaultAlexandre/vanishing-results-over-the-siegel-variety} to download the algorithm.
\subsection*{Acknowledgement}
I heartily thank Benoit Stroh for encouraging me to write this paper and for explaining me the relevance of generalized Hasse invariants to obtain positivity results on the flag bundle over the Siegel modular variety. I am very grateful to Jean-Stefan Koskivirta for answering my questions on the theory of $G\Zip$s. I also thank Diego Berger, Yohan Brunebarbe and Arnaud Eteve for very helpful discussions.

\section{Recollection on group theory}\label{part2}
In this section, we follow mostly \cite{MR2015057} for generalities about algebraic representations of reductive groups over a field of positive characteristic. Let $k$ be a field of positive characteristic $p > 0$ and $G$ a geometrically connected split reductive algebraic group over $k$. The weights of the adjoint representation of $G$ on its Lie algebra $\mathfrak{g}$ define a set of roots $\phi$. We fix a Borel pair $(B,T)$ defined over $k$ where $B$ is a Borel subgroup and $T \subset B$ is a maximal torus. This choice of Borel pair determines a subset of simple roots $\Delta$ and positive roots $\phi^+$. We use a non-standard convention\footnote{It simplify the statement of the proposition \ref{prop13}.} for the positive roots as we declare $\alpha$ to be positive if the root group $U_{-\alpha}$ is contained in $B$. Let $\rho$ denote the half-sum of positive roots as $\Q$-character of $T$. If $I \subset \Delta$, we write $\phi_I$ (resp. $\phi_I^+$) for the set of roots (resp. positive roots) generated from $I$. We write $W$ for the Weyl group of $G$, $l : W \rightarrow \N$ for its length function and $w_0$ for its longest element. If $I \subset \Delta$, let $W_I \subset W$ denote the subgroup generated by the reflections $s_{\alpha}$ where $\alpha \in I$ and let ${}^IW \subset W$ denote the set of minimal length representatives of $W_I\backslash W$. We write $\langle \cdot ,\cdot \rangle : X^*(T) \times X_*(T) \rightarrow \mathbb{Z}$ for the perfect pairing between the characters $X^*(T)$ of $T$ and the cocharacters $X_*(T)$ of $T$. Since the characteristic of $k$ is assumed to be positive, $G$ is endowed with a relative Frobenius morphism $\varphi : G \rightarrow G^{(p)}$ where $G^{(p)} := G \times_{k,\sigma} k$ (with $\sigma : k \rightarrow k$ the Frobenius morphism of $k$) is again a reductive group over $k$. Unlike when the characteristic of $k$ is $0$, the category of algebraic representations of $G$ on finite dimensional vector spaces is no longer semi-simple. The simple objects $L(\lambda)$ are still indexed by their highest weight $\lambda$ but not every representation can be split into a direct sum of simple objects. This category $\Rep_k(G)$ has the structure of a highest weight category (see \cite[3.7]{riche:tel-01431526} for a general introduction to this notion).
\begin{definition}[{\cite[Part I, sect. 5.8]{MR2015057}}]
Let $\lambda : T \rightarrow \mathbb{G}_m$ be a character of $T$. We define a line bundle $\Lc_{\lambda}$ on the flag variety $G/B$ as the $B$-quotient of the vector bundle $G \times_k \mathbb{A}^1 \rightarrow G$, where $B$ acts on $G \times_k \mathbb{A}^1$ by
\begin{equation*}
(g,x)b = (gb^{-1},\lambda(b^{-1})x),
\end{equation*}
and where $\lambda$ is naturally extended by 0 on the unipotent part of $B$. The global section group $H^0(G/B,\Lc_\lambda)$ is given the structure of a $G$-module through left translation. As a consequence we get an algebraic representation of $G$, and we will denote it simply $\nabla(\lambda)$.
\end{definition}
\begin{proposition}[{\cite[Part II, sect. 2.6]{MR2015057}}]
The $G$-module $\nabla(\lambda)$ is non-zero exactly when $\lambda$ is dominant. Moreover, its highest $T$-weight is $\lambda$ and we call $\nabla(\lambda)$ the induced module or costandard module of highest weight $\lambda$.
\end{proposition}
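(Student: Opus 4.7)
The plan is to realize $\nabla(\lambda)$ explicitly as an induced module and then analyze it through the Bruhat decomposition of $G/B$. First I would identify the space of global sections with
\begin{equation*}
\Ind_B^G(\lambda) = \{f \in k[G] \mid f(gb) = \lambda(b)^{-1} f(g) \text{ for all } g \in G,\ b \in B\},
\end{equation*}
where $\lambda$ is extended trivially on the unipotent radical of $B$ and $G$ acts by left translation. This identification is a direct unwinding of the definition of $\Lc_\lambda$ as the contracted product $G \times^B \mathbb{A}^1$. Let $B^+$ denote the Borel opposite to $B$ with unipotent radical $U^+$; the multiplication map $U^+ \times B \to G$ is an open immersion onto a dense open $\Omega \subset G$, so any $f \in \Ind_B^G(\lambda)$ is determined by its restriction to $U^+$, and conversely any $\phi \in k[U^+]$ extends to a $B$-equivariant $f_\phi$ on $\Omega$.

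For the statement on the highest weight, I would extract $T$-eigenvectors by studying $B^+$-stable lines. A $B^+$-invariant section corresponds to a $U^+$-invariant $\phi \in k[U^+]$, hence a constant; comparing the two equivariance conditions on $T = B \cap B^+$ forces this line to be of weight $\lambda$. A standard highest-weight-category argument (or a direct weight multiplicity computation against Weyl's character formula, valid in characteristic $p$ via base change from $\Z$) shows that every other $T$-weight of $\nabla(\lambda)$ is strictly lower than $\lambda$ in the dominance order. Since a highest weight of a nonzero $G$-module with respect to a Borel is automatically dominant, this handles the ``only if'' direction: non-vanishing of $\nabla(\lambda)$ forces $\lambda$ to be dominant.

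For the ``if'' direction I would exhibit a non-zero global section when $\lambda$ is dominant. Take $\phi = 1 \in k[U^+]$ and let $f_1$ be the resulting section on $\Omega$; the task is to show that $f_1$ extends regularly across $G\setminus \Omega$. I would reduce this to rank-one computations, stratum by stratum: across the codimension-one Bruhat stratum indexed by a simple root $\alpha$, the extension problem pulls back along the associated $\SL_2$-embedding to the question of whether $\lambda|_{\SL_2}$ admits an extension, which holds precisely when $\langle \lambda, \alpha^\vee \rangle \geq 0$. Gluing these local extensions using normality of $G/B$ (Hartogs) produces the desired global section, completing the proof.

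The main obstacle is the extension step in the last paragraph: one must pass from the $\SL_2$ criterion across each codimension-one stratum to a genuinely global regular section on the smooth proper variety $G/B$. In Jantzen's treatment this is handled conceptually via Frobenius reciprocity and the construction of a non-zero map $\Delta(\lambda) \to \nabla(\lambda)$ between the Weyl and induced modules; the route I sketched above is more hands-on but requires care in verifying that the local extensions agree on overlaps before invoking normality.
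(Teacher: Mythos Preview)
The paper does not give its own proof of this proposition: it is stated with a citation to Jantzen \cite[Part II, sect.\ 2.6]{MR2015057} and no argument is supplied. So there is nothing in the paper to compare against directly.

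That said, your sketch is essentially the standard argument one finds in Jantzen. The identification $\nabla(\lambda)=\Ind_B^G(\lambda)$, the injectivity of restriction to the big cell $U^+$, and the extraction of the highest weight from the unique $B^+$-stable line are exactly the ingredients of \cite[II.2.1--2.2]{MR2015057}. For the converse (dominant $\Rightarrow$ nonzero), Jantzen's actual route is via transitivity of induction through minimal parabolics, reducing to the rank-one computation you allude to; your ``extend across codimension-one strata via $\SL_2$, then invoke normality for the rest'' is a geometric rephrasing of the same reduction and is correct in outline. Your own caveat in the final paragraph is apt: the compatibility of the local $\SL_2$-extensions on overlaps is the one place where a bit more care is needed, and this is precisely why Jantzen's inductive/Frobenius-reciprocity formulation is cleaner than the hands-on gluing you describe. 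There is no genuine gap.
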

\begin{rmrk}
A different convention can be found in the litterature where we set the dominance to be relative to $B$.
\end{rmrk}
\begin{definition}[{\cite[Part II, sect. 2.13]{MR2015057}}]
Let $\lambda \in X^*(T)$ be a character. The standard module of highest weight $\lambda$ can be defined 
\begin{equation*}
\Delta(\lambda) := \nabla(-w_0\lambda)^{\vee},
\end{equation*}
where $w_0$ is the longest element of the Weyl group $W$ of $G$ and $\vee$ denotes the linear dual in $\Rep_k(G)$.
\end{definition}
\vspace{0.2cm}
As a consequence from the definitions, $\nabla(\lambda)$ and $\Delta(\lambda)$ must have the same characters but they are usually not simple and not isomorphic. However $L(\lambda)$ is the socle of $\nabla(\lambda)$ and the head of $\Delta(\lambda)$ (see \cite[Part II, Chap. 2]{MR2015057}). We recall Kempf's vanishing theorem.
\begin{proposition}[{\cite[Part II, sect. 4.5]{MR2015057}}]\label{prop4}
Let $\lambda$ be a dominant character. For each $i>0$, we have
\begin{equation*}
H^i(G/B,\Lc_{\lambda}) = 0.
\end{equation*}
More generally, let $P$ be a standard parabolic of type $I \subset \Delta$ and $\lambda$ a $I$-dominant character of $P$(i.e. $\langle \lambda , \alpha^{\vee} \rangle \geq 0$ for all $\alpha \in \Delta \backslash I$ and $\langle \lambda , \alpha^{\vee} \rangle = 0$ for all $\alpha \in I$). There is an associated line bundle $\mathcal{L}_{\lambda}$ on $G/P$ and we have 
\begin{equation*}
H^i(G/P,\mathcal{L}_{\lambda}) = 0,
\end{equation*}
for all $i>0$.
\end{proposition}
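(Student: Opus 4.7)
The plan is to reduce the statement on $G/P$ to the one on $G/B$ via a Leray spectral sequence, and then establish the Borel case by combining Mehta and Ramanathan's Frobenius splitting of $G/B$ with Serre vanishing after repeated Frobenius twists.

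For the reduction, consider the smooth projective projection $\pi : G/B \to G/P$ with fibers isomorphic to the complete flag variety $P/B \cong L/(L \cap B)$ of the Levi $L$ of $P$. The hypothesis that $\lambda$ is $I$-dominant with $\langle \lambda, \alpha^\vee \rangle = 0$ for $\alpha \in I$ says exactly that $\lambda$ extends from $T$ to a character of $P$, so $\mathcal{L}_\lambda$ on $G/B$ is the pullback under $\pi$ of $\mathcal{L}_\lambda$ on $G/P$ and its restriction to every fiber is trivial. Since $H^j(P/B, \mathcal{O}) = 0$ for $j > 0$ (the Borel statement applied to $L$ with trivial weight), flat base change yields $\pi_* \mathcal{L}_\lambda = \mathcal{L}_\lambda$ on $G/P$ and $R^j \pi_* \mathcal{L}_\lambda = 0$ for $j > 0$, so the Leray spectral sequence deduces the parabolic case from the Borel one.

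For the Borel case, the Frobenius splitting of $G/B$ constructed by Mehta and Ramanathan implies that the $p$-power Frobenius induces a split injection $H^i(G/B, \mathcal{L}_\lambda) \hookrightarrow H^i(G/B, \mathcal{L}_{p\lambda})$ for every $i$. Iterating, a nonzero class in $H^i(G/B, \mathcal{L}_\lambda)$ with $i > 0$ would survive in $H^i(G/B, \mathcal{L}_{p^n \lambda})$ for every $n \geq 0$. When $\lambda$ is regular dominant, $\mathcal{L}_\lambda$ is ample, so $\mathcal{L}_{p^n \lambda}$ becomes arbitrarily positive and Serre vanishing forces the cohomology to vanish for $n$ large, a contradiction.

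The main obstacle is extending the vanishing to dominant $\lambda$ on the walls of the dominant chamber, where $\mathcal{L}_\lambda$ is only semi-ample and the ampleness step above breaks down. Here I would use the refinement that the splitting of $G/B$ can be chosen compatibly with the union of the codimension-one Schubert subvarieties, so that for a suitable effective boundary divisor $D$ the twist $\mathcal{L}_\lambda(D)$ is of the form $\mathcal{L}_\mu$ with $\mu$ regular dominant; the compatible splitting then reduces the vanishing for $\mathcal{L}_\lambda$ to the vanishing for $\mathcal{L}_\mu$ settled in the previous paragraph. Securing this compatibility of the splitting with the Schubert boundary is the delicate technical point of the argument, and is what makes the statement hold uniformly over the closed dominant cone rather than only in its interior.
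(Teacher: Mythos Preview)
Your argument is correct in outline and is the Mehta--Ramanathan Frobenius-splitting proof of Kempf vanishing, which is a genuinely different route from the one the paper sketches. The paper instead invokes the representation-theoretic formula from Jantzen (Part~II, 3.19): for dominant $\lambda$ and $r \geq 1$ one has an isomorphism
\[
H^i(G/B,\mathcal{L}_\lambda)^{[r]} \otimes \mathrm{St}_r \;\cong\; H^i\bigl(G/B,\mathcal{L}_{p^r\lambda + (p^r-1)\rho}\bigr),
\]
where $\mathrm{St}_r = \nabla((p^r-1)\rho)$ is the Steinberg module and $[r]$ is the $r$-th Frobenius twist. Since $p^r\lambda + (p^r-1)\rho$ is automatically \emph{regular} dominant, the right-hand line bundle is ample and Serre vanishing kills its higher cohomology for $r$ large; as $\mathrm{St}_r \neq 0$ this forces $H^i(G/B,\mathcal{L}_\lambda)=0$. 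The virtue of this approach is that the shift by $(p^r-1)\rho$ handles the wall case in the same stroke as the regular case, with no separate argument needed. Your approach, by contrast, must invoke the finer $(p-1)D$-splitting along the Schubert boundary to move wall weights into the interior---a correct but more delicate step, exactly as you flag. On the other hand, your geometric method transports directly to Schubert varieties, Bott--Samelson resolutions, and other settings where no Steinberg-type tensor identity is available, so each approach buys something the other does not. Your Leray reduction from $G/P$ to $G/B$ is standard and not made explicit in the paper's sketch.
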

\begin{proof}
We give a sketch of the argument. The first step is to show that $\mathcal{L}_{\lambda}$ is ample over the flag variety $G/B$ exactly when $\lambda$ is strictly dominant by reducing to the case $G = \SL_2$ and $G/B = \mathbb{P}^1_k$. Then, in characteristic $0$, we can conclude with the Kodaira-Nakano vanishing theorem since the canonical bundle $\omega_{G/B}$ of $G/B$ is anti-ample. Indeed, we have an isomorphism
\begin{equation*}
\omega_{G/B} = \mathcal{L}_{-2\rho}
\end{equation*}
and if we consider a dominant character $\lambda$, the line bundle
\begin{equation*}
\omega_{G/B}^{-1} \otimes_{\mathcal{O}_{G/B}} \mathcal{L}_{\lambda} = \mathcal{L}_{2\rho+\lambda}
\end{equation*}
is ample since $2\rho+\lambda$ is strictly dominant. The Kodaira-Nakano vanishing theorem applied to $\mathcal{L}_{2\rho+\lambda}$ says that
\begin{equation*}
H^i(G/B,\underbrace{\omega_{G/B} \otimes \mathcal{L}_{2\rho+\lambda}}_{= \mathcal{L}_{\lambda}}) = 0,
\end{equation*}
for all $i > 0$. In positive characteristic, we can conclude with Serre's cohomological criterion for ampleness and the formula in \cite[Part II, sect. 3.19]{MR2015057} with the Steinberg module $\nabla((p^r-1)\rho)$. 
\end{proof}
We insist on the fact that the proof in \cite[Part II, sect. 5.3]{MR2015057} of the more general Borel-Weil-Bott theorem which gives information on the higher cohomology groups of $\Lc_\lambda$ when $\lambda$ is no longer dominant requires to divide by binomial numbers $\binom{n}{k}$ with $n \geq p$, which is impossible in characteristic $p$. Actually, one can find counterexamples to the Borel-Weil-Bott theorem in positive characteristic (see \cite[Part II, sect. 15.8]{MR2015057}). In characteristic $0$, it is easier to understand tensor product of highest weight representations: we know that $L(\lambda) \otimes L(\mu)$ is a direct sum of $L(\lambda^\prime)$ where $\lambda^\prime$ can be expressed as $\lambda + \mu^\prime$ where $\mu^\prime \leq \mu$ is a weight of $L(\mu)$. Going back to our positive characteristic case, we would like to have a weaker but similar kind of result for $\nabla(\lambda)$'s.
\begin{definition}\label{def6}
Let $V$ be an algebaic representation of $G$. We say that:
\begin{enumerate}
\item $V$ admits a $\nabla$-filtration if there is 
a finite filtration \begin{equation*}
0 =V^n \subsetneq V^{n-1} \subsetneq
 \cdots \subsetneq
 V^0 = V
\end{equation*}
with graded pieces 
\begin{equation*}
V^i/V^{i+1} \simeq \nabla(\nu_i),
\end{equation*}
for some dominant characters $\nu_i$.
\item $V$ admits a $\Delta$-filtration if there is 
a finite filtration \begin{equation*}
0 =V^n \subsetneq V^{n-1} \subsetneq
 \cdots \subsetneq
 V^0 = V
\end{equation*}
with graded pieces 
\begin{equation*}
V^i/V^{i+1} \simeq \Delta(\nu_i),
\end{equation*}
for some dominant characters $\nu_i$.
\end{enumerate}
\end{definition}
\begin{rmrk}
In the setting of a highest weight category, tilting modules are defined as modules that admit both a $\nabla$- and a $\Delta$-filtration.
\end{rmrk}
\vspace{0.2cm}
The following proposition states the existence of a $\nabla$-filtration for a tensor product $\nabla(\lambda) \otimes \nabla(\mu)$ and gives some details about its graded pieces. This result is due to Donkyn \cite{MR804233} when $G$ does not contain any components of type $E_7, E_8$ or that $p \neq  2$. His approach relies on a case by case analysis of each Dynkin diagram and requires long and difficult calculations. A more general proof, without the technical restrictions, was given later by Mathieu. We first need a lemma.
\begin{lemma}\label{lem3}
Let $\lambda$, $\mu$ denote $T$-characters such that $\Ext^1_{G}(\nabla(\lambda),\nabla(\mu)) \neq 0$. Then, $\lambda \geq \mu$.
\end{lemma}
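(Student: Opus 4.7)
The plan is to transfer the computation from $G$ to the Borel $B$ via Kempf vanishing, then reduce to one-dimensional characters by filtering $\nabla(\lambda)$, and finally conclude with a weight-positivity computation on $H^1(U,k)$.

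\textbf{Step 1: Reduction to $B$.} The functor $\Ind_B^G$ is right adjoint to the exact restriction $\Res_B^G$, so the Grothendieck spectral sequence for the composition $\Hom_G(\nabla(\lambda),-)\circ\Ind_B^G$ reads
\begin{equation*}
E_2^{p,q}=\Ext^p_G\bigl(\nabla(\lambda),R^q\Ind_B^G k_\mu\bigr)\Longrightarrow \Ext^{p+q}_B\bigl(\nabla(\lambda),k_\mu\bigr).
\end{equation*}
The assertion is vacuous if $\mu$ is not dominant, so I may assume $\mu$ dominant; Kempf vanishing (Proposition \ref{prop4}) then forces $R^q\Ind_B^G k_\mu=H^q(G/B,\Lc_\mu)=0$ for $q\ge 1$, and the sequence collapses to
\begin{equation*}
\Ext^1_G(\nabla(\lambda),\nabla(\mu))=\Ext^1_B(\nabla(\lambda),k_\mu).
\end{equation*}

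\textbf{Step 2: Reduction to characters.} With the convention that $U_{-\alpha}\subset B$ for $\alpha\in\phi^+$, each $U_{-\alpha}$ strictly decreases the $T$-weight by $\alpha$, so every $B$-submodule of $\nabla(\lambda)$ is downward-closed for the order $\le$. Picking a total order on the weights of $\nabla(\lambda)$ refining $\le$, together with any flag inside each multidimensional weight space, produces a $B$-stable filtration of $\nabla(\lambda)$ whose graded pieces are characters $k_{\nu_i}$ with $\nu_i\le\lambda$. Iterating the long exact sequence of $\Ext^\bullet_B(-,k_\mu)$ along this filtration shows that the non-vanishing of $\Ext^1_B(\nabla(\lambda),k_\mu)$ forces $\Ext^1_B(k_\nu,k_\mu)\neq 0$ for some weight $\nu\le\lambda$ of $\nabla(\lambda)$.

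\textbf{Step 3: Weight computation.} The Hochschild-Serre spectral sequence for $U\triangleleft B$ with $B/U\simeq T$, combined with the vanishing of $H^{>0}$ of a torus on rational modules, degenerates to give
\begin{equation*}
\Ext^1_B(k_\nu,k_\mu)=H^1(B,k_{\mu-\nu})=\bigl(H^1(U,k)\bigr)_{\nu-\mu}.
\end{equation*}
Since the coordinate ring $k[U_{-\alpha}]=k[x]$ carries $T$-weights in $\N\alpha$ (even the Frobenius-twisted monomials $x^{p^i}$ have weight $p^i\alpha$), the same is true of $k[U]=\bigotimes_{\alpha\in\phi^+} k[U_{-\alpha}]$, and hence of the Hochschild complex computing $H^*(U,k)$; in particular every $T$-weight of $H^1(U,k)$ lies in the positive cone $\sum_{\alpha\in\phi^+}\N\alpha$. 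Therefore $\nu-\mu\in\sum_{\alpha\in\phi^+}\N\alpha$, i.e.\ $\nu\ge\mu$, and combined with $\nu\le\lambda$ this gives $\lambda\ge\nu\ge\mu$.

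The main technical point is the positivity claim about the $T$-weights of $H^1(U,k)$ in characteristic $p$: Frobenius twists make this cohomology infinite-dimensional, in sharp contrast with the characteristic zero situation, but each $p^i\alpha$ remains in the positive cone $\sum_{\alpha\in\phi^+}\N\alpha$, so the positivity survives. Once this is granted, the rest is formal bookkeeping: degeneration of two standard spectral sequences and induction through the character filtration of $\nabla(\lambda)$.
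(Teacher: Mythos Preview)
Your proof is correct and follows the same strategy as the paper's: reduce from $G$ to $B$ via Kempf vanishing, then constrain the answer using the positivity of the $T$-weights occurring in $k[U]$. The only difference is cosmetic: the paper first dualizes $\nabla(\lambda)$ to $\Delta(-w_0\lambda)$ and then invokes \cite[II.4.10(b)]{MR2015057} as a black box for the weight argument that you carry out explicitly in Steps~2--3.
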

\begin{proof}
We have
\begin{equation*}
\begin{aligned}
\Ext^1_{G}(\nabla(\lambda),\nabla(\mu)) &= H^1(G,\Delta(-w_0\lambda)\otimes \nabla(\mu)) \\
&= H^1(P,\Delta(-w_0\lambda)\otimes \mu) \text{ by \cite[Part II, sect. 4.7]{MR2015057}}\\
\end{aligned}
\end{equation*}
and by \cite[Part II, sect. 4.10 b)]{MR2015057}, there exists a weight $\nu$ of $\Delta(-w_0\lambda)$ such that $-(\nu + \mu)$ is a $\N$-linear combination of positive roots $\phi^+$. In particular, we have $-\nu \geq \mu$. Since $w_0(-w_0\lambda) = - \lambda$ is the lowest weight of $\Delta(-w_0\lambda)$, we deduce that $\lambda \geq -\nu \geq \mu$.
\end{proof}
\begin{proposition}[{\cite{MR1072820}}]\label{prop8}
Let $\lambda, \mu$ be two dominant characters in $X^*(T)$. Then $\nabla(\lambda) \otimes \nabla(\mu)$ admits a $\nabla$-filtration $(V^i)_{i \geq 0}$ with graded pieces  
\begin{equation*}
V^i/V^{i+1} \simeq \nabla(\lambda+\mu_i),
\end{equation*}
where $(\mu_i)_{i}$ is a collection of weights of $\nabla(\mu)$ with $\mu_0 = \mu$. In particular, the first graded piece is given by $V^0/V^1 = \nabla(\lambda+\mu)$.
\end{proposition}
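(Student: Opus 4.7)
The plan is to combine the tensor-product theorem of Mathieu \cite{MR1072820} (cited in the statement) with a character computation and the $\Ext^1$ vanishing from Lemma \ref{lem3} to pin down both the isomorphism classes of the graded pieces and their position in the filtration. The deep input is Mathieu's result, which I would invoke as a black box: for any dominant $\lambda, \mu$, the tensor product $\nabla(\lambda) \otimes \nabla(\mu)$ admits some $\nabla$-filtration. This is the hardest step, and I would not attempt to reprove it; the remaining work is formal bookkeeping.

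Second, I would identify the multiset of graded pieces via characters. The Weyl characters $\chi(\nu)$, for $\nu$ dominant, are linearly independent in the character ring, and the character of any $\nabla$-filtered module equals the sum of the Weyl characters of its graded pieces. Hence the multiplicity $[V:\nabla(\nu)]$ in any $\nabla$-filtration of $V$ is determined by $\ch V$ alone. I would then expand $\chi(\lambda)\chi(\mu)$ using Brauer's formula, which yields $\chi(\lambda)\chi(\mu) = \sum_i \chi(\lambda + \mu_i)$ where $\mu_i$ ranges over the weights of $\nabla(\mu)$, together with the sign convention $\chi(w \cdot \nu) = (-1)^{\ell(w)}\chi(\nu)$ for the dot action. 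Collecting terms via this symmetry so as to keep only dominant indices (terms whose shifted weight lies on a wall are killed, and pairs related by a reflection cancel) recovers the asserted list of graded pieces $\nabla(\lambda + \mu_i)$.

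Finally, to place $\nabla(\lambda+\mu)$ as the top graded piece $V^0/V^1$, I would reorder the filtration using Lemma \ref{lem3}. Since $\mu$ is the highest weight of $\nabla(\mu)$ and occurs there with multiplicity one, $\lambda + \mu$ is the unique maximal element of the multiset $\{\lambda + \mu_i\}$, and every other $\lambda + \mu_i$ satisfies $\lambda + \mu_i \not\geq \lambda + \mu$. If $\nabla(\lambda+\mu)$ currently appears as $V^k/V^{k+1}$ for some $k \geq 1$, the two-step subquotient $V^{k-1}/V^{k+1}$ sits in a short exact sequence $0 \to \nabla(\lambda+\mu) \to V^{k-1}/V^{k+1} \to \nabla(\nu_{k-1}) \to 0$ classified by an element of $\Ext^1_G(\nabla(\nu_{k-1}), \nabla(\lambda+\mu))$. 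By Lemma \ref{lem3} this $\Ext^1$ vanishes, so the sequence splits and I can modify the filtration to move $\nabla(\lambda+\mu)$ one step upward. Iterating brings $\nabla(\lambda+\mu)$ to the top, giving $V^0/V^1 \simeq \nabla(\lambda+\mu)$ as required.
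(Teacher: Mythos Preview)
Your proposal is correct and follows essentially the same strategy as the paper: invoke Mathieu's theorem as a black box for the existence of a $\nabla$-filtration, determine the multiset of graded pieces by a character computation, and then use the $\Ext^1$ vanishing of Lemma~\ref{lem3} to swap adjacent layers and bring $\nabla(\lambda+\mu)$ to the top. The paper carries out the reordering for all graded pieces simultaneously (choosing an ordering of the $\mu_i$ compatible with the dominance order and bubbling misplaced layers into position), whereas you only move the single layer $\nabla(\lambda+\mu)$; since the statement only asserts $V^0/V^1\simeq\nabla(\lambda+\mu)$, your version suffices.
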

\begin{proof}
We add some details to the result of Mathieu to explain how to get a filtration with the desired properties. The result of Mathieu assumes that $G$ is a connected, simply-connected, semi-simple algebraic group over an algebraically closed field $k$ of characteristic $p > 0$ and it is not hard to reduce to this case. By \cite[Theorem 1]{MR1072820}, there exists a filtration 
\begin{equation*}
0 = V^n \subset V^{n-1} \subset \cdots \subset V^1 \subset \cdots V^0 = \nabla(\lambda) \otimes \nabla(\mu),
\end{equation*}
where for each $i$ the graded piece $V^i/V^{i+1}$ is a costandard module $\nabla(\nu_i)$ for some dominant character $\nu_i$. The character class of $\nabla(\lambda) \otimes \nabla(\mu)$ is 
\begin{equation*}
\ch(\nabla(\lambda) \otimes \nabla(\mu)) = \sum_{i} \ch \nabla(\lambda + \mu_i),
\end{equation*}
where the sum is taken over some weights $(\mu_i)_i$ of $\nabla(\mu)$. As the highest weight of this module, $\lambda + \mu$ contributes to the sum. Note that the non-zero terms are those such that $\lambda + \mu_i$ is dominant. We choose an ordering of the $(\mu_i)_i$ such that whenever $\mu_i < \mu_j$ for some $i,j$ then $i > j$. It implies that there exists a permutation $\sigma$ on $0,1,\cdots,n-1$ such that 
\begin{equation*}
V^i/V^{i+1} = \nabla(\lambda + \mu_{\sigma(i)}),
\end{equation*}
for all $i$ between $0$ and $n-1$. We remake the argument in \cite[Part 11, sect. 4.16, remark 4]{MR2015057} to explain how to reorganize the terms. If $\sigma(i) < \sigma(i+1)$ for some $i$, $0 \leq i \leq n-2$, then $\lambda + \mu_{\sigma(i)} \nless \lambda + \mu_{\sigma(i+1)}$ and the exact sequence 
\begin{equation*}
\begin{tikzcd}
0 \arrow[r] & \nabla(\lambda+\mu_{\sigma(i+1)}) \arrow[r] & V^{i}/V^{i+2} \arrow[r] & \nabla(\lambda+\mu_{\sigma(i)}) \arrow[r] & 0
\end{tikzcd}
\end{equation*} 
is split because $\Ext^1_G(\nabla(\lambda+\mu_{\sigma(i+1)}),\nabla(\lambda+\mu_{\sigma(i)})) = 0$ by lemma \ref{lem3}. It shows that 
\begin{equation*}
V^{i}/V^{i+2} = \nabla(\lambda+\mu_{\sigma(i+1)}) \oplus \nabla(\lambda+\mu_{\sigma(i)})
\end{equation*}
and we can replace $V^{i+1}$ by a submodule $\tilde{V}^{i+1}$ between $V^{i+2}$ and $V^i$ such that $\tilde{V}^{i+1}/V^{i+2} = \nabla(\lambda+\mu_{\sigma(i)})$ and $V^{i}/\tilde{V}^{i+1} = \nabla(\lambda+\mu_{\sigma(i+1)})$. We iterate this process to produce the desired filtration.
\end{proof}
\begin{rmrk}
\begin{enumerate}
\item Not all the weights $\mu^\prime \leq \mu$ of $\nabla(\mu)$ such that $\lambda + \mu^\prime$ is dominant will contribute to the filtration.
\item Even if we will not need it, we note that the dual statement says that tensor products of standard modules $\Delta(\lambda) \otimes \Delta(\mu)$ admit a $\Delta$-filtration.
\end{enumerate}
\end{rmrk}
\begin{corollary}\label{cor2}
Let $V$ and $W$ be two algebraic representations of $G$ that admit a $\nabla$-filtration. Then, $V \otimes W$ admits a $\nabla$-filtration.
\end{corollary}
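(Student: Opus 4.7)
The plan is to reduce to Proposition \ref{prop8} by a double induction on the lengths of the $\nabla$-filtrations of $V$ and $W$, using as a key auxiliary lemma that the class of modules admitting a $\nabla$-filtration is closed under extensions.

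First I would establish the extension lemma: if $0 \to A \to B \to C \to 0$ is a short exact sequence of $G$-modules in which both $A$ and $C$ admit $\nabla$-filtrations, then so does $B$. The construction is explicit. Given filtrations $0 = A^m \subsetneq \cdots \subsetneq A^0 = A$ and $0 = C^n \subsetneq \cdots \subsetneq C^0 = C$ with costandard graded pieces, let $\pi : B \to C$ denote the surjection and define
\begin{equation*}
B^j := \pi^{-1}(C^j) \text{ for } 0 \leq j \leq n, \qquad B^{n+i} := A^i \text{ for } 0 \leq i \leq m,
\end{equation*}
where in the second range we view $A \subset B$ via the injection. Then $0 = B^{n+m} \subsetneq \cdots \subsetneq B^0 = B$ is a filtration of $G$-submodules whose graded pieces are canonically isomorphic to $C^j/C^{j+1}$ and $A^i/A^{i+1}$, and hence are all costandard modules.

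Next I would perform the induction. For the base case, assume $V = \nabla(\lambda)$ and induct on the length $n$ of the $\nabla$-filtration of $W$: when $n = 1$, that is $W = \nabla(\mu)$, this is exactly Proposition \ref{prop8}; for $n \geq 2$, apply the tensor product $\nabla(\lambda) \otimes (-)$ to a short exact sequence $0 \to W' \to W \to \nabla(\mu) \to 0$ extracted from the filtration of $W$. Since we work over a field this remains exact, and the outer terms admit $\nabla$-filtrations by the inductive hypothesis and by Proposition \ref{prop8}, so the extension lemma concludes. Then, for general $V$, induct on the length of its filtration using the short exact sequence $0 \to V' \to V \to \nabla(\lambda) \to 0$ tensored with $W$; the outer terms now admit $\nabla$-filtrations by the outer inductive hypothesis and by the already-treated base case, respectively.

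I do not anticipate any serious obstacle. The only routine verifications are that the preimages $\pi^{-1}(C^j)$ in the extension lemma are $G$-stable (immediate from the $G$-equivariance of $\pi$) and that the natural map $\pi^{-1}(C^j)/\pi^{-1}(C^{j+1}) \to C^j/C^{j+1}$ is an isomorphism (a direct diagram chase, or an application of the snake lemma). The rest is bookkeeping around the two inductions.
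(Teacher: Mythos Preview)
Your argument is correct and is precisely the standard reduction one has in mind. The paper states this result as a corollary of Proposition~\ref{prop8} without proof, so there is nothing to compare against beyond noting that your double induction, together with the extension lemma, is exactly the routine unpacking of that corollary.
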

\vspace{0.2cm}
We recall the Donkyn criterion.
\begin{proposition}
Let $V$ be an algebraic representation of $G$. The following proposition are equivalent.
\begin{enumerate}
\item $V$ admits a $\nabla$-filtration.
\item For all dominant characters $\lambda$ and $i>0$, $\Ext_G^i(\Delta(\lambda),V) = 0$.
\item For all dominant characters $\lambda$, $\Ext_G^1(\Delta(\lambda),V) = 0$.
\end{enumerate} 
\end{proposition}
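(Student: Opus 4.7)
The strategy is the cycle of implications $(1) \Rightarrow (2) \Rightarrow (3) \Rightarrow (1)$. The central technical input is the \emph{Ext-orthogonality}
\[
\Ext^i_G(\Delta(\lambda), \nabla(\nu)) = \begin{cases} k & \text{if } i=0 \text{ and } \lambda = \nu, \\ 0 & \text{otherwise,} \end{cases}
\]
for all dominant characters $\lambda,\nu$. I would derive it by combining Kempf vanishing (Proposition \ref{prop4}) with Frobenius reciprocity: writing $\nabla(\nu) = \Ind_B^G k_\nu$ and using that $R^i\Ind_B^G k_\nu = 0$ for dominant $\nu$ and $i>0$, the Grothendieck spectral sequence degenerates to the identification $\Ext^i_G(\Delta(\lambda), \nabla(\nu)) = \Ext^i_B(\Delta(\lambda)|_B, k_\nu)$, and the right-hand side is then analysed through the $B$-module structure of $\Delta(\lambda)$ arising from its weight filtration.

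Once the orthogonality is available, $(1) \Rightarrow (2)$ is straightforward by induction on the length of a $\nabla$-filtration: apply the long exact $\Ext$-sequence of $\Hom_G(\Delta(\lambda),-)$ to $0 \to V^1 \to V \to \nabla(\nu_0) \to 0$, use the orthogonality for the quotient and the inductive hypothesis for $V^1$. The implication $(2) \Rightarrow (3)$ is immediate since condition $(3)$ is the case $i=1$ of $(2)$.

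The substantive direction is $(3) \Rightarrow (1)$, which I would prove by induction on $\dim V$. If $V \neq 0$, pick a weight $\mu$ of $V$ maximal in the partial order on $X^*(T)$; such a $\mu$ is automatically dominant. The plan is to fit $V$ into an exact sequence
\[
0 \to V' \to V \to \nabla(\mu)^{\oplus n} \to 0,
\]
with $n = \dim_k V_\mu$. Given this sequence, the long exact $\Ext$-sequence combined with the already established implication $(1) \Rightarrow (2)$ applied to $\nabla(\mu)^{\oplus n}$ forces $V'$ to still satisfy $(3)$; since $\dim V' < \dim V$, the inductive hypothesis produces a $\nabla$-filtration of $V'$, which prepended to the evident filtration of $\nabla(\mu)^{\oplus n}$ by copies of $\nabla(\mu)$ yields a $\nabla$-filtration of $V$.

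The main obstacle will be constructing the surjection $V \twoheadrightarrow \nabla(\mu)^{\oplus n}$. The key is that the maximality of $\mu$ forces the weight space $V_\mu$ to lie inside the $U$-invariants $V^U$, so $V_\mu$ inherits the structure of a $B$-module isomorphic to $k_\mu^{\oplus n}$ with trivial unipotent action. Frobenius reciprocity then gives the identification $\Hom_G(V, \nabla(\mu)) = \Hom_B(V|_B, k_\mu)$, and a careful bookkeeping of weights — using once more that no weight of $V$ exceeds $\mu$ — identifies this space with $V_\mu^{\vee}$. The resulting evaluation morphism $V \to \nabla(\mu) \otimes_k V_\mu \simeq \nabla(\mu)^{\oplus n}$ is surjective precisely because its restriction to the $\mu$-weight space of $V$ lands isomorphically onto the highest weight space of the target, and this is where the maximality of $\mu$ is used essentially.
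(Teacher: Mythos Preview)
Your overall architecture for $(3)\Rightarrow(1)$ is the standard one that Jantzen uses, but the surjectivity step contains a genuine error. You claim that the evaluation map $\phi:V\to\nabla(\mu)^{\oplus n}$ is surjective ``precisely because its restriction to the $\mu$-weight space of $V$ lands isomorphically onto the highest weight space of the target.'' This is false: the highest weight vector of $\nabla(\mu)$ lies in the simple socle $L(\mu)$, so the $G$-submodule it generates is only $L(\mu)$, not all of $\nabla(\mu)$. Concretely, for $G=\SL_2$ in characteristic $2$ the inclusion $L(2)\hookrightarrow\nabla(2)$ is an isomorphism on the weight-$2$ space yet is not surjective. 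Maximality of $\mu$ alone does not force surjectivity; condition $(3)$ must be used.

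The repair is to reverse the order of your last two steps. With $W=\operatorname{im}(\phi)$ and $V'=\ker(\phi)$, first show directly that $V'$ still satisfies $(3)$: from $0\to V'\to V\to W\to 0$ one needs $\Hom_G(\Delta(\nu),V)\to\Hom_G(\Delta(\nu),W)$ surjective, and since $\Hom_G(\Delta(\nu),W)\subset\Hom_G(\Delta(\nu),\nabla(\mu)^{\oplus n})$ this reduces (for $\nu=\mu$, the only nontrivial case) to the isomorphism $V_\mu\xrightarrow{\sim}(\nabla(\mu)^{\oplus n})_\mu$ you already have. Now the inductive hypothesis gives $V'$ a $\nabla$-filtration, so by your $(1)\Rightarrow(2)$ one gets $\Ext^2_G(\Delta(\nu),V')=0$, whence the long exact sequence forces $\Ext^1_G(\Delta(\nu),W)=0$ for every $\nu$. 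If $C:=\nabla(\mu)^{\oplus n}/W$ were nonzero, choosing $L(\nu)\hookrightarrow\operatorname{soc}(C)$ would give $0\neq\Hom_G(\Delta(\nu),C)\hookrightarrow\Ext^1_G(\Delta(\nu),W)=0$, a contradiction. Thus $\phi$ is surjective \emph{a posteriori}, and the argument closes. The paper itself simply cites Jantzen II.4.16, where the proof follows this corrected order.
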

\begin{proof}
See \cite[Part II, sect. 4.16]{MR2015057}.
\end{proof}
\begin{corollary}\label{cor1}
Let $V$ and $W$ be two algebraic representations of $G$. If $V$ admits a $\nabla$-filtration and $W$ is a direct factor of $V$, then $W$ admits a $\nabla$-filtration.
\end{corollary}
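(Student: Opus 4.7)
The plan is to deduce the corollary directly from the Donkyn criterion stated just above. Since $W$ is a direct factor of $V$, there exists an algebraic representation $W'$ of $G$ such that $V \simeq W \oplus W'$. The functor $\Ext_G^1(\Delta(\lambda), -)$ commutes with finite direct sums, so for every dominant character $\lambda$ we obtain a decomposition
\begin{equation*}
\Ext_G^1(\Delta(\lambda), V) \;\simeq\; \Ext_G^1(\Delta(\lambda), W) \,\oplus\, \Ext_G^1(\Delta(\lambda), W').
\end{equation*}

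Because $V$ admits a $\nabla$-filtration, the implication (1) $\Rightarrow$ (3) of the Donkyn criterion forces the left-hand side to vanish, hence both summands on the right-hand side vanish. In particular $\Ext_G^1(\Delta(\lambda), W) = 0$ for every dominant $\lambda$. Applying the reverse implication (3) $\Rightarrow$ (1) of the Donkyn criterion to $W$ then yields the desired $\nabla$-filtration on $W$.

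There is no serious obstacle here: the only subtlety is to make sure that the additivity of $\Ext^1$ in the second variable is being used, and that the Donkyn criterion is available in both directions, both of which have just been recorded. The argument is completely formal and does not require any further input about $G$ or the representations involved.
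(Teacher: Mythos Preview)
Your proposal is correct and follows exactly the intended route: the paper states this corollary immediately after the Donkyn criterion with no separate proof, so the implicit argument is precisely the one you give, using the additivity of $\Ext_G^1(\Delta(\lambda),-)$ together with the equivalence $(1)\Leftrightarrow(3)$.
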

\section{Recollection on Siegel varieties}\label{part3}
In this section, we follow \cite{MR1083353} for generalities about Siegel varieties. We denote by $\Sch_{\F_p}$ the category of schemes over $\F_p$ and $\mathbb{A}_f$ the finite adeles of $\Q$.
\begin{definition}
Let $A$ and $A^\prime$ be abelian schemes of relative dimension $g$ over a scheme $S$. A quasi-isogeny $A \rightarrow A^\prime$ is an equivalence class of pairs $(\alpha,N)$ where $\alpha : A \rightarrow A^\prime$ is an isogeny over $S$ and $N$ is a positive integer with the relation 
\begin{equation*}
(\alpha,N) \sim (\alpha^\prime,N^\prime) \text{ if and only if } N^\prime\alpha = N \alpha^\prime.
\end{equation*}
\end{definition}
\begin{definition}\label{def7}
Let $V$ be the $\mathbb{Z}$-module $\mathbb{Z}^{2g}$ with the standard non-degenerate symplectic pairing 
\begin{equation*}
\psi : V \times V \rightarrow \mathbb{Z}
\end{equation*}
such that $\psi(x,y) = {}^txJx$ where
\begin{equation*}
J = \begin{pmatrix}
0 & I_g \\
-I_g & 0 
\end{pmatrix}.
\end{equation*}
We denote by $\Sp_{2g}$ the algebraic group over $\Z$ of $2g \times 2g$ matrices $M$ that preserves the symplectic pairing $\psi$, i.e. such that
\begin{equation*}
{}^tMJM = J.
\end{equation*}
\end{definition}
In the following proposition, we define the Siegel modular variety of level $K$ as a scheme over $\F_p$ when the level is small enough and $p$ is a prime such that $K_p = \Sp_{2g}(\Z_p)$.
\begin{proposition}[\cite{MR1083353}]\label{prop1}
Let $K \subset \Sp_{2g}(\mathbb{A}_f)$ be a subgroup that can be written as $K = K_pK^p$ for $K_p \subset \Sp_{2g}(\mathbb{Q}_p)$ hyperspecial and $K^p \subset \Sp_{2g}(\mathbb{A}_f^p)$ compact open. Consider the fibered category in groupoids $\mathcal{A}_{g,K}$ on $\Sch_{\F_p}$ whose $S$-points are groupoids with
\begin{itemize}
\item Objects: $(A/S,\lambda,\psi)$ where $A/S$ is abelian scheme over $S$ of relative dimension $g$, $\lambda : A \rightarrow A^{\vee}$ is a $\mathbb{Z}_{(p)}$-multiple of a principal polarization and for all primes $l \neq p$ and all geometric points $s \in S$, $\psi_l$ is a $K_l$-orbit of symplectic \emph{isomorphisms} from $H_1(A_s,\mathbb{Q}_l)$ to $V\otimes \mathbb{Q}_l$ which is invariant under $\pi_1(S,s)$. The structure of symplectic $\mathbb{Q}_l$-vector space on the $l$-adic étale homology group $H_1(A_s,\mathbb{Q}_l)$ (it is also the rational Tate module of $A_s$) is the one induced by the polarization (which is an isomorphism since we tensor by $\mathbb{Q}_l$) and the Weil paring.
\item Morphisms: A morphism $(A/S,\lambda,\psi) \rightarrow (A^\prime/S,\lambda^\prime,\psi^\prime)$ is a quasi-isogeny $\alpha : A \rightarrow A^\prime$ over $S$ such that the diagram
\begin{equation*}
\begin{tikzcd}
A \arrow[r,"\alpha"] \arrow[d,"\lambda"] & A^{\prime} \arrow[d,"\lambda^\prime"] \\
A^{\vee} & {A^\prime}^{\vee} \arrow[l,"\alpha^{\vee}"]
\end{tikzcd}
\end{equation*}
is commutative up to a constant in $\mathbb{Z}_{(p)}$ and the pullback of $\psi_l$ by the quasi-isogeny $\alpha$ is $\psi^\prime_l$.
\end{itemize}
If the level away from $p$, $K^p$, is small enough\footnote{It is the case in particular when $K$ is the kernel of the reduction map $\Sp_{2g}(\Z) \rightarrow \Sp_{2g}(\Z/N\Z)$ with $N \geq 3$ such that $p \nmid N$.}, then $\mathcal{A}_{g,K}$ is representable by a smooth integral quasi-projective scheme over $\F_p$.
\end{proposition}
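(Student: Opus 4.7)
The plan is to reduce to a rigid moduli problem with classical level-$N$ structure, represent it by a smooth quasi-projective scheme via Mumford's GIT construction, and descend to an arbitrary small enough $K$ by a finite étale quotient. The argument is the standard one carried out in detail in \cite{MR1083353}, so I will only describe its shape.

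First, I would translate the adelic level datum into a classical one. Fix $N \geq 3$ with $p \nmid N$, let $K(N)^p \subset \Sp_{2g}(\hat{\Z}^{(p)})$ denote the principal level-$N$ kernel, and treat first the case $K^p = K(N)^p$. Giving a $\pi_1$-invariant $K_l$-orbit of symplectic isomorphisms $H_1(A_s,\Q_l) \xrightarrow{\sim} V\otimes \Q_l$ for every $l \neq p$ is then equivalent to giving a single symplectic isomorphism $\alpha_N : (\Z/N\Z)^{2g} \xrightarrow{\sim} A[N]$ compatible with the Weil pairing coming from $\lambda$, and one may normalize the polarization inside its quasi-isogeny class to a canonical $\Z_{(p)}$-multiple of a principal one. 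Serre's lemma guarantees that for $N \geq 3$ the triples $(A,\lambda,\alpha_N)$ admit no nontrivial automorphisms, so the moduli problem is a sheaf rather than a genuine stack.

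Next, I would prove representability by a smooth quasi-projective scheme over $\F_p$ following Mumford. Choose $n$ large enough that $\lambda^{\otimes n}$ is very ample on every geometric fiber; a basis of $H^0(A,\lambda^{\otimes n})$ rigidified by the level structure embeds $(A,\lambda,\alpha_N)$ into a fixed projective space $\mathbb{P}^M$, realizing the moduli functor as a locally closed subscheme of the corresponding Hilbert scheme. Forming the GIT quotient by the natural $\mathrm{PGL}_{M+1}$-action produces $\mathcal{A}_{g,K(N)^p K_p}$ as a quasi-projective scheme. Smoothness of relative dimension $g(g+1)/2$ follows from Grothendieck-Messing / Serre-Tate deformation theory: since $p \nmid N$, the level structure extends uniquely along infinitesimal thickenings, and deformations of $(A,\lambda)$ correspond to lifts of the Hodge filtration inside crystalline cohomology, a smooth and unobstructed problem. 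Integrality is then obtained from connectedness of the complex fiber combined with smoothness and flatness over $\F_p$.

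Finally, for a general small enough $K$, pick $N$ with $K^p \supset K(N)^p$ and observe that the forgetful morphism $\mathcal{A}_{g,K(N)^p K_p} \to \mathcal{A}_{g,K}$ is a torsor under the finite group $\Gamma := K^p/K(N)^p$, which acts freely on the source using once again the rigidity for $N \geq 3$. The quotient $\mathcal{A}_{g,K(N)^p K_p}/\Gamma$ exists as a smooth quasi-projective scheme and represents the moduli problem at level $K$. The main obstacle is the careful bookkeeping of the comparison between the adelic description (with quasi-isogenies and $\Z_{(p)}$-multiples of polarizations) and the classical description (with honest isomorphisms and integral level structure); once this dictionary is in place, everything else reduces to standard Mumford GIT and deformation theory as in \cite{MR1083353}.
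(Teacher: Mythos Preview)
The paper does not give its own proof of this proposition; it simply cites \cite{MR1083353} and moves on. Your outline is the standard argument from that reference (Mumford's GIT construction for representability, Serre's lemma for rigidity at level $N\geq 3$, deformation theory for smoothness, and a finite \'etale quotient for general small $K$), so there is nothing to compare against. One small wording issue: ``connectedness of the complex fiber combined with smoothness and flatness over $\F_p$'' is not quite right, since over a field everything is flat; what you want is to work over $\Z_{(p)}$ (or $\Z[1/N]$), use complex uniformization for connectedness of the generic fiber, and then invoke Zariski's connectedness theorem through the minimal compactification to get connectedness, hence irreducibility by smoothness, of the special fiber.
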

\begin{rmrk}
Without the hypothesis on the smallness of $K$, $\mathcal{A}_{g,K}$ is only a Deligne-Mumford stack over $\F_p$.
\end{rmrk}
\begin{notation}\label{not1}
We fix some notation for the rest of this section. Let $G$ denote the algebraic group $\Sp_{2g}$ over $\F_p$ where $g \geq 1$. We fix a neat finite level $K$ that can be written as $K = K_pK^p$ for $K_p \subset G(\mathbb{Q}_p)$ hyperspecial and $K^p \subset G(\mathbb{A}_f^p)$ compact open. Let $\Sh$ denote the smooth quasi-projective variety $\mathcal{A}_{g,K}$ over $\mathbb{F}_p$. Let $\mu : \mathbb{G}_m \rightarrow G$ denote the minuscule cocharacter that stabilizes the Hodge filtration\footnote{It maps $z$ to $\begin{pmatrix}
zI_g & 0 \\ 0 & z^{-1}I_g
\end{pmatrix}$ with our choice of symplectic pairing in definition \ref{def7}.} and let $P^+ := P_{\mu}, P:=P_{-\mu}$ denote the associated opposite parabolic subgroups with common Levi subgroup $L = \GL_g$ over $\F_p$. We consider the Borel $B$ of upper triangular matrices in $G = \Sp_{2g}$, so that $B \subset P$. We write $\phi_L$ (resp. $\phi_L^+$) for the roots of $L$ (resp. positive roots of $L$).
\end{notation}
\vspace{0.2cm}
Denote by $\pi : A \rightarrow \Sh$ the universal abelian scheme and $e : \Sh \rightarrow A$ its neutral section. The universal polarization of $A$ gives to the algebraic de Rham cohomology $\mathcal{H}^1_{\dR}$ of $A$ over $\Sh$ the structure of a $\Sp_{2g}$-torsor over $\Sh$. 
\begin{proposition}
The de Rham cohomology $\mathcal{H}^1_{\dR}$ is equipped with a Hodge filtration 
\begin{equation*}
0 \rightarrow \Omega \rightarrow  \mathcal{H}^1_{\dR} \rightarrow \Omega^{\vee} \rightarrow 0,
\end{equation*}
where
\begin{equation*}
\Omega = e^*\Omega^1_{A/\Sh}
\end{equation*}
and
\begin{equation*}
\Omega^{\vee} = R^1\pi_*\mathcal{O}_A.
\end{equation*} 
We call $\Omega$ the Hodge vector bundle, it is a locally free sheaf of rank $g$ over $\Sh$. Moreover, the Hodge bundle $\Omega$ is totally isotropic for the symplectic pairing on $\mathcal{H}^1_{\dR}$ which allows us to identify the Hodge filtration with a $P$-torsor on $\Sh$.
\end{proposition}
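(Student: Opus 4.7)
The plan is to combine the Hodge-to-de Rham spectral sequence for $\pi \colon A \to \Sh$ with the translation-invariance of differentials on an abelian scheme, then extract the symplectic structure from the universal polarization. I would first invoke the first-quadrant spectral sequence
\begin{equation*}
E_1^{p,q} = R^q\pi_* \Omega^p_{A/\Sh} \Longrightarrow \mathcal{H}^{p+q}_{\dR}(A/\Sh),
\end{equation*}
and recall that it degenerates at $E_1$ for abelian schemes (this is classical; one argument uses smooth lifts and the Deligne--Illusie theorem, another uses the explicit Hopf-algebra structure of $\pi_*\Omega^{\bullet}_{A/\Sh}$). The row $p+q = 1$ then gives the desired short exact sequence with kernel $\pi_*\Omega^1_{A/\Sh}$ and cokernel $R^1\pi_*\mathcal{O}_A$.

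Next, I would identify the kernel with $\Omega = e^*\Omega^1_{A/\Sh}$. The key input is that on any abelian scheme, the sheaf of relative differentials is translation-invariant, so there is a canonical isomorphism $\Omega^1_{A/\Sh} \simeq \pi^* e^*\Omega^1_{A/\Sh}$. Applying $\pi_*$ together with the projection formula and $\pi_*\mathcal{O}_A = \mathcal{O}_{\Sh}$ (which holds since $\pi$ is proper, smooth, with geometrically connected fibers) yields $\pi_*\Omega^1_{A/\Sh} = e^*\Omega^1_{A/\Sh}$, and this is locally free of rank $g$ because $A/\Sh$ has relative dimension $g$. The identification $R^1\pi_*\mathcal{O}_A = \Omega^{\vee}$ I would obtain from relative Serre duality on $\pi$: the cup product pairing
\begin{equation*}
\pi_*\Omega^1_{A/\Sh} \otimes R^{g-1}\pi_*\Omega^{g-1}_{A/\Sh} \longrightarrow R^g\pi_*\Omega^g_{A/\Sh} \simeq \mathcal{O}_{\Sh}
\end{equation*}
is perfect, and using the exterior-algebra structure $R^{q}\pi_*\mathcal{O}_A \simeq \Lambda^q R^1\pi_*\mathcal{O}_A$ (again a feature of abelian schemes) together with $\Omega^p_{A/\Sh} \simeq \pi^*\Lambda^p\Omega$, this reduces to a perfect pairing between $\Omega$ and $R^1\pi_*\mathcal{O}_A$.

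For the symplectic structure on $\mathcal{H}^1_{\dR}$, I would use the universal polarization $\lambda \colon A \to A^{\vee}$ combined with Poincaré duality for abelian schemes (giving $\mathcal{H}^1_{\dR}(A) \otimes \mathcal{H}^1_{\dR}(A^{\vee}) \to \mathcal{O}_{\Sh}$) to produce a non-degenerate alternating pairing on $\mathcal{H}^1_{\dR}$, well-defined up to a $\Z_{(p)}$-scalar as in the moduli problem of Proposition \ref{prop1}. This exhibits $\mathcal{H}^1_{\dR}$ as a $\Sp_{2g}$-torsor on $\Sh$. To conclude, I would check that $\Omega$ is totally isotropic: under the cup product, the restriction of the pairing to $\pi_*\Omega^1_{A/\Sh} \otimes \pi_*\Omega^1_{A/\Sh}$ factors through $\pi_*\Omega^2_{A/\Sh}$, whose further pairing with the trace is compatible with the symmetry of the polarization, forcing the restricted form to vanish. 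Since $\Omega$ has rank $g = \tfrac{1}{2}\rk \mathcal{H}^1_{\dR}$, it is actually Lagrangian; the standard formalism of reductions of structure group then identifies the filtered torsor $(\mathcal{H}^1_{\dR}, \Omega)$ with a $P$-torsor, where $P \subset \Sp_{2g}$ is the stabilizer of a Lagrangian.

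The main obstacle is the last paragraph: one must be careful that the alternating form produced by $\lambda$ and Poincaré duality is indeed the correct one (compatible with the fixed $\psi$ on $V$, hence giving a reduction to $\Sp_{2g}$ and not merely $\GSp_{2g}$), and then the Lagrangian property of $\Omega$ has to be extracted from the interaction between the Rosati involution and the Hodge decomposition; the other steps are essentially bookkeeping with standard properties of abelian schemes.
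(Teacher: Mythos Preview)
Your approach matches the paper's: both invoke degeneration of the Hodge--de Rham spectral sequence (the paper cites Deligne--Illusie), and both identify $\pi_*\Omega^1_{A/\Sh}$ with $e^*\Omega^1_{A/\Sh}$ via translation invariance $\Omega^1_{A/\Sh} \simeq \pi^* e^*\Omega^1_{A/\Sh}$, the projection formula, and $\pi_*\mathcal{O}_A = \mathcal{O}_{\Sh}$. Your write-up is actually more complete than the paper's: the paper's proof stops after establishing $\Omega \simeq \pi_*\Omega^1_{A/\Sh}$ and does not justify the identification $R^1\pi_*\mathcal{O}_A \simeq \Omega^{\vee}$, the construction of the symplectic form, or the Lagrangian property of $\Omega$, whereas you sketch arguments for all three.
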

\begin{proof}
The Hodge filtration comes from the degeneration at the second page of the Hodge-de Rham spectral sequence which is a result of Deligne and Illusie \cite{MR894379} in the case of abelian schemes. The vector bundle $\Omega$ is locally free of rank $g$ because $\pi : A \rightarrow \Sh$ is smooth. Actually, we also have an isomorphism 
\begin{equation*}
\Omega \simeq \pi_*\Omega^1_{A/\Sh}.
\end{equation*}
Indeed, as a group scheme $\pi$ satisfies
\begin{equation*}
\Omega^1_{A/\Sh} = \pi^*e^*\Omega^1_{A/\Sh}
\end{equation*}
and for any proper morphism $f : X \rightarrow Y$ with geometrically connected fibers, we have
\begin{equation*}
f_*\mathcal{O}_X = \mathcal{O}_Y.
\end{equation*}
From the projection formula, we deduce 
\begin{equation*}
\pi_*\Omega^1_{A/\Sh} = \pi_*(\pi^*e^*\Omega^1_{A/\Sh} \otimes \mathcal{O}_A) = e^*\Omega^1_{A/\Sh} \otimes \pi_*\mathcal{O}_A = \Omega^1_{A/\Sh}.\qedhere
\end{equation*}
\end{proof}

The Siegel modular variety $\Sh$ is not proper but we can consider a toroidal compactification.
\begin{definition}[{\cite[Chapter 4]{MR1083353}\cite[Th. 2.15]{MR2968629}}]\label{prop5}
Let $C$ be the cone of all positive semi-definite symetric bilinear forms on $X^*(T)\otimes_{\mathbb{Z}} \mathbb{R}$ whose radicals are defined over $\mathbb{Q}$. Let $\Sigma = \{ \sigma_{\alpha} \}_{\alpha}$ be a smooth $GL(X^*(T))$-admissible decomposition in polyhedral cones of $C$ as defined in \cite[Chapter 4, Definition 2.2/2.3]{MR1083353}. We assume that $\Sigma$ admits a $GL(X^*(T))$-equivariant polarization function as defined in \cite[Chapter 4, Definition 2.4]{MR1083353}. See \cite{MR2590897} or \cite{MR0335518} for a proof of the existence of such polyhedral cone decompositions. We consider the corresponding toroidal compactification $\Sh^{\tor,\Sigma}$ of the Siegel modular variety $\Sh$. It is a smooth and projective scheme over $\mathbb{F}_p$ which satisfies the following properties.
\begin{enumerate}
\item The complementary $D_{\red} = \Sh^{\tor,\Sigma} - \Sh$, when endowed with its reduced structure, is a Cartier divisor with normal crossings.
\item The universal abelian scheme $f : A \rightarrow \Sh$ extends to a semi-abelian scheme $f^{\tor} : A^{\tor} \rightarrow \Sh^{\tor}$ such that
\item $\Omega^{\tor} := e^*\Omega^1_{A^{\tor}/\Sh^{\tor,\Sigma}}$ is a vector bundle that extends the Hodge bundle to $\Sh^{\tor,\Sigma}$.
\item By \cite[Chapter 4]{MR1083353} or \cite[Th. 2.15, (2)]{MR2968629} the semi-abelian scheme $f^{\tor} : A^{\tor} \rightarrow \Sh^{\tor}$ can be compactified into a proper and log-smooth scheme $\bar{f}^{\tor} : \bar{A}^{\tor} \rightarrow \Sh^{\tor}$ which is projective and smooth over $\mathbb{F}_p$
\begin{equation*}
\begin{tikzcd}
A \arrow[r,"f"] \arrow[d] & A^{\tor} \arrow[d,"f^{\tor}"] \arrow[r] &\bar{A}^{\tor} \arrow[ld,"\bar{f}^{\tor}"] \arrow[d] \\
\Sh \arrow[r] & \Sh^{\tor} \arrow[r] & \Spec \mathbb{F}_p
\end{tikzcd}
\end{equation*}
and we denote again $D_{\red}$ the normal crossing divisor $\bar{A}^{\tor} - A$.
\item Following \cite[Chapter 4]{MR1083353} or \cite[Th. 2.15, (3)]{MR2968629}, the log-de Rham complex $\bar{\Omega}^{\bullet}_{\bar{A}^{\tor}/\Sh^{\tor}}$ is the complex of log-differentials $\bar{\Omega}^i_{\bar{A}^{\tor}/\Sh^{\tor}} := \Lambda^i\bar{\Omega}^1_{\bar{A}^{\tor}/\Sh^{\tor}}$ where
\begin{equation*}
\bar{\Omega}^1_{\bar{A}^{\tor}/\Sh^{\tor}} = \Omega^1_{\bar{A}^{\tor}}(\log D_{\red}) /  {(\bar{f}^{\tor})}^{*}  \Omega^1_{\Sh^{\tor}}(\log D_{\red}).
\end{equation*}
\item and the log-de Rham cohomology
\begin{equation*}
\mathcal{H}^1_{\log-\dR} := R^1{(\bar{f}^{\tor})}_*\bar{\Omega}^{\bullet}_{\bar{A}^{\tor}/\Sh^{\tor}}
\end{equation*}
is a $\Sp_{2g}$-torsor that extends $\mathcal{H}^1_{\dR}$ on $\Sh$.
\item and the logarithmic Hodge-de Rham spectral
sequence 
\begin{equation*}
E^{i,j}_1 = R^j{(\bar{f}^{\tor})}_*\bar{\Omega}^{i}_{\bar{A}^{\tor}/\Sh^{\tor}} \Rightarrow \mathcal{H}^i_{\log-\dR} := R^i{(\bar{f}^{\tor})}_*\bar{\Omega}^{\bullet}_{\bar{A}^{\tor}/\Sh^{\tor}}
\end{equation*}
degenerates at page $1$. It defines a $P$-reduction of the $\Sp_{2g}$-torsor $\mathcal{H}^1_{\log-\dR}$ on $\Sh^{\tor,\Sigma}$ that extends the Hodge fitlration on $\Sh$.
\end{enumerate}
\end{definition}
From now on, we drop the upperscript $\Sigma$ to denote $\Sh^{\tor,\Sigma}$ since the coherent cohomology does not depend on this choice.
\begin{definition}[\cite{MR1083353}]\label{def4}
We define the minimal compactification as the projective scheme 
\begin{equation*}
\Sh^{\min}:= \Proj(\oplus_{n\geq 0} H^0(\Sh^{\tor},\omega^{\otimes n})),
\end{equation*}
where $\omega = \det \Omega^{\tor}$ is the Hodge line bundle. 
\end{definition}
\vspace{0.2cm}
The minimal compactification $\Sh^{\min}$ is a normal and projective variety (independent of the choice of $\Sigma$) but it is not smooth in general. Moreover, the Hodge line bundle $\omega$ descends to an ample line bundle on $\Sh^{\min}$. From this construction, one can see that $\Sh^{\tor}$ is the normalization of the blow-up of $\Sh^{\min}$ along a coherent sheaf of ideals $\mathcal{J}$ of $\mathcal{O}_{\Sh^{\min}}$ and we write
\begin{equation*}
\varphi : \Sh^{\tor} \rightarrow \Sh^{\min}
\end{equation*}
for the induced morphism. The pullback $\varphi^*\mathcal{J}$ is of the form $\mathcal{O}_{\Sh^{\tor}}(-D)$ where $D$ is an effective Cartier divisor whose associated reduced Cartier divisor is $D_{\red}$. In particular, we deduce that there exists $\eta_0 > 0$ such that $\omega^{\otimes \eta}(-D)$ is ample on $\Sh^{\tor}$ for every $\eta \geq \eta_0$. In general, $\omega$ fails to be ample on $\Sh^{\tor}$. 
\begin{rmrk}
The effective Cartier divisor $D$ depends on the choice of the $GL(X^*(T))$-equivariant polarization function on the decomposition in polyhedral cones $\Sigma$.
\end{rmrk}
\vspace{0.2cm}
We are now able to define automorphic vector bundles with contracted products.
\begin{definition}\label{def3}
Let $V$ be a finite dimensional algebraic representation of $L = \GL_{g}$. We define the associated vector bundle $\mathcal{W}(V)$ on $\Sh$ (resp. its canonical extension to $\Sh^{\tor}$) to be the contracted product of $V$ with the $\GL_{g}$-torsor $\Omega$ (resp. $\Omega^{\tor}$). If $\lambda \in X^*(T)$ is an $L$-dominant character, we write simply $\nabla(\lambda)$ for the vector bundle corresponding to the induced representation $H^0(L/B_L, \mathcal{L}_{\lambda})$ of $L$. It corresponds to the costandard representation of highest weight $w_0w_{0,L}\lambda$.
\end{definition}
\begin{rmrk}
We apologize for the weird convention in definition \ref{def3}. The advantage of this convention is to keep an easy formula in proposition \ref{prop13}. 
\end{rmrk}
\vspace{0.2cm}
We recall the Kodaira-Spencer isomorphism. 
\begin{proposition}[{\cite[Chap. 3, sect. 9]{MR1083353}}]\label{prop19}
The Kodaira-Spencer map
\begin{equation*}
\rho_{\text{KS}} : \Sym^2 \Omega \rightarrow \Omega^1_{\Sh^{\tor}}(\log D_{\red})
\end{equation*}
is an isomorphism. This allows us to identify the logarithmic differentials $\Omega^1_{\Sh^{\tor}}(\log D_{\red})$ with the automorphic vector bundle $\mathcal{W}(\Sym^2 \std_L) = \nabla(0, \cdots, 0,-2)$. In particular, we have an isomorphism of line bundles
\begin{equation*}
\Omega^d_{\Sh^{\tor}}(\log D_{\red}) \simeq \nabla({-2\rho^L}),
\end{equation*} 
where $d$ is the dimension of $\Sh^{\tor}$ and
\begin{equation*}
\rho^L = \frac{1}{2} \sum_{\alpha \in \phi^{+} \backslash \phi_L^{+}} \alpha.
\end{equation*}
\end{proposition}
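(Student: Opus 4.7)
The plan is to build the Kodaira-Spencer map from the Gauss-Manin connection on the log de Rham cohomology $\mathcal{H}^1_{\log-\dR}$ constructed in Definition~\ref{prop5}, verify it is an isomorphism by a local computation, and then deduce the two claimed identifications from the contracted-product recipe and a short weight calculation.

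\textbf{Step 1: construction of $\rho_{\mathrm{KS}}$.} Restrict the Gauss-Manin connection
\[
\nabla_{\mathrm{GM}}: \mathcal{H}^1_{\log-\dR} \to \mathcal{H}^1_{\log-\dR} \otimes \Omega^1_{\Sh^{\tor}}(\log D_{\red})
\]
to the Hodge sub-bundle $\Omega^{\tor}$ and compose with the projection onto the quotient $\Omega^{\tor,\vee} = \mathcal{H}^1_{\log-\dR}/\Omega^{\tor}$ to kill the non-linearity; this produces an $\mathcal{O}_{\Sh^{\tor}}$-linear map $\bar{\nabla}: \Omega^{\tor} \to \Omega^{\tor,\vee} \otimes \Omega^1_{\Sh^{\tor}}(\log D_{\red})$, equivalently a pairing $\Omega^{\tor} \otimes \Omega^{\tor} \to \Omega^1_{\Sh^{\tor}}(\log D_{\red})$. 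Symmetry of this pairing, and hence its factorization through $\Sym^2 \Omega^{\tor}$, follows from the compatibility of $\nabla_{\mathrm{GM}}$ with the symplectic form on $\mathcal{H}^1_{\log-\dR}$ coming from the principal polarization, together with the fact that $\Omega^{\tor}$ is totally isotropic.

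\textbf{Step 2: $\rho_{\mathrm{KS}}$ is an isomorphism.} Both source and target are locally free of rank $d = g(g+1)/2$, so it suffices to check surjectivity pointwise. On the open Siegel variety $\Sh$, this is the classical Serre-Tate / Grothendieck-Messing identification of the tangent space to the moduli of principally polarized abelian schemes at $(A,\lambda)$ with $\Sym^2(\mathrm{Lie}\,A^{\vee})$. Over the toroidal boundary $D_{\red}$, one uses the log-smooth semi-abelian scheme $\bar f^{\tor}$ of Definition~\ref{prop5}(iv) together with the Mumford-style local description of $\Sh^{\tor}$ around boundary points to extend the argument; this is precisely the content of Faltings-Chai, Chapter~III, \S9.

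\textbf{Step 3: identifications.} By the contracted-product recipe, $\Sym^2 \Omega^{\tor} = \mathcal{W}(\Sym^2 \std_L)$, and Definition~\ref{def3} gives $\mathcal{W}(\Sym^2 \std_L) = \nabla(0, \ldots, 0, -2)$ since $w_0 w_{0,L}(0, \ldots, 0, -2) = (2, 0, \ldots, 0)$ is the standard highest weight of $\Sym^2 \std_L$. For the top-degree statement, apply $\Lambda^d$ to each side of the Kodaira-Spencer isomorphism: on the left we obtain $\Omega^d_{\Sh^{\tor}}(\log D_{\red})$, and on the right $\mathcal{W}(\det \Sym^2 \std_L)$; the weights of $\Sym^2 \std_L$ are $\epsilon_i + \epsilon_j$ for $1 \le i \le j \le g$, whose sum equals $(g+1) \sum_i \epsilon_i$, and this vector is directly checked to equal $-2\rho^L$ using the paper's reversed convention for positive roots, yielding $\Omega^d_{\Sh^{\tor}}(\log D_{\red}) \simeq \nabla(-2\rho^L)$. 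The main obstacle lies in Step~2 at the boundary: showing that the Kodaira-Spencer map indeed lands in log-differentials and remains an isomorphism there requires Faltings-Chai's explicit description of $\Sh^{\tor}$ and of the semi-abelian extension $\bar A^{\tor}$.
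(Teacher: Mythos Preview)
The paper does not give its own proof of this proposition; it is recorded as a citation to Faltings--Chai, Chapter~III, \S9. Your Steps~1--2 are precisely the standard argument found there: build $\rho_{\mathrm{KS}}$ from the Gauss--Manin connection on $\mathcal{H}^1_{\log-\dR}$, obtain symmetry from the polarization, and verify the isomorphism via deformation theory on the interior together with the Mumford local model at the boundary. So on the substance of the Kodaira--Spencer isomorphism your sketch is correct and matches the cited source.

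There is a slip in Step~3. You correctly apply the $w_0 w_{0,L}$ twist from Definition~\ref{def3} when identifying $\mathcal{W}(\Sym^2\std_L)=\nabla(0,\ldots,0,-2)$, but then silently drop it for the determinant: you equate the determinant weight $(g+1)\sum_i\epsilon_i$ directly with $-2\rho^L$ by appealing to a ``reversed convention''. That is inconsistent. With the simple roots the paper actually fixes later ($e_i-e_{i+1}$ and $2e_g$), one has $2\rho^L=\sum_{i\le j}(e_i+e_j)=(g+1,\ldots,g+1)$, so $-2\rho^L=(-(g+1),\ldots,-(g+1))$. The correct chain is: $\mathcal{W}(\det\Sym^2\std_L)=\nabla(\lambda)$ where $w_0 w_{0,L}\lambda=(g+1,\ldots,g+1)$, hence $\lambda=(-(g+1),\ldots,-(g+1))=-2\rho^L$. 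The conclusion is right, but it follows from the same twist you used for $\Sym^2$, not from flipping the sign of $\rho^L$.
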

Recall that the Hodge filtration on $\Sh$ is canonically identified with a $P$-torsor that extends to the toroidal compactification $\Sh^{\tor}$. From now on, $I_0$ denotes a subset of $I$ and $P_0$ denotes its associated intermediate parabolic subgroup $B \subset P_0 \subset P$.
\begin{definition}\label{def2}
Let $S \rightarrow \Sh$ be a $S$-valued point of $\Sh$ and denote by $A/S$ the corresponding abelian scheme. The flag bundle $Y_{I_0}$ is the scheme over $\Sh$ whose $S$-valued points are $P_0$-reductions of the $P$-torsor corresponding to the Hodge filtration of $A$. 
\end{definition}
\vspace{0.2cm}
From the definition of $Y_{I_0}$, we get a smooth proper morphism $\pi : Y_{I_0} \rightarrow \Sh$ where each fibre is isomorphic to the flag variety $P/P_0$.
\begin{rmrk}
In the special case $I_0 = I$, the flag bundle $Y_{I_0}$ coincide with the Siegel modular variety $\Sh$.
\end{rmrk}
\vspace{0.2cm}
The flag bundle $Y_{I_0}$ extends to a flag bundle $Y_{I_0}^{\tor}$ over the toroidal compactification $\Sh^{\tor}$ because we have seen that the Hodge filtration over $\Sh$ extends to $\Sh^{\tor}$ in definition \ref{prop5} $(7)$. It implies by base change that the universal $P_0$-torsor on $Y_{I_0}$ extends to $Y_{I_0}^{\tor}$. This allows us to define automorphic vector bundles on $Y_{I_0}$ from algebraic representations of $P_0$. 
\begin{definition}\label{def9}
Let $V$ be a finite dimensional algebraic representation of $P_0$. We define the associated vector bundle $\mathcal{L}(V)$ on $Y_{I_0}$ (resp. its canonical extension on $Y_{I_0}^{\tor}$) as the contracted product of $V$ with the universal $P_0$-torsor on $Y_{I_0}$ (resp. $Y_{I_0}^{\tor}$). If $\lambda \in X^*(P_0)$ is a character, we write simply $\mathcal{L}_{\lambda}$ for the corresponding line bundle.
\end{definition}
\begin{rmrk}
In the special case where $I_0 = \emptyset$, note that we have $X^*(P_0) = X^*(T)$. 
\end{rmrk}
\vspace{0.2cm}
There is an easy relation between $\mathcal{L}_{\lambda}$ and $\nabla(\lambda)$ that we want to explain. We first recall the proper base change theorem for coherent cohomology.

\begin{proposition}[Proper base change, non reduced case]\label{prop3}
Let $f : X\rightarrow S$ be a proper morphism between locally noetherian schemes. Let $\mathcal{F}$ be a coherent sheaf over $X$ which is flat over $S$. Let $p\geq 0$ and $s\in S$. If $\theta_s^p : (R^pf_*\mathcal{F})_s \otimes_{\mathcal{O}_{S,s}} k(s) \rightarrow H^p(X_s,\mathcal{F}_{|X_s})$ is surjective, then there is an open neighbourhood $U$ of $s$ such that for all $s^\prime \in U$, $\theta^p_{s^\prime}$ is an isomorphism and the following conditions are equivalent
\begin{enumerate}
\item $\theta_{s}^{p-1}$ is surjective,
\item $R^pf_*\mathcal{F}$ is free on $U$,
\end{enumerate}
and under these conditions, the formation of $R^pf_*\mathcal{F}$ commutes under base change. This means that for any $g : S^\prime \rightarrow S$, we have $g^*R^pf_*\mathcal{F} \simeq R^pf^\prime_*{g^\prime}^*\mathcal{F}$ where the maps are defined in the following cartesian diagram
\begin{equation*}
\begin{tikzcd}
X^\prime \arrow[d,"f^\prime"]  \arrow[r,"g^\prime"] & X \arrow[d,"f"] \\
S^\prime \arrow[r,"g"] & S
\end{tikzcd}
\end{equation*}
\end{proposition}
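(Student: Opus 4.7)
The plan is to prove this classical cohomology and base change result by the standard approach of Grothendieck: reduce to the affine noetherian situation, replace $R^\bullet f_*\mathcal{F}$ by a bounded complex $K^\bullet$ of finite free modules whose cohomology computes it universally, and then do linear algebra on $K^\bullet$.

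First, I would reduce to the case where $S = \Spec A$ with $(A, \mathfrak{m})$ a noetherian local ring and $s$ corresponds to $\mathfrak{m}$. The key input is the following classical construction (EGA III.6.10.5 or Mumford, \emph{Abelian Varieties}, \S 5): since $f$ is proper and $\mathcal{F}$ is coherent and $A$-flat, there exists a bounded complex $K^\bullet = (K^0 \to K^1 \to \cdots \to K^n)$ of finitely generated free $A$-modules, and a natural isomorphism
\begin{equation*}
H^p(X \times_S \Spec A', \mathcal{F} \otimes_A A') \;\simeq\; H^p(K^\bullet \otimes_A A'),
\end{equation*}
functorial in the $A$-algebra $A'$. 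This is built by taking a finite affine cover of $X$, forming the Čech complex computing $R^\bullet f_*\mathcal{F}$, and cleverly truncating using flatness of $\mathcal{F}$ so that the resulting complex is finitely generated free and still universal under tensor products.

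Once this complex is in place, the whole problem becomes a purely algebraic statement about bounded complexes of finite free modules over $A$: under the natural map
\begin{equation*}
\theta^p : H^p(K^\bullet) \otimes_A k(s) \longrightarrow H^p(K^\bullet \otimes_A k(s)),
\end{equation*}
assuming surjectivity at $s$, show (i) $\theta^p$ is an isomorphism in a neighbourhood of $s$; (ii) $H^p(K^\bullet)$ is free on such a neighbourhood iff $\theta^{p-1}_s$ is surjective; (iii) under these conditions, $H^p$ commutes with arbitrary base change. I would prove this by a local Nakayama argument: write the differentials $d^{p-1}$ and $d^p$ as matrices over $A$ in chosen bases. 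Surjectivity of $\theta^p_s$ together with Nakayama gives a finite set of elements in $\ker d^p$ generating $H^p(K^\bullet)$ locally. This expresses $H^p(K^\bullet)$ near $s$ as the cokernel of a map of finite free modules, so semicontinuity of fiber dimension reduces the freeness questions to the elementary fact that the cokernel of a map of finite free modules is free in a neighbourhood precisely when the rank is locally constant. The equivalence with surjectivity of $\theta_s^{p-1}$ follows from a rank count in the short exact sequences $0 \to Z^p \to K^p \to B^{p+1} \to 0$ and $0 \to B^p \to Z^p \to H^p \to 0$ after tensoring with $k(s)$.

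The main technical work sits in the construction of the complex $K^\bullet$, which is classical but delicate. The subsequent linear algebra step is routine once one is careful about the two exact sequences above. I expect no substantive obstacle, and since the statement is entirely standard the author is likely to either omit the proof or refer the reader to \cite[EGA III]{} or Mumford's book rather than reprove it here.
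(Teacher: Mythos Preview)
Your outline is correct and matches the standard Grothendieck--Mumford argument; you also correctly anticipated that the paper would not reprove this. Indeed the paper's proof consists of a single reference to \cite[Part III, Theorem 12.11]{MR0463157} (Hartshorne), which is precisely the approach you sketch.
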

\begin{proof}
See \cite[Part III, Theorem 12.11]{MR0463157}.
\end{proof}
\begin{rmrk}
\begin{enumerate}
\item We assume $\theta^{-1}_s$ to be the zero morphism.
\item The reference in proposition \ref{prop3} states a coherent base change theorem only for geometric points of $S$. To see how it implies the base change for any morphism $S^\prime \rightarrow S$, see \cite[Proposition 2.1]{conradbasechange}.
\end{enumerate}
\end{rmrk}
\begin{lemma}\label{lem2}
Let $\mathcal{X}$ and $\mathcal{Y}$ be two Artin stacks and $\pi : \mathcal{Y} \rightarrow \mathcal{X}$ a proper representable morphism. Let $\mathcal{L}$ be a coherent sheaf over $\mathcal{Y}$, flat over $\mathcal{X}$, such that for all geometric points $x : \Spec \bar{k} \rightarrow \mathcal{X}$ fitting in the cartesian diagram
\begin{equation*}
\begin{tikzcd}
\mathcal{Y}_x := \mathcal{Y}\times_{\mathcal{X},x}\Spec \bar{k} \arrow[d,"\pi_x"] \arrow[r,"i"] & \mathcal{Y} \arrow[d,"\pi"] \\
\Spec \bar{k} \arrow[r,"x"] & \mathcal{X}
\end{tikzcd}
\end{equation*}
the complex $R{(\pi_x)}_*\mathcal{L}_{|\mathcal{Y}_x}$ is concentrated in degree $0$. Then, the complex $R\pi_*\mathcal{L}$ is also concentrated in degree $0$.
\end{lemma}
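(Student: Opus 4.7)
The plan is to reduce to the case of a scheme base and then apply the proper base change theorem recalled in Proposition \ref{prop3}. I would first pick a smooth surjective morphism $\varphi : X \to \mathcal{X}$ with $X$ a locally noetherian scheme. Since $\pi$ is representable, the base change $\pi' : \mathcal{Y}' := \mathcal{Y} \times_{\mathcal{X}} X \to X$ is a proper morphism of locally noetherian schemes, and the pullback $\mathcal{L}'$ of $\mathcal{L}$ to $\mathcal{Y}'$ is coherent and flat over $X$. Smooth morphisms are flat, and flat base change commutes with $R\pi_*$ for proper morphisms, so there is a natural identification $\varphi^* R^p\pi_*\mathcal{L} \simeq R^p\pi'_*\mathcal{L}'$. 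Faithful flatness of $\varphi$ then reduces the vanishing of $R^p\pi_*\mathcal{L}$ for $p \geq 1$ on $\mathcal{X}$ to the vanishing of $R^p\pi'_*\mathcal{L}'$ on $X$.

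For the scheme-level argument, I would fix $p \geq 1$ and $t \in X$. Choosing a geometric point $\bar{t} \to X$ above $t$, the fiber $(\mathcal{Y}')_{\bar{t}}$ is identified with the fiber $\mathcal{Y}_x$ of $\pi$ over the composed geometric point $x : \bar{t} \to X \to \mathcal{X}$, and the hypothesis of the lemma gives $H^p((\mathcal{Y}')_{\bar{t}}, \mathcal{L}'|_{(\mathcal{Y}')_{\bar{t}}}) = 0$. Flat base change from $k(t)$ to its algebraic closure transports this vanishing to $H^p((\mathcal{Y}')_t, \mathcal{L}'|_{(\mathcal{Y}')_t}) = 0$, so the comparison map
\begin{equation*}
\theta^p_t : (R^p\pi'_*\mathcal{L}')_t \otimes_{\mathcal{O}_{X,t}} k(t) \longrightarrow H^p((\mathcal{Y}')_t, \mathcal{L}'|_{(\mathcal{Y}')_t}) = 0
\end{equation*}
is trivially surjective. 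Proposition \ref{prop3} then yields an open neighborhood $U$ of $t$ on which $\theta^p_{t'}$ is an isomorphism for every $t' \in U$; in particular $(R^p\pi'_*\mathcal{L}')_{t'} \otimes k(t') = 0$ on $U$. Nakayama's lemma forces the stalks of the coherent sheaf $R^p\pi'_*\mathcal{L}'$ to vanish at each $t' \in U$, and since the support of a coherent sheaf is closed, $R^p\pi'_*\mathcal{L}'$ is in fact zero on a neighborhood of $t$. As $t$ is arbitrary, $R^p\pi'_*\mathcal{L}' = 0$ on all of $X$, which is what we needed.

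The main delicate point is really the passage between the stacky and scheme-theoretic levels, namely the flat base change identification $\varphi^* R\pi_*\mathcal{L} \simeq R\pi'_*\mathcal{L}'$ for the representable proper morphism $\pi$; this is standard but must be invoked correctly in the Artin-stack setting. Once that is in place, everything else is a mechanical use of Proposition \ref{prop3}: the hypothesis that the fibers have cohomology concentrated in degree $0$ makes the surjectivity of $\theta^p_t$ for $p \geq 1$ completely trivial, so one does not need the subtle descending induction on $p$ that would otherwise be required.
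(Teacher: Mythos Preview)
Your proposal is correct and follows essentially the same approach as the paper: reduce to schemes via a smooth surjective presentation of $\mathcal{X}$, apply Proposition \ref{prop3} fiberwise to obtain vanishing of the higher direct images on the scheme cover, and then use flat base change together with faithful flatness of the presentation to descend the vanishing back to $\mathcal{X}$. The only differences are cosmetic (order of the two reductions, explicit mention of Nakayama's lemma).
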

\begin{proof}
Consider a presentation $f : X \rightarrow \mathcal{X}$ of the Artin stack $\mathcal{X}$ where $X$ is a scheme and $f$ is a surjective and smooth morphism. Consider the double cartesian diagram 
 \begin{equation*}
\begin{tikzcd}
{Y}_x := {Y}\times_{{X},x}\Spec \bar{k} \arrow[d,"\pi_x"] \arrow[r,"i"] & Y := X \times_{\mathcal{X}} \mathcal{Y} \arrow[d,"\tilde{\pi}"]\arrow[r,"f^\prime"] & \mathcal{Y} \arrow[d,"\pi"] \\
\Spec \bar{k} \arrow[r,"x"] & X \arrow[r,"f"] &  \mathcal{X}
\end{tikzcd}
\end{equation*}
where $x$ is a geometric point of $X$. For any $i > 0$, we have $H^i(Y_x,\mathcal{L}_{|Y_x}) = 0$ by hypothesis. As a consequence, the base change morphism for the first cartesian diagram 
\begin{equation*}
\theta_x^i : R^i\tilde{\pi}_*\mathcal{L}_{|Y} \otimes_{\mathcal{O}_{X,x}} k(x) \rightarrow H^i(Y_x,\mathcal{L}_{|Y_x}) 
\end{equation*}
is surjective. By proposition \ref{prop3}, we deduce that $\theta_{x^\prime}^i$ is an isomorphism for all $x^\prime$ in a neighborhood of $x$. We deduce that $R^i\tilde{\pi}_*\mathcal{L}_{|Y}$ is zero for all $i>0$. Since $f$ is flat, the base change theorem for the second cartesian diagram says that there is an isomorphism
\begin{equation*}
f^*\circ R\pi_*\mathcal{L} \rightarrow R\tilde{\pi}_*\circ {(f^\prime)}^*\mathcal{L}.
\end{equation*}
Since $f$ is faithfully flat, it implies that $R\pi_*\mathcal{L}$ is concentrated in degree 0. 
\end{proof}
\begin{proposition}\label{prop13}
Let $\lambda$ be a character of $P_0$. Denote by $\pi : Y_{I_0} \rightarrow \Sh$ the flag bundle defined before. We have a canonical isomorphism
\begin{equation*}
\pi_* \mathcal{L}_{\lambda} \simeq \nabla(\lambda),
\end{equation*}
where we see $\lambda$ as a character of $T$ to construct $\nabla(\lambda)$. This isomorphism extends to the toroidal compactifications $Y_{I_0}^{\tor}$ and $\Sh^{\tor}$.
\end{proposition}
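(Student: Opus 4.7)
The plan is to identify $Y_{I_0}^{\tor}$ with the partial flag bundle associated to the Hodge $P$-torsor and reduce the computation to Kempf's vanishing fiber by fiber. Let $\mathcal{P}$ denote the Hodge $P$-torsor on $\Sh^{\tor}$, whose existence is recalled in Definition \ref{prop5}(7). By Definition \ref{def2}, the flag bundle fits into a canonical identification
\begin{equation*}
Y_{I_0}^{\tor} \simeq \mathcal{P}\times^{P}(P/P_0),
\end{equation*}
under which $\mathcal{L}_\lambda$ is the contracted product of the $P$-equivariant line bundle on $P/P_0$ defined by $\lambda$ with the torsor $\mathcal{P}$. Since $B\subset P_0$, we have $U_P\subset P_0$ and the inclusion $L\hookrightarrow P$ induces an isomorphism $L/L_0 \simeq P/P_0$, where $L_0 := L\cap P_0$ is the standard parabolic of $L = \GL_g$ of type $I_0$.

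The next step is to compute the cohomology of the fibers. The restriction of $\mathcal{L}_\lambda$ to any geometric fiber of $\pi$ is the line bundle attached to $\lambda$, now regarded as a character of $L_0$ (since $X^*(P_0) = X^*(L_0)$). Applying Kempf's vanishing (Proposition \ref{prop4}) to $L$ and its parabolic $L_0$ shows that
\begin{equation*}
H^i(L/L_0,\mathcal{L}_\lambda) = 0 \quad \text{for all } i>0,
\end{equation*}
while the $L$-module $H^0(L/L_0,\mathcal{L}_\lambda)$ identifies with $H^0(L/B_L,\mathcal{L}_\lambda)$ via the Leray spectral sequence for $L/B_L\to L/L_0$ (whose relative higher cohomology vanishes by Kempf applied to the fibers $L_0/B_L$). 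This $L$-module is precisely $\nabla(\lambda)$ in the sense of Definition \ref{def3}.

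To turn the fiber-wise computation into a statement about $R\pi_*\mathcal{L}_\lambda$, I invoke Lemma \ref{lem2}: since $\pi$ is proper and representable and the higher cohomology vanishes on every geometric fiber, one concludes $R^i\pi_*\mathcal{L}_\lambda = 0$ for $i>0$. Combined with Proposition \ref{prop3}, the formation of $\pi_*\mathcal{L}_\lambda$ is then compatible with base change and, pulling back along an étale trivialization of $\mathcal{P}$, one identifies
\begin{equation*}
\pi_*\mathcal{L}_\lambda \simeq \mathcal{P}\times^{P}\nabla(\lambda),
\end{equation*}
where $\nabla(\lambda)$ is inflated from $L$ to $P$ along $P\twoheadrightarrow L$. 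This contracted product is exactly the automorphic vector bundle $\nabla(\lambda)$ of Definition \ref{def3}, which settles the statement both on $\Sh^{\tor}$ and, by restriction to the open Siegel variety, on $\Sh$. The only genuinely non-formal ingredient is the compatibility of $\pi_*$ with the contracted product construction along $\mathcal{P}$; this is precisely what Lemma \ref{lem2} and Proposition \ref{prop3} are tailored to deliver, so the argument reduces to the standard fiber-wise computation via Kempf.
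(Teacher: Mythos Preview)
Your proof is correct and follows essentially the same route as the paper: both reduce to Kempf's vanishing on the fibers $P/P_0\simeq L/L_0$, invoke Lemma \ref{lem2} to conclude that $R\pi_*\mathcal{L}_\lambda$ is concentrated in degree $0$, and then use base change to identify $\pi_*\mathcal{L}_\lambda$ with the automorphic bundle $\nabla(\lambda)$. The only cosmetic difference is packaging: the paper organizes the argument through the cartesian square
\[
\begin{tikzcd}
Y_{I_0} \arrow[d,"\pi"] \arrow[r] & \lfloor P_0\backslash * \rfloor \arrow[d] \\
\Sh \arrow[r] & \lfloor P\backslash * \rfloor
\end{tikzcd}
\]
and appeals to flat base change along the bottom arrow, whereas you phrase the same computation in the equivalent language of contracted products and \'etale trivializations of the Hodge $P$-torsor. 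One small point worth making explicit (the paper does so): Kempf's vanishing on $L/L_0$ needs $\lambda$ to be $I_0$-dominant in the sense of Proposition \ref{prop4}, which is automatic for the pairing against roots in $I_0$ since $\lambda\in X^*(P_0)$, but the positivity against $I\setminus I_0$ is an actual hypothesis you are implicitly using when you invoke Lemma \ref{lem2}.
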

\begin{proof}
This isomorphism is a formal consequence of the definition of automorphic vector bundles in definitions \ref{def3} and \ref{def9} and standard base change theorem combined with Kempf's theorem. We have a cartesian diagram
\begin{equation*}
\begin{tikzcd}
Y_{I_0} \arrow[d,"\pi"] \arrow[r,"\tilde{\zeta}"] & \lfloor P_0 \backslash * \rfloor \arrow[d,"\tilde{\pi}"] \\
\Sh \arrow[r,"\zeta"] & \lfloor P \backslash * \rfloor
\end{tikzcd}
\end{equation*}
where the horizontal arrows corresponds to the universal $P$-torsor on $\Sh$ and the universal $P_0$-torsor on $Y_{I_0}$ and where the vertical arrow $\tilde{\pi}$ between the classifying stacks is induced by the inclusion $P_0 \subset P$. For every $\lambda \in X^*(P_0)$, we have a line bundle $\mathcal{L}_{\lambda}$ on the classifying stack of $P_0$. We denote by $\nabla(\lambda)$ the vector bundle on the classifying stack of $P$ associated to the $P$-module $H^0(P/P_0,\mathcal{L}_{\lambda})$. By definition, we have isomorphisms
\begin{equation*}
\left\{
\begin{aligned}
&\tilde{\pi}_* \mathcal{L}_{\lambda} = \nabla(\lambda), \\
&\tilde{\zeta}^*\mathcal{L}_{\lambda} = \mathcal{L}_{\lambda}, \\
&\zeta^*\nabla(\lambda)= \nabla(\lambda),
\end{aligned}
\right.
\end{equation*}
on $ \lfloor P \backslash * \rfloor$. Since $\zeta$ is flat, we have a base change theorem in the derived category of quasi-coherent sheaves over $\Sh$ which says that the natural map
\begin{equation*}
\zeta^* \circ {R}\tilde{\pi}_* \mathcal{L}_{\lambda}  \rightarrow {R}\pi_*\circ \tilde{\zeta}^* \mathcal{L}_{\lambda} 
\end{equation*}
is an isomorphism. If $\lambda$ is $I_0$-dominant, Kempf's vanishing theorem from proposition \ref{prop4} combined with lemma \ref{lem2} implies that
\begin{equation*}
\left\{
\begin{aligned}
&{R}\pi_* \mathcal{L}_{\lambda} = \pi_* \mathcal{L}_{\lambda}, \\
&{R}\tilde{\pi}_* \mathcal{L}_{\lambda} = \tilde{\pi}_*  \mathcal{L}_{\lambda},
\end{aligned}
\right.
\end{equation*}
and we get
\begin{equation*}
\pi_* \mathcal{L}_{\lambda} \simeq \nabla(\lambda).
\end{equation*}
For the toroidal compactifications, the proof is exactly the same.
\end{proof}
\section{$G$-Zips and stratifications}\label{part4}
Let $\Sh$ denote the Siegel modular variety over $\F_p$ of genus $g \geq 1$ and neat level $K \subset \Sp_{2g}(\mathbb{A}_f)$ such that $K_p$ is hyperspecial. The Siegel modular variety $\Sh$ has the Ekedahl-Oort stratification (EO stratification) which is a genuine new structure which does not exist in characteritic $0$. For the modular curve defined over $\mathbb{F}_p$, there are two strata: the ordinary locus and the supersingular locus. The ordinary locus is an open subscheme corresponding to ordinary elliptic curves over $\mathbb{F}_p$ and the supersingular locus is a reduced closed subscheme corresponding to supersingular elliptic curves over $\mathbb{F}_p$. Hence, the closure of the ordinary locus is the whole modular curve. In the series of papers \cite{MR1710754} \cite{MR2104263} \cite{MR2804513} \cite{MR3347958}, Moonen, Wedhorn, Pink and Ziegler define an Artin stack $G\Zip^{\mu}$ which depends on the reductive group $G$ over $\mathbb{F}_p$ and a cocharacter $\mu$ of $G$. The underlying topological space of this stack is finite and its topology captures the closure relations of the EO stratification. Furthermore, one can construct a morphism
\begin{equation*}
\zeta : \Sh \rightarrow G\Zip^{\mu}
\end{equation*}
from the $G$-torsor $\mathcal{H}^1_{\dR}$ corresponding to de Rham cohomology of the universal abelian scheme. Zhang, in his thesis \cite{MR3759007}, has proven that $\zeta$ is a smooth morphism. One can recover the EO stratification on $\Sh$ through a pullback of some substack $w$ of $G\Zip^{\mu}$. Recall that $P$ is the parabolic associated to $-\mu$ defined in notation \ref{not1} and $I \subset \Delta$ is its type. The EO stratification on the Shimura variety can be further generalized on the flag bundle $Y_{I_0}$ corresponding to a standard parabolic subgroup $P_0 \subset P$ of type $I_0 \subset I$.  In \cite{MR3973104}, Goldring and Koskivirta define a stack $G\ZipFlag^{\mu,I_0}$, a smooth morphism
\begin{equation*}
\zeta_{I_0} : Y_{I_0} \rightarrow G\ZipFlag^{\mu,I_0}
\end{equation*}
and one can define a stratification of the flag bundle $Y_{I_0}$ through the pullback of a collection of some substacks $[w]$ of $G\ZipFlag^{\mu,I_0}$. For the convenience of the reader, we recall how \cite{MR3989256} and \cite{MR3973104} use the formalism of $G\Zip$s and  $G\ZipFlag$s to define and study the stratifications on the Siegel modular variety $\Sh$ and its flag bundle $Y_{I_0}$. In particular, we recall their result on the existence of generalized Hasse invariants .
\subsection{General theory}
In order to consider the stratification of the stack $G\ZipFlag^{\mu,I_0}$ for all $I_0 \subset I$, it is convenient to use a general zip datum $\mathcal{Z}$ and to define a stack $G\Zip^{\mathcal{Z}}$ for a general reductive group $G$ over $k$, a field of positive characteristic.
\begin{definition}
A zip datum of exposant $n \geq 1$ is a tuple
\begin{equation*}
\mathcal{Z} = (G,P,L,Q,M,\varphi^n),
\end{equation*}
where $G$ is a reductive group over $\mathbb{F}_p$, $\varphi : G \rightarrow G$ is the relative Frobenius map and $P,Q \subset G \times_{\mathbb{F}_p} \bar{\mathbb{F}}_p$ are parabolics over $\bar{\mathbb{F}}_p$ with Levi subgroups $L \subset P$, $M \subset Q$ such that $\varphi^n(L) = M$. We write $U$ and $V$ for the unipotent radical of $P$ and $Q$.
\end{definition}
\begin{definition}
A morphism of zip data of exposant $n \geq 1$
\begin{equation*}
\mathcal{Z} = (G,P,L,Q,M,\varphi^n) \rightarrow \mathcal{Z}^\prime = (G^\prime,P^\prime,L^\prime,Q^\prime,M^\prime,{\varphi^\prime}^n)
\end{equation*}
is the data of a group morphism $f : G \rightarrow G^\prime$ such that $f(\lozenge) \subset \lozenge^\prime$ for $\lozenge = G,P,L,Q,M,U,V$.
\end{definition}
\vspace{0.2cm}
Recall that for each $g \geq 1$, we have a minuscule cocharacter $\mu : \mathbb{G}_m \rightarrow \Sp_{2g}$ defined over $\mathbb{F}_p$. The couple $(\Sp_{2g},\mu)$ ($\Sp_{2g}$ is defined over $\mathbb{F}_p$) is a cocharacter datum according to the following definition.
\begin{definition}\label{def8}
A cocharacter datum is a couple $(G,\mu)$ where $G$ is a reductive group over $\mathbb{F}_p$ and $\mu :  \mathbb{G}_m \rightarrow G$ is a cocharacter defined over $\bar{\mathbb{F}}_p$. A morphism of cocharacter data
\begin{equation*}
(G,\mu) \rightarrow (G^\prime,\mu^\prime)
\end{equation*}
is a group morphism $f : G \rightarrow G^\prime$ such that $\mu = f\circ \mu^\prime$. A cocharacter data $(G,\mu)$ determines a opposite parabolic subgroup $P_{\mu},P_{-\mu}$ with common Levi subgroup $L =  P_{-\mu} \cap  P_{\mu}$.
\end{definition}
\vspace{0.2cm}
From a cocharacter datum $(G,\mu)$ we can construct a zip datum of exposant $n$
\begin{equation*}
\mathcal{Z}_{\mu} = (G,P,L,Q,M,\varphi^{n})
\end{equation*}
by setting $P = P_{-\mu}$, $Q=\varphi^n(P_{\mu})$, $L = P_{-\mu} \cap  P_{\mu}$, $M = \varphi^n(L)$. We explain how to define a stack $G\Zip^{\mathcal{Z}}$ from a zip datum $\mathcal{Z}$.
\begin{definition}
Let $\mathcal{Z}$ be a zip datum and $S$ be a scheme over $\mathbb{F}_p$. A zip of type $\mathcal{Z}$ over $S$ is a tuple
\begin{equation*}
\underline{I} = (\mathcal{I},\mathcal{I}_P,\mathcal{I}_Q,\psi),
\end{equation*}
where $\mathcal{I}$ is a $G$-torsor over $S$, $\mathcal{I}_P \subset \mathcal{I}$ is a $P$-reduction of $\mathcal{I}$, $\mathcal{I}_Q \subset \mathcal{I}$ is a $Q$-reduction of $\mathcal{I}$ and 
\begin{equation*}
\psi : {(\varphi^n)}^*(\mathcal{I}_P/U) \rightarrow \mathcal{I}_Q/V
\end{equation*}
is an isomorphism of $M$-torsors over $S$. A morphism of zips of type $\mathcal{Z}$ over $S$
\begin{equation*}
\underline{I} = (\mathcal{I},\mathcal{I}_P,\mathcal{I}_Q,\psi) \rightarrow \underline{I}^\prime = (\mathcal{I}^\prime,\mathcal{I}_P^\prime,\mathcal{I}_Q^\prime,\psi^\prime)
\end{equation*}
is a morphism of $G$-torsors $f : \mathcal{I} \rightarrow \mathcal{I}^\prime$ over $S$ such that $f(\lozenge) \subset {\lozenge}^\prime$ for $\lozenge = \mathcal{I}_P,\mathcal{I}_Q$ and such that the following diagram commutes
\begin{equation*}
\begin{tikzcd}
{(\varphi^n)}^*(\mathcal{I}_P/U) \arrow[d] \arrow[r,"\psi"] & \mathcal{I}_Q/V \arrow[d]\\
{({(\varphi^\prime)}^n)}^*(\mathcal{I}_P^\prime/U^\prime) \arrow[r,"\psi^\prime"] & \mathcal{I}_Q^\prime/V^\prime
\end{tikzcd}
\end{equation*}
where the vertical arrows are induced by $f$.
\end{definition}
\begin{proposition}
Let $\mathcal{Z}$ be a zip datum and $S$ be a scheme over $\mathbb{F}_p$. The category $G\Zip^{\mathcal{Z}}(S)$ of zips of type $\mathcal{Z}$ over $S$ is a groupoid. The association $S \rightarrow G\Zip^{\mathcal{Z}}(S)$ defines an algebraic stack over $\mathbb{F}_p$ that we simply denote $G\Zip^{\mathcal{Z}}$.
\end{proposition}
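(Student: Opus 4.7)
The groupoid assertion is immediate: morphisms of $G$-torsors are always isomorphisms, and the additional data (the $P$- and $Q$-reductions and the isomorphism $\psi$ of $M$-torsors) is preserved by any such morphism, so every arrow in $G\Zip^{\mathcal{Z}}(S)$ admits an inverse. The substantive content is algebraicity. The plan, following Pink--Wedhorn--Ziegler, is to realize $G\Zip^{\mathcal{Z}}$ as a quotient stack $[E_{\mathcal{Z}} \backslash G]$ for a suitable smooth algebraic ``zip group'' $E_{\mathcal{Z}}$ acting on the scheme $G$.

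Concretely, I would introduce
\begin{equation*}
E_{\mathcal{Z}} := \{(p,q) \in P \times Q \mid \varphi^n(\overline{p}) = \overline{q}\},
\end{equation*}
where $\overline{p} \in L = P/U$ and $\overline{q} \in M = Q/V$ denote the Levi components. Equivalently, $E_{\mathcal{Z}} = P \times_M Q$, where $P \to M$ is $\varphi^n \circ \pi_L$ and $Q \to M$ is $\pi_M$. Since $\pi_M : Q \to M$ is a smooth morphism with unipotent fibres $V$, its pullback $E_{\mathcal{Z}} \to P$ is smooth, which together with smoothness of $P$ shows that $E_{\mathcal{Z}}$ is a smooth algebraic group of finite type. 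Let $E_{\mathcal{Z}}$ act on $G$ by $(p,q) \cdot g := p g q^{-1}$.

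The key step is to exhibit an equivalence $G\Zip^{\mathcal{Z}} \simeq [E_{\mathcal{Z}} \backslash G]$. I would construct a morphism $\Phi : G \to G\Zip^{\mathcal{Z}}$ by attaching to $g \in G(S)$ the trivial $G$-torsor $\mathcal{I} = G \times S$ equipped with the tautological $P$-reduction $\mathcal{I}_P = P \times S$, the $Q$-reduction $\mathcal{I}_Q = gQ \times S$ obtained by left translation, and the isomorphism $\psi_g$ of trivial $M$-torsors induced by $\pi_M(g)$ after twisting by $\varphi^n$. One then verifies directly that $\Phi(pgq^{-1}) \simeq \Phi(g)$ for every $(p,q) \in E_{\mathcal{Z}}$, so that $\Phi$ factors through $[E_{\mathcal{Z}} \backslash G]$. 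To conclude, it suffices to establish that (i) every zip over a scheme $S$ becomes isomorphic to some $\Phi(g)$ after an fppf cover, and (ii) for $g,g' \in G(S)$, the isomorphisms $\Phi(g) \to \Phi(g')$ are precisely the elements $(p,q) \in E_{\mathcal{Z}}(S)$ with $pgq^{-1} = g'$. Granting these points, $G \to G\Zip^{\mathcal{Z}}$ is an $E_{\mathcal{Z}}$-torsor, the induced map $[E_{\mathcal{Z}} \backslash G] \to G\Zip^{\mathcal{Z}}$ is an isomorphism, and since $E_{\mathcal{Z}}$ is a smooth algebraic group acting on a finite-type scheme, $[E_{\mathcal{Z}} \backslash G]$ is an algebraic stack.

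The hard part is the verification of (i) and (ii). Point (i) is essentially local triviality: $G$-torsors are smooth-locally trivial, a $P$- (resp.\ $Q$-) reduction is a section of the smooth morphism to $G/P$ (resp.\ $G/Q$) and thus can be trivialized after a further cover, and the remaining isomorphism $\psi$ of $M$-torsors is determined fppf-locally by a single element of $M$, which combines with the prior trivializations into a global element $g \in G(S)$ using surjectivity of $\pi_M : Q \to M$. Point (ii) requires careful bookkeeping of how changes of trivialization of $\mathcal{I}$ permute the $P$- and $Q$-trivializations and interact with the Frobenius twist $\varphi^n$; this twist is precisely what forces the asymmetric definition of $E_{\mathcal{Z}}$, one factor lying in $P$ and the other in $Q$ but matched through $\varphi^n$ on the Levi quotients.
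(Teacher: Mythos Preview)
Your proposal is correct and follows exactly the approach of Pink--Wedhorn--Ziegler that the paper cites (the paper's own proof is just the reference \cite[Proposition 3.2/3.11]{MR3347958}). One minor imprecision: in the construction of $\Phi(g)$ the isomorphism $\psi_g$ is not ``induced by $\pi_M(g)$'' (which is undefined for general $g\in G$) but is simply the canonical identification of the two trivial $M$-torsors; the element $g$ is recorded entirely in the $Q$-reduction $gQ\times S$.
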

\begin{proof}
See \cite[Proposition 3.2/3.11]{MR3347958}.
\end{proof}
Note that the association $\mathcal{Z} \rightarrow G\Zip^{\mathcal{Z}}$ defines a functor from the category of zip data to the category of algebraic stacks over $\mathbb{F}_p$. We simply write $G\Zip^{\mu}$ instead of $G\Zip^{\mathcal{Z}_\mu}$ when the zip datum comes from a cocharacter datum $(G,\mu)$. Most of the interesting properties of $G\Zip^{\mathcal{Z}}$ can be deduced from its presentation as a quotient stack. From now on, we fix a zip datum of exposant $n$
\begin{equation*}
\mathcal{Z} = (G,P,L,Q,M,\varphi^n).
\end{equation*}
\begin{proposition}
$G\Zip^{\mathcal{Z}}$ is a smooth stack of dimension $0$ over $\mathbb{F}_p$ and it is presented as a quotient stack
\begin{equation*}
\lfloor E_{\mathcal{Z}}  \backslash G \rfloor
\end{equation*}
where $E_{\mathcal{Z}} = \{ (x,y) \in P \times Q \ | \ \varphi^n(\bar{x}) = \bar{y} \}$, $x \rightarrow \bar{x}$ denotes the natural projection $P \rightarrow L$, $Q \rightarrow M$ and $(x,y) \in E_{\mathcal{Z}}$ acts on $g \in G$ by
\begin{equation*}
(x,y)g = xgy^{-1}.
\end{equation*}
\end{proposition}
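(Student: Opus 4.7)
The plan is to build an explicit atlas $G \to G\Zip^{\mathcal{Z}}$ and identify the fiber equivalence relation with the $E_{\mathcal{Z}}$-action, yielding the quotient stack description; smoothness and the dimension count then follow from the presentation.

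First, I would construct a \emph{universal zip} over $G$. Take the trivial $G$-torsor $G \times G \to G$ (projection on the second factor), the constant $P$-reduction $P \times G$, and the $Q$-reduction $\mathcal{I}_Q = \{(gq,g) : q \in Q\}$, whose fiber over $g \in G$ is the right coset $gQ$. The tautological trivialization identifies $\mathcal{I}_P/U \cong L \times G$, while the assignment $gq \mapsto \bar{q}$ identifies $\mathcal{I}_Q/V \cong M \times G$; the equality $\varphi^n(L) = M$ then provides a canonical isomorphism $\psi_{\mathrm{can}}: (\varphi^n)^*(\mathcal{I}_P/U) \to \mathcal{I}_Q/V$, and this tuple defines a morphism $\pi : G \to G\Zip^{\mathcal{Z}}$.

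Next I would analyze when two points $g, g' \in G(S)$ map to isomorphic zips. Any automorphism of the trivial $G$-torsor is left multiplication by some $h \in G(S)$; preservation of the constant $P$-reduction forces $h \in P$ (call it $x$), and sending $gQ$ to $g'Q$ forces $g' = xgy^{-1}$ for some $y \in Q$. Under the trivializations above, the induced automorphism of $L$ is left multiplication by $\bar{x}$ and that of $M$ is left multiplication by $\bar{y}$. Compatibility with $\psi_{\mathrm{can}}$ then unpacks to $\varphi^n(\bar{x}) = \bar{y}$, which is exactly the defining condition of $E_{\mathcal{Z}}$. Hence the fibers of $\pi$ are precisely the orbits of the $E_{\mathcal{Z}}$-action $(x,y)\cdot g = xgy^{-1}$.

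To promote this to an isomorphism of stacks, I would show $\pi$ is surjective étale-locally: given any zip $(\mathcal{I}, \mathcal{I}_P, \mathcal{I}_Q, \psi)$ over $S$, an étale cover trivializes $\mathcal{I}$, and further étale localizations make $\mathcal{I}_P = P$ and $\mathcal{I}_Q = gQ$ for some $g \in G(S)$; after this, $\psi$ coincides with $\psi_{\mathrm{can}}$ up to rescaling the trivialization of $\mathcal{I}_Q/V$. Combined with the identification of fibers, this yields an equivalence $\lfloor E_{\mathcal{Z}} \backslash G \rfloor \xrightarrow{\sim} G\Zip^{\mathcal{Z}}$. Smoothness and dimension follow from this presentation: the projection $E_{\mathcal{Z}} \to P$, $(x,y)\mapsto x$, is smooth and surjective with fibers isomorphic to $V$, so $E_{\mathcal{Z}}$ is a smooth group scheme of dimension $\dim P + \dim V = \dim L + \dim U + \dim V = \dim G$ (using $\dim V = \dim U$, which follows from $\dim L = \dim M$ together with the equality of dimensions of opposite unipotent radicals). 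The main delicate point will be the bookkeeping in the second paragraph: correctly tracking how the trivializations $\mathcal{I}_P/U \cong L$ and $\mathcal{I}_Q/V \cong M$ transform under left multiplication, and verifying that the compatibility with $\psi_{\mathrm{can}}$ recovers precisely the equation $\varphi^n(\bar{x}) = \bar{y}$.
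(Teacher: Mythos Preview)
The paper does not give its own proof of this proposition; it simply cites \cite[Proposition~3.2/3.11]{MR3347958} (Pink--Wedhorn--Ziegler). Your sketch is essentially the argument carried out in that reference: build a tautological zip over $G$, show every zip is \'etale-locally of this form, and identify the isomorphisms between such zips with the $E_{\mathcal{Z}}$-action $(x,y)\cdot g = xgy^{-1}$. The dimension count at the end is also correct.

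One point worth tightening is the phrase ``$\psi$ coincides with $\psi_{\mathrm{can}}$ up to rescaling the trivialization of $\mathcal{I}_Q/V$.'' Concretely: once $\mathcal{I}_P$ is trivialized as $P$ and $\mathcal{I}_Q$ is written as $gQ$, the given $\psi$ is an isomorphism of trivial $M$-torsors, hence left multiplication by some $m \in M(S)$. To normalize $\psi$ to $\psi_{\mathrm{can}}$ one replaces the base point $g$ by $gq_0$ for any $q_0 \in Q$ with $\bar{q}_0$ absorbing $m$; this does not change the coset $gQ$ but shifts the trivialization of $\mathcal{I}_Q/V$ by $\bar{q}_0$. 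This is the step where one genuinely uses that $\psi$ is part of the data (and not an extra condition), and it is what pins down $g$ modulo the $E_{\mathcal{Z}}$-action rather than merely modulo $P \times Q$. With that clarification your outline matches the cited proof.
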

\begin{proof}
See \cite[Proposition 3.2/3.11]{MR3347958}.
\end{proof}
Denote by $W$ the Weyl group of $G$, $I \subset \Delta$ the type of the parabolic $P$, $J \subset \Delta$ the type of the parabolic $Q$, $W_I \subset W$ the subgroup generated by the reflexions in $I$, $\prescript{I}{}{W}$ the set of elements $w$ that are of minimal length in $W_Iw$, $W_J \subset W$ the subgroup generated by the reflexions in $J$ and ${W}^J$ the set of elements $w$ that are of minimal length in $wW_J$. The element of maximal length in $W$ (resp. $W_I$ and $W_J$) is denoted $w_0$ (resp. $w_{0,I}$ and $w_{0,J}$). Denote by $z$ the element $w_0w_{0,J}$. 
\begin{proposition}
If there exists a Borel pair $(B,T)$ of $G$ defined over $\mathbb{F}_p$, then there exists an element $z \in W$ such that the triple $(B,T,z)$ is a $W$-frame for $\mathcal{Z}$. It means that the following conditions are satisfied
\begin{enumerate}
\item $B \subset P$,
\item $zBz^{-1} \subset Q$,
\item $\varphi(B\cap L) =zBz^{-1}\cap M$.
\end{enumerate}
\end{proposition}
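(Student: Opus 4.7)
The plan is to verify the three conditions of a $W$-frame for the explicit candidate $z := w_0 w_{0,J}$ announced just before the proposition. First, I would exploit the hypothesis that $(B,T)$ is defined over $\mathbb{F}_p$, together with the transitivity of $G(\bar{\mathbb{F}}_p)$-conjugation on parabolics of a given type, to reduce to the situation in which $P$ is the standard parabolic of type $I$ containing $B$ and $L \supset T$ is the standard Levi of type $I$; this settles condition $(1)$ immediately. Since $B$ and $T$ are $\varphi$-stable, $\varphi^n$ permutes standard parabolics and Levis, so $M = \varphi^n(L)$ is the standard Levi of type $J = \varphi^n(I)$, and in particular $T \subset M \subset Q$.

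For condition $(2)$, I would compute $z(\phi^+)$ at the level of roots. Since $W_J$ stabilises $\phi^+ \setminus \phi_J$ and sends $\phi^+_J$ to $\phi^-_J$, one gets $w_{0,J}(\phi^+) = (\phi^+ \setminus \phi^+_J) \sqcup \phi^-_J$. Applying $w_0$ then yields $z(\phi^+) = (\phi^- \setminus \phi^-_{-w_0(J)}) \sqcup \phi^+_{-w_0(J)}$, which I would verify is the root system of the unique Borel of $Q$ containing $T$, matching the position of $Q$ in $G$ as an opposite-type parabolic with Levi $M$.

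For condition $(3)$, the $\varphi$-stability of $B$ and $T$ yields $\varphi^n(B\cap L) = \varphi^n(B) \cap \varphi^n(L) = B \cap M$, which is the Borel of $M$ with positive roots $\phi^+_J$. On the other hand, $zBz^{-1} \cap M$ is the Borel of $M$ whose positive roots are $z(\phi^+) \cap \phi_J$, and substituting the formula from step $(2)$ shows this intersection equals $\phi^+_J$ (using that $-w_0$ preserves $J$, which is automatic for $G = \Sp_{2g}$). The two Borels therefore coincide.

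The main obstacle is precisely this root-system bookkeeping: verifying $z(\phi^+) \cap \phi_J = \phi^+_J$ is what forces the formula $z = w_0 w_{0,J}$ rather than a simpler choice, and it tacitly relies on the identity $-w_0(J) = J$. A secondary subtlety is matching conventions so that the parabolic $Q$ produced by the zip datum is the one whose $T$-containing Borel has positive roots $z(\phi^+)$; this is where the passage from $P_\mu$ to its Frobenius-conjugate $\varphi^n(P_\mu)$, which is opposite-positioned relative to $P_{-\mu}$, plays the decisive role.
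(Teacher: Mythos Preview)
The paper does not give a self-contained argument here; it simply refers to Pink--Wedhorn--Ziegler. Your explicit verification with $z = w_0 w_{0,J}$ is therefore more detailed than what the paper offers, and your root computations are correct. Two points deserve sharpening, however.

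First, your reduction for condition~(1) is imprecise. The proposition fixes $(B,T)$ and asks for $z$; since condition~(1) does not involve $z$, if $B \not\subset P$ then no $z$ can rescue it. What is really going on is that the paper tacitly assumes $B \subset P$ from the outset (this is built into its standing notation), rather than something one ``reduces to'' by conjugating. Conjugating $P$ to become standard changes the zip datum; conjugating $(B,T)$ into $P$ may destroy $\mathbb{F}_p$-rationality.

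Second, and more substantively: the hypothesis $-w_0(J) = J$ is needed already for condition~(2), not only for~(3). Your formula $z(\phi^+) = (\phi^- \setminus \phi^-_{-w_0(J)}) \sqcup \phi^+_{-w_0(J)}$ is correct, but the roots of $Q$ (opposite to the standard parabolic of type $J$) are $\phi^- \cup \phi_J^+$, so the inclusion $z(\phi^+) \subset \text{roots}(Q)$ forces $\phi^+_{-w_0(J)} \subset \phi_J^+$, i.e.\ $-w_0(J) = J$. (Incidentally, there is not a \emph{unique} Borel of $Q$ containing $T$; there are $|W_J|$ of them.) When $-w_0(J) \neq J$ the element that actually satisfies all three conditions is $w_{0,J} w_0$ rather than $w_0 w_{0,J}$; the two coincide exactly when $-w_0(J) = J$, which holds for $\Sp_{2g}$ and is all the paper ultimately needs.
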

\begin{proof}
See the proof of \cite[Proposition 3.7]{MR2804513}.
\end{proof}
For each $w \in W$, we choose a lift $\dot{w}$ in $N_G(T)$. The following proposition explains how $G$ decomposes in $E_\mathcal{Z}$-orbits.
\begin{proposition}
The map $w \mapsto G_w := E_\mathcal{Z}\dot{w}\dot{z}^{-1}$ restricts to bijections\footnote{In the case $G = \Sp_{2g}$, these two bijections coincide.} between
\begin{enumerate}
\item $\prescript{I}{}{W}$ and the $E_\mathcal{Z}$-orbits of $G$,
\item ${W}^J$ and the $E_\mathcal{Z}$-orbits of $G$. 
\end{enumerate}
Moreover, we have the following dimension formula for all $w \in \prescript{I}{}{W} \cup {W}^J$
\begin{equation*}
\dim G_w = l(w) + \dim(P).
\end{equation*}
\end{proposition}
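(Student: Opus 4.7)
The plan is to deduce the orbit classification from the Bruhat decomposition of $G$ using the $W$-frame $(B,T,z)$, and then read off the dimension formula from an explicit stabilizer computation. I focus on the bijection with $\prescript{I}{}{W}$; the analogous statement for $W^J$ follows by a symmetric argument using the right-hand $Q$-factor instead of the left-hand $P$-factor.

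First I would leverage the $W$-frame conditions $B \subset P$ and $\dot{z}B\dot{z}^{-1} \subset Q$ to translate the $E_\mathcal{Z}$-action into Bruhat-friendly coordinates. Right multiplication by $\dot{z}$ identifies $G$ with itself so that the orbit $E_\mathcal{Z}\dot{w}\dot{z}^{-1}$ becomes the orbit of $\dot{w}$ under a conjugate subgroup of $P \times \dot{z}^{-1}Q\dot{z}$, and the target group now contains $P \times B$. This reduces the question to a study of the refined Bruhat decomposition
\begin{equation*}
G = \bigsqcup_{w \in \prescript{I}{}{W}} P\dot{w}B,
\end{equation*}
which itself follows from $G = \bigsqcup_{w \in W} B\dot{w}B$ combined with $P = \bigsqcup_{w' \in W_I} Bw'B$ and the fact that $\prescript{I}{}{W}$ is a set of minimal-length coset representatives for $W_I \backslash W$.

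Next I would show that each cell $P\dot{w}B\cdot\dot{z}^{-1}$ for $w \in \prescript{I}{}{W}$ is a single $E_\mathcal{Z}$-orbit. Surjectivity of the first projection $E_\mathcal{Z} \to P$ (whose fibers are isomorphic to $V$, thanks to the Frobenius relation) absorbs the entire left $P$-action; the subgroup $\{1\}\times V$ inside $E_\mathcal{Z}$ produces right $V$-translations, and conjugating by $\dot{z}$ transfers these to exactly the $B$-translations on the right of $\dot{w}$ needed to fill out the Bruhat cell. Uniqueness of $w$ comes from uniqueness of the Bruhat index; combining these identifications yields $G = \bigsqcup_{w \in \prescript{I}{}{W}} G_w$.

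For the dimension formula, I would compute the stabilizer of $g_w := \dot{w}\dot{z}^{-1}$. The condition $(x,y)\cdot g_w = g_w$ forces $y = g_w^{-1}xg_w$ with $x \in P \cap g_w Q g_w^{-1}$, together with the Frobenius compatibility $\varphi^n(\bar{x}) = \overline{g_w^{-1}xg_w}$. A standard root-group analysis using $z = w_0 w_{0,J}$ yields
\begin{equation*}
\dim\bigl(P \cap g_w Q g_w^{-1}\bigr) = \dim P - l(w),
\end{equation*}
because conjugation by $g_w$ sends the roots of $V$ into the roots of $P$ with exactly $l(w)$ exceptions. The Frobenius equation cuts an additional $\dim L$ on the Levi part transversally, as a consequence of the surjectivity of Lang-type maps on the connected group $L$. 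Therefore
\begin{equation*}
\dim \Stab_{E_\mathcal{Z}}(g_w) = (\dim P - l(w)) - \dim L = \dim U - l(w),
\end{equation*}
and since $\dim E_\mathcal{Z} = \dim P + \dim V = \dim P + \dim U$, we conclude $\dim G_w = \dim P + l(w)$, as required. The main technical hurdle will be the careful root-group bookkeeping in $P \cap g_w Q g_w^{-1}$, which is precisely where the minimality condition $w \in \prescript{I}{}{W}$ enters essentially and ensures the transversality of the Frobenius constraint.
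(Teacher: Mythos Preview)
The paper does not give a proof of this proposition; it simply records a citation to Pink--Wedhorn--Ziegler \cite[Theorems 7.5 and 11.2]{MR2804513}. So your sketch is being compared against that external argument, not against anything written out here.

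Your Step~2 contains a genuine gap. You claim that surjectivity of the first projection $E_\mathcal{Z}\to P$ ``absorbs the entire left $P$-action'', but this is not what surjectivity gives you: acting by $(p,y)\in E_\mathcal{Z}$ moves $g_0$ to $pg_0y^{-1}$, and the factor $y$ is constrained by $\varphi^n(\bar{p})=\bar{y}$, so the left $P$-move and the right $Q$-move are coupled. You cannot conclude that the orbit contains $Pg_0$ up to a right factor you control independently. Second, the claim that $\{1\}\times V$ ``transfers to exactly the $B$-translations'' after conjugating by $\dot z$ is false on dimension grounds: $\dot z^{-1}V\dot z$ is the unipotent radical of the parabolic $\dot z^{-1}Q\dot z$, hence has dimension $\dim V=|\phi^+|-|\phi_J^+|$, strictly smaller than $\dim B$ whenever $J\neq\emptyset$, and even in the Borel case it equals $U$, not $B$. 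So your two ``absorption'' moves together do not produce the full cell $P\dot w B\dot z^{-1}$; some Lang--Steinberg argument on the Levi direction is needed already here, not only in Step~3, and it has to be interwoven with the unipotent moves rather than appended afterwards.

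The actual argument in Pink--Wedhorn--Ziegler proceeds by an induction on the length of $w$ and a careful analysis of how the Frobenius-twisted conjugation interacts with the Levi decomposition; it is not the straightforward Bruhat-cell identification you propose. Your Step~3 has the right shape for the dimension formula, but the transversality assertion (``the Frobenius equation cuts an additional $\dim L$ on the Levi part transversally'') is exactly the subtle point and requires the same kind of Lang-type input that is missing from Step~2. As written, the sketch would not go through without substantial reorganisation along the lines of the cited reference.
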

\begin{proof}
See \cite[Theorem 7.5/11.2]{MR2804513}.
\end{proof}
\begin{corollary}
The stack $G\Zip^{\mathcal{Z}}$ decomposes 
\begin{equation*}
G\Zip^{\mathcal{Z}} = \underset{w \in \prescript{I}{}{W} }{\bigsqcup} \lfloor E_{\mathcal{Z}}  \backslash G_w \rfloor.
\end{equation*}
\end{corollary}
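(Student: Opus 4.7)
The proof is essentially a direct consequence of the two preceding propositions, so the plan is to assemble the ingredients rather than produce any new argument.

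First I would recall the presentation $G\Zip^{\mathcal{Z}} = \lfloor E_{\mathcal{Z}} \backslash G \rfloor$ from the proposition on the quotient-stack description, so that the question is reduced to producing a decomposition of $G$ into $E_{\mathcal{Z}}$-stable locally closed pieces. Then I would invoke the orbit proposition, which asserts that the map $w \mapsto G_w = E_{\mathcal{Z}}\dot{w}\dot{z}^{-1}$ is a bijection between $\prescript{I}{}{W}$ and the set of $E_{\mathcal{Z}}$-orbits of $G$. Set-theoretically this gives
\begin{equation*}
G = \underset{w \in \prescript{I}{}{W}}{\bigsqcup} G_w.
\end{equation*}

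The second step is to upgrade this set-theoretic decomposition to a stratification of the quotient stack. Each $G_w$ is an orbit for the action of the smooth algebraic group $E_{\mathcal{Z}}$ on the variety $G$, hence a locally closed $E_{\mathcal{Z}}$-stable subscheme of $G$; consequently $\lfloor E_{\mathcal{Z}} \backslash G_w \rfloor$ is a locally closed substack of $\lfloor E_{\mathcal{Z}} \backslash G \rfloor$. Since the $G_w$ are pairwise disjoint and cover $G$, the corresponding substacks are pairwise disjoint and cover $\lfloor E_{\mathcal{Z}} \backslash G \rfloor$, which yields the claimed decomposition
\begin{equation*}
G\Zip^{\mathcal{Z}} = \underset{w \in \prescript{I}{}{W}}{\bigsqcup} \lfloor E_{\mathcal{Z}} \backslash G_w \rfloor.
\end{equation*}

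There is no real obstacle here: the only mild point to verify is that orbits of an algebraic group action on a variety are locally closed, which is standard. One might also want to note, for completeness, that the dimension formula $\dim G_w = l(w) + \dim(P)$ from the orbit proposition implies $\dim \lfloor E_{\mathcal{Z}} \backslash G_w \rfloor = l(w) + \dim(P) - \dim E_{\mathcal{Z}} = l(w) - \dim(G/P)$, so that the open stratum corresponds to $w$ of maximal length and the closed stratum to $w = e$, consistent with the zero-dimensionality of $G\Zip^{\mathcal{Z}}$ only at the level of the generic stratum. This dimensional check is a good sanity verification but is not needed for the corollary itself.
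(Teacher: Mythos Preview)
Your argument is correct and is exactly the immediate deduction the paper has in mind: the corollary is stated without proof because it follows at once from the quotient presentation $G\Zip^{\mathcal{Z}} = \lfloor E_{\mathcal{Z}} \backslash G \rfloor$ together with the preceding orbit bijection $\prescript{I}{}{W} \to \{E_{\mathcal{Z}}\text{-orbits in }G\}$. The only content you add beyond the paper is the remark that orbits are locally closed and the dimension sanity check, both of which are fine (and the latter is indeed unnecessary for the statement).
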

The stack $G\Zip^{\mathcal{Z}}$ has a topology which describes the closure relations between the $G_w$. 
\begin{proposition}\label{prop21}
The underlying topological space of $G\Zip^{\mathcal{Z}}$ is homeomorphic to the finite topological space $\prescript{I}{}{W}$ where the topology is given by the partial order:
\begin{equation*}
w \preccurlyeq w^\prime  \Leftrightarrow \text{ if and only if there is } v \in W_I \text{ such that } vwxv^{-1}x^{-1} \leq w^\prime,
\end{equation*}
where $x$ is the unique element of minimal length in $W_Jw_0W_I$ and $\leq$ is the Bruhat order.
\end{proposition}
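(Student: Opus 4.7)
The plan is to exploit the presentation $G\Zip^{\mathcal{Z}} \simeq \lfloor E_{\mathcal{Z}} \backslash G \rfloor$. The underlying topological space of a quotient stack $\lfloor H \backslash X \rfloor$ is by definition the orbit space $|X|/H$ with the quotient topology, so I must identify the set of $E_{\mathcal{Z}}$-orbits with $\prescript{I}{}{W}$ (already done in the previous proposition) and compute the closure relations between the orbits $G_w = E_{\mathcal{Z}}\dot{w}\dot{z}^{-1}$. Since each $E_{\mathcal{Z}}$-orbit is locally closed in $G$ (the action is by a connected smooth algebraic group) and the dimension formula $\dim G_w = \ell(w) + \dim P$ guarantees that strictly larger orbits have strictly larger dimension, the induced topology on $\prescript{I}{}{W}$ is $T_0$ and fully encoded by a partial order that I need to identify with $\preccurlyeq$.

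The second step is to relate $E_{\mathcal{Z}}$-orbits to Bruhat cells. Using the $W$-frame $(B,T,z)$, the group $E_{\mathcal{Z}}$ contains the twisted product $\{(b, zbz^{-1}) \mid b \in B\}$ up to the Frobenius coming from $\varphi^n(L) = M$, and standard saturation arguments show that every $E_{\mathcal{Z}}$-orbit on $G$ is a union of $(B \times B)$-double cosets, i.e. of Bruhat cells $B\dot{u}B$. Writing $x$ for the unique element of minimal length in $W_Jw_0W_I$, a direct computation with the action $(p,q)\cdot g = pgq^{-1}$ and the structure of $P \cap Q$-conjugation on the cells shows that the Bruhat cells occurring in $G_w$ are precisely those containing the elements $\dot v \dot w \dot x \dot v^{-1} \dot x^{-1}$ as $v$ ranges over $W_I$.

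The third step is to take closures. The classical Bruhat closure formula $\overline{B\dot{u}B} = \bigsqcup_{u' \le u} B\dot{u}'B$ and the fact that the closure of a union is the union of closures translate the inclusion $G_w \subset \overline{G_{w'}}$ into the combinatorial condition that there exist $v \in W_I$ and $v' \in W_I$ with $vwxv^{-1}x^{-1} \le v'w'xv'^{-1}x^{-1}$. Using that the bijection $\prescript{I}{}{W} \to$ orbits is independent of the chosen $W_I$-representatives, this condition collapses to the stated one. The resulting closed subsets of $\prescript{I}{}{W}$ are exactly the downward-closed subsets for $\preccurlyeq$, which is the Alexandrov topology on this finite poset, giving the homeomorphism.

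The main obstacle will be the combinatorial bookkeeping in the second step. The Frobenius twist, entering both through $\varphi^n(L) = M$ and through the frame element $z = w_0 w_{0,J}$, makes the precise decomposition of each $E_{\mathcal{Z}}$-orbit into Bruhat cells subtle; in particular, proving that the indexing by $v \in W_I$ via the twisted conjugation $v \mapsto vxv^{-1}x^{-1}$ is exact (no spurious cells, no missing ones) requires the delicate analysis already carried out in \cite[Theorem 11.2]{MR2804513}, and most of the work will consist in reducing our situation to that result.
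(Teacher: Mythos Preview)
The paper's own proof is a one-line citation: the closure formula
\[
\overline{G_w} = \bigsqcup_{w' \preccurlyeq w,\ w' \in {}^IW} G_{w'}
\]
is quoted directly from \cite[Theorem~6.2]{MR2804513}, and the homeomorphism with the Alexandrov topology on $({}^IW,\preccurlyeq)$ is then immediate. Your first and last paragraphs recover exactly this reduction, so the outer shell of your argument matches the paper.

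The problem is your second step. The claim that every $E_{\mathcal{Z}}$-orbit on $G$ is a union of ordinary Bruhat cells $B\dot u B$ is not correct. The zip group $E_{\mathcal{Z}}$ does not contain $B\times B$ (nor any subgroup acting as $B$ on the left and $B$ on the right independently): its Levi part is the \emph{graph} of $\varphi^n\colon L\to M$, so the action on $g$ is a Frobenius-twisted conjugation $g\mapsto u_1\, l\, g\, \varphi^n(l)^{-1} u_2^{-1}$, not a two-sided $B$-translation. Consequently the $E_{\mathcal{Z}}$-orbits are not $B$-bi-invariant, and your description ``the Bruhat cells occurring in $G_w$ are precisely those containing $\dot v\dot w\dot x\dot v^{-1}\dot x^{-1}$'' does not hold as stated. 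What is true in \cite{MR2804513} is a much more delicate inductive comparison of $E_{\mathcal{Z}}$-orbit closures with $P\times Q$-orbit closures (their Theorems~6.2 and 8.1), not a reduction to the classical Bruhat order via cell decompositions. Your step~3 then inherits this problem: the passage from the two-variable condition $vwx v^{-1}x^{-1}\le v'w'x v'^{-1}x^{-1}$ to the one-variable condition in the statement is itself nontrivial and is part of what \cite{MR2804513} proves, not a consequence of ``independence of representatives''.

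Finally, a bookkeeping point: you defer the hard work to \cite[Theorem~11.2]{MR2804513}, but that theorem is the orbit \emph{parametrisation} (already used in the preceding proposition of the paper); the closure relations you need are in \cite[Theorem~6.2]{MR2804513}. Since you end up citing Pink--Wedhorn--Ziegler anyway, the cleanest fix is to drop the Bruhat-cell detour and cite Theorem~6.2 directly, which is exactly what the paper does.
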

\begin{proof}
The result follows from the isomorphism 
\begin{equation*}
\overline{G_w} =  \underset{w^\prime \preccurlyeq w \text{, } w^\prime \in \prescript{I}{}{W} }{\bigsqcup} G_{w^\prime},
\end{equation*}
for all $w \in \prescript{I}{}{W}$ which is proven in \cite[Theorem 6.2]{MR2804513}.
\end{proof}
We simply write $[w]$ for the locally closed substack $\lfloor E_{\mathcal{Z}}  \backslash G_w \rfloor$ of $G\Zip^{\mathcal{Z}}$. Now, we describe how to define a more general stack $G\ZipFlag^{\mathcal{Z},P_0}$ which depends on the zip datum $\mathcal{Z}$ and an auxiliary parabolic subgroup $B \subset P_0 \subset P$.
\begin{definition}
Let $B \subset P_0 \subset P$ be a parabolic subgroup of $P$ and $S$ be a scheme over $\mathbb{F}_p$. A zip flag of type $(\mathcal{Z},P_0)$ over $S$ is a tuple
\begin{equation*}
\underline{J} = (\underline{I},\mathcal{J}),
\end{equation*}
where $\underline{I} = (\mathcal{I},\mathcal{I}_P,\mathcal{I}_Q,\psi)$ is a zip of type $\mathcal{Z}$ over $S$ and $\mathcal{J} \subset \mathcal{I}_P$ is a $P_0$-reduction of the $P$-torsor $\mathcal{I}_P$. A morphism of zip flags of type $(\mathcal{Z},P_0)$ over $S$
\begin{equation*}
\underline{J} = (\underline{I},\mathcal{J}) \rightarrow \underline{J}^\prime = (\underline{I}^\prime,\mathcal{J}^\prime)
\end{equation*}
is a morphism of zip $\underline{I} \rightarrow \underline{I}^\prime$ of type $\mathcal{Z}$ over $S$ such that the underlying morphism of $G$-torsor $\mathcal{I} \rightarrow \mathcal{I}^\prime$ restricts to a morphism of $P_0$-torsor $\mathcal{J} \rightarrow \mathcal{J}^\prime$ over $S$. 
\end{definition} 
\begin{proposition}\label{prop13bis}
Let $B \subset P_0 \subset P$ be a parabolic subgroup of $P$ and $S$ be a scheme over $\mathbb{F}_p$. The category $G\ZipFlag^{\mathcal{Z},P_0}(S)$ of zip flags of type $(\mathcal{Z},P_0)$ over $S$ is a groupoid. The association $S \rightarrow G\ZipFlag^{\mathcal{Z},P_0}(S)$ defines an algebraic stack over $\mathbb{F}_p$ that we simply denote $G\ZipFlag^{\mathcal{Z},P_0}$.
\end{proposition}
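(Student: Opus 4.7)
The plan is to model the argument on the already-established presentation of $G\Zip^{\mathcal{Z}}$ as the quotient stack $\lfloor E_{\mathcal{Z}}  \backslash G \rfloor$. First I would dispatch the groupoid claim. By definition, a morphism of zip flags is a morphism $f$ of the underlying $G$-torsors compatible with the $P$-, $Q$-, and $P_0$-reductions and with the isomorphism $\psi$. Any morphism of $G$-torsors over $S$ is automatically an isomorphism, and the stated compatibilities are stable under inversion, so $G\ZipFlag^{\mathcal{Z},P_0}(S)$ is a groupoid, and the fibration in groupoids $S \mapsto G\ZipFlag^{\mathcal{Z},P_0}(S)$ is a prestack. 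Fppf descent for torsors together with the descent of the reductions and of $\psi$ turns it into a stack over $\F_p$.

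Second, I would construct a quotient-stack presentation. Introduce the algebraic group
\begin{equation*}
E_{\mathcal{Z},P_0} := \{(x,y)\in P_0 \times Q \ | \ \varphi^n(\bar{x})=\bar{y}\},
\end{equation*}
where $\bar{x}$ denotes the image of $x$ under $P_0 \hookrightarrow P \twoheadrightarrow L$, and let $E_{\mathcal{Z},P_0}$ act on $G$ by $(x,y)\cdot g = xgy^{-1}$. The goal is to produce an isomorphism
\begin{equation*}
G\ZipFlag^{\mathcal{Z},P_0} \simeq \lfloor E_{\mathcal{Z},P_0}  \backslash G \rfloor.
\end{equation*}
In one direction, starting from an $E_{\mathcal{Z},P_0}$-torsor $T$ over $S$ equipped with an equivariant map $T \to G$, I form the associated $G$-torsor $\mathcal{I}= T\times^{E_{\mathcal{Z},P_0}} G$, extract the $P$-reduction (via $E_{\mathcal{Z},P_0} \to P_0 \hookrightarrow P$), a $P_0$-refinement of this $P$-reduction (via $E_{\mathcal{Z},P_0} \to P_0$), the $Q$-reduction (via $E_{\mathcal{Z},P_0} \to Q$), and finally the isomorphism $\psi$ coming from the defining relation $\varphi^n(\bar{x})=\bar{y}$. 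In the other direction, from a zip flag I locally trivialise all torsors and recover an $E_{\mathcal{Z},P_0}$-torsor exactly as in the proof of \cite[Proposition 3.11]{MR3347958}. Checking that these assignments are quasi-inverse reduces to standard torsor book-keeping already carried out for $G\Zip^{\mathcal{Z}}$.

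Finally I would conclude: since $E_{\mathcal{Z},P_0}$ is an algebraic group of finite type acting on the smooth scheme $G$, the quotient $\lfloor E_{\mathcal{Z},P_0}\backslash G \rfloor$ is an algebraic stack, giving algebraicity of $G\ZipFlag^{\mathcal{Z},P_0}$. As a cross-check (and an alternative route that bypasses the explicit presentation), the forgetful morphism $G\ZipFlag^{\mathcal{Z},P_0} \to G\Zip^{\mathcal{Z}}$ that drops $\mathcal{J}$ is representable, smooth and proper, with fibres isomorphic to the flag variety $P/P_0$, so algebraicity propagates from the known case. The only delicate point is the faithfulness of the moduli interpretation of the quotient stack — specifically, that a $P_0$-reduction of the $P$-reduction corresponds precisely to replacing the first factor $P$ of $E_{\mathcal{Z}}$ by $P_0$; this is the step I expect to demand the most care, and it is handled by the same torsor descent mechanism that underlies the original presentation of $G\Zip^{\mathcal{Z}}$.
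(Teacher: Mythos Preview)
Your proposal is correct and follows the same line as the paper: the paper's own proof is simply a citation to \cite[Theorem 2.1.2]{MR3973104}, and your sketch reproduces the content of that reference, in particular the quotient-stack presentation $G\ZipFlag^{\mathcal{Z},P_0}\simeq\lfloor E_{\mathcal{Z},P_0}\backslash G\rfloor$ which the paper states immediately afterwards (Proposition~\ref{prop14}) by citing the same theorem. Your definition of $E_{\mathcal{Z},P_0}$ agrees with the paper's $E_{\mathcal{Z}}\cap(P_0\times G)$, and your alternative route via the forgetful map to $G\Zip^{\mathcal{Z}}$ being a $P/P_0$-fibration is exactly Proposition~\ref{prop15}.
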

From now on, we fix an auxiliary parabolic subgroup $B \subset P_0 \subset P$.
\begin{proposition}\label{prop14}
The stack $G\ZipFlag^{\mathcal{Z},P_0}$ is a smooth stack of dimension $\dim (P/P_0)$ over $\mathbb{F}_p$ and it can be presented as the quotient stack
\begin{equation*}
\lfloor E_{\mathcal{Z},P_0}  \backslash G \rfloor
\end{equation*}
where $E_{\mathcal{Z},P_0} := E_{\mathcal{Z}} \cap (P_0 \times G) \subset P_0 \times Q$ acts on $G$ by restriction of the $E_{\mathcal{Z}}$-action on $G$. It can also be presented as the quotient stack
\begin{equation*}
\lfloor E_{\mathcal{Z}} \times P_0  \backslash G \times P \rfloor
\end{equation*}
where $((x,y),p_0) \in E_{\mathcal{Z}} \times P_0$ acts on $(g,p) \in G \times P$ through the formula 
\begin{equation*}
((x,y),p_0).(g,p) = (xgy^{-1},xpp_0^{-1}).
\end{equation*}
\end{proposition}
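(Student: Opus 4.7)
The plan is to bootstrap from the already established presentation $G\Zip^{\mathcal{Z}} = \lfloor E_{\mathcal{Z}} \backslash G \rfloor$, using the natural forgetful morphism $G\ZipFlag^{\mathcal{Z},P_0} \to G\Zip^{\mathcal{Z}}$ that forgets the $P_0$-reduction $\mathcal{J}$. The key structural observation is that the first projection $\pi_1 : E_{\mathcal{Z}} \to P$, $(x,y) \mapsto x$, is a surjective morphism of algebraic groups with kernel $\{e\} \times V$ (where $V$ is the unipotent radical of $Q$), and its preimage $\pi_1^{-1}(P_0)$ equals $E_{\mathcal{Z},P_0}$. Consequently, we have a canonical identification $E_{\mathcal{Z}}/E_{\mathcal{Z},P_0} \cong P/P_0$ of $E_{\mathcal{Z}}$-homogeneous spaces.

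For the first presentation, given an $S$-point of $G\Zip^{\mathcal{Z}}$ corresponding to an $E_{\mathcal{Z}}$-torsor $T$ with an equivariant map to $G$, the $P$-torsor $\mathcal{I}_P$ is recovered by extending the structure group along $\pi_1$. Via the identification above, the $P/P_0$-bundle $\mathcal{I}_P/P_0$, whose sections are by definition the $P_0$-reductions of $\mathcal{I}_P$, is canonically identified with $T/E_{\mathcal{Z},P_0}$. Standard torsor reduction theory then yields a bijection between $P_0$-reductions of $\mathcal{I}_P$ and $E_{\mathcal{Z},P_0}$-reductions of $T$, giving the functorial equivalence $G\ZipFlag^{\mathcal{Z},P_0} \cong \lfloor E_{\mathcal{Z},P_0} \backslash G \rfloor$.

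For the second presentation, I would apply a shearing argument. The morphism $\iota : G \to G \times P$, $g \mapsto (g, e)$, is equivariant for the closed embedding $\psi : E_{\mathcal{Z},P_0} \hookrightarrow E_{\mathcal{Z}} \times P_0$, $(x,y) \mapsto ((x,y), x)$, which is well-defined since $x \in P_0$. To show that $\iota$ induces an isomorphism of quotient stacks, one checks that (i) every $(E_{\mathcal{Z}} \times P_0)$-orbit on $G \times P$ meets $\iota(G)$: given $(g, p)$, choose $x = p^{-1} \in P$ and a lift $(x, y) \in E_{\mathcal{Z}}$, then $((x, y), e) \cdot (g, p) = (p^{-1}gy^{-1}, e) \in \iota(G)$; and (ii) the stabilizer of $(g, e)$ in $E_{\mathcal{Z}} \times P_0$ coincides with $\psi(\Stab_{E_{\mathcal{Z},P_0}}(g))$, which is immediate from the action formula. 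Since all groups and varieties are smooth, these two conditions imply that the induced morphism $(E_{\mathcal{Z}} \times P_0) \times^{E_{\mathcal{Z},P_0}} G \to G \times P$ is an isomorphism, yielding the desired equivalence of quotient stacks.

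Both presentations exhibit $G\ZipFlag^{\mathcal{Z},P_0}$ as the quotient of a smooth variety by a smooth algebraic group, so it is smooth. From $\dim E_{\mathcal{Z},P_0} = \dim P_0 + \dim V$, together with $\dim G = \dim E_{\mathcal{Z}} = \dim P + \dim V$ (equivalent to the previously established $\dim G\Zip^{\mathcal{Z}} = 0$), we compute $\dim G\ZipFlag^{\mathcal{Z},P_0} = \dim G - \dim E_{\mathcal{Z},P_0} = \dim P - \dim P_0 = \dim(P/P_0)$. The main obstacle I anticipate is the rigorous torsor-theoretic translation in the first step: establishing the equivalence between $P_0$-reductions of $\mathcal{I}_P$ and $E_{\mathcal{Z},P_0}$-reductions of $T$ requires careful bookkeeping with associated bundle constructions and quotient stacks, after which the remaining computations are essentially formal.
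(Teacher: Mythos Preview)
Your argument is correct and supplies the details that the paper omits: the paper's own ``proof'' consists only of the citation ``See \cite[Theorem 2.1.2]{MR3973104}'', so there is no in-paper argument to compare against. Your bootstrapping from the known presentation of $G\Zip^{\mathcal{Z}}$ via the surjection $\pi_1:E_{\mathcal{Z}}\to P$ with kernel $\{e\}\times V$, together with the shearing isomorphism $(E_{\mathcal{Z}}\times P_0)\times^{E_{\mathcal{Z},P_0}}G \xrightarrow{\sim} G\times P$, is exactly the standard route taken in the cited reference, and your dimension count is clean. The one place you flag as an obstacle---translating $P_0$-reductions of $\mathcal{I}_P$ into $E_{\mathcal{Z},P_0}$-reductions of the $E_{\mathcal{Z}}$-torsor $T$---is indeed just the functoriality of associated bundles under the identification $T/E_{\mathcal{Z},P_0}\cong T\times^{E_{\mathcal{Z}}}(E_{\mathcal{Z}}/E_{\mathcal{Z},P_0})\cong T\times^{E_{\mathcal{Z}}}(P/P_0)\cong \mathcal{I}_P/P_0$, so there is no hidden difficulty.
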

\begin{definition}
The map sending a zip flag $\underline{J} = (\underline{I},\mathcal{J})$ of type $(\mathcal{Z},P_0)$ over $S$ to the zip $\underline{I}$ of type $\mathcal{Z}$ over $S$ defines a morphism of algebraic stacks over $\mathbb{F}_p$
\begin{equation*}
\pi : G\ZipFlag^{\mathcal{Z},P_0} \rightarrow G\Zip^{\mathcal{Z}}.
\end{equation*}
\end{definition}
\begin{proposition}\label{prop14bis}
The inclusion $E_{\mathcal{Z},P_0} \subset E_{\mathcal{Z}}$ induces a morphism
\begin{equation*}
\lfloor E_{\mathcal{Z},P_0}  \backslash G \rfloor \rightarrow \lfloor E_{\mathcal{Z}}  \backslash G \rfloor
\end{equation*}
which corresponds to $\pi$ through the isomorphisms in proposition \ref{prop14}.
\end{proposition}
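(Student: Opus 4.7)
The plan is to unpack the explicit equivalences of stacks furnished by proposition \ref{prop14} and verify compatibility on the level of groupoid-valued functors. Recall that the presentation $G\Zip^{\mathcal{Z}} \simeq \lfloor E_{\mathcal{Z}}\backslash G\rfloor$ comes from associating to a point $g \in G(S)$ the trivial zip whose $P$-reduction is $P \subset G$, whose $Q$-reduction is $gQ \subset G$, and whose isomorphism $\psi$ is determined by multiplication by (the image of) $g$. Two such trivial zips coming from $g_1, g_2 \in G(S)$ are isomorphic precisely when $g_2 = xg_1y^{-1}$ for some $(x,y) \in E_{\mathcal{Z}}(S)$, which is the quotient stack description.

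For the first presentation of $G\ZipFlag^{\mathcal{Z},P_0}$, a point $g \in G(S)$ yields a zip flag consisting of the trivial zip as above together with the tautological $P_0$-reduction $P_0 \subset P$ of $\mathcal{I}_P$; two such zip flags are isomorphic exactly when they differ by an element of $E_{\mathcal{Z}} \cap (P_0 \times G) = E_{\mathcal{Z},P_0}$, since the morphism of $G$-torsors on the left must preserve the $P_0$-reduction. For the second presentation, a pair $(g,p) \in (G \times P)(S)$ yields the trivial zip together with the $P_0$-reduction $pP_0 \subset P$; the action of $((x,y),p_0)$ corresponds to changing the trivializations, hence to $(g,p) \mapsto (xgy^{-1}, xpp_0^{-1})$.

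Given these descriptions, the forgetful morphism $\pi$ simply drops the $P_0$-reduction. Under the second presentation, this is manifestly the projection onto the first factor $G \times P \to G$, equivariant for the obvious map of groups $E_{\mathcal{Z}} \times P_0 \to E_{\mathcal{Z}}$. Under the first presentation, one uses the $P_0$-action to translate the chosen $P_0$-reduction to the standard one $P_0 \subset P$, and then the forgetful map is visibly induced by the inclusion $E_{\mathcal{Z},P_0} \hookrightarrow E_{\mathcal{Z}}$ acting on the same underlying scheme $G$. The equivalence between the two presentations of $G\ZipFlag^{\mathcal{Z},P_0}$ sends $[(g,p)]$ to $[p^{-1}g]$ (one checks this is well-defined on orbits, using that for $p_0 \in P_0$ we have $E_{\mathcal{Z},P_0}$-equivariance), and through this identification the two versions of $\pi$ obviously coincide.

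The main step is thus a formal diagram chase; the only mildly subtle point is to verify that the equivalence between the two quotient presentations of $G\ZipFlag^{\mathcal{Z},P_0}$ is compatible with the two candidate descriptions of $\pi$, i.e.\ to check that reducing $p$ to the identity via the $P_0$-action realizes the inclusion $E_{\mathcal{Z},P_0} \subset E_{\mathcal{Z}}$ as a restriction of the full $E_{\mathcal{Z}}$-action. Since both stacks, both morphisms, and the equivalence between the two presentations of $G\ZipFlag^{\mathcal{Z},P_0}$ are already established in proposition \ref{prop14} and \cite{MR3973104}, the verification is essentially tautological once the explicit equivalences are written down, and no new argument is required.
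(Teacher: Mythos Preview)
The paper states this proposition without proof; it is treated as an immediate consequence of the explicit quotient presentations recalled in proposition~\ref{prop14} (themselves taken from \cite[Theorem 2.1.2]{MR3973104}). Your proposal correctly supplies the verification the paper omits: you unpack how the quotient presentations arise from trivialized zips and zip flags, and observe that forgetting the $P_0$-reduction corresponds, on the level of presentations, to the map of quotient stacks induced by the subgroup inclusion $E_{\mathcal{Z},P_0}\subset E_{\mathcal{Z}}$ acting on the same scheme $G$.

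One minor caution: your explicit formula $[(g,p)]\mapsto[p^{-1}g]$ for the equivalence between the two presentations of $G\ZipFlag^{\mathcal{Z},P_0}$ is not quite right as stated. Normalizing the second coordinate to $1$ requires acting by some $((x,y),p_0)\in E_{\mathcal{Z}}\times P_0$ with $xpp_0^{-1}=1$; taking $x=p^{-1}$ forces a specific $y\in Q$ determined by the zip condition $\varphi^n(\bar{x})=\bar{y}$, so the image in $G$ is $p^{-1}gy^{-1}$ rather than $p^{-1}g$. This does not affect the argument, since the point is only that the resulting class in $\lfloor E_{\mathcal{Z},P_0}\backslash G\rfloor$ maps to $[g]$ in $\lfloor E_{\mathcal{Z}}\backslash G\rfloor$, which it does. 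The overall approach is correct and essentially the tautological check the paper has in mind.
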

\begin{proposition}\label{prop15}
The morphism $\pi : G\ZipFlag^{\mathcal{Z},P_0} \rightarrow G\Zip^{\mathcal{Z}}$ is proper and smooth with fibers isomorphic to the flag variety $P_0/P$.
\end{proposition}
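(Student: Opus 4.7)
The plan is to reduce the statement to a corresponding assertion about the classifying stacks $BE_{\mathcal{Z}}$ and $BE_{\mathcal{Z},P_0}$ via a cartesian diagram, and then to identify the fibers by a direct computation. (Note: in the notation where $P_0 \subset P$, the natural fiber is the flag variety $P/P_0$, not $P_0/P$.)

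The first step is to compute the quotient $E_{\mathcal{Z}}/E_{\mathcal{Z},P_0}$ as a scheme. We consider the first projection $p_1 : E_{\mathcal{Z}} \to P$, $(x,y) \mapsto x$. By definition of a zip datum, $p_1$ is surjective: given $x \in P$, the element $\varphi^n(\bar{x}) \in M$ lifts through the smooth surjection $Q \to M$ to some $y \in Q$ with $(x,y) \in E_{\mathcal{Z}}$. Its kernel is $\{1\} \times V$, where $V$ is the unipotent radical of $Q$. The restriction of $p_1$ to $E_{\mathcal{Z},P_0} = E_{\mathcal{Z}} \cap (P_0 \times G)$ is surjective onto $P_0$ with the same kernel. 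Passing to right coset spaces therefore yields an isomorphism $E_{\mathcal{Z}}/E_{\mathcal{Z},P_0} \xrightarrow{\sim} P/P_0$, which is smooth and projective over $\bar{\F}_p$ since $P_0 \subset P$ is a parabolic subgroup.

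The second step is to exhibit a cartesian square
\begin{equation*}
\begin{tikzcd}
G\ZipFlag^{\mathcal{Z},P_0} \arrow[r] \arrow[d,"\pi"] & BE_{\mathcal{Z},P_0} \arrow[d,"q"] \\
G\Zip^{\mathcal{Z}} \arrow[r] & BE_{\mathcal{Z}}
\end{tikzcd}
\end{equation*}
where the right vertical arrow is induced by the inclusion $E_{\mathcal{Z},P_0} \hookrightarrow E_{\mathcal{Z}}$ and the horizontal arrows are induced by forgetting the $G$-valued equivariant map in the quotient presentations of Proposition \ref{prop14}. Unpacking the groupoid of $S$-points, an object of the fiber product consists of an $E_{\mathcal{Z}}$-torsor $\mathcal{P}$ with an $E_{\mathcal{Z}}$-equivariant map $\phi : \mathcal{P} \to G$ together with an $E_{\mathcal{Z},P_0}$-reduction $\mathcal{Q} \subset \mathcal{P}$; this is exactly the datum of an $E_{\mathcal{Z},P_0}$-torsor $\mathcal{Q}$ equipped with an $E_{\mathcal{Z},P_0}$-equivariant map to $G$, i.e. an object of $\lfloor E_{\mathcal{Z},P_0} \backslash G \rfloor = G\ZipFlag^{\mathcal{Z},P_0}$.

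The third step is to conclude: the right vertical map $q$ in the square above is a morphism of classifying stacks induced by a closed subgroup inclusion, and its fiber over a geometric point is the homogeneous space $E_{\mathcal{Z}}/E_{\mathcal{Z},P_0} \cong P/P_0$ computed in Step 1. Since $P/P_0$ is smooth and projective, $q$ is smooth and proper. By stability of smoothness and properness under arbitrary base change of stacks, the left vertical map $\pi$ is also smooth and proper, and its fibers are isomorphic to $P/P_0$. I do not expect any serious obstacle here; the only point to verify carefully is that the square is indeed cartesian, which is a purely formal check with quotient stacks.
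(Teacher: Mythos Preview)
Your argument is correct and complete. The paper itself does not give a proof of this proposition; it simply cites \cite[Theorem 2.1.2]{MR3973104}. Your three-step approach (computing $E_{\mathcal{Z}}/E_{\mathcal{Z},P_0}\cong P/P_0$ via the first projection, exhibiting the cartesian square $[E_{\mathcal{Z},P_0}\backslash G]\cong [E_{\mathcal{Z}}\backslash G]\times_{BE_{\mathcal{Z}}}BE_{\mathcal{Z},P_0}$, and concluding by base change) is the natural way to establish this kind of statement for quotient stacks, and is essentially what one finds in the cited reference. Your short exact sequence argument in Step~1 is clean: both $E_{\mathcal{Z}}\to P$ and $E_{\mathcal{Z},P_0}\to P_0$ have kernel $\{1\}\times V$, so the induced map on quotients is an isomorphism. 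The cartesian square in Step~2 is the general fact that $[H'\backslash X]\cong [H\backslash X]\times_{BH}BH'$ for $H'\subset H$ acting on $X$ via restriction, which you unpack correctly. You are also right to flag the typo in the statement: since $P_0\subset P$, the fiber is the flag variety $P/P_0$, not $P_0/P$.
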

\begin{proof}[Proof for propositions \ref{prop13bis}, \ref{prop14}, \ref{prop14bis} and \ref{prop15}]
See \cite[Theorem 2.1.2]{MR3973104}.
\end{proof}
It is natural to hope for a stratification of $G\ZipFlag^{\mathcal{Z},P_0}$ that generalizes the stratification on $G\Zip^{\mathcal{Z}}$ however the $E_{\mathcal{Z},P_0}$-orbits of $G$  are not as easy to understand as the $E_{\mathcal{Z}}$-orbits. Instead, we define a smooth surjective map
\begin{equation*}
G\ZipFlag^{\mathcal{Z},P_0} \rightarrow G\Zip^{\mathcal{Z}_0}
\end{equation*}
where $\mathcal{Z}_0$ is a zip datum constructed from $\mathcal{Z}$ and $P_0$ and then pullback the stratification of $G\Zip^{\mathcal{Z}_0}$. 
\begin{definition}
We denote by $\mathcal{Z}_0$ the zip datum
\begin{equation*}
\mathcal{Z}_0 = (G,P_0,L_0,Q_0,M_0,\varphi^n)
\end{equation*}
where $Q_0$ is a parabolic subgroup of $Q$ defined by 
\begin{equation*}
Q_0 = \varphi^n(P_0\cap L)R_u(Q) \subset Q
\end{equation*}
with $R_u(Q)$ the unipotent radical of $Q$ and where $L_0$, $M_0$ are the Levi subgroups of $P_0$, $Q_0$.
\end{definition}
\begin{proposition}
We have inclusions
\begin{equation*}
E_{\mathcal{Z},P_0} \subset E_{\mathcal{Z}_0} \subset P_0 \times Q_0
\end{equation*}
and the induced maps
\begin{equation*}
\begin{tikzcd}
G\ZipFlag^{\mathcal{Z},P_0}  \arrow[r,"\psi_1"] &  G\Zip^{\mathcal{Z}_0}  \arrow[r,"\psi_2"] & \lfloor P_0 \backslash G / Q_0 \rfloor
\end{tikzcd}
\end{equation*}
are smooth and surjective.
\end{proposition}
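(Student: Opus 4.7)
The plan is to check the two inclusions directly from definitions, then deduce smoothness and surjectivity of $\psi_1$ and $\psi_2$ from the general principle that for a closed inclusion $H \subset K$ of smooth algebraic groups acting on a scheme $X$, the induced morphism $\lfloor H \backslash X \rfloor \to \lfloor K \backslash X \rfloor$ is representable and smooth of relative dimension $\dim(K/H)$, with geometric fibres $K/H$, and surjective whenever $K/H$ is smooth and non-empty.

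The inclusion $E_{\mathcal{Z}_0} \subset P_0 \times Q_0$ is by definition. For $E_{\mathcal{Z},P_0} \subset E_{\mathcal{Z}_0}$, take $(x,y) \in E_{\mathcal{Z},P_0}$, so that $x \in P_0$, $y \in Q$, and $\varphi^n(\bar x) = \bar y$ in $M$ (bars denoting Levi projections). Since $R_u(P) \subset R_u(P_0) \subset P_0$, the projection $P \twoheadrightarrow L$ restricts to a surjection $P_0 \twoheadrightarrow P_0 \cap L$, so $\bar x \in P_0 \cap L$ and $\bar y = \varphi^n(\bar x) \in \varphi^n(P_0 \cap L)$. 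Writing $y = \bar y \cdot v$ with $v \in V$, this forces $y \in \varphi^n(P_0 \cap L) \cdot V = Q_0$. Moreover, $\varphi^n$ carries the Levi decomposition of $P_0 \cap L$ to that of $\varphi^n(P_0 \cap L)$, sending $L_0$ to $M_0$ and $R_u(P_0 \cap L)$ to $R_u(\varphi^n(P_0 \cap L))$, so projection to the Levi commutes with $\varphi^n$, giving $\varphi^n(\bar x_0) = \overline{\varphi^n(\bar x)}_0 = \bar y_0$, hence $(x,y) \in E_{\mathcal{Z}_0}$.

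It then suffices to verify the fibres are smooth. For $\psi_2$, consider the Levi projection $P_0 \times Q_0 \twoheadrightarrow L_0 \times M_0$: its kernel $R_u(P_0) \times R_u(Q_0)$ is contained in $E_{\mathcal{Z}_0}$, and the image of $E_{\mathcal{Z}_0}$ is the graph $\Gamma$ of $\varphi^n|_{L_0}$, so $(P_0 \times Q_0)/E_{\mathcal{Z}_0} \cong (L_0 \times M_0)/\Gamma \cong M_0$, a smooth scheme. For $\psi_1$, the assignment $(x,y) \mapsto \varphi^n(\bar x)^{-1}\bar y$ defines a morphism $E_{\mathcal{Z}_0} \to R_u(\varphi^n(P_0 \cap L))$, well-defined because $\bar y$ and $\varphi^n(\bar x)$ have the same image $\bar y_0 = \varphi^n(\bar x_0)$ in $M_0$; a direct check shows it is invariant under left multiplication by $E_{\mathcal{Z},P_0}$ and realizes an isomorphism $E_{\mathcal{Z},P_0} \backslash E_{\mathcal{Z}_0} \cong R_u(\varphi^n(P_0 \cap L))$, a smooth unipotent group. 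The main subtlety I anticipate is scheme-theoretic rather than set-theoretic: in characteristic $p$ the Frobenius $\varphi^n : L \to M$ is a universal homeomorphism but not an isomorphism of schemes, so one must verify the identifications above hold as isomorphisms of group schemes. This works because $\varphi^n$ is functorial on all the constructions involved and preserves the Levi and unipotent radical of any standard parabolic scheme-theoretically.
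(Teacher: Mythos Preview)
Your argument is correct. The paper does not give its own proof here; it simply cites \cite[Section~3.1]{MR3973104}. Your direct verification is essentially what one finds there: check the chain of inclusions by unwinding the Levi decompositions, then invoke the general fact that for smooth closed subgroups $H\subset K$ acting on $G$, the map $\lfloor H\backslash G\rfloor\to\lfloor K\backslash G\rfloor$ is a $K/H$-fibration.

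One remark: your explicit identification of the fibres as $M_0$ and $R_u(\varphi^n(P_0\cap L))$ is pleasant but more than strictly necessary. Once you know the inclusions hold, it suffices to observe that each of $E_{\mathcal{Z},P_0}$, $E_{\mathcal{Z}_0}$, and $P_0\times Q_0$ is a smooth group scheme over $\bar{\mathbb{F}}_p$; this follows because each is the fibre over $1$ of a smooth morphism $(x,y)\mapsto \varphi^n(\bar x)^{-1}\bar y$ to the appropriate Levi (the differential of this map is the second Levi projection, since $d\varphi^n=0$, hence surjective). The quotient of a smooth affine group by a smooth closed subgroup is then automatically smooth, which gives smoothness and surjectivity of $\psi_1$ and $\psi_2$ without further computation. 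Your explicit fibre descriptions do, however, make the relative dimensions transparent, and your closing remark about Frobenius being only a universal homeomorphism correctly identifies the one place where a purely set-theoretic argument would be insufficient.
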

\begin{proof}
See \cite[Section 3.1]{MR3973104}.
\end{proof}
\begin{definition}
The fine stratification of $G\ZipFlag^{\mathcal{Z},P_0}$ is the stratification of $G\Zip^{\mathcal{Z}_0}$ pulled back by $\psi_1$ and the coarse stratification of $G\ZipFlag^{\mathcal{Z},P_0}$ is the stratification of the Bruhat stack $\lfloor P_0 \backslash G / Q_0 \rfloor$ pulled back by $\psi_2 \circ \psi_1$. If $w \in \prescript{I_0}{}{W} \cup W^{J_0}$, then we write $G\ZipFlag^{\mathcal{Z},P_0}_w$ for the corresponding fine strata.
\end{definition}
\vspace{0.2cm}
In the special case where $P_0 = B$ is the Borel subgroup, the map $\psi_2$ is an isomorphism, so the coarse and the fine stratifications of $G\ZipFlag^{\mathcal{Z},P_0}$ coincide. Note that if we have an inclusion of auxiliary parabolics $B \subset P_0 \subset P_1 \subset P$, then there exists natural maps making the following diagram $2$-cartesian.
\begin{equation*}
\begin{tikzcd}
G\ZipFlag^{\mathcal{Z},P_0} \arrow[d]  \arrow[r] & \lfloor P_0 \backslash G / Q_0 \rfloor \arrow[d] \\
G\ZipFlag^{\mathcal{Z},P_1}  \arrow[r] & \lfloor P_1 \backslash G / Q_1 \rfloor
\end{tikzcd}
\end{equation*}
However, we don't know if a similar statement holds if we replace the Bruhat stacks with $G\Zip^{\mathcal{Z}_0}$ and $G\Zip^{\mathcal{Z}_1}$. 
\begin{corollary}
The stack $G\ZipFlag^{\mathcal{Z},P_0}$ decomposes as
\begin{equation*}
G\ZipFlag^{\mathcal{Z},P_0} = \underset{w \in \prescript{I_0}{}{W} }{\bigsqcup} G\ZipFlag^{\mathcal{Z},P_0}_w
\end{equation*}
and for all  $w \in \prescript{I_0}{}{W}$, we have the closure relation
\begin{equation*}
\overline{G\ZipFlag^{\mathcal{Z},P_0}_w} =  \underset{w^\prime \preccurlyeq w \text{, } w^\prime \in \prescript{I_0}{}{W} }{\bigsqcup} G\ZipFlag^{\mathcal{Z},P_0}_{w^\prime},
\end{equation*}
where the order on $\prescript{I_0}{}{W}$ is the one introduced in proposition \ref{prop21} with $I$ replaced by $I_0$.
\end{corollary}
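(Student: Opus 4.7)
The plan is to deduce both assertions from the analogous facts for $G\Zip^{\mathcal{Z}_0}$ by transferring them along the smooth surjection $\psi_1 : G\ZipFlag^{\mathcal{Z},P_0} \to G\Zip^{\mathcal{Z}_0}$. The stratification $G\ZipFlag^{\mathcal{Z},P_0}_w$ was \emph{defined} as the pullback $\psi_1^{-1}([w])$, so the corollary is essentially a compatibility between taking preimages and taking closures.

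First I would apply Proposition \ref{prop21} (and the preceding decomposition into $E_{\mathcal{Z}}$-orbits) directly to the zip datum $\mathcal{Z}_0 = (G,P_0,L_0,Q_0,M_0,\varphi^n)$, whose parabolic $P_0$ has type $I_0$. This gives the decomposition
\begin{equation*}
G\Zip^{\mathcal{Z}_0} = \bigsqcup_{w \in \prescript{I_0}{}{W}} [w]
\end{equation*}
into locally closed substacks, together with the closure formula
\begin{equation*}
\overline{[w]} = \bigsqcup_{w' \preccurlyeq w,\ w' \in \prescript{I_0}{}{W}} [w']
\end{equation*}
for the order defined in Proposition \ref{prop21}. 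The decomposition of $G\ZipFlag^{\mathcal{Z},P_0}$ is then immediate: the preimage of a disjoint union under $\psi_1$ is again a disjoint union, and by surjectivity of $\psi_1$ it still covers the source.

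For the closure relation, I would use that $\psi_1$ is smooth and surjective, hence flat and surjective, hence open and continuous on the underlying topological spaces. For any open continuous surjection $f : X \to Y$ and any subset $A \subset Y$ one has $f^{-1}(\overline{A}) = \overline{f^{-1}(A)}$: the inclusion $\supset$ holds by continuity, while for the inclusion $\subset$ one takes $y \in \overline{A}$ and $x \in f^{-1}(y)$; for any open neighbourhood $U$ of $x$, openness of $f$ makes $f(U)$ an open neighbourhood of $y$, which must meet $A$, so $U$ meets $f^{-1}(A)$. Applying this to $A = [w]$ gives
\begin{equation*}
\overline{G\ZipFlag^{\mathcal{Z},P_0}_w} = \psi_1^{-1}(\overline{[w]}) = \bigsqcup_{w' \preccurlyeq w,\ w' \in \prescript{I_0}{}{W}} \psi_1^{-1}([w']) = \bigsqcup_{w' \preccurlyeq w,\ w' \in \prescript{I_0}{}{W}} G\ZipFlag^{\mathcal{Z},P_0}_{w'},
\end{equation*}
which is the desired formula.

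The main point requiring care is the topological commutation of preimage with closure along $\psi_1$, but the argument above shows that smoothness and surjectivity are exactly what is needed; no extra information about $E_{\mathcal{Z},P_0}$-orbits on $G$ is used. Everything else reduces to invoking the already-recalled results of Pink--Wedhorn--Ziegler for the auxiliary zip datum $\mathcal{Z}_0$.
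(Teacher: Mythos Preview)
Your proposal is correct and is exactly the argument the paper has in mind: the corollary is stated without proof because it follows immediately from applying Proposition~\ref{prop21} to the auxiliary zip datum $\mathcal{Z}_0$ and pulling back along the smooth surjection $\psi_1$, precisely as you do. The paper later uses the same ``generalizations lift along flat morphisms'' reasoning to transfer closure relations along $\zeta$ and $\zeta_{I_0}$, so your topological justification via openness of $\psi_1$ is in the same spirit.
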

\begin{corollary}
Let $w \in \prescript{I_0}{}{W} \cup W^{J_0}$ and $G\ZipFlag^{\mathcal{Z},P_0}_w$ be the corresponding fine strata. Then $G\ZipFlag^{\mathcal{Z},P_0}_w$ is a smooth stack over $\mathbb{F}_p$ of pure dimension $l(w)+\dim P -\dim G$.
\end{corollary}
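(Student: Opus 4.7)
The plan is to exploit the definition of the fine stratification as a pullback along the smooth surjection $\psi_1 : G\ZipFlag^{\mathcal{Z},P_0} \rightarrow G\Zip^{\mathcal{Z}_0}$, reducing everything to the analogous statement for $G\Zip^{\mathcal{Z}_0}$ that was established for general zip data.

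First, I would apply the general results on $G\Zip^{\mathcal{Z}}$ to the zip datum $\mathcal{Z}_0 = (G,P_0,L_0,Q_0,M_0,\varphi^n)$. The bijection between $\prescript{I_0}{}{W} \cup W^{J_0}$ and the $E_{\mathcal{Z}_0}$-orbits of $G$, together with the dimension formula $\dim G_w = l(w) + \dim P_0$, gives a locally closed substack $[w] = \lfloor E_{\mathcal{Z}_0} \backslash G_w \rfloor \subset G\Zip^{\mathcal{Z}_0}$. Smoothness of $[w]$ follows from smoothness of the $E_{\mathcal{Z}_0}$-orbit $G_w$ (as the orbit of a smooth algebraic group action on a smooth variety). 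For its dimension, I would use that $G\Zip^{\mathcal{Z}_0} = \lfloor E_{\mathcal{Z}_0} \backslash G \rfloor$ is of dimension $0$ by the earlier proposition, so $\dim E_{\mathcal{Z}_0} = \dim G$, hence
\[ \dim [w] = \dim G_w - \dim E_{\mathcal{Z}_0} = l(w) + \dim P_0 - \dim G. \]

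Next, by definition of the fine stratification, $G\ZipFlag^{\mathcal{Z},P_0}_w = \psi_1^{-1}([w])$. Since $\psi_1$ is smooth and surjective by the preceding proposition and $[w]$ is smooth, the pullback $G\ZipFlag^{\mathcal{Z},P_0}_w$ is smooth. Its pure dimension is obtained by adding the relative dimension of $\psi_1$, computed from proposition \ref{prop14} as $\dim G\ZipFlag^{\mathcal{Z},P_0} - \dim G\Zip^{\mathcal{Z}_0} = \dim(P/P_0) - 0 = \dim P - \dim P_0$. Therefore
\[ \dim G\ZipFlag^{\mathcal{Z},P_0}_w = (l(w) + \dim P_0 - \dim G) + (\dim P - \dim P_0) = l(w) + \dim P - \dim G, \]
as claimed.

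There is no substantial obstacle here; the argument is essentially a bookkeeping of dimensions transported across the smooth morphism $\psi_1$. The only point requiring mild care is to verify that the dimension formula $\dim G_w = l(w) + \dim P_0$ from the earlier proposition is indeed applied with $\mathcal{Z}_0$ (so that $P$ in the formula is replaced by $P_0$), which is legitimate because that proposition was proved for an arbitrary zip datum, and to note that the parametrization $w \in \prescript{I_0}{}{W}$ and $w \in W^{J_0}$ index the same set of orbits with compatible length-based dimension formulas, so the final statement holds uniformly for $w \in \prescript{I_0}{}{W} \cup W^{J_0}$.
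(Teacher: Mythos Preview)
Your proof is correct and follows exactly the approach implied by the paper's setup: the corollary is stated without explicit proof because all the ingredients (smoothness and surjectivity of $\psi_1$, the dimension formula for $E_{\mathcal{Z}_0}$-orbits, and the dimension of $G\ZipFlag^{\mathcal{Z},P_0}$) have been assembled in the preceding propositions, and your argument is the natural way to combine them.
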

\vspace{0.2cm}
Now we want to construct some sections of vector bundles on $G\Zip^{\mathcal{Z}}$, $G\ZipFlag^{\mathcal{Z},P_0}$ and relate their non-vanishing loci to the stratification we have introduced. We start by introducing vector bundles on $G\Zip^{\mathcal{Z}}$.
\begin{definition}
Let $ \rho : L \rightarrow \GL(V)$ be a finite dimensional algebraic representation of the Levi $L$. Consider the map $f: E_{\mathcal{Z}}  \rightarrow L$ which is the composition of the first projection $E_{\mathcal{Z}} \rightarrow P$ with the quotient map $P \rightarrow L$. The composition $\rho \circ f$ is an algebraic representation of $E_{\mathcal{Z}}$. It induces a locally free sheaf $\mathcal{W}(V)$ of rank $\dim_{\mathbb{F}_p} V$ on $\lfloor E_{\mathcal{Z}} \backslash G \rfloor$. If $\lambda \in X^{*}(T)$ is a $I$-dominant character of $T$, we simply denote $\nabla(\lambda)$ the locally free sheaf $\mathcal{W}(H^0(L/B_L, \mathcal{L}_{\lambda}))$. Note that $H^0(L/B_L, \mathcal{L}_{\lambda})$ is the costandard $L$-represention of highest weight $w_0w_{0,L}\lambda$.
\end{definition}
\vspace{0.2cm}
More generally, we can define vector bundles on $G\ZipFlag^{\mathcal{Z},P_0}$.
\begin{definition}
Let $ \rho : P_0 \rightarrow \GL(V)$ be a finite dimensional algebraic representation of the parabolic $P_0$. Consider the first projection map $f: E_{\mathcal{Z},P_0}  \rightarrow P_0$. The composition $\rho \circ f$ is an algebraic representation of $E_{\mathcal{Z},P_0}$ and it induces a locally free sheaf $\mathcal{L}(V)$ of rank $\dim_{\overline{\mathbb{F}}_p} V$ on $\lfloor E_{\mathcal{Z},P_0} \backslash G \rfloor$. If $\lambda \in X^{*}(L_0) \subset X^{*}(T) $ is a character of $L_0$, we also denote by $\mathcal{L}_{\lambda}$ the line bundle $\mathcal{L}(\lambda)$ where we see $\lambda$ as a one-dimensional representation of $P_0$.
\end{definition}
\vspace{0.2cm}
We have defined vector bundles $\nabla(\lambda)$ on $G\Zip^{\mathcal{Z}}$ and line bundles $\mathcal{L}_{\lambda}$ on $G\ZipFlag^{\mathcal{Z},P_0}$ for certain characters $\lambda \in X^*(T)$. The next proposition gives a direct relation between them.
\begin{proposition}
Recall that $\pi : G\ZipFlag^{\mathcal{Z},P_0} \rightarrow G\Zip^{\mathcal{Z}}$ is the proper and smooth map that forgets the $P_0$-torsor from a zip flag of type $(\mathcal{Z},P_0)$. Let $\lambda \in X^*(L_0)$ be an $I_0$-dominant character of $L_0$. We have a canonical isomorphism
\begin{equation*}
\pi_* \mathcal{L}_{\lambda} \simeq \nabla(\lambda).
\end{equation*}
\end{proposition}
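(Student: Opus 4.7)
The proof will proceed along the same lines as Proposition \ref{prop13}. The plan is to construct a 2-cartesian diagram relating $\pi$ to the analogous forgetful map between classifying stacks, reduce the computation to the flag variety $P/P_0$, apply Kempf's vanishing theorem, and conclude by flat base change combined with Lemma \ref{lem2}.

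First, I would use the quotient presentations of Proposition \ref{prop14}. The group homomorphisms $E_{\mathcal{Z}} \rightarrow P$ and $E_{\mathcal{Z},P_0} \rightarrow P_0$ given by first projection, together with the trivial map $G \rightarrow *$, induce morphisms $\zeta : G\Zip^{\mathcal{Z}} \rightarrow \lfloor P\backslash * \rfloor$ and $\tilde{\zeta} : G\ZipFlag^{\mathcal{Z},P_0} \rightarrow \lfloor P_0\backslash * \rfloor$ fitting into a commutative square with $\pi$ and the natural map $\tilde{\pi} : \lfloor P_0\backslash * \rfloor \rightarrow \lfloor P\backslash * \rfloor$ coming from $P_0 \hookrightarrow P$. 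This square is 2-cartesian: the defining identity $E_{\mathcal{Z},P_0} = E_{\mathcal{Z}} \cap (P_0 \times Q)$ is precisely the statement that $E_{\mathcal{Z},P_0} = E_{\mathcal{Z}} \times_P P_0$ as groupoid pullback.

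Second, I would identify the sheaves through this diagram. By construction, $\mathcal{L}_{\lambda}$ on $G\ZipFlag^{\mathcal{Z},P_0}$ is the pullback via $\tilde{\zeta}$ of the tautological line bundle attached to the character $\lambda \in X^*(P_0)$ on $\lfloor P_0\backslash * \rfloor$; similarly $\nabla(\lambda)$ on $G\Zip^{\mathcal{Z}}$ is the pullback via $\zeta$ of the vector bundle on $\lfloor P\backslash * \rfloor$ associated to the $P$-representation $H^0(P/P_0, \mathcal{L}_{\lambda})$. The computation on the classifying stacks amounts to the observation that the fibers of $\tilde{\pi}$ are isomorphic to $P/P_0$, so the corresponding derived pushforward is given by the $P$-representation $R\Gamma(P/P_0, \mathcal{L}_{\lambda})$; Kempf's vanishing theorem (Proposition \ref{prop4}) applied to the $I_0$-dominant character $\lambda$ then implies that this complex is concentrated in degree $0$, with value $\nabla(\lambda)$.

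Finally, I would conclude by flat base change. The morphism $\zeta$ is smooth since it is presented by the smooth atlas $G \rightarrow G\Zip^{\mathcal{Z}}$, in particular flat, so base change in the derived category of quasi-coherent sheaves gives an isomorphism $\zeta^{*} \circ R\tilde{\pi}_{*}\mathcal{L}_{\lambda} \simeq R\pi_{*} \circ \tilde{\zeta}^{*}\mathcal{L}_{\lambda}$. The left-hand side equals $\nabla(\lambda)$ concentrated in degree $0$. To transfer the concentration to the right-hand side, apply Lemma \ref{lem2}: the map $\pi$ is proper and representable by Proposition \ref{prop15} with geometric fibers isomorphic to $P/P_0$, and Kempf's vanishing on each such fiber supplies exactly the hypothesis of that lemma. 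Combining both concentrations yields the desired isomorphism $\pi_{*}\mathcal{L}_{\lambda} \simeq \nabla(\lambda)$. The only delicate point is the rigorous handling of the 2-cartesian property and flat base change in the stacky setting; both are standard, and once granted, the argument is a direct transposition of Proposition \ref{prop13}.
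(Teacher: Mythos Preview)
Your proposal is correct and follows essentially the same approach as the paper: set up the cartesian square between $\pi$ and the forgetful map $\tilde{\pi}:\lfloor P_0\backslash *\rfloor\to\lfloor P\backslash *\rfloor$, identify $\tilde{\pi}_*\mathcal{L}_\lambda=\nabla(\lambda)$, use Kempf's vanishing together with Lemma~\ref{lem2} to concentrate both derived pushforwards in degree $0$, and conclude by flat base change along $\zeta$. The only cosmetic difference is that the paper invokes the universal $P$- and $P_0$-torsors to define $\zeta,\tilde{\zeta}$ rather than the quotient presentations, and simply asserts that $\zeta$ is flat without the (slightly imprecise) justification you give via the atlas.
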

\begin{proof}
Consider the cartesian diagram
\begin{equation*}
\begin{tikzcd}
G\ZipFlag^{\mathcal{Z},P_0} \arrow[d,"\pi"] \arrow[r,"\tilde{\zeta}"] & \lfloor * / P_0 \rfloor \arrow[d,"\tilde{\pi}"] \\
G\Zip^{\mathcal{Z}} \arrow[r,"\zeta"] & \lfloor * / P \rfloor
\end{tikzcd}
\end{equation*}
where the horizontal maps are given by the universal $P_0$-torsor on $G\ZipFlag^{\mathcal{Z},P_0}$ and the universal $P$-torsor on $G\Zip^{\mathcal{Z}}$. For each character $\lambda \in X^*(P_0)$, we have a line bundle $\mathcal{L}_{\lambda}$ on $\lfloor * / P_0 \rfloor$ and a vector bundle $\nabla(\lambda)$ on $\lfloor * / P \rfloor$ (corresponding to the induced $P$-representation $H^0(P/P_0,\mathcal{L}_{\lambda})$) that satisfies
\begin{equation*}
\left\{
\begin{aligned}
&\tilde{\zeta}^*\mathcal{L}_{\lambda} = \mathcal{L}_{\lambda}, \\
&\zeta^*\nabla({\lambda}) = \nabla({\lambda}).
\end{aligned}
\right.
\end{equation*}
It is straightforward from the definitions that 
\begin{equation*}
\tilde{\pi}_*\mathcal{L}_{\lambda} = \nabla(\lambda).
\end{equation*}
As the map is fibered in $P/P_0$ by proposition \ref{prop15}, we know by proposition \ref{prop4} and lemma \ref{lem2} that for a $I_0$-dominant character $\lambda$, we have 
\begin{equation*}
\left\{
\begin{aligned}
&R\tilde{\pi}_* \mathcal{L}_{\lambda} = \tilde{\pi}_* \mathcal{L}_{\lambda}, \\
&R{\pi}_* \mathcal{L}_{\lambda} = {\pi}_* \mathcal{L}_{\lambda}.
\end{aligned}
\right.
\end{equation*}
Since $\zeta$ is flat, we conclude as in the end of the proof of proposition \ref{prop13} with the base change theorem in the derived category that says that the natural map
\begin{equation*}
\zeta^* \circ R\tilde{\pi}_* \mathcal{L}_{\lambda}  \rightarrow {R}\pi_*\circ \zeta^* \mathcal{L}_{\lambda} 
\end{equation*}
is an isomorphism.
\end{proof}
On the Bruhat stack $\Brh = \lfloor B \backslash G / B \rfloor$, we have the Bruhat stratification
\begin{equation*}
\Brh = \underset{w  \in \prescript{}{}{W} }{\bigsqcup} \Brh_w,
\end{equation*} 
where $\Brh_w = \lfloor B \backslash BwB / B \rfloor$ and for all $w \in W$ we have the closure relation
\begin{equation*}
\overline{\Brh_w} = \underset{w^\prime \leq w \text{, } w^\prime \in \prescript{}{}{W} }{\bigsqcup} \Brh_{w^\prime},
\end{equation*}
where $\leq$ is the Bruhat order. We consider the morphism 
\begin{equation*}
\psi : G\ZipFlag^{\mathcal{Z},B} \rightarrow \Brh
\end{equation*}
defined as the composition of the morphism induced by the inclusion
\begin{equation*}
E_{\mathcal{Z},B} \subset B \times zBz^{-1}
\end{equation*}
with the isomorphism
\begin{equation*}
\alpha_z : \lfloor B \backslash G / zBz^{-1} \rfloor \rightarrow \lfloor B \backslash G / B \rfloor,
\end{equation*}
that sends $x$ to $xz$. We use this stack to construct some sections on $G\ZipFlag^{\mathcal{Z},P_0}$.
\begin{proposition}
Given two characters $(\lambda,\eta) \in X^*(T) \times X^*(T)$, the associated line bundle $\mathcal{L}_{\lambda,\eta}$ on $\Brh$ has the following properties.
\begin{enumerate}
\item We have a canonical isomorphism $\psi^*\mathcal{L}_{\lambda,\eta} = \mathcal{L}_{\lambda+p\prescript{\sigma}{}{(z\eta)}}$ where $\sigma : \overline{\mathbb{F}}_p \rightarrow  \overline{\mathbb{F}}_p$ is the inverse of the Frobenius.
\item For all $w \in W$, we have $H^0(\Brh_w, \mathcal{L}_{\lambda,\eta}) \neq 0 \Leftrightarrow \eta = -w^{-1}\lambda$.
\item $\dim_{\mathbb{F}_p} H^0(\Brh_w, \mathcal{L}_{\lambda,-w^{-1}\lambda}) = 1$.
\item For any non-zero $s \in H^0(\Brh_w, \mathcal{L}_{\lambda,-w^{-1}\lambda})$ viewed as a rational function on $\overline{\Brh}_w$, one has
\begin{equation*}
\divi(s) = -\sum_{\alpha \in E_w} \langle\lambda,w\alpha^{\vee}\rangle\overline{\Brh}_{ws_{\alpha}}
\end{equation*}
where $E_w = \{ \alpha \in \phi^+ \ | \ ws_{\alpha} < w \text{ and } l(ws_{\alpha}) = l(w) -1 \}$. The set of $ws_{\alpha}$ for $\alpha \in E_w$ is called the set of lower neighbors of $w$.
\end{enumerate}
\end{proposition}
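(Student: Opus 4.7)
For item $(1)$, the plan is to unwind the definition of $\psi$ step by step. Since $\psi$ is the composition of the map induced by the inclusion $E_{\mathcal{Z},B}\subset B\times zBz^{-1}$ with the isomorphism $\alpha_z : \lfloor B\backslash G/zBz^{-1}\rfloor\to\lfloor B\backslash G/B\rfloor$, I would first compute that $\alpha_z^{*}\mathcal{L}_{\lambda,\eta}$ is the line bundle with $B\times zBz^{-1}$-character $(\lambda,z\eta)$, since right multiplication by $z$ transports the right $B$-action into a right $zBz^{-1}$-action and sends the character $\eta$ of $B$ to the character $z\eta$ of $zBz^{-1}$. Then I would restrict this character along the embedding of $E_{\mathcal{Z},B}$ and use the defining relation $\varphi^n(\bar x)=\bar y$ on the Levi parts: the contribution of the second factor becomes $(z\eta)(\varphi^n(\bar x))$. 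Since $\varphi$ is the relative Frobenius and $z\eta$ is a character defined over $\bar{\mathbb{F}}_p$, this equals $\bigl(p\,\prescript{\sigma}{}{(z\eta)}\bigr)(\bar x)$. Combining with the $\lambda$-contribution from the first factor yields the claimed formula $\psi^{*}\mathcal{L}_{\lambda,\eta}\simeq\mathcal{L}_{\lambda+p\,\prescript{\sigma}{}{(z\eta)}}$.

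For parts $(2)$ and $(3)$, I would exploit the presentation $\Brh_w=\lfloor(B\times B)\backslash BwB\rfloor$: since $BwB$ is a single $(B\times B)$-orbit, a global section of $\mathcal{L}_{\lambda,\eta}$ on $\Brh_w$ is determined by its value at a fixed lift $\dot w$, and the existence of a non-zero value is equivalent to the triviality of the character on the stabilizer of $\dot w$. This stabilizer consists of pairs $(b_1,b_2)\in B\times B$ with $b_2=\dot w^{-1}b_1\dot w$, and its maximal torus is $\{(t,w^{-1}t) : t\in T\}$. Restricting the character of $\mathcal{L}_{\lambda,\eta}$ to this torus gives a character of $T$ of the form $\lambda+w\cdot\eta$ (after the sign normalization fixing the statement of the proposition), while the unipotent part is automatically trivial. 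Thus a non-zero section exists iff $\eta=-w^{-1}\lambda$, and in that case it is unique up to a scalar, yielding dimension $1$.

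For part $(4)$, the plan is to compute $\divi(s)$ by working locally at the generic point of each irreducible component of $\overline{\Brh}_w\setminus\Brh_w$. By the Bruhat-order closure relation recalled just above the proposition, those components are precisely the $\overline{\Brh}_{ws_\alpha}$ for $\alpha\in E_w$. To compute the order of vanishing along one such component, I would choose a transverse slice passing through a generic point of $\Brh_{ws_\alpha}$: the root subgroup $U_{w\alpha}$, translated by $\dot w$, parameterizes an $\mathbb{A}^1$ that meets $\Brh_{ws_\alpha}$ transversally at the origin. Pulling back $\mathcal{L}_{\lambda,\eta}$ to this $\mathbb{A}^1$ and computing with the $\mathrm{SL}_2$-triple attached to $w\alpha$, the calculation reduces to the familiar description of line bundles on $\mathbb{P}^1$, and the pairing $\langle\lambda,w\alpha^\vee\rangle$ emerges as the order of the relevant $\mathbb{P}^1$-bundle with the minus sign coming from the fact that the section $s$ is regular on $\Brh_w$ and must acquire its zeros or poles at the boundary strata. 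The main obstacle will be this last local computation: correctly identifying the transverse slice, its intersection with the stratum, and tracking conjugations and signs carefully enough for $w\alpha^\vee$ (rather than, say, $\alpha^\vee$ or $-w\alpha^\vee$) to appear with the sign required by the statement.
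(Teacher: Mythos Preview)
The paper does not give its own proof of this proposition: it simply refers to \cite[Lemma 3.1.1]{MR3989256} for part $(1)$ and to \cite[Theorem 2.2.1]{MR3989256} for parts $(2)$--$(4)$. Your proposal, by contrast, sketches an actual argument, and the outline you give is essentially the standard one carried out in those references.

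Your treatment of $(1)$ is correct: tracking the character through $\alpha_z$ and then through the Frobenius constraint $\varphi^n(\bar x)=\bar y$ on $E_{\mathcal{Z},B}$ is exactly how the formula is derived. For $(2)$ and $(3)$, your stabilizer argument is right, though you should note that the stabilizer of $\dot w$ in $B\times B$ is $\{(b,\dot w^{-1}b\dot w):b\in B\cap wBw^{-1}\}$, not all of $B$ embedded diagonally; the point that the character of $B\times B$ is trivial on unipotents (since it factors through $T\times T$) is what makes the torus computation sufficient. For $(4)$, the transverse $\mathbb{A}^1$-slice via the root subgroup $U_{w\alpha}$ and the reduction to an $\mathrm{SL}_2$-computation is indeed the method used in the cited reference; your caveat about tracking signs and the precise appearance of $w\alpha^\vee$ is well placed, as this is where most errors occur in practice.

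In short: the paper defers entirely to the literature, while you have correctly reconstructed the skeleton of the argument found there.
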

\begin{proof}
For $(i)$, see \cite[Lemma 3.1.1]{MR3989256}. For $(ii)$ to $(iv)$, see \cite[Theorem 2.2.1]{MR3989256}.
\end{proof}
\begin{definition}
Let $w \in W$ and $n \geq 0$. We define by induction on $n$, the element $w^{(n)}$ by setting
\begin{enumerate}
\item $w^{(0)} = e$,
\item $w^{(n)} = \prescript{\sigma}{}{(w^{(n-1)}w)}$ if $n\geq 1$.
\end{enumerate}
\end{definition}
\begin{proposition}
The function
\begin{equation*}
\begin{array}{cccc}
D_w : & X^*(T) & \to & X^*(T)\\
& \lambda & \mapsto & \lambda-p\prescript{\sigma}{}{(zw^{-1}\lambda)}
\end{array}
\end{equation*}
induces a $\mathbb{Q}$-linear automorphism of $X^*(T) \otimes_{\mathbb{Z}} \mathbb{Q}$. If $\chi$ is a character, its inverse by $D_w$ is given by the $\mathbb{Q}$-character 
\begin{equation*}
\lambda = \frac{-1}{p^{rn}-1}\sum_{i=0}^{rn-1} p^i(zw^{-1})^{(i)} \prescript{\sigma^i}{}{\chi},
\end{equation*}
where $r$ is an integer such that $(zw^{-1})^{(r)} = e$ and $n$ is an integer such that $\chi$ is defined over $\mathbb{F}_{p^n}$. 
\end{proposition}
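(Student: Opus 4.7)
The map $D_w$ is manifestly $\Z$-linear on $X^*(T)$, since both $\lambda \mapsto \lambda$ and $\lambda \mapsto p\,\prescript{\sigma}{}{(zw^{-1}\lambda)}$ are $\Z$-linear; it therefore extends uniquely to a $\Q$-linear endomorphism of the finite-dimensional $\Q$-vector space $X^*(T) \otimes_{\Z} \Q$. The plan is to prove that $D_w$ is an automorphism by exhibiting the explicit inverse: equivalently, writing $u = zw^{-1}$ and
\begin{equation*}
A \colon X^*(T) \otimes \Q \to X^*(T) \otimes \Q, \qquad A(\lambda) := p\,\prescript{\sigma}{}{(u\lambda)},
\end{equation*}
so that $D_w = \id - A$, I would invert $\id - A$ by a finite geometric series.

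First, an induction on $k$ using the identity $\prescript{\sigma}{}{(vv')} = \prescript{\sigma}{}{v} \cdot \prescript{\sigma}{}{v'}$ for elements of $W$ and the analogous compatibility between the $W$-action and the Galois action on characters yields a formula of the shape
\begin{equation*}
A^k(\lambda) = p^k \cdot u^{(k)} \cdot \prescript{\sigma^k}{}{\lambda},
\end{equation*}
where $u^{(k)}$ is the element defined by the recursion preceding the proposition. Matching the iterated product coming out of the recursion $u^{(k)} = \prescript{\sigma}{}{(u^{(k-1)} u)}$ with the product produced by successive applications of $A$ is the main technical point, especially in the non-split case where $\sigma$ acts nontrivially on $W$.

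Next, I would show that the iteration closes up for two independent reasons. The map $W \to W$, $v \mapsto \prescript{\sigma}{}{(v u)}$, is a bijection of the finite Weyl group (it is the composition of the automorphism $\sigma$ with right translation by $u$), so the forward orbit of $e$ under this map is periodic and there exists $r \geq 1$ with $u^{(r)} = e$. Furthermore, since $T$ is defined over $\F_p$ it splits over some finite extension $\F_{p^m}$, so $\sigma$ acts on $X^*(T)$ through a finite quotient of the Galois group and any fixed character $\chi$ admits an integer $n \geq 1$ with $\prescript{\sigma^n}{}{\chi} = \chi$.

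Setting $N := rn$, both resets combine so that $A^N(\chi) = p^N \chi$. The elementary telescoping identity $(\id - A)(\id + A + \cdots + A^{N-1}) = \id - A^N$ evaluated at $\chi$ then shows that $D_w$ is invertible and
\begin{equation*}
D_w^{-1}(\chi) = \frac{1}{1 - p^N} \sum_{k=0}^{N-1} A^k(\chi) = \frac{-1}{p^{rn} - 1} \sum_{k=0}^{rn-1} p^k\, (zw^{-1})^{(k)} \prescript{\sigma^k}{}{\chi},
\end{equation*}
recovering precisely the formula of the proposition. The main obstacle is the non-commutative bookkeeping in the computation of $A^k$: tracking the order of the Galois-twisted Weyl elements carefully enough to identify it with the recursively defined symbol $(zw^{-1})^{(k)}$, all other steps being formal consequences of the finiteness of $W$ and of the Galois orbit of $\chi$.
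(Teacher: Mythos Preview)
The paper does not prove this proposition; it simply cites \cite[Lemma 3.1.3]{MR3989256}. Your geometric-series argument is the natural proof and is almost certainly what the cited reference does: write $D_w = \id - A$ with $A(\lambda) = p\,\prescript{\sigma}{}{(u\lambda)}$, observe that $A^N$ acts as $p^N$ on $\chi$ for $N = rn$, and invert via $(\id - A)^{-1} = (1-p^N)^{-1}\sum_{k<N} A^k$. In the context of the paper ($G = \Sp_{2g}$ split over $\mathbb{F}_p$), the Galois action on $W$ and on $X^*(T)$ is trivial, so the bookkeeping you flag collapses: $u^{(k)} = u^k$, $\prescript{\sigma^k}{}{\chi} = \chi$, and the formula for $A^k$ is immediate.

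One caution on the non-split case, since you raise it: iterating $A$ produces the Weyl element governed by the recursion $v_k = \prescript{\sigma}{}{(u\,v_{k-1})}$, whereas the symbol $u^{(k)}$ is defined by $u^{(k)} = \prescript{\sigma}{}{(u^{(k-1)}\,u)}$; the two products are built in opposite orders and need not coincide in a non-abelian $W$ when $\sigma$ acts nontrivially. This does not affect invertibility of $D_w$ (your periodicity argument applies equally to either recursion), but it does affect whether the explicit inverse matches the displayed formula literally. Since the paper only needs the split case, this is moot here, but if you care about the general statement you should check the conventions in \cite{MR3989256} directly rather than trust the transcription.
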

\begin{proof}
See \cite[Lemma 3.1.3]{MR3989256}.
\end{proof}
Having defined sections on stacks $G\Zip^{\mathcal{Z}}$, we can study their vanishing locus.
\begin{definition}
Let $\lambda \in X^*(P_0)$ be a $L_0$-dominant character of $P_0$ and
\begin{equation*}
s \in H^0(G\ZipFlag^{\mathcal{Z},P_0}_w,\mathcal{L}_{\lambda}),
\end{equation*}
a non-zero section. We say that $s$ is a generalized Hasse invariant for $G\ZipFlag^{\mathcal{Z},P_0}_w$ if there exists some $d \geq 1$ such that $s^d$ extends to $\overline{G\ZipFlag^{\mathcal{Z},P_0}_w}$ with non-vanishing locus $G\ZipFlag^{\mathcal{Z},P_0}_w$. We define the sets
\begin{multline*}
\mathcal{C}_{\Ha,I_0,w} = \{ \lambda \in X^*(P_0) \ | \ \mathcal{L}_{\lambda} \text{ has a generalized } \text{ Hasse invariant for } G\ZipFlag^{\mathcal{Z},P_0}_w \}
\end{multline*}
and
\begin{equation*}
\mathcal{C}_{\Ha,I_0} = \bigcap_{w \in W} \mathcal{C}_{\Ha,I_0,w}.
\end{equation*}
\end{definition}
Now, we give a strong result for the existence of generalized Hasse invariants on the stack $G\ZipFlag^{\mathcal{Z},P_0}$.
\begin{proposition}\label{prop22}
Let $\lambda \in X^*(P_0)$ be a $L_0$-dominant character, $w$ be an element of $\prescript{I_0}{}{W}$ and $s$ be a non zero section of $H^0(G\ZipFlag_w^{\mathcal{Z},P_0},\mathcal{L}_{\lambda})$. Then, the following statements are equivalent:
\begin{enumerate}
\item $s$ is a generalized Hasse invariant for $G\ZipFlag^{\mathcal{Z},P_0}_w$.
\item For all $\alpha \in E_w$, we have
\begin{equation*}
\sum_{i=0}^{rn-1} \langle (zw^{-1})^{(i)} (\prescript{\sigma^i}{}{\lambda})	,w\alpha^{\vee}\rangle p^i > 0,
\end{equation*}
where $r$ is an integer such that $(zw^{-1})^{(r)} = e$ and $n$ is an integer such that $\lambda$ is defined over $\mathbb{F}_{p^n}$. 
\end{enumerate}
\end{proposition}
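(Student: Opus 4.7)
The plan is to transport the question to the Bruhat stack $\Brh = \lfloor B \backslash G / B \rfloor$, where the divisor of a section of $\mathcal{L}_{\mu,\eta}$ on a Bruhat stratum $\overline{\Brh}_w$ is given explicitly by the formula recalled before the statement. The bridge is the smooth morphism $\psi : G\ZipFlag^{\mathcal{Z},B} \rightarrow \Brh$ together with the pullback formula $\psi^* \mathcal{L}_{\mu,\eta} = \mathcal{L}_{\mu + p \prescript{\sigma}{}{(z \eta)}}$.

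First I would reduce to the case $P_0 = B$. The forgetful map $\pi_{B,P_0} : G\ZipFlag^{\mathcal{Z},B} \rightarrow G\ZipFlag^{\mathcal{Z},P_0}$ is a proper smooth flag bundle with fibres $P_0/B$, and by an argument analogous to proposition \ref{prop13}, combining Kempf's vanishing with lemma \ref{lem2} and flat base change, its pushforward sends the line bundle $\mathcal{L}_\lambda$ (for $\lambda$ an $L_0$-dominant character of $P_0$) to the vector bundle $\nabla(\lambda)$. In particular a non-zero section $s \in H^0(G\ZipFlag^{\mathcal{Z},P_0}_w, \mathcal{L}_\lambda)$ pulls back to a non-zero section on $G\ZipFlag^{\mathcal{Z},B}_w$, and the property of some power $s^d$ extending to the closure with non-vanishing locus exactly the open stratum can be checked upstairs.

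Next I would find $\mu \in X^*(T)$ realising a suitable power $\mathcal{L}_{N \lambda}$ as a pullback from $\Brh$. Since a non-zero section of $\mathcal{L}_{\mu,\eta}$ exists on $\Brh_w$ only if $\eta = -w^{-1}\mu$, I need $\mu - p \prescript{\sigma}{}{(z w^{-1} \mu)} = N \lambda$, i.e. $D_w(\mu) = N\lambda$. The $\mathbb{Q}$-linear invertibility of $D_w$ and its explicit inverse give, for $N = p^{rn}-1$, the integral solution
\[
\mu \ = \ -\sum_{i=0}^{rn-1} p^i (zw^{-1})^{(i)} \prescript{\sigma^i}{}{\lambda}.
\]
On $\Brh_w$ the one-dimensional space $H^0(\Brh_w, \mathcal{L}_{\mu, -w^{-1}\mu})$ is spanned by a section $\tau$ which extends to $\overline{\Brh}_w$ with divisor $-\sum_{\alpha \in E_w} \langle \mu, w\alpha^\vee \rangle \, \overline{\Brh}_{w s_\alpha}$. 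Pulling back along $\psi$ gives a section of $\mathcal{L}_{(p^{rn}-1)\lambda}$ on $G\ZipFlag^{\mathcal{Z},B}_w$, which (using that the corresponding space of sections on the zip flag stratum is itself one-dimensional via surjectivity of $\psi$ and pullback of scalars) must agree with $s^{p^{rn}-1}$ up to a non-zero scalar.

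Finally, the coefficient of $\psi^* \tau$'s divisor on the boundary component $\overline{G\ZipFlag^{\mathcal{Z},B}_{w s_\alpha}}$ equals $-\langle \mu, w \alpha^\vee \rangle = \sum_{i=0}^{rn-1} p^i \langle (zw^{-1})^{(i)} \prescript{\sigma^i}{}{\lambda}, w \alpha^\vee \rangle$, and pushing down by $\pi_{B,P_0}$ recovers the boundary of $\overline{G\ZipFlag^{\mathcal{Z},P_0}_w}$. Hence some power of $s$ extends to the closure with non-vanishing locus exactly $G\ZipFlag^{\mathcal{Z},P_0}_w$ if and only if each of these coefficients is strictly positive for every $\alpha \in E_w$, which is precisely condition (ii); this gives both implications simultaneously. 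The main obstacle is making the reduction step precise — in particular identifying the boundary components of $\overline{G\ZipFlag^{\mathcal{Z},B}_w}$ with the $\overline{G\ZipFlag^{\mathcal{Z},B}_{w s_\alpha}}$ indexed by $E_w$, and justifying the one-dimensionality of sections that pins down $\psi^* \tau = c \cdot s^{p^{rn}-1}$.
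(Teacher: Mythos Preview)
The paper does not supply its own proof here: it simply cites \cite[Proposition~3.2.1]{MR3989256}. Your sketch is precisely the argument carried out in that reference, and the overall strategy---pass to the Bruhat stack via $\psi$, invert $D_w$ after scaling by $p^{rn}-1$ to get an integral character $\mu$, read off the multiplicities from the divisor formula for $\mathcal{L}_{\mu,-w^{-1}\mu}$ on $\overline{\Brh}_w$, and translate strict positivity of each coefficient into the Hasse-invariant property---is correct.

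The one place where your write-up is a little loose is the reduction to $P_0=B$. You implicitly identify $\pi_{B,P_0}^{-1}\bigl(G\ZipFlag^{\mathcal{Z},P_0}_w\bigr)$ with the single $B$-stratum $G\ZipFlag^{\mathcal{Z},B}_w$, but as the paper itself remarks (just after the 2-cartesian diagram relating the zip-flag stacks to the Bruhat stacks), compatibility of the \emph{fine} stratifications under $\pi_{B,P_0}$ is not formal. What actually makes the reduction go through in \cite{MR3989256} is that for $\lambda\in X^*(P_0)$ the pulled-back section is constant along $P_0/B$-fibres, so the extension/vanishing question over $\overline{G\ZipFlag^{\mathcal{Z},P_0}_w}$ is equivalent, via smooth surjectivity and properness of $\pi_{B,P_0}$, to the same question over its preimage; one then checks that the irreducible components of the codimension-one boundary of that preimage are exactly the closures $\overline{G\ZipFlag^{\mathcal{Z},B}_{ws_\alpha}}$ for $\alpha\in E_w$ (this uses the identification of the $B$-level fine and coarse stratifications and the explicit description of Schubert-cell boundaries). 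Once that is in place, your computation of the multiplicities and the equivalence with condition~(ii) are exactly as you wrote.
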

\begin{proof}
See \cite[Proposition 3.2.1]{MR3989256}.
\end{proof}
\begin{example}
We give more details in the case $G = \Sp_4$ and $\mathcal{Z} = \mathcal{Z}_{\mu}$ with $\mu$ the cocharacter that stabilizes the Hodge filtration of the Siegel datum. The Levi $L$ is $\GL_2$. We denote by $s_1$ and $s_2$ the simple reflections associated to the simple roots $(1,-1)$ and $(0,2)$. We represent the elements of $W$ in the diagram
\begin{equation*}
\begin{tikzcd}
& w_0 = s_2s_1s_2s_1 \arrow[ld] \arrow[rd] & \\
w_1 = s_2s_1s_2 \arrow[d] \arrow[rrd] & & w_1^{\prime} = s_1s_2s_1 \arrow[d] \arrow[lld]\\
w_2 =  s_2s_1 \arrow[d] \arrow[rrd] & & w_2^{\prime} = s_1s_2 \arrow[d] \arrow[lld]\\
w_3 = s_2 \arrow[rd] & & w_3^{\prime} = s_1 \arrow[ld] \\
& e &
\end{tikzcd}
\end{equation*}
where an arrow is drawn from $w$ to $w^{\prime}$ if $w^{\prime} \leq w$ and $l(w^\prime) = l(w)-1$. For each $w \in W$ and $\lambda \in X^*$, we denote by $s_{\lambda,w}$ the quasi-section\footnote{It means a section of a certain positive tensorial power of $\Lc_{\lambda}$.} in $H^0(G\ZipFlag_w^{\mathcal{Z},B},\mathcal{L}_{\lambda})$ obtained via pullback from a non-zero quasi-section of $H^0(\Brh_w, \mathcal{L}_{\chi,-w^{-1}\chi})$ where $\chi$ is a $\Q$-character such that $D_w(\chi) = \lambda$. We write $\lambda =(k_1,k_2)$ and we compute $\divi(s_{\lambda,w})$ for each $w$.
\begin{equation*}
\left\{
\begin{aligned}
\divi(s_{\lambda,w_0}) &= \frac{1}{p^2-1}\left((p-1)(k_1-k_2)\overline{[w_1]} - (k_2+pk_1) \overline{[w_1^\prime]} \right), \\
\divi(s_{\lambda,w_1}) &= \frac{1}{p-1}\left(-k_1\overline{[w_2]} -k_2\overline{[w_2^\prime]} \right),\\
\divi(s_{\lambda,w_1^{\prime}}) &= \frac{1}{p^2+1}\left( - ((p-1)k_1+(p+1)k_2)\overline{[w_2]} + ((p+1)k_1-(p-1)k_2) \overline{[w_2^\prime]} \right), \\
\divi(s_{\lambda,w_2}) &= \left( \frac{k_1}{p+1}-\frac{k_2}{p-1}\right) \overline{[w_3]} +\frac{-k_2}{p-1}\overline{[w_3^\prime]}, \\
\divi(s_{\lambda,w_2^\prime}) &=  -\left(\frac{k_1}{p-1}+\frac{k_2}{p+1} \right)\overline{[w_3]} + \frac{-k_1}{p-1}\overline{[w_3^\prime]}, \\
\divi(s_{\lambda,w_3}) &= \frac{1}{p^2+1} (k_2-pk_1)\overline{[e]}, \\
\divi(s_{\lambda,w_3^\prime}) &= \frac{1}{p+1}(k_1-k_2) \overline{[e]}.
\end{aligned}
\right.
\end{equation*}
We deduce that the following set of characters
\begin{equation*}
\mathcal{C}_1 := \{ \lambda = (k_1,k_2) \ | \ 0 > k_1 > \frac{p-1}{p+1}k_2 \text{ and } k_2 > pk_1 \} 
\end{equation*}
has generalized Hasse invariants for all strata of $G\ZipFlag_w^{\mathcal{Z},B}$. Since the character of the Levi are $X^*(L) = \mathbb{Z}(1,1)$ and the minimal length left coset representatives are
\begin{equation*}
\prescript{I}{}{W} = \left\{ e, w_3, w_2, w_1\right\},
\end{equation*}
we deduce that the following set of characters
\begin{equation*}
\mathcal{C}_2 := \{ \lambda = (k_1,k_1) \ | \  k_1 < 0 \} 
\end{equation*}
has generalized Hasse invariant for all strata of $G\Zip^{\mathcal{Z}}$.
\end{example}
\begin{example}
We give some details in the case $G = \Sp_6$. The Levi $L$ is $\GL_3$ and we denote by $s_1, s_2$  and $s_3$ the simple reflections associated to the simple roots $(1,-1,0),(0,1,-1)$ and $(0,0,2)$. The Weyl group $W$ is isomorphic to $S_3 \ltimes (\Z/2\Z)^3$ (48 elements). The computations can be painful without a computer, so we have implemented an algorithm in SageMath that computes the divisor of all the $s_{w,\lambda}$ for any $g \geq 2$. See \href{https://github.com/ThibaultAlexandre/generalized-hasse-invariants}{github.com/ThibaultAlexandre/generalized-hasse-invariants} to download the algorithm. Take $p = 7$, $w = s_1s_2s_3$, and $\lambda = (-1,-3,-5)$. We get
\begin{equation*}
\divi(s_{w,\lambda}) = \frac{5}{6}\overline{[s_2s_3]} + \frac{1}{2}\overline{[s_3s_1]}  + \frac{1}{6}\overline{[s_1s_2]}.
\end{equation*}
\end{example}
Koskivirta and Goldring introduced a notion called orbitally $p$-closeness that guarantees a character to have generalized Hasse invariants for all strata without having to compute all the $\divi(s_{w,\lambda})$. However, this notion is not necessary for a character to have Hasse invariants.
\begin{definition}
Let $\lambda$ be a character of $T$. For every coroot such that $\langle \lambda,\alpha^{\vee}\rangle \neq 0$, we set
\begin{equation*}
\Orb(\lambda,\alpha^{\vee}) = \{ \frac{ | \langle \lambda,w\alpha^{\vee}\rangle |}{|\langle \lambda,\alpha^{\vee} \rangle | } \ | \ w \in W \rtimes \text{Gal}(\overline{\mathbb{F}}_p/\mathbb{F}_p) \}
\end{equation*}
and we say that $\lambda$ is 
\begin{enumerate}
\item orbitally $p$-close if $\max_{\alpha \in \phi} \Orb(\lambda,\alpha^{\vee}) \leq p-1$,
\item $\mathcal{Z}_0$-ample if $\langle \lambda, \alpha^{\vee}\rangle > 0$ for all $\alpha \in I \backslash I_0$ and $\langle \lambda, \alpha^{\vee}\rangle <0$ for all $\alpha \in \phi^+ \backslash \phi^+_L$.
\end{enumerate}
\end{definition}
\begin{proposition}\label{prop16}
Let $\lambda$ be a character of $P_0$. If $\lambda$ is orbitally $p$-close and $\mathcal{Z}_0$-ample then, there exists $d \geq 1$ such that for all $w \in \prescript{I_0}{}{W}$ and all non-zero section $s$ in $$H^0(G\ZipFlag^{\mathcal{Z},P_0}_w,\mathcal{L}_{\lambda}),$$ the $d^{\text{th}}$-power $s^d$ extends to $\overline{G\ZipFlag^{\mathcal{Z},P_0}_w}$ with non-vanishing locus $G\ZipFlag^{\mathcal{Z},P_0}_w$.
\end{proposition}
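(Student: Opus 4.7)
The plan is to reduce to the criterion provided by Proposition \ref{prop22} and then use the two hypotheses to verify its positivity condition uniformly. Fix $w \in \prescript{I_0}{}{W}$ and $\alpha \in E_w$. According to Proposition \ref{prop22}, after replacing $\lambda$ by a suitable multiple (which corresponds to taking a power $s^d$ of the section), it suffices to show
\begin{equation*}
S_{w,\alpha} := \sum_{i=0}^{rn-1} \langle (zw^{-1})^{(i)} (\prescript{\sigma^i}{}{\lambda}), w\alpha^{\vee}\rangle\, p^i \;>\; 0,
\end{equation*}
where $r$ is such that $(zw^{-1})^{(r)} = e$ and $n$ is such that $\lambda$ is defined over $\mathbb{F}_{p^n}$. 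Here the freedom to choose $d$ lets us absorb the denominators that appear when inverting the map $D_w$ and producing an actual (as opposed to $\mathbb{Q}$-)character.

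First, I would bound each coefficient uniformly. Writing $c_i := \langle (zw^{-1})^{(i)} (\prescript{\sigma^i}{}{\lambda}), w\alpha^{\vee}\rangle = \langle \prescript{\sigma^i}{}{\lambda}, ((zw^{-1})^{(i)})^{-1} w\alpha^{\vee}\rangle$, the element $((zw^{-1})^{(i)})^{-1} w$ lies in the subgroup $W \rtimes \mathrm{Gal}(\overline{\mathbb{F}}_p/\mathbb{F}_p)$ acting on coroots. The orbital $p$-closeness hypothesis then gives
\begin{equation*}
|c_i| \leq (p-1)\,|\langle \lambda, \alpha^{\vee}\rangle| =: (p-1)c,
\end{equation*}
for every $i$, provided $\langle \lambda,\alpha^\vee\rangle \neq 0$ (if $\langle \lambda,\alpha^\vee\rangle = 0$ the orbital quantity still controls the size of $c_i$, and one handles this degenerate case separately by noting that all $c_i$ vanish and one reduces to a lower-dimensional subroot system).

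Next, I would pin down the sign and size of the leading coefficient $c_{rn-1}$ using $\mathcal{Z}_0$-ampleness. The elements $E_w$ are positive roots for which $ws_\alpha$ is a lower neighbor of $w$; combined with the fact that $w \in \prescript{I_0}{}{W}$, one checks that the coroot $((zw^{-1})^{(rn-1)})^{-1} w\alpha^\vee$ lands in the region $(I \setminus I_0) \cup (\phi^+ \setminus \phi_L^+)$ (up to a Galois twist which only affects $\lambda$ by $\sigma^{rn-1}$, preserving pairings on coroots). Then the sign conditions defining $\mathcal{Z}_0$-ampleness force $c_{rn-1}$ to be strictly positive and, moreover, $c_{rn-1} \geq c$. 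Combining these estimates via the geometric series bound
\begin{equation*}
S_{w,\alpha} \;\geq\; p^{rn-1}c \;-\; (p-1)c\sum_{i=0}^{rn-2} p^i \;=\; p^{rn-1}c \;-\; c(p^{rn-1}-1) \;=\; c \;>\; 0,
\end{equation*}
yields the required inequality for all strata simultaneously, so a common denominator $d$ can be extracted, proving the proposition.

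The main obstacle is the sign analysis of the leading term: tracking the action of the twisted operator $(zw^{-1})^{(rn-1)}$ on the coroot $w\alpha^\vee$ and verifying that $\mathcal{Z}_0$-ampleness really produces a positive pairing in every case. This requires a careful bookkeeping of which chamber the resulting coroot lies in as $w$ ranges over $\prescript{I_0}{}{W}$ and $\alpha$ over $E_w$, together with the precise interaction between the $\sigma$-twists and the parabolic type $I_0$. Once that geometric fact is in hand, the rest is the elementary bound above.
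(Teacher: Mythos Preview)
The paper does not give its own proof of this proposition; it simply cites \cite[Proposition 3.2.3]{MR3989256}. Your sketch is essentially a reconstruction of the Goldring--Koskivirta argument behind that citation, and the overall strategy---reduce to the divisor criterion of Proposition \ref{prop22}, bound all coefficients via orbital $p$-closeness, and pin down the sign of the dominant term via $\mathcal{Z}_0$-ampleness---is exactly what is done there.

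Two points deserve tightening. First, your orbital bound should be normalized by the term whose sign you control, not by $|\langle\lambda,\alpha^\vee\rangle|$: once you know $c_{rn-1}\neq 0$ (which will follow from $\mathcal{Z}_0$-ampleness), orbital $p$-closeness applied with base coroot in the $W\rtimes\mathrm{Gal}$-orbit of $\alpha^\vee$ gives $|c_i|\leq (p-1)|c_{rn-1}|$ for all $i$, and then your geometric-series estimate yields $S_{w,\alpha}\geq c_{rn-1}>0$. The intermediate claim $c_{rn-1}\geq c$ is neither needed nor obviously true.

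Second, the sign analysis you defer can be made concrete: from the recursion and $(zw^{-1})^{(rn)}=e$ one computes $(zw^{-1})^{(rn-1)}=wz^{-1}$, whence $c_{rn-1}=\langle\sigma^{rn-1}\lambda, z\alpha^\vee\rangle$. So the ``bookkeeping'' reduces to locating $z\alpha$ (for $\alpha\in E_w$, $w\in\prescript{I_0}{}{W}$) relative to $\phi_L^+$ and $I_0$, which is where the $\mathcal{Z}_0$-ampleness inequalities enter. This is indeed the heart of the matter in \cite{MR3989256}, and your identification of it as the main obstacle is accurate.
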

\begin{proof}
See \cite[Proposition 3.2.3]{MR3989256}.
\end{proof}

\subsection{$G\Zip$ associated to the universal abelian scheme}
In this subsection, we specialize our discussion to the Siegel case. Recall that the Siegel modular variety $\Sh$ is a smooth scheme over $k = \mathbb{F}_p$. We denote by $\pi : Y_{I_0} \rightarrow \Sh$ the Siegel flag bundle of type $I_0 \subset I$. Recall that $\pi$ extends to the toroidal compactifications $\pi : Y_{I_0}^{\tor}  \rightarrow \Sh^{\tor}$. We also have a minimal compactification $\Sh^{\min}$ for the Shimura variety but not for the flag bundle. The goal of this subsection is to define the maps $\zeta$ and $\zeta_{I_0}$. We need some results on the Hodge and  conjugate filtrations of abelian schemes. We recall a result due to Deligne and Illusie.
\begin{proposition}
Let $S$ be a scheme of characteristic $p$. Let $f : A \rightarrow S$ be an abelian scheme over $S$. Consider the Hodge to de Rham spectral sequence
\begin{equation*}
E_2^{i,j} = R^jf_*\Omega^i_{A/S} \Rightarrow  H^{i+j}_{\dR}(A/S).
\end{equation*}
and the conjugate spectral sequence
\begin{equation*}
{E^{\prime}_1}^{i,j} = R^if_*(\mathcal{H}^j(\Omega^{\bullet}_{A/S})) \Rightarrow  H^{i+j}_{\dR}(A/S).
\end{equation*}
Then
\begin{enumerate}
\item $E_2^{i,j}$ degenerates at page $2$,
\item ${E^{\prime}_1}^{i,j}$ degenerates at page $1$.
\end{enumerate}
\end{proposition}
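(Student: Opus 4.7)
The plan is to exploit the group structure of the abelian scheme $f : A \to S$ to reduce both degeneration assertions to a rank comparison, following the classical computation of Berthelot--Breen--Messing. Translation invariance gives $\Omega^1_{A/S} \simeq f^*\omega$ where $\omega := e^*\Omega^1_{A/S}$ is the Hodge bundle (already used in the excerpt for the proof that $\Omega \simeq \pi_*\Omega^1_{A/\Sh}$). Taking exterior powers and applying the projection formula yields
$$R^j f_*\Omega^i_{A/S} \simeq \Lambda^i\omega \otimes_{\mathcal{O}_S} R^j f_*\mathcal{O}_A.$$
The next input I would use is the Künneth-type identification $R^j f_*\mathcal{O}_A \simeq \Lambda^j R^1 f_*\mathcal{O}_A$, proved by pulling back along the multiplication map $m : A\times_S A \to A$ and the diagonal, using the Eilenberg--Zilber isomorphism and induction on $j$. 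Consequently the Hodge cohomology sheaves are exterior in both indices:
$$R^j f_*\Omega^i_{A/S} \simeq \Lambda^i\omega \otimes \Lambda^j\omega^{\vee}.$$

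For the Hodge--de Rham spectral sequence, I would combine this with the de Rham analogue $\mathcal{H}^n_{\dR}(A/S) \simeq \Lambda^n \mathcal{H}^1_{\dR}(A/S)$, proved again by applying the Künneth formula for de Rham cohomology to $m$ together with the coassociativity of the comultiplication. Since $\mathcal{H}^1_{\dR}(A/S)$ fits into the locally split extension $0 \to \omega \to \mathcal{H}^1_{\dR} \to \omega^{\vee} \to 0$ of definition~\ref{prop5}, the exterior power $\Lambda^n \mathcal{H}^1_{\dR}$ inherits a three-step filtration whose graded pieces are exactly the $\Lambda^i\omega \otimes \Lambda^j\omega^{\vee}$ with $i+j=n$. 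A rank count therefore forces the abutment of the Hodge--de Rham spectral sequence to coincide with the direct sum of its $E_1$-terms, which is only possible if every differential vanishes; this is the claimed degeneration (at page $2$, up to the standard convention of re-indexing $E_1 = E_2$ once one knows $d_1 = 0$).

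For the conjugate spectral sequence, I would apply the Cartier isomorphism, valid since $A \to S$ is smooth: $\mathcal{H}^j(\Omega^{\bullet}_{A/S}) \simeq \Omega^j_{A^{(p)}/S}$ after pulling through the relative Frobenius. This identifies the $E_1$-page of the conjugate spectral sequence of $A/S$ with the Hodge cohomology of the Frobenius twist $f^{(p)} : A^{(p)} \to S$, which is again an abelian scheme. The degeneration then follows by applying the first part of the proof to $A^{(p)}$.

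The main obstacle is not conceptual but bibliographic: each of the three ingredients (the exterior-algebra description of $R^{\bullet}f_*\mathcal{O}_A$, its de Rham analogue, and the Cartier isomorphism) is a nontrivial classical input. In this excerpt, the statement is attributed directly to Deligne--Illusie \cite{MR894379}, and the cleanest route is to cite that reference rather than reproduce the proof; the outline above is what one would write to make the result self-contained.
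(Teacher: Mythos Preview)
Your outline is correct and follows the classical Berthelot--Breen--Messing argument (translation invariance plus the exterior-algebra structure of both Hodge and de Rham cohomology, then a rank count; Cartier for the conjugate side). The paper itself gives no argument at all beyond the citation to \cite[Corollaire 2.4 and Remarques 2.6 (iv)]{MR894379}, so your proposal is strictly more detailed than what appears there, and your closing remark that one should simply cite Deligne--Illusie is exactly what the paper does. The only quibble is your parenthetical about the $E_1$/$E_2$ convention: the paper's choice to index the Hodge--de Rham spectral sequence starting at $E_2$ is just a labeling convention, not a consequence of knowing $d_1 = 0$, so that aside is slightly muddled but harmless.
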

\begin{proof}
See \cite[Corollaire 2.4 and Remarques 2.6 (iv)]{MR894379}.
\end{proof}
\begin{definition}
Let $S$ be a scheme of characteristic $p$. Let $f : A \rightarrow S$ be an abelian scheme over $S$. The Hodge filtration of $A$ over $S$ is the two-step underlying filtration on $H^1_{\dR}(A/S)$ coming from the degeneration of the Hodge to de Rham spectral sequence
\begin{equation*}
0 \rightarrow \pi_*{\Omega^1_{A/S}} \rightarrow H^1_{\dR}(A/S) \rightarrow R^1\pi_*\mathcal{O}_A \rightarrow 0.
\end{equation*}
\end{definition}
\begin{definition}
The conjugate filtration is the two-step underlying filtration on $H^1_{\dR}(A/S)$ coming from the degeneration of the conjugate spectral sequence
\begin{equation*}
0 \rightarrow R^1\pi_*{\mathcal{H}^0(\Omega^{\bullet}_{A/S})} \rightarrow H^1_{\dR}(A/S) \rightarrow \pi_*{\mathcal{H}^1(\Omega^{\bullet}_{A/S})} \rightarrow 0.
\end{equation*}
\end{definition}
The Hodge and the conjugate filtration are related on their graded pieces by the Cartier isomorphism which of we recall the definition.
\begin{definition}
Let $S$ be a scheme of characteristic $p$ and $f : A \rightarrow S$ be a smooth morphism\footnote{Such as an abelian scheme $A$ over $S$.}. The Cartier morphism is a map of graded algebras
\begin{equation*}
C^{-1} : \bigoplus_{i} \Omega^{i}_{A^{(p)}/S} \rightarrow \bigoplus_{i}\mathcal{H}^{i}(F_*\Omega^{\bullet}_{A/S}),
\end{equation*}
where $F : A \rightarrow A^{(p)}$ is the relative geometric Frobenius of $A$ over $S$. It is enough to define it in degree $0$ and $1$ and then use the graded algebra structure to extend it. In degree $0$, it is the map $F^{*} : \mathcal{O}_{A^{(p)}} \rightarrow F_*\mathcal{O}_{A}$. In degree $1$, it is a map 
\begin{equation*}
\Omega^{1}_{A^{(p)}/S} \rightarrow \mathcal{H}^{1}(F_*\Omega^{\bullet}_{A/S})
\end{equation*}
coming from the $S$-derivation $\delta : \mathcal{O}_{A^{(p)}} \rightarrow \mathcal{H}^{1}(F_*\Omega^{\bullet}_{A/S})$ satisfying (we use the isomorphism $\mathcal{O}_{A^{(p)}} = \mathcal{O}_A \otimes_{\mathcal{O}_S,F^*}\mathcal{O}_S$)
\begin{enumerate}
\item $\delta(fs\otimes s^\prime) = \delta(f\otimes s^p s^\prime)$,
\item $\delta(fg\otimes s^\prime) = f^p\delta(g \otimes s) + g^p\delta(f\otimes s)$,
\end{enumerate}
for all $f,g \in \mathcal{O}_{A}$ and $s,s^\prime \in \mathcal{O}_S$. If $f \in \mathcal{O}_A$ and $s \in \mathcal{O}_S$, we define $\delta(f\otimes s)$ to be the cohomology class of $sf^{p-1}df$.
\end{definition}
\begin{proposition}[\cite{MR84497}]
The Cartier morphism $C^{-1}$ is an isomorphism and it satisfies
\begin{enumerate}
\item $C^{-1}(1) = 1$,
\item $C^{-1}(w \wedge w^\prime) = C^{-1}(w) \wedge C^{-1}(w^\prime)$ for all $w \in \Omega^{i}_{A^{(p)}/S}$,  $w^\prime \in \Omega^{i^\prime}_{A^{(p)}/S}$,
\item $C^{-1}(d(f\otimes 1)) = \left[ f^{p-1}df \right]$.
\end{enumerate}
\end{proposition}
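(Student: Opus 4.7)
The plan is to first verify that the map $\delta : \mathcal{O}_{A^{(p)}} \to \mathcal{H}^1(F_*\Omega^\bullet_{A/S})$ is well-defined, i.e. that the formula $\delta(f \otimes s) := [s f^{p-1} df]$ gives a consistent additive map satisfying the two axioms (i) and (ii) in the definition of $\delta$. Additivity is the only subtle point: one lifts $f$ and $g$ to characteristic zero (or works with a formal indeterminate), notes that over $\Z$ one has the identity
\begin{equation*}
(f+g)^{p-1} d(f+g) - f^{p-1}df - g^{p-1}dg = d\!\left(\frac{(f+g)^p - f^p - g^p}{p}\right),
\end{equation*}
where the right-hand side lies in $p \Z$-integral coefficients thanks to $p \mid \binom{p}{k}$ for $0<k<p$, and then reduces modulo $p$ to conclude that $\delta(f+g) - \delta(f) - \delta(g)$ is exact. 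Multiplicativity (the Leibniz rule modulo $p$) is a direct expansion. This produces $C^{-1}$ in degrees $0$ and $1$; property (iii) follows from the very definition, property (i) is immediate from $F^*(1)=1$, and property (ii) is then imposed by extending multiplicatively, using the graded-commutative algebra structure on $\bigoplus_i \mathcal{H}^i(F_*\Omega^\bullet_{A/S})$ coming from the wedge product.

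Next I would check that $C^{-1}$ is indeed well-defined as a map of graded algebras. Because $\Omega^\bullet_{A^{(p)}/S}$ is the free graded-commutative $\mathcal{O}_{A^{(p)}}$-algebra on $\Omega^1_{A^{(p)}/S}$ modulo $d^2 = 0$, the key verification is that the images $\delta(f\otimes 1) = [f^{p-1}df]$ satisfy the same relations, i.e. wedge-squaring to zero in cohomology and behaving well under wedge. This is a short cohomological check once $\delta$ is shown to be a derivation.

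The heart of the argument is showing $C^{-1}$ is an isomorphism, and this is purely local on $A$. Since $f : A \to S$ is smooth of relative dimension $g$ (it is an abelian scheme), étale-locally on $A$ there exist coordinates, i.e. an étale morphism $A \supset U \to \mathbb{A}^g_S$. The construction of $C^{-1}$ is functorial under étale base change of the source (because $F$ is compatible with étale maps in characteristic $p$: étale morphisms are invariant under Frobenius pullback), so one reduces to the case $A = \mathbb{A}^g_S$. A Künneth/tensor-product argument in turn reduces to the case $g = 1$, i.e. to checking the statement for the de Rham complex of $R[x]$ over $R$ where $R = \mathcal{O}_S$ is of characteristic $p$.

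The main obstacle, and the only genuine calculation, is this last step: explicitly computing $\mathcal{H}^0$ and $\mathcal{H}^1$ of the two-term complex $R[x] \xrightarrow{d} R[x]\, dx$. For this one writes any polynomial in the basis $(x^n)_{n\geq 0}$ and observes that $d(x^{n+1}) = (n+1) x^n dx$, so $x^n dx$ is exact precisely when $n \not\equiv p-1 \pmod p$; this gives $\mathcal{H}^0 = R[x^p]$ (matching $\mathcal{O}_{\mathbb{A}^1_S{}^{(p)}}$ via $F^*$) and $\mathcal{H}^1$ free of rank one over $R[x^p]$ with generator $[x^{p-1}dx]$, matching $F^* \Omega^1_{\mathbb{A}^1_S{}^{(p)}/S}$ with generator $d(x\otimes 1) = d(x^p)$ sent to $[x^{p-1}dx]$ under $C^{-1}$. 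Combining with the Künneth reduction concludes the proof.
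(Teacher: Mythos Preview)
The paper does not actually give a proof of this proposition: it is stated with a citation to Cartier's original paper \cite{MR84497} and used as a black box. Your proposal therefore cannot be compared to a proof in the paper, but it is worth noting that what you have written is precisely the classical argument (as found e.g.\ in Katz's account or in \cite{MR894379}): verify that $\delta$ is an $S$-derivation via the universal polynomial identity for $(f+g)^{p-1}d(f+g)$, extend multiplicatively to the exterior algebra, reduce to $\mathbb{A}^g_S$ by \'etale-local coordinates and the compatibility of $C^{-1}$ with \'etale base change, then reduce to $\mathbb{A}^1_S$ by K\"unneth, and finish with the explicit computation of the cohomology of $R[x]\xrightarrow{d}R[x]\,dx$.

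Two small remarks on your write-up. First, in the last paragraph you write ``matching $F^*\Omega^1_{\mathbb{A}^1_S{}^{(p)}/S}$'': the source of $C^{-1}$ is $\Omega^1_{A^{(p)}/S}$ as an $\mathcal{O}_{A^{(p)}}$-module, so the $F^*$ is spurious. Second, the equality ``$d(x\otimes 1)=d(x^p)$'' is notationally misleading: $d(x\otimes 1)$ lives in $\Omega^1_{A^{(p)}/S}$, and property (iii) sends it to the class $[x^{p-1}dx]$ in $\mathcal{H}^1(F_*\Omega^\bullet_{A/S})$; there is no literal identification with $d(x^p)$ (which is zero as a closed form in $F_*\Omega^1$). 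Neither of these affects the correctness of the argument.
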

\vspace{0.2cm}
The Hodge filtration and the conjugate filtration of an abelian scheme $f : A \rightarrow S$ of relative dimension $g$ can be seen as a $P$-reduction $\mathcal{I}$ and a $Q$-reduction $\mathcal{I}$ of the $G = \Sp_{2g}$-torsor $H^1_{\dR}(A/S)$ where $P$ and $Q$ are the maximal parabolic subgroups associated\footnote{See the paragraph after definition \ref{def8}.} to the cocharacter datum $(\Sp_{2g}, \mu)$. With the Cartier isomorphism, we can construct an isomorphism 
\begin{equation*}
\psi : {(\varphi)}^*(\mathcal{I}_{P}/U) \rightarrow \mathcal{I}_{Q}/V
\end{equation*}
of $M$-torsors. In other words, we can construct a zip $(H^1_{\dR}(A/S),\mathcal{I}_{P},\mathcal{I}_{Q},\psi)$ over $S$ of type $\mathcal{Z} = (G,P,L,Q,M,\varphi)$.
\begin{definition}
The morphism
\begin{equation*}
\zeta : \Sh \rightarrow G\Zip^{\mathcal{Z}}
\end{equation*}
is the classifying map of the universal zip $\underline{I} = (\mathcal{H}^1_{\dR},\mathcal{I}_{P},\mathcal{I}_{Q},\psi)$ associated to the universal abelian scheme $f : A \rightarrow \Sh$ over $\Sh$. For all $w \in \prescript{I}{}{W}$, we define the locally closed subscheme $\Sh_w := \zeta^{-1}(G\Zip^{\mathcal{Z}}_w)$. 
\end{definition}
\vspace{0.2cm}
Over $Y_{I_0}$, we have a universal $P_{I_0}$-reduction $\mathcal{J}$ of the $P$-torsor $\mathcal{I}_P$ corresponding to the Hodge filtration. The pair $(\underline{I},\mathcal{J})$ is a zip flag of type $(\mathcal{Z},I_0)$.
\begin{definition}
The morphism
\begin{equation*}
\zeta_{I_0} : Y_{I_0} \rightarrow G\ZipFlag^{\mathcal{Z},I_0}
\end{equation*}
is the classifying map of the universal zip flag $(\underline{I},\mathcal{J})$ of type $(\mathcal{Z},I_0)$. For all $w \in \prescript{I_0}{}{W}$, we define the locally closed subscheme ${(Y_{I_0})}_w := \zeta_{I_0}^{-1}(G\ZipFlag^{\mathcal{Z}}_w)$. 
\end{definition}
\begin{proposition}
The morphisms $\zeta$ and $\zeta_{I_0}$ are smooth and surjective.
\end{proposition}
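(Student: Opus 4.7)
The plan is to establish the statement for $\zeta$ first, then deduce it for $\zeta_{I_0}$ from a $2$-cartesian square.

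For the smoothness of $\zeta$, I would invoke as a black box the main theorem of Zhang's thesis \cite{MR3759007}, which is precisely this fact. The underlying idea is a comparison of infinitesimal deformations on both sides: on the Siegel side, deformations of an abelian scheme with its principal polarization and prime-to-$p$ level structure are controlled, via Serre-Tate and Grothendieck-Messing theory, by the Kodaira-Spencer isomorphism $\Sym^2\Omega \simeq \Omega^1_{\Sh}$ of proposition \ref{prop19}; on the stack side, deformations of a zip are governed by the Lie algebra of $E_{\mathcal{Z}}$ acting on $\mathfrak{g}$, via the quotient presentation $G\Zip^{\mathcal{Z}} = \lfloor E_{\mathcal{Z}} \backslash G \rfloor$. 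A direct tangent-space computation then shows that the differential of $\zeta$ is surjective at every geometric point. For the surjectivity of $\zeta$, smoothness implies that the image is open in the finite stack $G\Zip^{\mathcal{Z}}$, and the classical non-emptiness of every Ekedahl-Oort stratum of the Siegel variety ensures that each point $[w]$ of the target lies in the image.

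For $\zeta_{I_0}$, the key step is to check that the square
\begin{equation*}
\begin{tikzcd}
Y_{I_0} \arrow[r,"\zeta_{I_0}"] \arrow[d,"\pi"] & G\ZipFlag^{\mathcal{Z},I_0} \arrow[d,"\pi"] \\
\Sh \arrow[r,"\zeta"] & G\Zip^{\mathcal{Z}}
\end{tikzcd}
\end{equation*}
is $2$-cartesian. This should be immediate from the definitions: by definition \ref{def2}, an $S$-point of $Y_{I_0}$ is the data of an $S$-point of $\Sh$ together with a $P_0$-reduction of the associated Hodge $P$-torsor; on the other hand an $S$-point of $G\ZipFlag^{\mathcal{Z},I_0}$ lying above a prescribed zip is precisely a $P_0$-reduction of the underlying $P$-torsor of that zip. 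Hence an $S$-point of the fibered product $\Sh \times_{G\Zip^{\mathcal{Z}}} G\ZipFlag^{\mathcal{Z},I_0}$ carries exactly the same data as an $S$-point of $Y_{I_0}$. Since both smoothness and surjectivity of morphisms of algebraic stacks are stable under arbitrary base change, the conclusion for $\zeta_{I_0}$ follows from the conclusion for $\zeta$.

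The main obstacle is thus the smoothness of $\zeta$ itself, which is not formal and rests on Zhang's careful deformation-theoretic comparison; by contrast, the surjectivity of $\zeta$ via non-emptiness of EO strata and the reduction of $\zeta_{I_0}$ to $\zeta$ via the cartesian square are essentially bookkeeping.
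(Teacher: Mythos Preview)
Your proposal is correct and aligns with the paper's own proof, which simply cites Zhang \cite{MR3759007} for smoothness and \cite{MR1827027} (non-emptiness of EO strata) for surjectivity. Your explicit $2$-cartesian square reducing the case of $\zeta_{I_0}$ to that of $\zeta$ is a clean addition that the paper leaves implicit in its blanket citation; it is exactly the right way to handle $\zeta_{I_0}$ once $\zeta$ is known.
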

\begin{proof}
See \cite[Theorem 3.1.2]{MR3759007} for the smoothness. See \cite{MR1827027} for the surjectivity.
\end{proof}
As generalizations lift along flat morphisms\footnote{See \cite[\href{https://stacks.math.columbia.edu/tag/03HV}{Tag 03HV}]{stacks-project}}, we deduce in particular that we have the following closure relations.
\begin{enumerate}
\item $\text{For all } w \in \prescript{I}{}{W} \text{, } \overline{\Sh}_w =  \underset{w^\prime \preccurlyeq w \text{, } w^\prime \in \prescript{I}{}{W} }{\bigsqcup} \Sh_{w^\prime}$.
\item $\text{For all } w \in \prescript{I_0}{}{W} \text{, } \overline{{(Y_{I_0})}}_w =  \underset{w^\prime \preccurlyeq w \text{, } w^\prime \in \prescript{I_0}{}{W} }{\bigsqcup} {(Y_{I_0})}_{w^\prime}$.
\end{enumerate}
We give a statement about the extension of these results on the toroidal compactifications.
\begin{proposition}
The universal zip $\underline{I}$ extends to a zip of type $\mathcal{Z}$ over the toroidal compactification $\Sh^{\tor}$ of $\Sh$. The corresponding classifying morphism $\zeta^{\tor} $ extends the morphism $\zeta$:
\begin{equation*}
\begin{tikzcd}
\Sh^{\tor} \arrow[r,"\zeta^{\tor}"] & G\Zip^{\mathcal{Z}} \\
\Sh\arrow[u] \arrow[ru,"\zeta",swap] & 
\end{tikzcd}
\end{equation*}
\end{proposition}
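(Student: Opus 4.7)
The plan is to construct a zip of type $\mathcal{Z}$ over $\Sh^{\tor}$ by systematically replacing de Rham cohomology of the universal abelian scheme with the logarithmic de Rham cohomology of the compactified semi-abelian scheme, and then invoke the universal property of $G\Zip^{\mathcal{Z}}$. The four pieces of data that must be produced are the $G$-torsor, the $P$-reduction, the $Q$-reduction, and the isomorphism $\psi^{\tor}$ of $M$-torsors on associated graded pieces.

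For the $G$-torsor and the $P$-reduction, the work has essentially been done in definition \ref{prop5}: item (6) provides the $\Sp_{2g}$-torsor $\mathcal{H}^1_{\log-\dR}$ extending $\mathcal{H}^1_{\dR}$, and item (7) shows that the logarithmic Hodge-de Rham spectral sequence attached to $\bar{f}^{\tor} : \bar{A}^{\tor} \rightarrow \Sh^{\tor}$ degenerates at page $1$. The underlying filtration of this degeneration gives the extension $\Omega^{\tor} \subset \mathcal{H}^1_{\log-\dR}$ of the Hodge filtration; combined with the (log) symplectic pairing, this identifies $\Omega^{\tor}$ as a totally isotropic subbundle of maximal rank, producing the required $P$-reduction $\mathcal{I}_P^{\tor}$.

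For the $Q$-reduction, I would apply the logarithmic analogue of Deligne--Illusie to the proper log-smooth morphism $\bar{f}^{\tor}$: the logarithmic conjugate spectral sequence
\begin{equation*}
{E^{\prime}_1}^{i,j} = R^i\bar{f}^{\tor}_* \mathcal{H}^j(\bar{\Omega}^{\bullet}_{\bar{A}^{\tor}/\Sh^{\tor}}) \Rightarrow \mathcal{H}^{i+j}_{\log-\dR}
\end{equation*}
degenerates at page $1$ (this is precisely the log version of the statement invoked in the abelian case; for semi-abelian schemes with the compactifications in definition \ref{prop5} it follows from Faltings--Chai / Lan). The two-step filtration produced on $\mathcal{H}^1_{\log-\dR}$ defines the $Q$-reduction $\mathcal{I}_Q^{\tor}$. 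For the isomorphism $\psi^{\tor}$, I would invoke the logarithmic Cartier isomorphism applied to $\bar{f}^{\tor}$ in degrees $0$ and $1$, which gives a canonical identification
\begin{equation*}
\psi^{\tor} : (\varphi^n)^*(\mathcal{I}_P^{\tor}/U) \xrightarrow{\sim} \mathcal{I}_Q^{\tor}/V
\end{equation*}
of $M$-torsors, extending the construction recalled in the subsection on the Cartier morphism.

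Putting these four pieces together produces a zip $\underline{I}^{\tor} = (\mathcal{H}^1_{\log-\dR}, \mathcal{I}_P^{\tor}, \mathcal{I}_Q^{\tor}, \psi^{\tor})$ of type $\mathcal{Z}$ over $\Sh^{\tor}$; the universal property then gives the classifying morphism $\zeta^{\tor} : \Sh^{\tor} \rightarrow G\Zip^{\mathcal{Z}}$. Commutativity of the triangle is immediate because, away from $D_{\red}$, the log structures are trivial, $\bar{A}^{\tor}$ coincides with $A$, and each of the four constructions reduces to the one used to define $\zeta$. The main subtlety is ensuring that the logarithmic versions of Deligne--Illusie degeneration and the Cartier isomorphism apply to the compactified semi-abelian scheme $\bar{A}^{\tor}$: this is not formal in that one needs the log-smoothness of $\bar{f}^{\tor}$ provided by the carefully chosen toroidal compactification, plus the liftability ingredients underlying Deligne--Illusie. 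Once those are granted, each intermediate object lives functorially over $\Sh^{\tor}$ and the extension of $\zeta$ follows.
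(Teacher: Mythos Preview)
Your proposal is correct and sketches exactly the expected argument: extend each of the four pieces of the zip datum (the $G$-torsor, the Hodge $P$-reduction, the conjugate $Q$-reduction, and the Cartier isomorphism) to the toroidal compactification using the logarithmic de Rham machinery built into definition \ref{prop5}, and then appeal to the universal property. The paper itself does not carry this out; it simply cites \cite[Theorem 6.2.1]{MR3989256} (Goldring--Koskivirta), where precisely this construction is performed. So you have reconstructed the content of the cited proof rather than diverged from it.

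One minor remark: you correctly flag the main technical input, namely that the logarithmic Deligne--Illusie degeneration and the logarithmic Cartier isomorphism apply to $\bar{f}^{\tor}$. In the Siegel case this is available from Faltings--Chai and Lan, as you say; in the reference the authors work in the more general Hodge-type setting and have to be a bit more careful, but your sketch is accurate for the situation at hand.
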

\begin{proof}
See \cite[Theorem 6.2.1]{MR3989256}.
\end{proof}
\begin{corollary}
The universal zip flag $(\underline{I},\mathcal{J})$ extends to a zip flag of type $(\mathcal{Z},I_0)$ over the toroidal compactification $\Sh^{\tor}$ of $\Sh$. The corresponding classifying morphism $\zeta_{I_0}^{\tor} $ extends the morphism $\zeta_{I_0}$
\begin{equation*}
\begin{tikzcd}
Y_{I_0}^{\tor} \arrow[r,"\zeta_{I_0}^{\tor}"] & G\ZipFlag^{\mathcal{Z},I_0} \\
Y_{I_0} \arrow[u] \arrow[ru,"\zeta_{I_0}",swap] & 
\end{tikzcd}
\end{equation*}
\end{corollary}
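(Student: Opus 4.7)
The strategy is to bootstrap from the preceding proposition, which already provides the extension $\zeta^{\tor} : \Sh^{\tor} \to G\Zip^{\mathcal{Z}}$ of the classifying map of the universal zip. What remains is essentially the construction of the auxiliary $P_0$-reduction and the packaging of the data into a zip flag.

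First, I would invoke the previous proposition to pull back the universal extended zip $\underline{I}^{\tor} = (\mathcal{H}^1_{\log-\dR}, \mathcal{I}_P^{\tor}, \mathcal{I}_Q^{\tor}, \psi^{\tor})$ from $\Sh^{\tor}$ along the structural morphism $\pi : Y_{I_0}^{\tor} \to \Sh^{\tor}$; this yields a zip of type $\mathcal{Z}$ over $Y_{I_0}^{\tor}$ restricting to the pullback of $\underline{I}$ on $Y_{I_0}$. Next, as was already observed in the paragraph preceding definition \ref{def9}, the flag bundle $Y_{I_0}^{\tor}$ is constructed from the extension of the Hodge $P$-torsor to $\Sh^{\tor}$ (provided by definition \ref{prop5} (7)) by the base change that parametrizes $P_0$-reductions; it therefore carries a tautological universal $P_0$-reduction $\mathcal{J}^{\tor}$ of $\pi^*\mathcal{I}_P^{\tor}$, which by construction restricts to $\mathcal{J}$ over the open subscheme $Y_{I_0} \subset Y_{I_0}^{\tor}$.

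Combining the pulled-back zip with this tautological $P_0$-reduction gives a zip flag $(\pi^*\underline{I}^{\tor}, \mathcal{J}^{\tor})$ of type $(\mathcal{Z}, I_0)$ over $Y_{I_0}^{\tor}$ extending $(\underline{I}, \mathcal{J})$. The classifying map $\zeta_{I_0}^{\tor} : Y_{I_0}^{\tor} \to G\ZipFlag^{\mathcal{Z}, I_0}$ then extends $\zeta_{I_0}$ by construction: restricting $\zeta_{I_0}^{\tor}$ to $Y_{I_0}$ recovers the classifying map of $(\underline{I}, \mathcal{J})$ by the universal property of $G\ZipFlag^{\mathcal{Z}, I_0}$, and the compatibility with $\zeta^{\tor}$ under the forgetful morphism $G\ZipFlag^{\mathcal{Z}, I_0} \to G\Zip^{\mathcal{Z}}$ follows from the commutativity of the diagram
\begin{equation*}
\begin{tikzcd}
Y_{I_0}^{\tor} \arrow[r, "\zeta_{I_0}^{\tor}"] \arrow[d, "\pi"] & G\ZipFlag^{\mathcal{Z}, I_0} \arrow[d] \\
\Sh^{\tor} \arrow[r, "\zeta^{\tor}"] & G\Zip^{\mathcal{Z}}
\end{tikzcd}
\end{equation*}
which is just the translation into classifying maps of the fact that $\pi^*\underline{I}^{\tor}$ is the underlying zip of $(\pi^*\underline{I}^{\tor}, \mathcal{J}^{\tor})$.

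There is really no obstacle here: the content has already been absorbed in the preceding proposition (which handles the hard extension across the boundary divisor using Mumford's semi-abelian compactification and the logarithmic Hodge-de Rham degeneration). The only mild subtlety is to make sure that the flag bundle $Y_{I_0}^{\tor}$ used here is indeed the one built from the extended $P$-torsor on $\Sh^{\tor}$, rather than some other partial compactification; but this is precisely how $Y_{I_0}^{\tor}$ was defined in the discussion following definition \ref{def2}, so the corollary is immediate.
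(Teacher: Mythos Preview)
Your proposal is correct and matches the paper's approach: the paper gives no proof for this corollary, treating it as an immediate consequence of the preceding proposition together with the construction of $Y_{I_0}^{\tor}$ from the extended $P$-torsor on $\Sh^{\tor}$, which is exactly the argument you spell out.
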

\begin{proposition}
The morphisms $\zeta^{\tor}$ and $\zeta^{\tor}_{I_0}$ are smooth.
\end{proposition}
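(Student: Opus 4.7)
The plan is to first establish smoothness of $\zeta^{\tor}$ and then deduce smoothness of $\zeta^{\tor}_{I_0}$ by a formal base change argument, so that the bulk of the work lies on the Siegel variety itself.

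For the reduction, I would consider the square
\begin{equation*}
\begin{tikzcd}
Y_{I_0}^{\tor} \arrow[r, "\zeta_{I_0}^{\tor}"] \arrow[d, "\pi"] & G\ZipFlag^{\mathcal{Z}, I_0} \arrow[d, "\tilde{\pi}"] \\
\Sh^{\tor} \arrow[r, "\zeta^{\tor}"] & G\Zip^{\mathcal{Z}}
\end{tikzcd}
\end{equation*}
where the right-hand vertical map forgets the $P_0$-reduction from a zip flag. I would check from the moduli descriptions given in Definition \ref{def2} and in the construction of $G\ZipFlag^{\mathcal{Z},I_0}$ that this square is $2$-cartesian: giving an $S$-point of $Y_{I_0}^{\tor}$ is the same as giving an $S$-point of $\Sh^{\tor}$ together with a $P_0$-reduction of the $P$-part of the corresponding zip, and this is precisely the fibered product description. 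Since smoothness of representable morphisms is stable under base change, once $\zeta^{\tor}$ is smooth, so is $\zeta^{\tor}_{I_0}$.

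For the smoothness of $\zeta^{\tor}$, the restriction to $\Sh$ is smooth by Zhang's theorem, so only boundary points need to be analyzed. My approach would be to use the Faltings--Chai local model at a boundary point: étale-locally on $\Sh^{\tor}$ around a stratum attached to a rational polyhedral cone $\sigma$, the scheme is isomorphic to a product of a lower-genus Siegel variety with an affine toric piece encoding the degeneration, and the universal semi-abelian scheme is obtained by the Mumford construction. On such a chart the log de Rham cohomology $\mathcal{H}^1_{\log-\dR}$ of Definition \ref{prop5} is explicitly trivialized, the Hodge filtration and the conjugate filtration are explicit, and the Cartier isomorphism extends via the universal property of log-differentials. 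This gives local sections of the smooth atlas $G \to G\Zip^{\mathcal{Z}}$ lifting $\zeta^{\tor}$ on the chart.

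To conclude, I would verify the infinitesimal criterion for smoothness: given a square-zero thickening $S_0 \hookrightarrow S$ and a commutative diagram sending $S_0 \to \Sh^{\tor}$ and $S \to G\Zip^{\mathcal{Z}}$, we must lift to $S \to \Sh^{\tor}$. Through the local model this lifting problem translates into a Kodaira--Spencer computation: the first-order deformations of the zip structure (i.e., of the Hodge filtration on $\mathcal{H}^1_{\log-\dR}$) are classified by $\Sym^2 \Omega^{\tor}$, which by the logarithmic Kodaira--Spencer isomorphism of Proposition \ref{prop19} is exactly the tangent sheaf $\Omega^1_{\Sh^{\tor}}(\log D_{\red})^\vee$ of $\Sh^{\tor}$ relative to the boundary. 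Surjectivity of the differential then reduces to the fact that every such first-order deformation is realized by a deformation of the semi-abelian scheme, which is built into the Mumford construction. The main obstacle in this plan is the careful bookkeeping in the Faltings--Chai chart to show that the $G$-torsor $\mathcal{H}^1_{\log-\dR}$ degenerates in a manner compatible with the log Cartier isomorphism, since it is this compatibility that ensures the local lifting of zip structures reduces to the Kodaira--Spencer computation rather than to a genuinely new obstruction at the boundary.
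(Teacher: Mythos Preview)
The paper's own proof is a bare citation to \cite[Theorem 1.2]{andreatta2021mod}, so there is no in-text argument to compare against; your proposal is an attempt to sketch what such a proof might contain.

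Your reduction of $\zeta^{\tor}_{I_0}$ to $\zeta^{\tor}$ via the $2$-cartesian square is correct and is exactly how one deduces the flag version from the base version.

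There is, however, a genuine gap in your sketch for $\zeta^{\tor}$. You write that ``first-order deformations of the zip structure (i.e., of the Hodge filtration on $\mathcal{H}^1_{\log-\dR}$) are classified by $\Sym^2 \Omega^{\tor}$'' and then match this with the log tangent sheaf via Kodaira--Spencer. But a zip is \emph{not} just a Hodge filtration: it is the triple (Hodge filtration, conjugate filtration, Cartier isomorphism between their Frobenius-twisted graded pieces). The tangent space to $G\Zip^{\mathcal{Z}} = \lfloor E_{\mathcal{Z}}\backslash G\rfloor$ at a point is the cokernel of the infinitesimal $E_{\mathcal{Z}}$-action on $\mathfrak{g}$, and this encodes the interaction of all three pieces of data, not only the $P$-reduction. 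So the lifting problem in the infinitesimal criterion does not reduce to ``realize a deformation of the Hodge filtration by a deformation of the semi-abelian scheme''; one must simultaneously control the conjugate filtration and the Cartier map.

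What makes the actual argument (Zhang on the interior, Andreatta at the boundary) work is the crystalline nature of de Rham cohomology: the Gauss--Manin connection on $\mathcal{H}^1_{\log-\dR}$ is horizontal for Frobenius, so the conjugate filtration and Cartier isomorphism are rigidly determined once the underlying (log) abelian scheme is deformed. One then computes that the induced map from the log tangent sheaf of $\Sh^{\tor}$ to the tangent of the zip stack is surjective. Your Faltings--Chai/Mumford local model strategy is the right setting for this computation at the boundary, but the step you flag as ``bookkeeping'' is precisely where the crystalline input enters and is the heart of the matter, not a routine verification. (Minor point: Kodaira--Spencer gives $\Omega^1_{\Sh^{\tor}}(\log D_{\red}) \simeq \Sym^2\Omega^{\tor}$, so the log tangent sheaf is $\Sym^2(\Omega^{\tor})^\vee$, not $\Sym^2\Omega^{\tor}$.)
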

\begin{proof}
See \cite[Theorem 1.2.]{andreatta2021mod}.
\end{proof}

\section{Positive automorphic line bundles and Kodaira vanishing}\label{part5}
Let $\Sh^{\tor}$ be a smooth and projective toroidal compactification of the special fiber of the Siegel modular variety as in definition \ref{prop5}. Let $I_0 \subset I$ be a subset and $\pi : Y_{I_0}^{\tor} \rightarrow \Sh^{\tor}$ be the associated flag bundle that parametrizes $P_0$-reduction of the Hodge filtration over $\Sh^{\tor}$. In the last section, we have defined smooth morphisms
\begin{equation*}
\zeta^{\tor} : \Sh^{\tor} \rightarrow G\Zip^{\mathcal{Z}_{\mu}}
\end{equation*}
and 
\begin{equation*}
\zeta^{\tor}_{I_0} : Y^{\tor}_{I_0} \rightarrow G\ZipFlag^{\mathcal{Z}_{\mu},P_0},
\end{equation*}
that allowed us to construct generalized Hasse invariants on the stratification of $\Sh^{\tor}$ and $Y_{I_0}^{\tor}$. We denote by $D_{\red}$ the normal crossing Cartier divisors supported on the boundary of $\Sh^{\tor}$. Recall\footnote{See the paragraph after definition \ref{def4}.} that there exists an effective Cartier divisor $D$ whose associated reduced divisor is $D_{\red}$ and an integer $\eta_0 > 0$ such that $\omega^{\otimes \eta}(-D)$ is ample on $\Sh^{\tor}$ for every $\eta \geq \eta_0$. To lighten our notations, we write $D,D_{\red}$ instead of $\pi^{-1}D, \pi^{-1}D_{\red}$ when no confusion is possible. Following the result of \cite{StrohPrep}, we use the generalized Hasse invariants to prove that certain line bundles $\mathcal{L}_{\lambda}$ are $D$-ample\footnote{See definition \ref{def5}.} on the flag bundle $Y^{\tor}_{I_0}$. The motivation for this notion comes from the determinant $\omega$ of the Hodge bundle $\Omega^{\tor}$ which is not ample on $\Sh^{\tor}$ but only $D$-ample. Finally, we state a Kodaira-Nakano-like vanishing theorem for $D$-ample line bundles in positive characteristic from \cite{MR1193913}.
\subsection{Positive line bundles}
We recall the main positivity notion we will need for our automorphic line bundles. In this subsection $X$ is a projective variety over $k$, a field of any characteristic.
\begin{definition}[{\cite[Chap. 1]{MR2095471}}]
Let $\mathcal{L}$ a line bundle over $X$.
\begin{enumerate}
\item $\mathcal{L}$ is ample if for any coherent module $\mathcal{F}$ over $X$, there is an integer $n_0 \geq 1$ such that for all $n\geq n_0$, the sheaf $\mathcal{F} \otimes \mathcal{L}^{\otimes n}$ is globally generated.
\item Equivalently, $\mathcal{L}$ is ample if for any subvariety $V \subset X$, we have
\begin{equation*}
c_1(\mathcal{L})^{\dim V} \cdot [V] > 0,
\end{equation*}
where $c_1(\mathcal{L})$ denotes the first chern class of $\mathcal{L}$ and $\cdot$ the intersection product in the Chow ring of $X$.
\item Equivalently, $\mathcal{L}$ is ample if for any subvariety $V \subset X$, there is an integer $d \geq 1$, a non-zero section $s$ of $\mathcal{L}^{\otimes d}_{|V}$ and a point $x \in V$ such that $s(x) = 0$.
\item $\mathcal{L}$ is nef if for any subvariety $V \subset X$, we have $c_1(\mathcal{L})^{\dim V} \cdot [V] \geq 0$.
\item Equivalently, $\mathcal{L}$ is nef if for any subvariety $V \subset X$, there is an integer $d \geq 1$ and a non-zero section $s$ of $\mathcal{L}^{\otimes d}_{|V}$.
\item $\mathcal{L}$ is big if there is an integer $n \geq 1$ and an ample line bundle $\mathcal{A}$ such that $\mathcal{L}^{\otimes n} \otimes \mathcal{A}^{\otimes -1}$ is globally generated.
\end{enumerate}
\end{definition}
We now define the non-standard notion of $D$-ample line bundle on a pair $(X,D)$. It is a notion that appears in \cite{MR1193913} without being explicitly named. 
\begin{definition}\label{def5}
Let $D$ be an effective Cartier divisor of $X$ and $\mathcal{L}$ a line bundle over $X$. We say that $\mathcal{L}$ is $D$-ample if
\begin{equation*}
\exists \eta_0>0 \ \forall \eta \geq \eta_0 \ \mathcal{L}^{\otimes \eta}(-D) \text{ is ample.}
\end{equation*}
\end{definition}
We recall some known facts about $D$-ample line bundles. Since we have not find a reference, we reprove them.
\begin{proposition}
Le $D$ be an effective Cartier divisor of $X$ and $\mathcal{L}$ a line bundle over $X$. We have the following implication.
\begin{equation*}
\mathcal{L} \text{ is ample} \Rightarrow \mathcal{L} \text{ is } D\text{-ample.}
\end{equation*}
\end{proposition}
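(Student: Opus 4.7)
The plan is to reduce the statement to the well-known \emph{openness of the ample cone} in the Néron--Severi group: if $\mathcal{L}$ is ample and $\mathcal{M}$ is any line bundle on $X$, then $\mathcal{L}^{\otimes n} \otimes \mathcal{M}$ is ample for all $n$ sufficiently large. Taking $\mathcal{M} = \mathcal{O}_X(-D)$ would yield an integer $\eta_0$ such that $\mathcal{L}^{\otimes \eta}(-D)$ is ample for every $\eta \geq \eta_0$, which is precisely the definition of $D$-ampleness.

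To make the argument self-contained, I would first apply Serre's ampleness theorem to $\mathcal{L}$ and to the coherent sheaf $\mathcal{O}_X(-D)$ to produce an integer $n_1$ such that $\mathcal{L}^{\otimes n_1} \otimes \mathcal{O}_X(-D)$ is generated by its global sections, and in particular nef. Then, for any $\eta \geq n_1 + 1$, I would use the decomposition
$$
\mathcal{L}^{\otimes \eta}(-D) \;=\; \mathcal{L}^{\otimes (\eta - n_1)} \otimes \bigl(\mathcal{L}^{\otimes n_1}(-D)\bigr),
$$
which expresses $\mathcal{L}^{\otimes \eta}(-D)$ as the tensor product of a positive power of an ample line bundle with a globally generated line bundle. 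Since the tensor product of an ample line bundle with a nef one is again ample (a standard consequence of Kleiman's criterion), we conclude that $\mathcal{L}^{\otimes \eta}(-D)$ is ample for all $\eta \geq \eta_0 := n_1 + 1$.

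I do not anticipate any genuine obstacle: the proof consists of combining two classical ingredients (Serre's ampleness theorem and the fact ``ample $\otimes$ nef $=$ ample''), both of which hold for any projective variety over an arbitrary field. The only minor point worth flagging is that one cannot simply take $\eta_0 = n_1$, because global generation of $\mathcal{L}^{\otimes n_1}(-D)$ is strictly weaker than ampleness; one really needs to absorb at least one extra copy of $\mathcal{L}$ to upgrade nefness to ampleness.
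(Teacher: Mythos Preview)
Your proposal is correct and follows essentially the same approach as the paper. The paper's proof is the one-line observation that for $\mathcal{L}$ ample and any fixed line bundle $\mathcal{O}_X(-D)$, the twist $\mathcal{L}^{\otimes \eta}(-D)$ is ample for all sufficiently large $\eta$; you simply unpack this standard fact by producing the global generation via Serre and then using ``ample $\otimes$ nef $=$ ample''.
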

\begin{proof}
If $\mathcal{L}$ is ample, then $\mathcal{L}^{\otimes \eta}(-D) = \mathcal{L}^{\otimes \eta} \otimes \mathcal{O}_X(-D)$ must be ample for all $\eta \geq 1$ large enough.
\end{proof}
\begin{proposition}\label{prop17}
Let $\mathcal{L}$ be a line bundle over $X$. The following assertions are equivalent.
\begin{enumerate}
\item $\mathcal{L}$ is nef and big.
\item There exists an effective Cartier divisor $D$ on $X$ such that $\mathcal{L}$ is $D$-ample.
\end{enumerate}
\end{proposition}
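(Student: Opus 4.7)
The plan is to establish the two implications separately. Both directions rest on two standard tools: Kodaira's lemma, which says that a line bundle $\mathcal{L}$ is big if and only if some tensor power admits a decomposition $\mathcal{L}^{\otimes m} \simeq \mathcal{A} \otimes \mathcal{O}_X(E)$ with $\mathcal{A}$ ample and $E$ effective; and the fact that the tensor product of a nef line bundle with an ample line bundle is ample (a consequence of Seshadri's / Kleiman's ampleness criterion).

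For $(1) \Rightarrow (2)$, I would invoke Kodaira's lemma on the big line bundle $\mathcal{L}$ to produce an integer $m \geq 1$, an ample line bundle $\mathcal{A}$, and an effective divisor $E$ with $\mathcal{L}^{\otimes m} \simeq \mathcal{A} \otimes \mathcal{O}_X(E)$. Setting $D := E$, I compute for every $\eta \geq m$
\[
\mathcal{L}^{\otimes \eta}(-D) \simeq \mathcal{L}^{\otimes \eta - m} \otimes \mathcal{A}.
\]
Since $\mathcal{L}$ is nef, so is $\mathcal{L}^{\otimes \eta - m}$, and the standard fact quoted above shows that this tensor product is ample. Hence $\mathcal{L}$ is $D$-ample.

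For $(2) \Rightarrow (1)$, fix $\eta_0$ with $\mathcal{L}^{\otimes \eta_0}(-D)$ ample. Bigness is immediate from the identity
\[
\mathcal{L}^{\otimes \eta_0} \simeq \bigl(\mathcal{L}^{\otimes \eta_0}(-D)\bigr) \otimes \mathcal{O}_X(D),
\]
which expresses a positive power of $\mathcal{L}$ as the tensor product of an ample line bundle with the line bundle of an effective divisor; by the converse direction of Kodaira's lemma this forces $\mathcal{L}$ to be big. For nefness I would use Kleiman's criterion and verify that $\mathcal{L} \cdot C \geq 0$ for every irreducible curve $C \subset X$. For each $\eta \geq \eta_0$ the ampleness of $\mathcal{L}^{\otimes \eta}(-D)$ yields $\eta(\mathcal{L} \cdot C) - (D \cdot C) > 0$, that is,
\[
\mathcal{L} \cdot C > \frac{D \cdot C}{\eta},
\]
and letting $\eta \to \infty$ gives $\mathcal{L} \cdot C \geq 0$ regardless of the sign of $D \cdot C$ (the only mildly delicate point: if $C$ is contained in the support of $D$ then $D \cdot C$ may be negative, but this only makes the right-hand side tend to zero from below, still forcing the limit inequality). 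The main technical inputs are therefore Kodaira's lemma and Kleiman's criterion; no further calculation is needed.
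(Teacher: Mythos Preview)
Your proof is correct and follows essentially the same route as the paper's: both directions use that a power of a big line bundle decomposes as ample tensor effective (what you call Kodaira's lemma, which in the paper is built into the definition of big), together with the fact that nef tensor ample is ample. The only cosmetic difference is that the paper phrases the nefness argument in $(2)\Rightarrow(1)$ as a proof by contradiction rather than your limit $\eta\to\infty$, but these are the same computation.
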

\begin{proof}
Assume that there exists an effective Cartier divisor $D$ on $X$ such that $\mathcal{L}$ is $D$-ample. If $\mathcal{L}$ is not nef, then there is a curve $C \subset X$ such that the intersection product
\begin{equation*}
c_1(\mathcal{L}) \cdot [C]
\end{equation*}
is negative. It implies that the intersection product
\begin{equation*}
c_1(\mathcal{L}^{\otimes \eta}(-D))\cdot [C] = \eta \underbrace{\left( c_1(\mathcal{L})\cdot [C]\right)}_{<0} - {D}\cdot [C]
\end{equation*}
must be negative when $\eta$ is large enough, which contradicts the $D$-ampleness of $\mathcal{L}$. Moreover, since we can write $\mathcal{L}^{\otimes \eta_0}$ as a tensor product
\begin{equation*}
\mathcal{L}^{\otimes \eta_0} = \mathcal{L}^{\otimes \eta_0}(-D) \otimes \mathcal{O}_X(D)
\end{equation*}
of an ample line bundle with an effective line bundle, $\mathcal{L}$ is big. We are left to show the implication $(1) \Rightarrow (2)$. Since $\mathcal{L}$ is big, there exists an integer $n_0 \geq 1$ and an ample line bundle $\mathcal{A}$ such that $\mathcal{L}^{\otimes n_0}\otimes \mathcal{A}^{\otimes -1} = \mathcal{O}_X(D)$ with $D$ an effective divisor. In particular, the line bundle $\mathcal{L}^{\otimes n_0}(-D)$ is ample. Since $\mathcal{L}$ is nef and the tensor product of an ample line bundle with a nef line bundle is ample, the line bundle $\mathcal{L}^{\otimes n}(-D)$ is ample for all integer $n \geq n_0$.
\end{proof}
\begin{proposition}\label{prop_cone}
Let $D$ be an effective Cartier divisor and $\mathcal{L}$ a line bundle over $X$. If $\mathcal{L}$ and $\mathcal{L}^{\prime}$ are $D$-ample line bundles on $X$, then $\mathcal{L} \otimes \mathcal{L}^{\prime}$ is $D$-ample. If $\mathcal{L}^{\otimes n}$ is $D$-ample for a positive integer $n$, then $\mathcal{L}$ is $D$-ample.
\end{proposition}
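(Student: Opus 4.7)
The plan is to reduce both assertions to two standard positivity facts: (a) the tensor product of an ample line bundle with a nef line bundle is ample, and (b) a line bundle is nef if and only if some (equivalently, any) positive tensor power is nef. Both are classical consequences of the Nakai--Moishezon criterion, or equivalently of the characterization of nefness by non-negativity of degree on every curve. Combined with Proposition \ref{prop17}, which says that any $D$-ample line bundle is in particular nef, these should give both parts directly.

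For the first assertion, I would fix $\eta_0 > 0$ large enough that $\mathcal{L}^{\otimes \eta}(-D)$ is ample for all $\eta \geq \eta_0$, and decompose
\begin{equation*}
(\mathcal{L} \otimes \mathcal{L}')^{\otimes \eta}(-D) = \mathcal{L}^{\otimes \eta}(-D) \otimes (\mathcal{L}')^{\otimes \eta}.
\end{equation*}
The first factor is ample by the choice of $\eta_0$, and the second is nef: indeed Proposition \ref{prop17} guarantees that $\mathcal{L}'$ is nef, and positive tensor powers of a nef line bundle are nef by fact (b). An application of fact (a) then shows that $(\mathcal{L} \otimes \mathcal{L}')^{\otimes \eta}(-D)$ is ample for every $\eta \geq \eta_0$, which is precisely the $D$-ampleness of $\mathcal{L} \otimes \mathcal{L}'$.

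For the second assertion, I would first invoke Proposition \ref{prop17} on $\mathcal{L}^{\otimes n}$ to see that $\mathcal{L}^{\otimes n}$ is nef, hence $\mathcal{L}$ itself is nef by fact (b). Fixing $\eta_0$ such that $\mathcal{L}^{\otimes n \eta_0}(-D)$ is ample, and given any integer $m \geq n \eta_0$, I would write
\begin{equation*}
\mathcal{L}^{\otimes m}(-D) = \mathcal{L}^{\otimes (m - n \eta_0)} \otimes \mathcal{L}^{\otimes n \eta_0}(-D),
\end{equation*}
which is a tensor product of a nef and an ample line bundle, hence ample by fact (a). This exhibits $\mathcal{L}$ as $D$-ample with threshold $n \eta_0$.

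There is no real obstacle here: the argument is essentially formal once Proposition \ref{prop17} is in hand, and the only things to verify are the two positivity facts (a) and (b), both of which are standard and hold in arbitrary characteristic. If anything, the mildest point of care is ensuring that the decompositions above make sense as genuine identities of line bundles (not just up to numerical equivalence), which is automatic since $\mathcal{O}_X(-D)$ appears as an honest factor on both sides.
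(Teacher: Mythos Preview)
Your proof is correct and follows essentially the same route as the paper: both parts use Proposition \ref{prop17} to extract nefness, then the identical decompositions $(\mathcal{L}\otimes\mathcal{L}')^{\otimes\eta}(-D)=\mathcal{L}^{\otimes\eta}(-D)\otimes(\mathcal{L}')^{\otimes\eta}$ and $\mathcal{L}^{\otimes m}(-D)=\mathcal{L}^{\otimes(m-n\eta_0)}\otimes\mathcal{L}^{\otimes n\eta_0}(-D)$, concluding via ``ample $\otimes$ nef is ample''.
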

\begin{proof}
Assume that $\mathcal{L}$ and $\mathcal{L}^\prime$ are $D$-ample line bundles. In particular, $\mathcal{L}^\prime$ is nef by proposition \ref{prop17}. For $n \geq 1$ large enough, the bundle
\begin{equation*}
{(\mathcal{L} \otimes \mathcal{L}^{\prime})}^{\otimes n}(-D) = \mathcal{L}^{\otimes n}(-D) \otimes {(\mathcal{L}^{\prime})}^{\otimes n}
\end{equation*}
is ample as the tensor product of an ample line bundle with a nef line bundle. If $\mathcal{L}^{\otimes n}$ is $D$-ample for some $n \geq 1$, it implies that $\mathcal{L}^{\otimes n}$, hence $\mathcal{L}$, is nef. It also means that there is an integer $\eta_0 \geq 1$ such that $\mathcal{L}^{\otimes n\eta_0}(-D)$ is ample. Thus, the bundle 
\begin{equation*}
\mathcal{L}^{\otimes \eta - n\eta_0} \otimes \mathcal{L}^{\otimes n\eta_0}(-D) = \mathcal{L}^{\otimes \eta}(-D)
\end{equation*}
is ample for all $\eta \geq n\eta_0$.
\end{proof}
\begin{proposition}\label{prop_nD}
Let $D$ be an effective Cartier divisor, $n \geq 1$ an integer and $\mathcal{L}$ a line bundle over $X$. The following assertions are equivalent.
\begin{enumerate}
\item $\Lc$ is $D$-ample.
\item $\Lc$ is $nD$-ample.
\end{enumerate}
\end{proposition}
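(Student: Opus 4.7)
The plan is to exploit the identity $\Lc^{\otimes n\eta}(-nD) = (\Lc^{\otimes \eta}(-D))^{\otimes n}$, together with the standard fact that a line bundle on a projective variety is ample if and only if some (equivalently, every sufficiently large) positive tensor power is ample. I will also use Proposition \ref{prop17}, which tells me that any $D$-ample (resp. $nD$-ample) line bundle is in particular nef.

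For $(1) \Rightarrow (2)$, assume $\Lc$ is $D$-ample and fix $\eta_0 \geq 1$ such that $\Lc^{\otimes \eta}(-D)$ is ample for all $\eta \geq \eta_0$. Taking tensor $n$-th powers, the line bundle
\begin{equation*}
\Lc^{\otimes n\eta}(-nD) \;=\; \bigl(\Lc^{\otimes \eta}(-D)\bigr)^{\otimes n}
\end{equation*}
is ample for every $\eta \geq \eta_0$. This only gives ampleness of $\Lc^{\otimes m}(-nD)$ when $m$ is a multiple of $n$, so to fill in the gap I use that $\Lc$ is nef (Proposition \ref{prop17}): for any integer $m \geq n\eta_0$, I decompose
\begin{equation*}
\Lc^{\otimes m}(-nD) \;=\; \Lc^{\otimes (m - n\eta_0)} \,\otimes\, \Lc^{\otimes n\eta_0}(-nD),
\end{equation*}
where the second factor is ample by the previous step and the first factor is a nonnegative power of a nef line bundle, hence nef. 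Since the tensor product of a nef line bundle with an ample line bundle is ample (as used already in Proposition \ref{prop_cone}), the product is ample, showing that $\Lc$ is $nD$-ample.

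For $(2) \Rightarrow (1)$, assume $\Lc$ is $nD$-ample and fix $\eta_0 \geq 1$ such that $\Lc^{\otimes \eta}(-nD)$ is ample for all $\eta \geq \eta_0$. For any integer $\eta \geq \lceil \eta_0/n \rceil$, the same identity
\begin{equation*}
\bigl(\Lc^{\otimes \eta}(-D)\bigr)^{\otimes n} \;=\; \Lc^{\otimes n\eta}(-nD)
\end{equation*}
together with $n\eta \geq \eta_0$ shows that the right-hand side is ample. Because ampleness of a line bundle is equivalent to ampleness of some positive tensor power, the line bundle $\Lc^{\otimes \eta}(-D)$ is itself ample for all such $\eta$, proving $\Lc$ is $D$-ample.

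No step is really an obstacle: the whole argument reduces to the tensor-product identity above, the elementary fact that ampleness is detected on positive tensor powers, and the nefness of $D$-ample (resp. $nD$-ample) line bundles provided by Proposition \ref{prop17}. The only point worth flagging is that in $(1) \Rightarrow (2)$ the direct $n$-th power argument alone is not enough to conclude, and one genuinely needs the nefness of $\Lc$ to fill in the intermediate powers.
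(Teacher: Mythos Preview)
Your proof is correct and follows essentially the same approach as the paper: both directions rest on the identity $\bigl(\Lc^{\otimes \eta}(-D)\bigr)^{\otimes n} = \Lc^{\otimes n\eta}(-nD)$, and for $(1)\Rightarrow(2)$ you use the same decomposition $\Lc^{\otimes m}(-nD) = \Lc^{\otimes (m-n\eta_0)} \otimes \Lc^{\otimes n\eta_0}(-nD)$ together with nefness of $\Lc$ to fill in the non-multiples of $n$.
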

\begin{proof}
Assume that $\Lc$ is $D$-ample. In particular $\Lc$ is nef and consider $\eta_0 \geq 1$ such that $\Lc^{\otimes \eta}(-D)$ is ample for all $\eta \geq \eta_0$. The bundle 
\begin{equation*}
\Lc^{\otimes \eta}(-nD) = \Lc^{\otimes \eta-n\eta_0} \otimes \left(\Lc^{\otimes \eta_0}(-D)\right)^{\otimes n}
\end{equation*}
is ample for all $\eta \geq n\eta_0$ as a tensor product of a nef line bundle with an ample line bundle. Assume that $\Lc$ is $nD$-ample. It implies that the bundle
\begin{equation*}
\left(\Lc^{\otimes \eta}(-D)\right)^{\otimes n} = \Lc^{\otimes n\eta}(-nD)
\end{equation*}
is ample for all $\eta$ large enough.
\end{proof}
\subsection{$D$-ample automorphic line bundles}
It is now convenient to introduce a relevant subset of characters of $P_0$.
\begin{definition}
We set
\begin{equation*}
\mathcal{C}_{\amp,I_0} = \left\{\lambda\in X^*(P_0) \ | \ \mathcal{L}_{\lambda} \text{ is }D\text{-ample on } Y^{\tor}_{I_0}\right\}.
\end{equation*}
\end{definition}
\begin{proposition}
The subset $\mathcal{C}_{\amp,I_0}$ is a saturated cone of $X^*(P_0)$ by proposition \ref{prop_cone} and we call it the $D$-ample cone of $Y^{\tor}_{I_0}$.
\end{proposition}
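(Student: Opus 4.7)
The plan is to unpack what ``saturated cone'' means and then apply proposition \ref{prop_cone} almost verbatim. A subset $C \subset X^*(P_0)$ is a saturated cone when it is closed under addition (if $\lambda, \mu \in C$ then $\lambda + \mu \in C$) and saturated in the sense that $n\lambda \in C$ for some integer $n \geq 1$ implies $\lambda \in C$.

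First I would verify closure under addition. Given $\lambda, \mu \in \mathcal{C}_{\amp, I_0}$, the construction of automorphic line bundles from characters of $P_0$ is multiplicative, so there is a canonical isomorphism $\mathcal{L}_{\lambda+\mu} \simeq \mathcal{L}_{\lambda} \otimes \mathcal{L}_{\mu}$ on $Y^{\tor}_{I_0}$. Since $\mathcal{L}_{\lambda}$ and $\mathcal{L}_{\mu}$ are $D$-ample by hypothesis, the first half of proposition \ref{prop_cone} gives that $\mathcal{L}_{\lambda} \otimes \mathcal{L}_{\mu}$ is $D$-ample, hence $\lambda+\mu \in \mathcal{C}_{\amp,I_0}$.

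Next I would verify saturation. If $n\lambda \in \mathcal{C}_{\amp, I_0}$ for a positive integer $n$, then $\mathcal{L}_{n\lambda} \simeq \mathcal{L}_{\lambda}^{\otimes n}$ is $D$-ample on $Y^{\tor}_{I_0}$, and the second half of proposition \ref{prop_cone} forces $\mathcal{L}_{\lambda}$ itself to be $D$-ample. Therefore $\lambda \in \mathcal{C}_{\amp, I_0}$.

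There is no genuine obstacle here; the only ``check'' is the multiplicativity $\mathcal{L}_{\lambda+\mu} = \mathcal{L}_{\lambda}\otimes \mathcal{L}_{\mu}$, which is immediate from the contracted product construction of line bundles from characters in definition \ref{def9} applied on $Y_{I_0}^{\tor}$. Everything else is a direct translation of proposition \ref{prop_cone} from the language of line bundles into the language of the character lattice $X^*(P_0)$.
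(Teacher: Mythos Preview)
Your proposal is correct and matches the paper's approach exactly: the paper gives no separate proof for this proposition, treating it as an immediate consequence of proposition \ref{prop_cone} together with the multiplicativity $\mathcal{L}_{\lambda+\mu}\simeq\mathcal{L}_{\lambda}\otimes\mathcal{L}_{\mu}$. You have simply spelled out what the paper leaves implicit.
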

\begin{definition}
Let $\lambda  \in X^*(P_0)$ and $w \in \prescript{I_0}{}{W}$. We call a generalized Hasse invariant for $\Lc_{\lambda}$ any section $s$ of $\Lc_{\lambda}^{\otimes d}$ (for some $d\geq 1$) over $\overline{Y}^{\tor}_{I_0,w}$ that vanishes exactly on the boundary $\overline{Y}^{\tor}_{I_0,w} - Y^{\tor}_{I_0,w}$. Any $\Lc_{\lambda}$ with $\lambda \in \mathcal{C}_{\Ha,I_0,w}$ admits a generalized Hasse invariant obtained as a pullback by $\zeta^{\tor}_{I_0}$.
\end{definition}
\vspace{0.2cm}
We can now state and give a proof of the main result of this section.
\begin{theorem}
If $\lambda \in X^*(P_0)$ is a character in $\mathcal{C}_{\Ha,I_0}$, then $\mathcal{L}_{\lambda}$ is $D$-ample on $Y^{\tor}_{I_0}$. 
\end{theorem}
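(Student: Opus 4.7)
The strategy is to reduce $D$-ampleness of $\mathcal{L}_{\lambda}$ to two intermediate facts: (i) $\mathcal{L}_{\lambda}$ is nef on $Y^{\tor}_{I_0}$, and (ii) there exist integers $\eta_1, m \geq 1$ with $\mathcal{L}_{\lambda}^{\otimes \eta_1}(-mD)$ ample. Once both hold, for every $\eta \geq \eta_1$ the line bundle
\[ \mathcal{L}_{\lambda}^{\otimes \eta}(-mD) = \mathcal{L}_{\lambda}^{\otimes(\eta-\eta_1)} \otimes \mathcal{L}_{\lambda}^{\otimes \eta_1}(-mD) \]
is a tensor product of a nef and an ample line bundle, hence ample. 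Therefore $\mathcal{L}_{\lambda}$ is $mD$-ample, and Proposition \ref{prop_nD} upgrades this to $D$-ampleness.

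\textbf{Step 1 (nef-ness of $\mathcal{L}_{\lambda}$).} I would argue by induction along the partial order on $\prescript{I_0}{}{W}$, showing that $\mathcal{L}_{\lambda}|_{\overline{Y}^{\tor}_{I_0,w}}$ is nef for every $w$. The minimal stratum $\overline{Y}^{\tor}_{I_0,w_{\min}}$ is closed and a generalized Hasse invariant $h_{w_{\min}}$ is nowhere vanishing on it, so a tensor power of $\mathcal{L}_{\lambda}|_{\overline{Y}^{\tor}_{I_0,w_{\min}}}$ is trivial, hence nef. For the induction step, $\divi(h_w)$ is an effective Cartier divisor $E_w$ on $\overline{Y}^{\tor}_{I_0,w}$ representing $\mathcal{L}_{\lambda}^{\otimes d_w}|_{\overline{Y}^{\tor}_{I_0,w}}$ and supported on $\bigcup_{w'\prec w}\overline{Y}^{\tor}_{I_0,w'}$. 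Any irreducible curve $C \subset \overline{Y}^{\tor}_{I_0,w}$ either avoids $\mathrm{supp}(E_w)$, in which case $d_w(\mathcal{L}_{\lambda}\cdot C) = E_w \cdot C \geq 0$, or else lies in a smaller closed stratum, where the induction hypothesis gives $\mathcal{L}_{\lambda}\cdot C \geq 0$. Taking $w$ the top element of $\prescript{I_0}{}{W}$ yields nef-ness on all of $Y^{\tor}_{I_0}$.

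\textbf{Step 2 (an ample twist).} Here I would combine the $D$-ampleness of the Hodge line bundle $\omega$ on $\Sh^{\tor}$ with relative ampleness of $\mathcal{L}_{\lambda}$ along $\pi : Y^{\tor}_{I_0} \to \Sh^{\tor}$. Each fiber of $\pi$ is the flag variety $P/P_0$, and the restrictions of the Hasse invariants of $\lambda$ to the Schubert closures inside a fiber impose strict $P_0$-dominance of $\lambda$ (equivalently, every coroot in $\Delta\setminus I_0$ pairs strictly positively with $\lambda$); by Proposition \ref{prop4} this gives fiberwise ampleness of $\mathcal{L}_{\lambda}$, i.e.\ relative ampleness of $\mathcal{L}_{\lambda}$ over $\Sh^{\tor}$. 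A standard relative-ample-plus-ample-base argument then yields that
\[ \mathcal{L}_{\lambda}^{\otimes k} \otimes \pi^{*}\bigl(\omega^{\otimes \eta_0}(-D)\bigr)^{\otimes N} \]
is ample on $Y^{\tor}_{I_0}$ for $k, N$ suitably large. Writing $\pi^{*}\omega = \mathcal{L}_{\kappa}$, this is $\mathcal{L}_{k\lambda + N\eta_0\kappa}(-ND)$. Since $\pi^{*}\omega$ is nef (as pullback of a semi-ample line bundle from $\Sh^{\min}$), the $\kappa$-offset can be absorbed into additional powers of $\mathcal{L}_{\lambda}$ via Step 1, producing a line bundle of the exact form $\mathcal{L}_{\lambda}^{\otimes \eta_1}(-mD)$.

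\textbf{Main obstacle.} The delicate point is Step 2: rigorously deducing strict $P_0$-dominance of $\lambda$ on each fiber from the finitely many positivity conditions encoded by Proposition \ref{prop22}, and then performing the bookkeeping to match the abstract ample bundle produced by the EGA-type argument with one of the precise form $\mathcal{L}_{\lambda}^{\otimes \eta_1}(-mD)$ demanded by Definition \ref{def5}. Cancelling the character offset coming from $\pi^{*}\omega$ against a suitable power of the nef $\mathcal{L}_{\lambda}$ requires a careful coordinated choice of $k, N, \eta_1, m$ compatible with the additive structure of the Hasse cone $\mathcal{C}_{\Ha,I_0}$.
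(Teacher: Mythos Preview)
Your overall architecture matches the paper's: first show $\mathcal{L}_{\lambda}$ is nef, then produce one ample bundle of the form $\mathcal{L}_{\lambda}^{\otimes \eta_1}(-mD)$, then conclude via Proposition~\ref{prop_nD}. Your Step~1 is essentially correct; the paper argues it slightly more efficiently by using the characterization ``nef $\Leftrightarrow$ every subvariety carries a non-zero section of some power'': for any $V$ take the minimal $w$ with $V\subset\overline{Y}^{\tor}_{I_0,w}$, and the Hasse invariant on $\overline{Y}^{\tor}_{I_0,w}$ restricts non-trivially to $V$ by minimality. This avoids intersection-number computations on the possibly singular strata closures.

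The genuine gap is in Step~2. You obtain an ample bundle $\mathcal{L}_{\lambda}^{\otimes k}\otimes(\pi^{*}\omega)^{\otimes N\eta_0}(-ND)$ and then try to ``absorb'' the positive power of $\pi^{*}\omega$ by appealing to nef-ness of $\pi^{*}\omega$. That goes the wrong way: to strip off $(\pi^{*}\omega)^{\otimes N\eta_0}$ you must tensor with its \emph{inverse}, which is anti-nef, so ``nef $\otimes$ ample $=$ ample'' does not apply. The paper's key observation, which you gesture at in your ``Main obstacle'' paragraph but do not carry out, is that the defining inequalities of $\mathcal{C}_{\Ha,I_0}$ are \emph{strict}. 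Consequently, for $n$ sufficiently large the character $n\lambda - \eta\kappa$ (where $\mathcal{L}_{\kappa}=\pi^{*}\omega$) again lies in $\mathcal{C}_{\Ha,I_0}$, and Step~1 applied to this new character shows $\mathcal{L}_{\lambda}^{\otimes n}\otimes\pi^{*}\omega^{\otimes -\eta}$ is nef. Tensoring the ample $\mathcal{L}_{\lambda}\otimes(\pi^{*}\omega^{\otimes\eta}(-D))^{\otimes m}$ with the $m$-th power of this nef bundle yields $\mathcal{L}_{\lambda}^{\otimes(nm+1)}(-mD)$ ample, as required. So the missing idea is: reuse the Hasse-invariant argument for a \emph{perturbed} character, exploiting openness of the Hasse cone, rather than trying to use nef-ness of $\pi^{*}\omega$ itself.

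A smaller point: you flag $\pi$-ampleness of $\mathcal{L}_{\lambda}$ as delicate, but the paper simply asserts it. It follows because the fiberwise restriction of the Hasse conditions to strata contained in a single fiber $P/P_0$ forces $\langle\lambda,\alpha^{\vee}\rangle>0$ for $\alpha\in I\setminus I_0$, i.e.\ strict $I_0$-dominance, which is exactly ampleness on $P/P_0$.
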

\begin{proof}
We start by proving that $\mathcal{L}_{\lambda}$ is nef on $Y^{\tor}_{I_0}$ for any $\lambda \in \mathcal{C}_{\Ha,I_0}$. Let $V$ be a subvariety of $Y^{\tor}_{I_0}$ and consider the minimal element $w$ of $\prescript{I_0}{}{W}$ such that $V \subset \overline{Y}^{\tor}_{I_0,w}$. Such an element always exists because $\overline{Y}^{\tor}_{I_0,w_0} = Y^{\tor}_{I_0}$. We consider a generalized Hasse invariant $s \in H^0(\overline{Y}^{\tor}_{I_0,w},\mathcal{L}^{\otimes d}_{\lambda})$ (for some $d \geq 1$) and we claim that the restriction $s_{|V}$ is not identically zero. If it were, we would have $$V \subset \overline{Y}^{\tor}_{I_0,w} - Y^{\tor}_{I_0,w} = \underset{w^{\prime}  \preccurlyeq w, w^\prime \neq w}{\bigsqcup} Y^{\tor}_{I_0,w^\prime},$$ which would contradict the minimality of $w$ ($V$ is irreducible). In particular, we have shown that $\mathcal{L}_{\lambda}$ is nef. Let $\lambda$ be a character in $\mathcal{C}_{\Ha,I_0}$. Recall that $\eta_0\geq 1$ is an integer such that $\omega^{\otimes \eta}(-D)$ is ample for all $\eta \geq \eta_0$. Since $\mathcal{L}_{\lambda}$ is $\pi$-ample, we deduce that
\begin{equation*}
\mathcal{L}_{\lambda} \otimes (\pi^*\omega^{\otimes \eta}(-D))^{\otimes m}  
\end{equation*}
is ample on $Y^{\tor}_{I_0}$ for $m$ large enough. Since $\lambda$ belongs to $\mathcal{C}_{\Ha,I_0}$ and the inequalities that define $\mathcal{C}_{\Ha,I_0}$ are strict, we know that for all $n$ large enough,
\begin{equation*}
\mathcal{L}_{\lambda}^{\otimes n} \otimes \pi^*\omega^{\otimes -\eta}
\end{equation*}
has generalized Hasse invariants for all strata $Y^{\tor}_{I_0,w}$, so it is nef. Hence, we know
\begin{equation*}
\mathcal{L}_{\lambda} \otimes (\pi^*\omega^{\otimes \eta}(-D))^{\otimes m} \otimes (\mathcal{L}_{\lambda}^{\otimes n} \otimes \pi^*\omega^{\otimes -\eta})^{\otimes m} = \mathcal{L}_{\lambda}^{\otimes nm +1}(-mD)
\end{equation*}
is ample on $Y_{I_0}^{\tor}$ for $n,m$ large enough. We consider some integer $n_0, m_0 \geq 1$ such that $\mathcal{L}_{\lambda}^{\otimes n_0m_0 +1}(-m_0D)$ is ample. Since $\mathcal{L}_{\lambda}$ is nef, we must have $\mathcal{L}_{\lambda}^{\otimes \eta}(-m_0D)$ ample for all $\eta \geq n_0m_0+1$. In particular, $\mathcal{L}_{\lambda}$ is $m_0D$-ample, hence $D$-ample by proposition \ref{prop_nD}.
\end{proof}
\begin{rmrk}
The theorem can be rephrased as an inclusion
\begin{equation*}
\mathcal{C}_{\Ha,I_0} \subset \mathcal{C}_{\amp,I_0}.
\end{equation*}
\end{rmrk}
Using proposition \ref{prop16} for the existence of generalized Hasse invariants on the stack $G\ZipFlag^{\mathcal{Z}_{\mu},P_0}$, we get
\begin{theorem}\label{th_amp_auto}
Let $\lambda$ be a character of $P_0$. If $\lambda$ is orbitally $p$-close and $\mathcal{Z}_0$-ample, then $\mathcal{L}_{\lambda}$ is $D$-ample on $Y_{I_0}^{\tor}$.
\end{theorem}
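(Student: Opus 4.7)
The plan is to deduce this from the previous theorem (inclusion $\mathcal{C}_{\Ha,I_0} \subset \mathcal{C}_{\amp,I_0}$) combined with the existence result for generalized Hasse invariants on the stack of zip flags (Proposition \ref{prop16}). More precisely, I would argue that the hypotheses \emph{orbitally $p$-close} and \emph{$\mathcal{Z}_0$-ample} imply $\lambda \in \mathcal{C}_{\Ha,I_0}$, so that $D$-ampleness follows from the previous theorem.

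First, I invoke Proposition \ref{prop16}: under the two hypotheses on $\lambda$, there exists an integer $d \geq 1$ such that for every $w \in {}^{I_0}W$, any non-zero section $\tilde{s} \in H^0(G\ZipFlag^{\mathcal{Z}_\mu,P_0}_w, \mathcal{L}_\lambda)$ has the property that $\tilde{s}^d$ extends to $\overline{G\ZipFlag^{\mathcal{Z}_\mu,P_0}_w}$ with non-vanishing locus exactly $G\ZipFlag^{\mathcal{Z}_\mu,P_0}_w$. In particular such a non-zero section $\tilde{s}$ exists on each stratum, and its $d$-th power is a generalized Hasse invariant at the level of the stack.

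Next, I pull these invariants back to $Y_{I_0}^{\tor}$ along the smooth morphism $\zeta^{\tor}_{I_0} : Y^{\tor}_{I_0} \to G\ZipFlag^{\mathcal{Z}_\mu,P_0}$. Since ${(\zeta^{\tor}_{I_0})}^{*} \mathcal{L}_\lambda = \mathcal{L}_\lambda$ on $Y^{\tor}_{I_0}$ and the stratification $Y^{\tor}_{I_0,w} = (\zeta^{\tor}_{I_0})^{-1}(G\ZipFlag^{\mathcal{Z}_\mu,P_0}_w)$ is defined by pullback, the section $s := {(\zeta^{\tor}_{I_0})}^{*} \tilde{s}^d \in H^0(\overline{Y}^{\tor}_{I_0,w}, \mathcal{L}_\lambda^{\otimes d})$ extends over the stratum closure and vanishes precisely on the boundary $\overline{Y}^{\tor}_{I_0,w} - Y^{\tor}_{I_0,w}$ (here I use that $\zeta^{\tor}_{I_0}$ is smooth, hence flat, so pullback of the non-vanishing locus is the non-vanishing locus of the pullback). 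This shows exactly that $\lambda \in \mathcal{C}_{\Ha,I_0,w}$ for every $w \in {}^{I_0}W$, i.e.\ $\lambda \in \mathcal{C}_{\Ha,I_0}$.

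Finally, applying the previous theorem (which asserts $\mathcal{C}_{\Ha,I_0} \subset \mathcal{C}_{\amp,I_0}$) yields that $\mathcal{L}_\lambda$ is $D$-ample on $Y^{\tor}_{I_0}$, proving the theorem. There is no real obstacle here: the statement is a direct combination of the two earlier results, and the only minor point to be careful about is to keep track that pulling back by a smooth morphism preserves both the stratification and the non-vanishing locus of a section, which is standard.
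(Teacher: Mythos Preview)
Your proof is correct and follows exactly the same approach as the paper: the theorem is stated there as an immediate consequence of Proposition \ref{prop16} (orbitally $p$-close and $\mathcal{Z}_0$-ample $\Rightarrow$ generalized Hasse invariants exist on every stratum, i.e.\ $\lambda \in \mathcal{C}_{\Ha,I_0}$) combined with the preceding theorem ($\mathcal{C}_{\Ha,I_0} \subset \mathcal{C}_{\amp,I_0}$). You have in fact spelled out more detail than the paper does, in particular the pullback step along $\zeta^{\tor}_{I_0}$, which the paper handles in a single sentence preceding the previous theorem.
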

\subsection{A logarithmic Kodaira-Nakano vanishing theorem in positive characteristic}
In this subsection, we review the Kodaira-Nakano vanishing theorem in positive characteristic due to Deligne and Illusie in \cite{MR894379} and a logarithmic version due to Esnault and Viehweg in \cite{MR1193913}. Let $X$ be a smooth projective variety of dimension $n$ over a perfect field $k$ of characteristic $p>0$. Let $D_{\red}$ be a normal crossing divisor of $X$. We have an open immersion $\tau : U := X-D_{\red} \rightarrow X$.
\begin{proposition}
Recall that $X$ is a smooth projective variety over $k$ and let $\mathcal{L}$ be an ample line bundle over $X$. Denote by $d$ the dimension of $X$. Assume that $(X,\mathcal{L})$ lifts to $W_2(k)$ and $p \geq d$, then
\begin{equation*}
\forall i+j > d \ H^i(X,\Omega^j_{X} \otimes \mathcal{L}) = 0.
\end{equation*}
\end{proposition}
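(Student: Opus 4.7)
The plan is to prove this via the classical Frobenius-pullback argument due to Deligne--Illusie, reducing everything to Serre's cohomological criterion of ampleness. Let $X' = X^{(p)}$ denote the Frobenius twist and $F : X \to X'$ the relative Frobenius. The hypotheses $p \geq d$ and the $W_2(k)$-lifting of $(X, \mathcal{L})$ are exactly what is required to invoke the Deligne--Illusie decomposition
\begin{equation*}
F_*\Omega^\bullet_X \simeq \bigoplus_{i=0}^{d} \Omega^i_{X'}[-i] \quad \text{in} \quad D^b(X').
\end{equation*}
(In fact only $X$ needs to lift for the decomposition; the line bundle lifting is harmless since any line bundle on $X$ admits a lift compatible with that of $X$.)

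The next step is to tensor the Deligne--Illusie isomorphism by the ample line bundle $\mathcal{L}$ on $X'$ (which is flat, hence tensoring is exact) to obtain
\begin{equation*}
F_*\Omega^\bullet_X \otimes \mathcal{L} \simeq \bigoplus_{i} (\Omega^i_{X'} \otimes \mathcal{L})[-i].
\end{equation*}
Taking hypercohomology on $X'$, using the projection formula and the affineness of $F$ (so that $R F_* = F_*$), the left-hand side becomes $\mathbb{H}^k(X, \Omega^\bullet_X \otimes F^*\mathcal{L})$, while the right-hand side splits into a direct sum. This yields the key identity
\begin{equation*}
\mathbb{H}^k(X, \Omega^\bullet_X \otimes F^*\mathcal{L}) \;\simeq\; \bigoplus_{i+j=k} H^j(X', \Omega^i_{X'} \otimes \mathcal{L}).
\end{equation*}
Combining this with the inequality coming from the Hodge-to-de Rham spectral sequence on $(X, F^*\mathcal{L})$, namely
\begin{equation*}
\sum_{i+j=k} \dim H^j(X, \Omega^i_X \otimes \mathcal{L}^p) \;\geq\; \dim \mathbb{H}^k(X, \Omega^\bullet_X \otimes F^*\mathcal{L}),
\end{equation*}
we obtain an implication: if some $H^j(X', \Omega^i_{X'} \otimes \mathcal{L}) \neq 0$ with $i+j = k > d$, then at least one term $H^{j'}(X, \Omega^{i'}_X \otimes \mathcal{L}^p)$ with $i'+j' = k$ is nonzero. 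Since $\Omega^{i'}_X = 0$ for $i' > d$, any such $(i', j')$ must satisfy $j' \geq k - d > 0$.

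Now I iterate: replacing $\mathcal{L}$ by $\mathcal{L}^p$, then $\mathcal{L}^{p^2}$, etc., and using that $X^{(p^n)} \simeq X$ as $k$-schemes (since $k$ is perfect), the argument produces, for every $n \geq 0$, a pair $(i_n, j_n)$ with $i_n + j_n = k > d$, $j_n > 0$, and $H^{j_n}(X, \Omega^{i_n}_X \otimes \mathcal{L}^{p^n}) \neq 0$. There are only finitely many such pairs since $0 \leq i_n \leq d$, so some fixed $(i_*, j_*)$ with $j_* > 0$ occurs for infinitely many $n$. But Serre's cohomological criterion for the ample line bundle $\mathcal{L}$ forces $H^{j_*}(X, \Omega^{i_*}_X \otimes \mathcal{L}^{p^n}) = 0$ as soon as $p^n$ is large enough, a contradiction. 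Hence no such initial nonvanishing existed, proving the theorem.

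The main technical obstacle is the invocation of the Deligne--Illusie decomposition, for which the hypotheses $p \geq d$ (so $p$ exceeds the length of the de Rham complex involved) and the $W_2(k)$-liftability are essential. The rest of the argument is a formal dimension count and iteration, but one must be careful about the identifications $X \simeq X^{(p^n)}$ (needed to reformulate the iteration as a statement about successive powers of a single ample line bundle on $X$), and about the Frobenius being an affine morphism so that $R F_* = F_*$ and the projection formula applies without derived subtleties.
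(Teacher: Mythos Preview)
Your argument is correct and is precisely the Deligne--Illusie proof that the paper cites (the paper's own ``proof'' is simply a reference to \cite{MR894379}). One small caveat worth flagging: the full decomposition $F_*\Omega^\bullet_X \simeq \bigoplus_{i=0}^{d} \Omega^i_{X'}[-i]$ that you invoke follows directly from Deligne--Illusie only when $p > d$, since their theorem decomposes the truncation $\tau_{<p}F_*\Omega^\bullet_X$. In the boundary case $p = d$ one either extends the decomposition to the top degree via a Grothendieck duality argument, or---as Deligne and Illusie themselves do in proving their Corollaire 2.8---one first applies Serre duality to reduce to showing $H^j(X,\Omega^i_X\otimes\mathcal{L}^{-1})=0$ for $i+j<d$, where only degrees strictly below $d\le p$ of the decomposition are needed and your iteration then goes through verbatim with $\mathcal{L}^{-1}$ in place of $\mathcal{L}$.
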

\begin{proof}
The detailed proof can be found in \cite{MR894379}. 
\end{proof}
Over the flag bundle $Y^{\tor}_{I_0}$ of the toroidal compactification of the Siegel modular variety, we have seen that certain line bundles $\mathcal{L} _{\lambda}$ are $D$-ample for some effective divisor $D$ supported on the boundary. This motivates\footnote{This result already appears in \cite{MR3055995}.} this refined version of the result of Deligne and Illusie due to Esnault and Viehweg. 
\begin{proposition}\label{prop7}
Recall that $X$ is a smooth projective variety over $k$. Recall that $D_{\red}$ denotes a normal crossing divisor on $X$. Let $D$ be an effective Cartier divisor whose associated reduced divisor is $D_{\red}$ and let $\mathcal{L}$ be a $D$-ample line bundle on $X$. Denote by $d$ the dimension of $X$. Assume that the triple $(X,D_{\red},\mathcal{L})$ lifts to $W_2(k)$ and $p \geq d$, then
 \begin{equation*}
 \forall i+j > d \ H^i(X,\Omega^j_{X}(\log D_{\red}) \otimes \mathcal{L}(-D_{\red})) = 0.
 \end{equation*}
\end{proposition}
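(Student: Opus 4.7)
This is the logarithmic version of the characteristic-$p$ Kodaira-Nakano vanishing theorem, and the proof I would give follows Esnault-Viehweg's adaptation of Deligne-Illusie's Frobenius-decomposition argument to the logarithmic setting. The key new input compared to the original Deligne-Illusie statement is the allowance for a boundary divisor $D$ strictly heavier than its reduced part $D_{\red}$, which is what permits using only the $D$-ampleness of $\mathcal{L}$ instead of genuine ampleness.

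First I would invoke the logarithmic Deligne-Illusie decomposition. Under the hypothesis that $(X, D_{\red})$ lifts to $W_2(k)$ and $p \geq d$, there is an isomorphism in $D^b(X^{(p)})$,
\begin{equation*}
F_*\bigl(\Omega^\bullet_X(\log D_{\red})(-D_{\red})\bigr) \simeq \bigoplus_{j=0}^{d} \Omega^j_{X^{(p)}}(\log D_{\red}^{(p)})(-D_{\red}^{(p)})[-j],
\end{equation*}
where $F : X \to X^{(p)}$ denotes the relative Frobenius. This is the subcanonical analogue of Deligne-Illusie's splitting, obtained by the same local Frobenius-lift construction applied to the subcanonical log de Rham complex. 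It forces $E_1$-degeneration of the log Hodge-to-de Rham spectral sequence for any twist by a line bundle pulled back along $F$.

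Next, to incorporate the twist by the specific $D$-ample line bundle $\mathcal{L}$, I would construct a cyclic cover $\sigma : Y \to X$ of degree $N$ coprime to $p$, associated with a section of a sufficiently high tensor power of $\mathcal{L}^{\otimes \eta}(-D)$ for some $\eta \geq \eta_0$, which is ample by the $D$-ampleness hypothesis. The pair $(Y, \sigma^{-1}D_{\red} \cup R)$, where $R$ is the ramification divisor, still lifts to $W_2(k)$ because $(N,p) = 1$, so the decomposition above applies on $Y$. The $\mu_N$-equivariant pushforward of the subcanonical log de Rham complex on $Y$ splits into eigen-summands, one of which is canonically identified with $\Omega^\bullet_X(\log D_{\red}) \otimes \mathcal{L}(-D_{\red})$. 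Combining $E_1$-degeneration on $Y$ with Serre vanishing on $Y$ for the ample line bundle $\sigma^*(\mathcal{L}^{\otimes \eta}(-D))$ forces the hypercohomology of each eigen-summand to vanish in total degree $>d$, and in particular of the eigen-summand corresponding to $\mathcal{L}(-D_{\red})$, yielding the stated vanishing.

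The main obstacle lies in the cyclic-cover bookkeeping: one must choose the defining section so that $Y$ is smooth with simple normal crossings boundary, identify the correct $\mu_N$-character appearing in the pushforward of the log de Rham complex with the bundle $\Omega^j_X(\log D_{\red}) \otimes \mathcal{L}(-D_{\red})$ from the statement (rather than some generic $\mathcal{L}^{\otimes a}(-bD)$-twist), and verify that the ramification divisor contributes exactly the required $-D_{\red}$ correction. These computations, together with the construction of the subcanonical log Cartier isomorphism lift, are carried out in full detail in Esnault-Viehweg \cite{MR1193913}, to which the complete proof is referred.
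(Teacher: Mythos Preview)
Your plan is broadly in the spirit of Esnault--Viehweg and, since you ultimately defer to \cite{MR1193913} for the bookkeeping, it is acceptable as a proof outline. However, the paper takes a shorter and structurally different route which sidesteps exactly the obstacle you flag.

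The paper does not attempt to realise $\Omega^\bullet_X(\log D_{\red})\otimes\mathcal{L}(-D_{\red})$ directly as an eigen-summand of a cyclic cover. Instead it invokes \cite[Proposition~11.5]{MR1193913} in its native form, which gives
\[
H^{i}(X,\Omega^j_{X}(\log D_{\red}) \otimes \mathcal{L}^{-1}) = 0 \quad \text{for } i+j<\min(d,p),
\]
and then passes to the stated vanishing by Serre duality, using that the wedge pairing
\[
\Omega^i_X(\log D_{\red})\otimes\Omega^{d-i}_X(\log D_{\red})\longrightarrow \Omega^d_X(\log D_{\red})=\omega_X(D_{\red})
\]
is perfect. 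Thus the Serre dual of $\Omega^j_X(\log D_{\red})\otimes\mathcal{L}^{-1}$ is $\Omega^{d-j}_X(\log D_{\red})\otimes\mathcal{L}(-D_{\red})$, and the range $i+j<d$ becomes $i'+j'>d$. The twist by $-D_{\red}$ appears automatically from the duality, not from any cyclic-cover eigenspace identification.

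This is worth noting because your cyclic-cover sketch, as written, does not obviously produce the correct eigen-summand: a cover built from a section of a power of $\mathcal{L}^{\eta}(-D)$ has eigen-sheaves twisted by powers of $\mathcal{L}^{\eta}(-D)$, not by $\mathcal{L}(-D_{\red})$, and bridging that gap is precisely the integral-part computation in \cite{MR1193913} that you allude to. The paper's Serre-duality manoeuvre avoids this entirely: one needs only the ``untwisted'' logarithmic vanishing for $\mathcal{L}^{-1}$, which is the form Esnault--Viehweg actually state, plus one line of duality.
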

\begin{proof}
The proof of \cite[Proposition 11.5]{MR1193913} shows that 
\begin{equation*}
\forall i + j < \min(d,p) \ H^{i}(X,\Omega^j_{X}(\log D_{\red}) \otimes \mathcal{L}^{-1}) = 0,
\end{equation*}
which is equivalent to 
\begin{equation*}
\forall i + j > \max(2d-p,d) \ H^{i}(X,\Omega^j_{X}(\log D_{\red}) \otimes \mathcal{L}(-D_{\red})) = 0
\end{equation*}
by Serre duality. We use that for all $i + j = n$, the pairing $\Omega^i_X(\log D_{\red}) \otimes \Omega^j_X(\log D_{\red}) \rightarrow \Omega^{n}_X(\log D_{\red})$ mapping $\alpha \otimes \beta$ to $\alpha \wedge \beta$ is perfect.
\end{proof}
\begin{rmrk}
Motivated by proposition \ref{prop17}, one might be tempted to replace the assumption $D$-ample by nef and big. However, the proposition \ref{prop7} requires a normal crossing divisor.
\end{rmrk}
\section{Vanishing for automorphic vector bundles}\label{part6}
In this section, we prove our vanishing results announced in section \ref{sect1}. We start with some preliminary results concerning the spectral sequence associated to the cohomology of a filtered sheaf. Next, we construct the function $g_{I_0,e}$ on the power set of characters and prove that it produces new vanishing results from old ones. Finally, we give more details in the special case $g = 2$ as it is easier than the general case.
\subsection{Spectral sequence associated to a filtered sheaf}
We consider a scheme morphism $f : X \rightarrow S$ and a sheaf $\mathcal{F}$ on $X$ endowed with an increasing filtration $\mathcal{F}_{\bullet}$ with graded pieces
\begin{equation*}
\forall k \in \mathbb{Z} \ \gr_k  = \mathcal{F}_{k}/\mathcal{F}_{k-1}.
\end{equation*}
\begin{proposition}\label{prop6}
There is a spectral sequence starting at page $2$
\begin{equation*}
E_{2}^{t,k} = R^{t+k}f_*(\gr_k) \Rightarrow R^{t+k}f_*(\mathcal{F}).
\end{equation*}
\end{proposition}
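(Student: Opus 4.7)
The plan is to realize this as the spectral sequence of a filtered complex. First I would construct a filtered injective resolution: an injective resolution $\mathcal{F} \to \mathcal{I}^\bullet$ equipped with a compatible filtration $F_\bullet \mathcal{I}^\bullet$ by subcomplexes such that each $F_k \mathcal{I}^\bullet$ is an injective resolution of $\mathcal{F}_k$ and each graded piece $\gr_k^F \mathcal{I}^\bullet := F_k \mathcal{I}^\bullet / F_{k-1}\mathcal{I}^\bullet$ is an injective resolution of $\gr_k$. This Cartan--Eilenberg-type construction is standard: one resolves each $\gr_k$ individually, then builds $\mathcal{I}^\bullet$ inductively from the short exact sequences $0 \to \mathcal{F}_{k-1} \to \mathcal{F}_k \to \gr_k \to 0$ via iterated application of the horseshoe lemma, with the injectivity of the chosen resolutions providing the required splittings at each step.

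Applying $f_*$ term-by-term then produces a filtered complex $(f_*\mathcal{I}^\bullet, f_*F_\bullet \mathcal{I}^\bullet)$ on $S$, and I would invoke the classical machinery for the spectral sequence of a filtered complex. The abutment is $H^{t+k}(f_*\mathcal{I}^\bullet) = R^{t+k} f_*\mathcal{F}$, since $\mathcal{I}^\bullet$ computes $Rf_*\mathcal{F}$. The first nontrivial page is then identified as
\begin{equation*}
H^{t+k}\bigl(\gr_k^F f_*\mathcal{I}^\bullet\bigr) = H^{t+k}\bigl(f_* \gr_k^F \mathcal{I}^\bullet\bigr) = R^{t+k} f_*(\gr_k),
\end{equation*}
where the last equality holds because $\gr_k^F \mathcal{I}^\bullet$ is an injective resolution of $\gr_k$ and therefore computes $Rf_*$ applied to $\gr_k$. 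Adopting the indexing convention of the statement, in which the page recording this cohomology of the associated graded is labeled $E_2$ (rather than the more common $E_1$, the difference being a choice of whether to call the associated graded itself page $0$ or page $1$), yields precisely the spectral sequence claimed.

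I do not expect any genuine obstacle here: the filtrations encountered later in the paper are finite, so boundedness and hence strong convergence of the spectral sequence is automatic, and the existence of a filtered injective resolution with injective graded pieces is a classical result of homological algebra, available for instance in \cite{MR0463157}. The only modest care needed is in tracking the indexing conventions between the standard references and the statement in this paper.
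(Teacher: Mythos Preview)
Your proposal is correct and supplies precisely the standard construction that the paper omits by citing the appendix of \cite{MR1193913}. The paper's own proof adds only the remark that the $d_2$-differentials are the connecting morphisms arising from the short exact sequences $0 \to \gr_{k-1} \to \mathcal{F}_k/\mathcal{F}_{k-2} \to \gr_k \to 0$, which is entirely consistent with (and recoverable from) your filtered-complex approach.
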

\begin{proof}
This result is well-known: see the appendix of \cite{MR1193913} for example. We just recall how the differentials of the second page are defined. For all $k \in \mathbb{Z}$, there is an exact sequence 
\begin{equation*}
0 \rightarrow \mathcal{F}_{k-1}/\mathcal{F}_{k-2} \rightarrow \mathcal{F}_{k}/\mathcal{F}_{k-2} \rightarrow \mathcal{F}_{k}/\mathcal{F}_{k-1} \rightarrow 0
\end{equation*}
and the differentials are the connecting morphisms
\begin{equation*}
\forall i \geq 0 \ R^if_*(\gr_k) \rightarrow R^{i+1}f_*(\gr_{k-1}).
\end{equation*}
\end{proof}
From the study of this spectral sequence, we deduce several results.
\begin{lemma}\label{prop10}
Let $i_0 \geq 0$ and assume 
\begin{equation*}
\forall k \in \mathbb{Z} \ R^{i_0}f_*(\gr_k) = 0.
\end{equation*}
Then,
\begin{equation*}
R^{i_0}f_*(\mathcal{F}) = 0.
\end{equation*}
\end{lemma}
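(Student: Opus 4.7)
The plan is to deduce this immediately from the spectral sequence of Proposition \ref{prop6}. Recall that it has the shape
\begin{equation*}
E_2^{t,k} = R^{t+k} f_*(\gr_k) \;\Longrightarrow\; R^{t+k} f_*(\mathcal{F}),
\end{equation*}
with differentials that shift the total degree $n := t+k$ by $+1$, so each graded piece of the induced filtration on $R^n f_*(\mathcal{F})$ is a subquotient of some $E_2^{t,k}$ with $t+k = n$.

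Fix $n = i_0$. Every term on the anti-diagonal $t + k = i_0$ of the second page equals $R^{i_0} f_*(\gr_k)$ for some $k$, and therefore vanishes by hypothesis. Since each $E_\infty^{t,k}$ is a subquotient of $E_2^{t,k}$, we get $E_\infty^{t,k} = 0$ for all $(t,k)$ with $t+k = i_0$. The convergence of the spectral sequence then exhibits $R^{i_0} f_*(\mathcal{F})$ as a successive extension of these $E_\infty$-terms (the filtration being finite in the setting considered, since the filtration on $\mathcal{F}$ is), and hence $R^{i_0} f_*(\mathcal{F}) = 0$. There is no genuine obstacle here: the only thing to verify is that the abutment filtration is exhaustive and bounded, which is automatic once the filtration $\mathcal{F}_\bullet$ is finite, as in all subsequent applications in Section \ref{part6}.
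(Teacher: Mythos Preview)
Your proof is correct and follows essentially the same approach as the paper: both use the spectral sequence of Proposition \ref{prop6}, observe that the hypothesis kills the entire anti-diagonal $t+k=i_0$ on the $E_2$-page, deduce the vanishing of the corresponding $E_\infty$-terms, and conclude via convergence. Your remark on finiteness of the filtration is a helpful clarification that the paper leaves implicit.
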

\begin{proof}
We pass from a page of a spectral sequence to the next one by taking cohomology and since $E_2^{i_0-k,k} = R^{i_0}f_*(\gr_k) = 0$ for all $k \in \mathbb{Z}$, we have 
\begin{equation*}
\forall a \geq 2 \ \forall k \in \mathbb{Z} \ E_a^{i_0-k,k} = 0.
\end{equation*}
Thus,
\begin{equation*}
\forall a \geq 2 \ \forall k \in \mathbb{Z} \ E_{\infty}^{i_0-k,k} = 0 
\end{equation*}
and
\begin{equation*}
R^{i_0}f_*(\mathcal{F}) = 0.
\end{equation*}
\end{proof}

\begin{lemma}\label{prop11}
Let $i_0 \geq 0$ and assume that there exists $n \in \mathbb{Z}$ such that for all $k > n$, $\gr_{k} = 0$. If
\begin{equation*}
\left\{
\begin{aligned}
& R^{i_0}f_*(\mathcal{F}) = 0, \\
&\forall k \leq n-1 \ R^{i_0+1}f_*(\gr_k) = 0,
\end{aligned}
\right.
\end{equation*}
then 
\begin{equation*}
R^{i_0}f_*(\gr_n) = 0.
\end{equation*}
\end{lemma}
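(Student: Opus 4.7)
The plan is to invoke the spectral sequence of Proposition \ref{prop6} and show that the term $E_2^{i_0-n,n} = R^{i_0}f_*(\gr_n)$ is preserved all the way to $E_\infty$, whereupon the vanishing of $R^{i_0}f_*(\mathcal{F})$ will force it to be zero.

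First, I would pin down the bidegree conventions. From the description of the second-page differentials in the proof of Proposition \ref{prop6}, the map $d_2\colon E_2^{t,k}\to E_2^{t',k'}$ is the connecting morphism $R^{t+k}f_*(\gr_k)\to R^{t+k+1}f_*(\gr_{k-1})$, so $(t',k')=(t+2,k-1)$. More generally, $d_r$ shifts $(t,k)$ by $(+r,-(r-1))$, preserving total degree $+1$.

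Next, I would analyse the term $E_2^{i_0-n,n}$. For every $r\ge 2$:
\begin{itemize}
\item The incoming differential has source $E_r^{i_0-n-r,\,n+r-1}$. Since $n+r-1>n$ and $\gr_k=0$ for $k>n$, already $E_2^{i_0-n-r,\,n+r-1}=0$, so its subquotient $E_r^{i_0-n-r,\,n+r-1}$ vanishes.
\item The outgoing differential has target $E_r^{i_0-n+r,\,n-r+1}$, a subquotient of $E_2^{i_0-n+r,\,n-r+1}=R^{i_0+1}f_*(\gr_{n-r+1})$. Since $n-r+1\le n-1$, the second hypothesis gives $E_2^{i_0-n+r,\,n-r+1}=0$, hence $E_r^{i_0-n+r,\,n-r+1}=0$.
\end{itemize}
Thus both incoming and outgoing differentials at position $(i_0-n,n)$ are zero on every page $r\ge 2$, so by induction on $r$ I obtain
\begin{equation*}
E_\infty^{i_0-n,n}\;=\;E_2^{i_0-n,n}\;=\;R^{i_0}f_*(\gr_n).
\end{equation*}

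Finally, the convergence statement of Proposition \ref{prop6} gives a finite filtration of $R^{i_0}f_*(\mathcal{F})$ whose graded pieces are the $E_\infty^{t,k}$ with $t+k=i_0$. By the first hypothesis $R^{i_0}f_*(\mathcal{F})=0$, so every such graded piece vanishes; in particular $E_\infty^{i_0-n,n}=0$, which by the previous step yields $R^{i_0}f_*(\gr_n)=0$. There is no genuine obstacle here: the only subtlety worth double-checking is the bookkeeping of the $d_r$ shifts so that one correctly identifies which hypothesis kills incoming versus outgoing differentials.
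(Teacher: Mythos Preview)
Your proof is correct and follows essentially the same approach as the paper's: both argue that incoming differentials at $(i_0-n,n)$ vanish because $\gr_k=0$ for $k>n$, outgoing differentials vanish because $R^{i_0+1}f_*(\gr_k)=0$ for $k\le n-1$, hence $E_2^{i_0-n,n}=E_\infty^{i_0-n,n}$, which is a graded piece of $R^{i_0}f_*(\mathcal F)=0$. Your version is slightly more explicit about the bidegree shifts of $d_r$, but the argument is the same.
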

\begin{proof}
For a visual support, see the figure \ref{fig1}. From the hypothesis on the graded pieces, we know that for all $a\geq 2$, the differential with target $E_{a}^{-n+i_0,n}$ vanishes. Since for all $k \leq n-1$ we have $R^{i_0+1}f_*(\gr_k) = 0$, then for all $a \geq 2$ the differential with source $E_a^{-n+i_0,n}$ must vanish. Thus, we get $R^{i_0}f_*(\gr_n) = E_2^{-n+i_0,n} = E_{\infty}^{-n+i_0,n}$ and $E_{\infty}^{-n+i_0,n} = 0$ as a graded piece of $R^{i_0}f_*(\mathcal{F})$. 
\begin{figure}
\centering
\vspace{3.5cm}
\begin{tikzpicture}[transform canvas={scale=0.8}]
  \matrix (m) [matrix of math nodes, nodes in empty cells,nodes={minimum width=5ex, minimum height=5.5ex,outer sep=-5pt}, column sep=1ex,row sep=1ex]{
           k  &   &   &   &   & \\
          n+1 \ \ \ \    & 0 &  0 & 0 & 0  & 0\\
          n     &  f_*(\gr_n) &  R^1f_*(\gr_n)   & R^2f_*(\gr_n)  &  R^3f_*(\gr_n) & \cdots \\
          n-1  \ \ \ \    &  0 &  f_*(\gr_{n-1})  & R^1f_*(\gr_{n-1}) & R^2f_*(\gr_{n-1})  & \cdots \\
          n-2  \ \ \ \    &  0 & 0&  f_*(\gr_{n-2})   & R^1f_*(\gr_{n-2})  & \cdots  \\
          n-3  \ \ \ \    &  0 & 0&  0 & f_*(\gr_{n-3})  & \cdots \\
    \quad\strut &   -n  &  -n+1  &  -n+2  &  -n+3 & \cdots & t \strut  \\};
  	\draw[-stealth] (m-2-2.south east) -- (m-3-4.north west);
    \draw[-stealth] (m-2-3.south east) -- (m-3-5.north west);
	\draw[-stealth] (m-3-2.south east) -- (m-4-4.north west);
    \draw[-stealth] (m-3-3.south east) -- (m-4-5.north west);
    \draw[-stealth] (m-4-2.south east) -- (m-5-4.north west);
    \draw[-stealth] (m-4-3.south east) -- (m-5-5.north west);
    \draw[-stealth] (m-5-3.south east) -- (m-6-5.north west);

\draw[thick,-stealth] (m-7-1.east) -- (m-1-1.east) ;
\draw[thick,-stealth] (m-7-1.north) -- (m-7-7.north) ;
\end{tikzpicture}
\vspace{3.5cm}
\caption{$E_2$-page of the spectral sequence.}\label{fig1}
\end{figure}
\FloatBarrier
\end{proof}
\subsection{The general case}
The goal of this subsection is to explain how to deduce new vanishing results for the coherent cohomology from known ones. Other Shimura varieties could be considered but we have restricted ourselves to the Siegel case for simplicity. We recall the notations. Let $\Sh^{\tor}$ be the special fiber over $\mathbb{F}_p$ of the Siegel modular variety of genus $g \geq 2$ and $\pi : Y_{I_0}^{\tor} \rightarrow \Sh^{\tor}$ the flag bundle in $P/P_0$ where $P_0 \subset \Sp_{2g}$ is a parabolic subgroup of type $I_0 \subset I \subset \Delta$ which is contained in the parabolic $P \subset \Sp_{2g}$ of type $I$. We denote by $D_{\red}$ the normal crossing divisor supported on the boudary of $\Sh^{\tor}$. We use the same notation $D_{\red}$ for the normal crossing divisor $\pi^{-1}D_{\red}$ of $Y_{I_0}^{\tor}$ when no confusion is possible. We denote by $d$, $d_0$ the dimension of $\Sh^{\tor}$, $Y_{I_0}^{\tor}$ and $r_0 = d_0-d$ the relative dimension of $\pi$. We choose a system of positive roots in a way to obtain
\begin{equation*}
I = \{e_i-e_{i+1} \ | \ i = 1, \cdots g-1 \}  \subset \Delta = \{e_i-e_{i+1} \ | \ i = 1, \cdots g-1 \} \cup \{2e_g\}.
\end{equation*}
The Levi subgroup $L$ of $P \subset \Sp_{2g}$ is $\GL_g$ and to each representation $V$ of $L$, we have an associated vector bundle $\mathcal{W}(V)$ on $\Sh^{\tor}$. With our conventions, the Hodge bundle $\Omega$ is the vector bundle of rank $g$ associated to the standard representation $\std_L$ of $L$. To each character $\lambda$ of $P_0$, we have an associated line bundle $\mathcal{L}_{\lambda}$ on $Y_{I_0}^{\tor}$. Assuming that $p \geq d_0$, the basic idea is to use the logarithmic Kodaira-Nakano vanishing theorem (see proposition \ref{prop7}) on the flag bundle $Y^{\tor}_{I_0}$ with $D$-ample line bundle $\mathcal{L}_{\lambda}$. Since the determinant of 
\begin{equation*}
\Omega^1_{\smash{Y^{\tor}_{I_0}}}(\log D_{\red})
\end{equation*}
is a line bundle over $\smash{Y^{\tor}_{I_0}}$, it is not hard to express it as an automorphic bundle and it provides vanishing results for cohomology groups $H^i$ with $i>0$. The accessible weights with this method are regular. To access less regular weights, a natural idea is to use the logarithmic Kodaira-Nakano vanishing theorem for 
\begin{equation*}
\Omega^{m}_{\smash{Y^{\tor}_{I_0}}}(\log D_{\red})
\end{equation*}
with $m < d_{0}$. However, this bundle is not a line bundle and doesn't seem related to automorphic bundles (see remark \ref{rmrk1}). A solution is to filter it by automorphic vector bundles and then use the associated spectral sequence. The following result is well-known but since we haven't found a reference, we give a proof.
\begin{lemma}
We have an exact sequence of vector bundles
\begin{equation*}
\begin{tikzcd}[column sep = small, row sep = small]
0 \arrow[r] &\pi^* \Omega^1_{\Sh^{\tor}}(\log D_{\red}) \arrow[r] & \Omega^1_{Y^{\tor}_{I_0}}(\log D_{\red}) \arrow[r] & \Omega^1_{Y^{\tor}_{I_0}/\Sh^{\tor}} \arrow[r] & 0.
\end{tikzcd}
\end{equation*}
\end{lemma}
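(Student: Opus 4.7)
The plan is to interpret the claimed sequence as the relative log cotangent exact sequence for the morphism $\pi : Y^{\tor}_{I_0} \to \Sh^{\tor}$, where both source and target are equipped with the log structures coming from their boundary divisors $D_{\red}$. The structural input I will rely on is that $\pi$ is a smooth proper morphism with smooth fibers $P/P_0$ (from Definition \ref{def2} together with its extension to the toroidal compactification via Definition \ref{prop5}(7)), and that by the convention adopted at the start of this subsection the divisor $D_{\red}$ on $Y^{\tor}_{I_0}$ is precisely $\pi^{-1}(D_{\red})$.

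First I would invoke the standard short exact sequence of log differentials attached to a log smooth morphism $f : (X, M_X) \to (S, M_S)$ of log smooth schemes over $k$:
\begin{equation*}
0 \to f^* \Omega^1_{S/k}(\log M_S) \to \Omega^1_{X/k}(\log M_X) \to \Omega^1_{X/S}(\log) \to 0,
\end{equation*}
a classical fact in logarithmic geometry (see for instance the appendix of \cite{MR1193913}). Applied to $\pi$ with the log structures associated to the boundary divisors, this already produces an exact sequence of locally free sheaves with the correct first two terms; it remains only to identify the right-hand term.

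Second, I would argue that the relative log differentials $\Omega^1_{Y^{\tor}_{I_0}/\Sh^{\tor}}(\log)$ in fact coincide with the ordinary relative cotangent sheaf $\Omega^1_{Y^{\tor}_{I_0}/\Sh^{\tor}}$. The reason is that no new vertical log divisor is introduced upstairs: since $\pi$ is smooth with smooth proper fibers $P/P_0$ and $D_{\red}$ on $Y^{\tor}_{I_0}$ is the scheme-theoretic preimage $\pi^{-1}(D_{\red})$, étale locally on $\Sh^{\tor}$ the pair $(\Sh^{\tor}, D_{\red})$ is a normal crossing toric chart and $\pi$ looks like the product of this chart with the smooth fiber $P/P_0$. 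In such coordinates the relative log part contributes nothing and collapses to the usual relative differentials of the smooth fiber. The only mild subtlety is to identify the log structure on $Y^{\tor}_{I_0}$ with the pullback of the log structure on $\Sh^{\tor}$, which follows from the construction of $Y^{\tor}_{I_0}$ by base change of the universal $P$-torsor to $\Sh^{\tor}$ as recalled in Section \ref{part3}; once this is in place, the lemma is formal.
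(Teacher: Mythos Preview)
Your argument is correct and follows the natural log-geometric route: you invoke the relative log cotangent sequence for the log smooth morphism $\pi$ and then observe that, because the log structure on $Y^{\tor}_{I_0}$ is the pullback of the one on $\Sh^{\tor}$ (i.e.\ $\pi$ is strict), the relative log differentials collapse to the ordinary $\Omega^1_{Y^{\tor}_{I_0}/\Sh^{\tor}}$.

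The paper proceeds differently and more elementarily, avoiding the formalism of log schemes. It writes down the two residue exact sequences
\[
0 \to \Omega^1_X \to \Omega^1_X(\log D) \to \mathcal{O}_D \to 0
\]
for $X = \Sh^{\tor}$ and $X = Y^{\tor}_{I_0}$ (the latter with $D = \pi^{-1}D_{\red}$), pulls back the first along $\pi$, and compares them via the natural maps $a : \pi^*\Omega^1_{\Sh^{\tor}} \to \Omega^1_{Y^{\tor}_{I_0}}$ and $b : \pi^*\Omega^1_{\Sh^{\tor}}(\log D_{\red}) \to \Omega^1_{Y^{\tor}_{I_0}}(\log D_{\red})$, the rightmost vertical map being the identity $\pi^*\mathcal{O}_{D_{\red}} = \mathcal{O}_{\pi^{-1}D_{\red}}$. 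Since $\pi$ is smooth, $a$ is injective with cokernel $\Omega^1_{Y^{\tor}_{I_0}/\Sh^{\tor}}$, and the snake lemma immediately gives $\ker b = 0$ and $\mathrm{coker}\, b \simeq \Omega^1_{Y^{\tor}_{I_0}/\Sh^{\tor}}$, which is exactly the claimed sequence. Your approach packages the same content in the language of log geometry; the paper's version has the advantage of needing only the residue sequence from \cite{MR0417174} and a diagram chase, while yours makes the conceptual reason (strictness of $\pi$) more transparent.
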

\begin{proof}
By \cite[II. §3.]{MR0417174}, we have a commutative diagram 
\begin{equation*}
\begin{tikzcd}[column sep = small]
0 \arrow[r] &\pi^* \Omega^1_{\Sh^{\tor}} \arrow[d,"a"] \arrow[r] & \pi^*\Omega^1_{\Sh^{\tor}}(\log D_{\red}) \arrow[d,"b"] \arrow[r] & \pi^*\mathcal{O}_{D_{\red}} \arrow[r] \arrow[d,equal] & 0 \\
0 \arrow[r] &\Omega^1_{Y^{\tor}_{I_0}} \arrow[r] & \Omega^1_{Y^{\tor}_{I_0}}(\log D_{\red}) \arrow[r] & \mathcal{O}_{\pi^{-1}D_{\red}} \arrow[r] & 0
\end{tikzcd}
\end{equation*}
where the rows are exact (use also that $\pi$ is flat for the first row). Since $\pi$ is smooth, $\ker a = 0$ and by the snake lemma, the sequence
\begin{equation*}
\begin{tikzcd}[column sep = small, row sep = small]
 0 \arrow[r] & \ker b \arrow[r] & 0 \arrow[r] & \Omega^1_{Y^{\tor}_{I_0}/\Sh^{\tor}} \arrow[r] & \Omega^1_{Y^{\tor}_{I_0}}(\log D_{\red})/\pi^*\Omega^1_{\Sh^{\tor}}(\log D_{\red}) \arrow[r] & 0
\end{tikzcd}
\end{equation*}
is exact. The desired exact sequence is obtained from $b$.
\end{proof}
\begin{definition}
Let $e \geq 0$ be an integer. We define an increasing filtration $F_\bullet$ of $\Omega^{d_0-e}_{Y^{\tor}_{I_0}}(\log D_{\red})$ by
\begin{equation*}
F_k = \pi^*\Omega^{d_{0}-e-k}_{\Sh^{\tor}} (\log D_{\red}) \wedge \Omega^k_{Y^{\tor}_{I_0}} (\log D_{\red}),
\end{equation*}
with graded pieces
\begin{equation*}
\gr_k = \pi^*\Omega^{d_{0}-e-k}_{\Sh^{\tor}} (\log D_{\red})\otimes \Omega^k_{Y^{\tor}_{I_0}/\Sh^{\tor}}.
\end{equation*}
From the proposition \ref{prop6}, we get an associated spectral sequence starting at page $2$ for each $\lambda \in X^*(P_0)$
\begin{multline}\label{sp1}
E_{2,e,\lambda}^{t,k} = H^{t+k}(Y^{\tor}_{I_0},\gr_k \otimes \mathcal{L}_{\lambda}(-D_{\red})) \\ \Rightarrow H^{t+k}(Y^{\tor}_{I_0},\Omega^{d_{0}-e}_{Y^{\tor}_{I_0}}(\log D_{\red}) \otimes \mathcal{L}_{\lambda}(-D_{\red})).
\end{multline}
\end{definition}
This spectral sequence doesn't degenerate in general, so we need to consider weights $\lambda$ that ensure partial degeneration results. This will allow us to deduce vanishing results for tensor product of the form
\begin{equation*}
\Omega^k_{\Sh^{\tor}}(\log D_{\red}) \otimes \nabla(\lambda).
\end{equation*}
Another difficulty arises because, in positive characteristic, algebraic representations of reductive groups are not semi-simple, so we can't easily deduce vanishing results for automophic bundles from vanishing results for such tensor products. However, from proposition \ref{prop18}, $\Omega^k_{\Sh^{\tor}}(\log D_{\red})$ admits a $\nabla$-filtration if $p > d$ and we can use corollary \ref{cor2} to see that the tensor product $\Omega^k_{\Sh^{\tor}}(\log D_{\red}) \otimes \nabla(\lambda)$ admits also a $\nabla$-filtration : this will allow us to deduce new vanishing results for automophic bundles. Since our method relies heavily on partial degeneration results that requires vanishing results, we can think of it as a way to deduce new vanishing results from known ones. This is why we present them in two steps:
\begin{itemize}
\item \emph{Degeneration:} We determine the vanishing results we need to ensure the degeneration of relevant spectral sequences.
\item \emph{Propagation:} Given a set of known vanishing results, we determine the new vanishing results we can deduce from them.
\end{itemize}
To lighten our notations, we will denote the subcanonical automorphic bundle by $\nabla^{\sub}(\lambda)$ instead of $\nabla(\lambda)(-D_{\red})$ and $\mathcal{L}^{\sub}(V)$ instead of $\mathcal{L}(V)(-D_{\red})$. We introduce some notations for the weights of our automorphic bundles.
\begin{definition}
For all $n \geq 0$, we set
\begin{equation*}
(\mu^{n}_j)_{1 \leq j \leq {d \choose n}} = (w_0w_{0,L}\nu^{n}_j)_{1 \leq j \leq {d \choose n}},
\end{equation*}
where the $\nu^{n}_j$'s are the characters of the $L$-representation
\begin{equation*}
\wedge^{n} \Sym^2 \std_L.
\end{equation*}
We assume that $\nu^{n}_{d \choose n}$ is the highest weight.
\end{definition}
\begin{proposition}\label{prop18}
If $p > d = g(g+1)/2$, then for any $n \geq 1$ the vector bundle $\Omega^n_{\Sh^{\tor}}(\log D_{\red})$ admits a filtration
\begin{equation*}
0 =\mathcal{V}^s \subsetneq \mathcal{V}^{s-1} \subsetneq
 \cdots \subsetneq
 \mathcal{V}^0 = \Omega^n_{\Sh^{\tor}}(\log D_{\red}),
\end{equation*}
where the graded pieces are automorphic vector bundles of the form $\nabla(\mu^n_j)$ with $\mu^n_j$ dominant.
\end{proposition}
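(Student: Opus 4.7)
The plan is to identify $\Omega^n_{\Sh^{\tor}}(\log D_{\red})$ with the automorphic bundle attached to $\wedge^n\Sym^2\std_L$ via Kodaira--Spencer, and then reduce the existence of the filtration to a purely representation-theoretic statement about the $L$-module $\wedge^n\Sym^2\std_L$, which will follow from Mathieu's theorem once the antisymmetrizer is available.

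First I would use the Kodaira--Spencer isomorphism of Proposition \ref{prop19} to identify $\Omega^1_{\Sh^{\tor}}(\log D_{\red})$ with $\mathcal{W}(\Sym^2\std_L)$. Because the contracted product $V\mapsto \mathcal{W}(V)$ is an exact tensor functor on $L$-representations and therefore commutes with exterior powers, one obtains $\Omega^n_{\Sh^{\tor}}(\log D_{\red})\simeq \mathcal{W}(\wedge^n\Sym^2\std_L)$. A filtration by $L$-subrepresentations of $\wedge^n\Sym^2\std_L$ with costandard graded pieces then transfers, via the contracted product, to a filtration of $\Omega^n_{\Sh^{\tor}}(\log D_{\red})$ whose graded pieces are automorphic bundles of the claimed form $\nabla(\mu^n_j)$ (taking into account the twist $\mu^n_j = w_0w_{0,L}\nu^n_j$ built into Definition \ref{def3}).

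Next, since $\Sym^2\std_L$ is a costandard $L$-module (highest weight $(2,0,\ldots,0)$), Proposition \ref{prop8} combined with Corollary \ref{cor2} yields by induction on $n$ a $\nabla$-filtration of the tensor power $(\Sym^2\std_L)^{\otimes n}$. This is where the hypothesis $p>d$ enters: one may assume $n\leq d$, because for $n>d$ both $\wedge^n\Sym^2\std_L$ and $\Omega^n_{\Sh^{\tor}}(\log D_{\red})$ vanish; since $p>d\geq n$, the integer $n!$ is invertible in $\mathbb{F}_p$, so the antisymmetrizer $\tfrac{1}{n!}\sum_{\sigma\in S_n}\mathrm{sgn}(\sigma)\,\sigma$ is a well-defined $L$-equivariant idempotent on $(\Sym^2\std_L)^{\otimes n}$ whose image is $\wedge^n\Sym^2\std_L$. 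Hence $\wedge^n\Sym^2\std_L$ is a direct summand of $(\Sym^2\std_L)^{\otimes n}$ as an $L$-module.

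Finally, Corollary \ref{cor1} guarantees that a direct summand of a module admitting a $\nabla$-filtration admits one itself, which produces the desired $\nabla$-filtration of $\wedge^n\Sym^2\std_L$, and hence of $\Omega^n_{\Sh^{\tor}}(\log D_{\red})$; dominance of the relevant highest weights is part of the definition of a $\nabla$-filtration. The only delicate point is the availability of the antisymmetrizer, which is precisely why the hypothesis $p>d$ is needed; without this numerical constraint one loses the clean direct-summand description of $\wedge^n\Sym^2\std_L$ inside a tensor power whose $\nabla$-filtration we control, and one would be forced to work with divided-power analogues, destroying the explicit description of the weights appearing in the graded pieces.
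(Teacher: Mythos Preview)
Your proof is correct and follows essentially the same route as the paper: identify $\Omega^n_{\Sh^{\tor}}(\log D_{\red})$ with $\mathcal{W}(\wedge^n\Sym^2\std_L)$ via Kodaira--Spencer, use Mathieu's theorem to get a $\nabla$-filtration on $(\Sym^2\std_L)^{\otimes n}$, split off $\wedge^n\Sym^2\std_L$ as a direct summand using the antisymmetrizer (available since $p>d\geq n$), and invoke Corollary~\ref{cor1}. Your version is slightly more explicit about the exactness of the contracted product and the trivial case $n>d$, but the argument is the same.
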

\begin{proof}
Recall from proposition \ref{prop19} that the Kodaira-Spencer map induces an isomorphism
\begin{equation*}
\Omega^1_{\Sh^{\tor}}(\log D_{\red}) = \mathcal{W}(\Sym^2 \std_L) = \nabla(0, \cdots, 0,-2).
\end{equation*}
We only need to see that for any $1 \leq n \leq \frac{g(g+1)}{2}$, the $\GL_g$-module $\Lambda^n \Sym^2 \std_L$ admits a $\nabla$-filtration. The module $$\Sym^2 \std_L = \nabla(2,0,\cdots,0)$$ is already a costandard module. From proposition \ref{prop8}, the module $$(\Sym^2 \std_L)^{\otimes n}$$ admits also a $\nabla$-filtration. Since $p> \frac{g(g+1)}{2} \geq n$, $p$ does not divide $n!$ and the surjection of $G$-modules
\begin{equation*}
(\Sym^2 \std_L)^{\otimes n} \rightarrow \Lambda^n \Sym^2 \std_L
\end{equation*}
admits a $\GL_g$-equivariant section $s$ defined by the formula
\begin{equation*}
s(v_1 \wedge \cdots \wedge v_{n}) = \frac{1}{n!}\sum_{\sigma \in \mathfrak{S}_n}\varepsilon(\sigma)v_{\sigma(1)}\otimes \cdots \otimes v_{\sigma(n)}.
\end{equation*}
As a direct factor of $(\Sym^2 \std_L)^{\otimes n}$, the corollary \ref{cor1} implies that \\$\Lambda^n \Sym^2 \std_L$ admits a $\nabla$-filtration.
\end{proof}
\begin{proposition}\label{prop9}
We have an isomorphism 
\begin{equation*}
\Omega^1_{Y^{\tor}_{I_0}/\Sh^{\tor}} = \mathcal{L}({\Lie L/\Lie (P_0 \cap L)})^{\vee},
\end{equation*}
and for all $i\geq 0$, the vector bundle $\Omega^{i}_{Y^{\tor}_{I_0}/\Sh^{\tor}}$ is filtered by line bundles 
\begin{equation*}
\mathcal{L}_{-s_M} \text{ where } 
s_M = \sum_{\alpha \in M} \alpha \text{ for all } M \subset \phi_L^+-\phi_{I_0}^+ \text{ such that } |M| = i.
\end{equation*}
In particular, $\Omega^{r_{0}}_{Y^{\tor}_{I_0}/\Sh^{\tor}} \simeq \mathcal{L}_{-2\rho_{I_0}}$ with
\begin{equation*}
\rho_{I_0}= \frac{1}{2} \sum_{\alpha \in \phi_L^{+} \backslash \phi_{I_0}^{+}} \alpha.
\end{equation*}
\end{proposition}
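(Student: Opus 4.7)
The plan is to transfer the question to the flag variety $L/(L\cap P_0)$, where everything can be computed in terms of root spaces, and then use the contracted product construction for automorphic bundles on $Y_{I_0}^{\tor}$.

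First, I would exploit the fact that the morphism $\pi : Y_{I_0}^{\tor} \to \Sh^{\tor}$ is a fibration in flag varieties of the form $P/P_0$. Since the unipotent radical of $P$ acts trivially on $P/P_0$, the natural map $L/(L\cap P_0) \to P/P_0$ is an isomorphism. Working on the universal $P_0$-torsor $\widetilde{Y} \to Y_{I_0}^{\tor}$ obtained by lifting the $P$-torsor attached to the Hodge filtration, the restriction of $\pi$ to fibres at the identity of $P_0$ identifies the relative cotangent space with the cotangent space of $L/(L\cap P_0)$ at the identity coset, which as an $(L\cap P_0)$-representation is $(\Lie L/\Lie(P_0\cap L))^{\vee}$. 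Applying the contracted product construction of Definition \ref{def9} yields the claimed isomorphism
\begin{equation*}
\Omega^1_{Y_{I_0}^{\tor}/\Sh^{\tor}} \simeq \mathcal{L}(\Lie L/\Lie(P_0\cap L))^{\vee}.
\end{equation*}

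Next I would analyze the $(L\cap P_0)$-module $\Lie L/\Lie(P_0\cap L)$ as a $T$-module. With the paper's convention that $\alpha \in \phi^+$ iff $U_{-\alpha} \subset B$, the unipotent radical of the Borel $B\cap L$ inside $L$ is spanned by root spaces $\mathfrak{g}_{-\alpha}$ for $\alpha \in \phi_L^+$, so the Lie algebra of the standard parabolic $P_0\cap L$ of $L$ is spanned by $\mathfrak{t}$, the negative-root spaces $\mathfrak{g}_{-\alpha}$ for $\alpha \in \phi_L^+$, and the positive-root spaces $\mathfrak{g}_{\alpha}$ for $\alpha \in \phi_{I_0}^+$. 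Consequently, as a $T$-module
\begin{equation*}
\Lie L/\Lie(P_0\cap L) \simeq \bigoplus_{\alpha \in \phi_L^+ \setminus \phi_{I_0}^+} \mathfrak{g}_{\alpha}.
\end{equation*}
Ordering the positive roots of $L$ in $\phi_L^+\setminus \phi_{I_0}^+$ in a way compatible with the Bruhat order then produces an $(L\cap P_0)$-stable filtration of $\Lie L/\Lie(P_0\cap L)$ whose graded pieces are the one-dimensional $(L\cap P_0)$-modules of weight $\alpha$. Dualizing reverses the signs, yielding a filtration of $(\Lie L/\Lie(P_0\cap L))^{\vee}$ with graded pieces the characters $-\alpha$ for $\alpha \in \phi_L^+\setminus \phi_{I_0}^+$, and pushing forward through the contracted product gives a filtration of $\Omega^1_{Y_{I_0}^{\tor}/\Sh^{\tor}}$ by the line bundles $\mathcal{L}_{-\alpha}$.

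Taking the $i$-th exterior power of this filtration (the standard Koszul-type filtration on $\Lambda^i$ associated with a filtration of the underlying module) produces a filtration of $\Omega^i_{Y_{I_0}^{\tor}/\Sh^{\tor}}$ whose graded pieces are, after reindexing, exactly the line bundles $\mathcal{L}_{-s_M}$ indexed by the subsets $M \subset \phi_L^+\setminus \phi_{I_0}^+$ with $|M|=i$, since on graded pieces the action of $(L\cap P_0)$ factors through $T$ and the exterior power picks up sums of distinct weights. Finally, at the top level $i=r_0 = |\phi_L^+\setminus\phi_{I_0}^+|$, there is a unique subset $M = \phi_L^+\setminus\phi_{I_0}^+$, and $s_M = 2\rho_{I_0}$ by the definition of $\rho_{I_0}$, giving the canonical isomorphism $\Omega^{r_0}_{Y_{I_0}^{\tor}/\Sh^{\tor}} \simeq \mathcal{L}_{-2\rho_{I_0}}$.

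The main technical point is the identification in the first step: one must argue that the relative cotangent sheaf on $Y_{I_0}^{\tor}$, which is a priori defined intrinsically, really coincides with the contracted product bundle built from the $(L\cap P_0)$-representation $(\Lie L/\Lie(P_0\cap L))^{\vee}$. I would argue this by descent, first over $\Sh$ where the flag bundle is literally a contracted product of the $P$-torsor with $P/P_0$, and then extend across the boundary using that the canonical extension of the $P$-torsor to $\Sh^{\tor}$ provided by Definition \ref{prop5}(7) makes $Y_{I_0}^{\tor} \to \Sh^{\tor}$ smooth with the same fibers $P/P_0$.
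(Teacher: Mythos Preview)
Your proposal is correct and follows essentially the same approach as the paper. The paper streamlines your ``main technical point'' by working directly with the cartesian diagram
\[
\begin{tikzcd}
Y^{\tor}_{I_0} \arrow[d,"\pi"] \arrow[r,"\tilde{\zeta}"] & \lfloor P_0 \backslash * \rfloor \arrow[d,"\tilde{\pi}"] \\
\Sh^{\tor} \arrow[r,"\zeta"] & \lfloor P \backslash * \rfloor
\end{tikzcd}
\]
and observing that $\Omega^1_{\tilde{\pi}} = (\Lie(P)/\Lie(P_0))^{\vee}$ as a $P_0$-representation, so that $\Omega^1_{\pi} = \tilde{\zeta}^*\Omega^1_{\tilde{\pi}}$ is the desired automorphic bundle; this bypasses your separate descent-and-extension argument since $\zeta$ already lives over $\Sh^{\tor}$.
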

\begin{proof}
Consider the cartesian diagram
\begin{equation*}
\begin{tikzcd}
Y^{\tor}_{I_0} \arrow[d,"\pi"] \arrow[r,"\tilde{\zeta}"] & \lfloor P_0 \backslash * \rfloor \arrow[d,"\tilde{\pi}"] \\
\Sh^{\tor} \arrow[r,"\zeta"] & \lfloor P \backslash * \rfloor
\end{tikzcd}
\end{equation*}
where the horizontal arrows correspond to the universal $P$-torsor on $\Sh^{\tor}$ and the universal $P_0$-torsor on $Y^{\tor}_{I_0}$ and where the vertical arrow $\tilde{\pi}$ between the classifying stacks is induced by the inclusion $P_0 \subset P$. Coherent sheaves on the classifying stack $\lfloor P_0 \backslash * \rfloor$ are algebraic representations of $P_0$ and clearly, we have
\begin{equation*}
\Omega^1_{\tilde{\pi}} = \Lie(P)/\Lie(P_0)^{\vee},
\end{equation*}
where the action of $P_0$ on $\Lie(P)$ is induced by the restriction of the adjoint action of $P$. From the isomorphism
\begin{equation*}
\tilde{\zeta}^*\Omega^1_{\tilde{\pi}} =\Omega^1_{\pi},
\end{equation*}
we deduce that $$\Omega^{1}_{Y^{\tor}_{I_0}/\Sh^{\tor}} = \Lc(\Lie(P)/\Lie(P_0)^{\vee}).$$ Since the $T$-weights on $\Lie(P)/\Lie(P_0)^{\vee}$ are the $-\alpha$ wih $\alpha \in \phi_L^+-\phi_{I_0}^+$, the result follows.
\end{proof}
\begin{rmrk}\label{rmrk1}
The exact sequence 
\begin{equation*}
0 \rightarrow \pi^*\Omega^1_{\Sh^{\tor}}(\log D_{\red}) \rightarrow \Omega^1_{Y^{\tor}_{I_0}}(\log D_{\red}) \rightarrow \Omega^1_{Y^{\tor}_{I_0}/\Sh^{\tor}} \rightarrow 0
\end{equation*}
doesn't seem to split and we cannot prove the vanishing of the abelian group $$\smash{\Ext^1_{\mathcal{O}_{Y^{\tor}_{I_0}}}(\Omega^1_{Y^{\tor}_{I_0}/\Sh^{\tor}},\pi^*\Omega^1_{\Sh^{\tor}}(\log D_{\red}))}$$ using known vanishing results because the vector bundle $$\pi^*\Omega^1_{\Sh^{\tor}}(\log D_{\red}) \otimes {\Omega^1}_{Y^{\tor}_{I_0}/\Sh^{\tor}}^{\vee}$$ is filtered by $\mathcal{L}_{\lambda}$'s with $\lambda \in X^*(P_0)$ outside the anti-dominant Weyl chamber for which the first cohomology is non-zero in general. Outside the case $I_0 = I$, we don't even know if $\Omega^1_{\smash{Y^{\tor}_{I_0}}}(\log D_{\red})$ is automorphic. In other words, we don't know if $\Omega^1_{\smash{Y^{\tor}_{I_0}}}(\log D_{\red})$ is of the form $\mathcal{L}(V)$ for an algebraic representation $V$ of $P_0$.
\end{rmrk}
\subsubsection{Degeneration}
Mutliple subsets of $X^*(P_0)$ will occur in the formulations of our degeneration results, we gather them in the following definition. 
\begin{definition}
Consider an integer $0 \leq e \leq d-1$. We denote $\mathcal{C}_{\deg}^{0}$, $\mathcal{C}_{\deg,e}^{1}$ and $\mathcal{C}_{\deg,e}^{2}$ the following sets of characters.
\begin{equation*}
\left\{
\begin{aligned}
\mathcal{C}_{\deg}^{0}\hspace{0.22cm} := \{ \lambda \in X^*(P_0) &\ | \ \lambda - 2\rho_{I_0} \in X^*(P_0)^{+} \}, \\
\mathcal{C}_{\deg, e}^{1} := \{ \lambda \in X^*(P_0) &\ | \ \forall i > e+1 \ \forall j \ \forall 1 \leq k \leq e \ \forall M \subset \phi_L^+-\phi_{I_0}^+ \text{ such that }\\
&|M| = r_{0}-k, \ H^i(\Sh^{\tor},\nabla^{\sub}({\mu_j^{d-e+k}+\lambda-s_M})) = 0 \}, \\
\mathcal{C}_{\deg, e}^{2} := \{ \lambda \in X^*(P_0) &\ | \ \forall i > e+1 \ \forall j \neq \binom{d}{d-e} \\
&H^i(\Sh^{\tor},\nabla^{\sub}({\mu^{d-e}_j+ \lambda-2\rho_{I_0}})) = 0 \}.
\end{aligned}
\right.
\end{equation*}
\end{definition}
\begin{lemma}\label{lem1}
Let $\lambda \in \mathcal{C}_{\deg}^{0}$ and $\mathcal{F}$ be a coherent sheaf on $\Sh^{\tor}$. For all $0 \leq i \leq r_{0}$ and $n \geq 0$, we have the following isomorphism
\begin{equation*}
 H^{n}(Y^{\tor}_{I_0},\pi^*\mathcal{F} \otimes \Omega^i_{Y^{\tor}_{I_0}/\Sh^{\tor}} \otimes \mathcal{L}^{\sub}_{\lambda}) = H^{n}(\Sh^{\tor},\mathcal{F} \otimes \pi_*(\Omega^i_{Y^{\tor}_{I_0}/\Sh^{\tor}} \otimes \mathcal{L}^{\sub}_{\lambda})).
\end{equation*}
\end{lemma}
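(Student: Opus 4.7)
The plan is to derive the stated isomorphism from a vanishing of higher direct images of $\pi$, which I would obtain via the filtration of $\Omega^{i}_{Y^{\tor}_{I_0}/\Sh^{\tor}}$ provided by proposition \ref{prop9} combined with Kempf's vanishing theorem.

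First, since $\pi$ is smooth (hence flat) and $D_{\red}$ on $Y^{\tor}_{I_0}$ is the pullback of the corresponding boundary divisor on $\Sh^{\tor}$, the projection formula gives, for every $q \geq 0$,
\begin{equation*}
R^{q}\pi_{*}\bigl(\pi^{*}\mathcal{F} \otimes \Omega^{i}_{Y^{\tor}_{I_0}/\Sh^{\tor}} \otimes \mathcal{L}^{\sub}_{\lambda}\bigr) \simeq \mathcal{F} \otimes R^{q}\pi_{*}\bigl(\Omega^{i}_{Y^{\tor}_{I_0}/\Sh^{\tor}} \otimes \mathcal{L}^{\sub}_{\lambda}\bigr).
\end{equation*}
Feeding this into the Leray spectral sequence for $\pi$, the isomorphism of the lemma follows as soon as $R^{q}\pi_{*}\bigl(\Omega^{i}_{Y^{\tor}_{I_0}/\Sh^{\tor}} \otimes \mathcal{L}^{\sub}_{\lambda}\bigr) = 0$ for every $q > 0$.

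To establish this vanishing, I would apply proposition \ref{prop9}: the sheaf $\Omega^{i}_{Y^{\tor}_{I_0}/\Sh^{\tor}}$ admits a filtration whose graded pieces are the line bundles $\mathcal{L}_{-s_{M}}$, indexed by $M \subset \phi_{L}^{+} - \phi_{I_0}^{+}$ with $|M| = i$. Tensoring this filtration with $\mathcal{L}^{\sub}_{\lambda}$ produces a filtration with graded pieces $\mathcal{L}^{\sub}_{\lambda - s_{M}}$. Iteratively invoking lemma \ref{prop10} on the exact sequences of the filtration reduces the problem to showing $R^{q}\pi_{*}\mathcal{L}^{\sub}_{\lambda - s_{M}} = 0$ for every $q > 0$ and every such $M$. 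For each $M$, one reproduces the cartesian diagram argument of proposition \ref{prop13} via flat base change along $\zeta: \Sh^{\tor} \to \lfloor P \backslash * \rfloor$ together with lemma \ref{lem2}, reducing the question to Kempf's vanishing theorem (proposition \ref{prop4}) on the fiber flag variety $P/P_0$; this applies once $\lambda - s_{M}$ is $I_{0}$-dominant. The twist $\mathcal{O}(-D_{\red})$ is pulled back from the base, so it commutes out of $R\pi_{*}$ via the projection formula and is inert in the analysis.

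The main obstacle is verifying that the hypothesis $\lambda - 2\rho_{I_{0}} \in X^{*}(P_{0})^{+}$ actually entails $I_{0}$-dominance of $\lambda - s_{M}$ for every admissible $M$. Writing $\lambda - s_{M} = (\lambda - 2\rho_{I_{0}}) + s_{(\phi_{L}^{+} - \phi_{I_{0}}^{+}) \setminus M}$ expresses this weight as the sum of an $I_{0}$-dominant character and a non-negative integral combination of positive roots in $\phi_{L}^{+} - \phi_{I_{0}}^{+}$. The desired dominance then reduces to a direct combinatorial check, using the type-$A$ structure of the Levi $L = \GL_{g}$, that the correction term pairs appropriately with the coroots of the simple roots of $G$ relevant to $I_{0}$-dominance.
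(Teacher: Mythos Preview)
Your proposal is correct and follows essentially the same route as the paper: use the filtration of proposition \ref{prop9}, apply Kempf's vanishing (via lemma \ref{lem2} and proposition \ref{prop4}) to each graded piece $\mathcal{L}_{\lambda-s_M}$, conclude $R^{q}\pi_{*}(\Omega^{i}_{Y^{\tor}_{I_0}/\Sh^{\tor}}\otimes\mathcal{L}^{\sub}_{\lambda})=0$ for $q>0$ by lemma \ref{prop10}, and finish with the projection formula and the degeneration of the Leray spectral sequence. The only cosmetic difference is in the dominance check: where you rewrite $\lambda-s_M=(\lambda-2\rho_{I_0})+s_{(\phi_L^+-\phi_{I_0}^+)\setminus M}$ and appeal to a combinatorial verification, the paper simply invokes that each root in $\phi_L^+-\phi_{I_0}^+$ is $I_0$-dominant, which is the same observation packaged more tersely.
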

\begin{proof}
Let $i\geq 0$. We know by proposition \ref{prop9} that the vector bundle $\Omega^{i}_{Y^{\tor}_{I_0}/\Sh^{\tor}}$ is filtered by line bundles 
\begin{equation*}
\mathcal{L}_{-s_M} \text{ where } 
s_M = \sum_{\alpha \in M} \alpha \text{ for all } M \subset \phi_L^+-\phi_{I_0}^+ \text{ such that } |M| = i.
\end{equation*}
From the definition of $\mathcal{C}_{\deg}^{0}$ and the fact that the roots in $\phi_L^+-\phi^+_{I_0}$ are $I_0$-dominant, we know that all $\lambda-s_M$ are $I_0$-dominant characters. From Kempf's vanishing theorem (see proposition \ref{prop4} and lemma \ref{lem2}), we get
\begin{equation*}
\forall M \ \forall k > 0 \ R^k\pi_*(\mathcal{L}_{\lambda-s_M}) = 0
\end{equation*}
and by lemma \ref{prop10} we deduce
\begin{equation*}
\forall k > 0 \ R^k\pi_*(\Omega^i_{Y^{\tor}_{I_0}/\Sh^{\tor}} \otimes \mathcal{L}_{\lambda}) = 0.
\end{equation*}
Since $\pi^*\mathcal{O}_{\Sh^{\tor}}(-D_{\red}) = \mathcal{O}_{Y^{\tor}_{I_0}}(-D_{\red})$, the projection formula implies
\begin{equation*}
\forall k > 0 \ R^k\pi_*(\Omega^i_{Y^{\tor}_{I_0}/\Sh^{\tor}} \otimes \mathcal{L}_{\lambda}(-D_{\red})) = 0.
\end{equation*}
Using again the projection formula, it implies that the Leray spectral sequence 
\begin{multline*}
E_2^{t,k} = H^t(\Sh^{\tor},R^k\pi_*(\pi^*\mathcal{F} \otimes \Omega^i_{Y^{\tor}_{I_0}/\Sh^{\tor}} \otimes \mathcal{L}^{\sub}_{\lambda})) \\ \Rightarrow H^{t+k}(Y^{\tor}_{I_0},\pi^*\mathcal{F} \otimes \Omega^i_{Y^{\tor}_{I_0}/\Sh^{\tor}} \otimes \mathcal{L}^{\sub}_{\lambda})
\end{multline*}
is concentrated on ine row and we get the desired isomorphisms.
\end{proof}
\begin{proposition}\label{prop12}
Assume that $p > d = g(g+1)/2$. Let $e \geq 0$ and $i>e$ be integers. For any character $$\lambda \in \mathcal{C}_{\deg}^{0} \cap \mathcal{C}_{\deg,e}^{1} \cap \mathcal{C}_{\deg,e}^{2},$$the vanishing
\begin{equation*}
H^i(Y^{\tor}_{I_0},\Omega^{d_{0}-e}_{Y^{\tor}_{I_0}}(\log D_{\red}) \otimes \mathcal{L}^{\sub}_{\lambda}) = 0
\end{equation*}
implies the vanishing
\begin{equation*}
H^i(\Sh^{\tor},\nabla^{\sub}({\mu_{{d \choose d-e}}^{d-e} + \lambda-2\rho_{I_0}})) = 0.
\end{equation*}
\end{proposition}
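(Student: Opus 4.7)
The plan is to apply lemma \ref{prop11} iteratively to three nested filtrations in order to isolate the cohomology of $\nabla^{\sub}(\mu^{d-e}_{\binom{d}{d-e}} + \lambda - 2\rho_{I_0})$ from the vanishing hypothesis on $Y^{\tor}_{I_0}$. First I would apply lemma \ref{prop11} to the spectral sequence \eqref{sp1} with $n = r_0$ and $i_0 = i > e$. The graded pieces $\gr_k$ vanish for $k > r_0$, the abutment vanishes by hypothesis, and using the isomorphism $\Omega^{r_0}_{Y^{\tor}_{I_0}/\Sh^{\tor}} \simeq \mathcal{L}_{-2\rho_{I_0}}$ from proposition \ref{prop9}, lemma \ref{lem1} (applicable because $\lambda \in \mathcal{C}_{\deg}^{0}$), and proposition \ref{prop13}, the intended conclusion reads
\[
H^i(Y^{\tor}_{I_0}, \gr_{r_0} \otimes \mathcal{L}^{\sub}_\lambda) \;=\; H^i(\Sh^{\tor}, \Omega^{d-e}_{\Sh^{\tor}}(\log D_{\red}) \otimes \nabla^{\sub}(\lambda - 2\rho_{I_0})) \;=\; 0.
\]
What remains to verify is the hypothesis $H^{i+1}(Y^{\tor}_{I_0}, \gr_k \otimes \mathcal{L}^{\sub}_\lambda) = 0$ for every $k \le r_0 - 1$.

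For this verification, set $m = r_0 - k$: if $m > e$ the graded piece vanishes, and for $m \in [1, e]$ lemma \ref{lem1} rewrites the group as a cohomology on $\Sh^{\tor}$. I would then combine three filtrations on the resulting sheaf: the filtration of $\Omega^{r_0-m}_{Y^{\tor}_{I_0}/\Sh^{\tor}}$ from proposition \ref{prop9} (whose pushforward produces the subcanonical bundles $\nabla^{\sub}(\lambda - s_M)$ for $|M| = r_0 - m$, after applying Kempf vanishing termwise, which is enabled by $\lambda \in \mathcal{C}_{\deg}^{0}$), the $\nabla$-filtration of $\Omega^{d-e+m}_{\Sh^{\tor}}(\log D_{\red})$ given by proposition \ref{prop18} (valid since $p > d$), and Mathieu's $\nabla$-filtration (proposition \ref{prop8}) of each resulting tensor $\nabla(\mu^{d-e+m}_j) \otimes \nabla^{\sub}(\lambda - s_M)$. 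The composite filtration has graded pieces of the form $\nabla^{\sub}(\mu^{d-e+m}_j + \nu)$ for $\nu$ a weight of $\nabla(\lambda - s_M)$. The top pieces, corresponding to $\nu = \lambda - s_M$, have vanishing $H^{i+1}$ by $\mathcal{C}_{\deg, e}^{1}$, and recursive use of lemma \ref{prop11} through the Mathieu layers propagates the vanishing up to the whole graded piece $\gr_k \otimes \mathcal{L}^{\sub}_\lambda$.

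The resulting vanishing $H^i(\Sh^{\tor}, \Omega^{d-e}_{\Sh^{\tor}}(\log D_{\red}) \otimes \nabla^{\sub}(\lambda - 2\rho_{I_0})) = 0$ then feeds into a second instance of lemma \ref{prop11}, this time for the $\nabla$-filtration of $\Omega^{d-e}_{\Sh^{\tor}}(\log D_{\red}) \otimes \nabla^{\sub}(\lambda - 2\rho_{I_0})$ obtained from proposition \ref{prop18}, ordered so that the piece $\nabla(\mu^{d-e}_{\binom{d}{d-e}}) \otimes \nabla^{\sub}(\lambda - 2\rho_{I_0})$ is at the top. The lower graded pieces are killed in degree $i + 1$ using $\mathcal{C}_{\deg, e}^{2}$, together with the same Mathieu-plus-lemma \ref{prop11} routine used in the previous step. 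A third and final application of lemma \ref{prop11}, applied to the Mathieu filtration of $\nabla(\mu^{d-e}_{\binom{d}{d-e}}) \otimes \nabla^{\sub}(\lambda - 2\rho_{I_0})$ itself, extracts the highest weight piece $\nabla^{\sub}(\mu^{d-e}_{\binom{d}{d-e}} + \lambda - 2\rho_{I_0})$ and delivers the stated vanishing.

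The hard part is the combinatorial bookkeeping inside each step. Every Mathieu expansion spawns lower weight terms $\nabla^{\sub}(\mu^{d-e+m}_j + \nu)$ with $\nu < \lambda - s_M$, and one has to check that the chain of partial degenerations really reduces every such term to a quantity controlled by $\mathcal{C}_{\deg, e}^{1}$ or $\mathcal{C}_{\deg, e}^{2}$, without losing any cohomological degree along the way. This is precisely what forces the specific recursive form taken by the defining conditions of those two sets, and threading the vanishings cleanly through three nested filtrations is the delicate step of the argument.
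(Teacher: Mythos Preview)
Your overall architecture is right—use lemma \ref{prop11} on the spectral sequence \eqref{sp1} to peel off the top piece $\gr_{r_0}$, then again on the $\nabla$-filtration of $\Omega^{d-e}_{\Sh^{\tor}}(\log D_{\red})\otimes\nabla^{\sub}(\lambda-2\rho_{I_0})$—but the way you handle the Mathieu filtration introduces a genuine gap.

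The problem is the direction in which you apply proposition \ref{prop8}. You write the graded pieces of $\nabla(\mu_j^{d-e+m})\otimes\nabla(\lambda-s_M)$ as $\nabla(\mu_j^{d-e+m}+\nu)$ with $\nu$ ranging over weights of $\nabla(\lambda-s_M)$. Only the single term $\nu=\lambda-s_M$ is then covered by $\mathcal{C}_{\deg,e}^{1}$, and you propose to recover the rest by ``recursive use of lemma \ref{prop11}''. But lemma \ref{prop11} goes the other way: it deduces vanishing of the \emph{top} piece from vanishing of the \emph{total} plus vanishing of the lower pieces in one degree higher. It cannot propagate vanishing from the top piece back down to the whole filtered object, which is what you need here (you are trying to establish $H^{i+1}(\gr_k\otimes\mathcal{L}^{\sub}_\lambda)=0$ for $k<r_0$, i.e.\ vanishing of a total). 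The lower Mathieu terms $\nabla(\mu_j^{d-e+m}+\nu)$ with $\nu<\lambda-s_M$ are simply not of the shape appearing in $\mathcal{C}_{\deg,e}^{1}$ or $\mathcal{C}_{\deg,e}^{2}$, so no amount of recursion on those hypotheses will kill them.

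The fix is to apply proposition \ref{prop8} with the roles swapped: view the tensor product as $\nabla(\lambda-s_M)\otimes\nabla(\mu)$ and take weights of the second factor. Since every weight of every $\nabla$-constituent of $\Lambda^{d-e+k}\Sym^2\std_L$ is already one of the $\mu_j^{d-e+k}$, the composite filtration of $\Omega^{d-e+k}_{\Sh^{\tor}}(\log D_{\red})\otimes\nabla(\lambda-s_M)$ has \emph{all} its graded pieces of the form $\nabla(\mu_j^{d-e+k}+\lambda-s_M)$. Now $\mathcal{C}_{\deg,e}^{1}$ kills every one of them in degree $i+1$, and a single application of lemma \ref{prop10} (not \ref{prop11}) gives $H^{i+1}(\gr_k\otimes\mathcal{L}^{\sub}_\lambda)=0$. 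The same orientation of Mathieu makes your ``third application'' unnecessary: the top graded piece of $\Omega^{d-e}_{\Sh^{\tor}}(\log D_{\red})\otimes\nabla(\lambda-2\rho_{I_0})$ is already $\nabla(\mu^{d-e}_{\binom{d}{d-e}}+\lambda-2\rho_{I_0})$, and $\mathcal{C}_{\deg,e}^{2}$ kills the others in degree $i+1$, so one use of lemma \ref{prop11} finishes. This is exactly the paper's proof.
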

\begin{proof}
We use lemma \ref{prop10} for the filtration of $$\pi^*\Omega^{d-e+k}_{\Sh^{\tor}}(\log D_{\red}) \otimes \Omega^{r_{0}-k}_{Y^{\tor}_{I_0}/\Sh^{\tor}} \otimes \mathcal{L}^{\sub}_{\lambda}$$ obtained from the one defined in proposition \ref{prop9} to see that the vanishing
\begin{equation*}
\forall 1 \leq k \leq e \ H^{i+1}(Y^{\tor}_{I_0}, \pi^*\Omega^{d-e+k}_{\Sh^{\tor}}(\log D_{\red}) \otimes \Omega^{r_0-k}_{Y^{\tor}_{I_0}/\Sh^{\tor}} \otimes \mathcal{L}^{\sub}_{\lambda}) = 0,
\end{equation*}
follows from the vanishing
\begin{equation}\label{eq2}
H^{i+1}(Y^{\tor}_{I_0}, \pi^*\Omega^{d-e+k}_{\Sh^{\tor}}(\log D_{\red}) \otimes\mathcal{L}^{\sub}_{\lambda-s_M}) = 0
\end{equation}
for all $1 \leq k \leq e$ and all $M \subset \phi_L^+-\phi_{I_0}^+$ such that $|M| = r_0-k$. Since $\lambda \in \mathcal{C}_{\deg}^{0}$, we know by lemma \ref{lem1} that
\begin{multline*}
 H^{i+1}(Y^{\tor}_{I_0}, \pi^*\Omega^{d-e+k}_{\Sh^{\tor}}(\log D_{\red}) \otimes\mathcal{L}^{\sub}_{\lambda-s_M}) \\
= H^{i+1}(\Sh^{\tor}, \Omega^{d-e+k}_{\Sh^{\tor}}(\log D_{\red}) \otimes \nabla^{\sub}({\lambda-s_M})).
\end{multline*}
We use proposition \ref{prop8} and proposition \ref{prop18} to see that the bundle
\begin{equation*}
 \Omega^{d-e+k}_{\Sh^{\tor}}(\log D_{\red}) \otimes \nabla^{\sub}({\lambda-s_M})
\end{equation*}
admits a filtration where the graded pieces are isomorphic to $$\nabla({\mu_j^{d-e+k}+\lambda-s_M}).$$By lemma \ref{prop10}, we deduce that the vanishing in equality \eqref{eq2} follows from $\lambda \in \mathcal{C}_{\deg,e}^{1}$. Since 
\begin{equation*}
H^i(Y^{\tor}_{I_0},\Omega^{d_{0}-e}_{Y^{\tor}_{I_0}}(\log D_{\red}) \otimes \mathcal{L}^{\sub}_{\lambda}) = 0
\end{equation*}
by hypothesis, we can apply lemma \ref{prop11} to $E_{2,d_{0} -e,\lambda}$ to deduce that 
\begin{equation*}
H^i(\Sh^{\tor},\Omega^{d-e}_{\Sh^{\tor}}(\log D_{\red}) \otimes \nabla^{\sub}({\lambda-2\rho_{I_0}})) = 0.
\end{equation*}
Combining again the propositions \ref{prop8} and \ref{prop18}, we know that
\begin{equation*}
\Omega^{d-e}_{\Sh^{\tor}}(\log D_{\red}) \otimes \nabla^{\sub}({\lambda-2\rho_{I_0}})
\end{equation*}
admits a $\nabla$-filtration. Since $\lambda \in \mathcal{C}_{\deg,e}^{2}$, we use again lemma \ref{prop11} for the $\nabla$-filtration of $\Omega^{d-e}_{\Sh^{\tor}}(\log D_{\red}) \otimes \nabla^{\sub}({\lambda-2\rho_{I_0}})$ to see that
\begin{equation*}
H^i(\Sh^{\tor},\nabla^{\sub}({\mu_{d \choose d-e}^{d-e}+ \lambda-2\rho_{I_0}})) = 0. 
\end{equation*}
\end{proof}
\subsubsection{Propagation}\label{def_g}
In this section, we construct a non-decreasing function on the power set of characters that gives new vanishing results from known ones. Our main result is theorem \ref{th1}. 
\begin{definition}
For all $k \geq 0$,  we define a subset $\mathcal{C}^k_{\van}$ of $X^*$ as
\begin{equation*}
\mathcal{C}_{\van}^k = \left\{ \lambda \in X^* \ | \ \forall i > k \ H^i(\Sh^{\tor},\nabla^{\sub}({\lambda})) = 0 \right\}.
\end{equation*}
\end{definition}
\begin{rmrk}
$\mathcal{C}_{\van}^k$ always contains the non-dominant characters.
\end{rmrk}
\begin{definition}
We define a function $g_{I_0,e} : \mathcal{P}(X^*) \rightarrow \mathcal{P}(X^*)$ by
\begin{equation*}
g_{I_0,e}(\mathcal{C}) = \mu^{d-e}_{d \choose d-e} + X^*(P_0)^+ \cap (-2\rho_{I_0} + \mathcal{C}_{\amp,I_0}) \cap \bigcap_{k,j,M} (s_M-2\rho_{I_0} - \mu^{d-e+k}_j + \mathcal{C}),
\end{equation*}
for all $\mathcal{C} \subset X^*$ where the last intersection is taken over the set of $k,j,M$ where $0 \leq k \leq e$, $1 \leq j \leq {d \choose d-e+k}$ and $M \subset \phi_L^+-\phi_{I_0}^+$ such that $|M| = r_{0}-k$ with the exception of $j= {d \choose d-e}$ when $k=0$.  
\end{definition}
\begin{theorem}\label{th1}
Assume that $p > d_0$. Let $\mathcal{C}$ be a subset of $\mathcal{C}_{\van}^{e+1}$. Then, we have
\begin{equation*}
g_{I_0,e}(\mathcal{C}) \subset \mathcal{C}_{\van}^e.
\end{equation*}
In other words, if we have a set $\mathcal{C}$ of characters $\lambda$ for which the cohomology 
\begin{equation*}
H^i(\Sh^{\tor},\nabla^{\sub}({\lambda}))
\end{equation*}
is concentrated in degrees $[0,e+1]$, then the image of $\mathcal{C}$ by the function $g_{I_0,e}$ is a set of characters $\lambda$ for which the cohomology $H^i(\Sh^{\tor},\nabla^{\sub}({\lambda}))$ is concentrated in degrees $[0,e]$.
\end{theorem}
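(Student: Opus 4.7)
The proof is a bookkeeping reduction to Proposition \ref{prop12} combined with the logarithmic Kodaira--Nakano vanishing of Proposition \ref{prop7}: the function $g_{I_0,e}$ has been engineered so that a character $\lambda \in g_{I_0,e}(\mathcal{C})$ is precisely the translate of an auxiliary character satisfying the combined hypotheses of these two results.

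Concretely, I would fix $\lambda \in g_{I_0,e}(\mathcal{C})$ and introduce the auxiliary character
$$\lambda_0 := \lambda - \mu^{d-e}_{\binom{d}{d-e}} + 2\rho_{I_0}.$$
Unwinding the four factors in the intersection defining $g_{I_0,e}(\mathcal{C})$, I would then verify, in order: (a) $\lambda_0 - 2\rho_{I_0} \in X^*(P_0)^+$, so $\lambda_0 \in \mathcal{C}_{\deg}^0$; (b) $\lambda_0 \in \mathcal{C}_{\amp,I_0}$, so $\mathcal{L}_{\lambda_0}$ is $D$-ample on $Y^{\tor}_{I_0}$; (c) for every triple $(k,j,M)$ with $1 \leq k \leq e$, $1 \leq j \leq \binom{d}{d-e+k}$, and $M \subset \phi_L^+ - \phi_{I_0}^+$ of size $r_0-k$, the character $\mu^{d-e+k}_j + \lambda_0 - s_M$ lies in $\mathcal{C} \subseteq \mathcal{C}_{\van}^{e+1}$, giving $\lambda_0 \in \mathcal{C}_{\deg,e}^1$; and (d) the $k=0$ factor, in which the unique allowed $M = \phi_L^+ - \phi_{I_0}^+$ satisfies $s_M = 2\rho_{I_0}$, yields $\mu^{d-e}_j + \lambda_0 - 2\rho_{I_0} \in \mathcal{C}_{\van}^{e+1}$ for all $j \neq \binom{d}{d-e}$, i.e.\ $\lambda_0 \in \mathcal{C}_{\deg,e}^2$.

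With (a), (c), (d) in hand, Proposition \ref{prop12} reduces the desired statement $H^i(\Sh^{\tor},\nabla^{\sub}(\lambda)) = 0$ for $i > e$ to the flag-bundle vanishing
$$H^i(Y^{\tor}_{I_0}, \Omega^{d_0-e}_{Y^{\tor}_{I_0}}(\log D_{\red}) \otimes \mathcal{L}^{\sub}_{\lambda_0}) = 0 \quad \text{for all } i > e.$$
This is exactly what Proposition \ref{prop7} supplies, taking $j = d_0 - e$, once we note that $p > d_0$ by hypothesis, that $\mathcal{L}_{\lambda_0}$ is $D$-ample by (b), and that the triple $(Y^{\tor}_{I_0}, D_{\red}, \mathcal{L}_{\lambda_0})$ lifts to $W_2(\mathbb{F}_p)$ (Siegel toroidal compactifications admit canonical such lifts, and the flag bundle over a $W_2$-lift is again a $W_2$-lift).

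The argument therefore contains no serious new obstacle: the entire content is the observation that the combinatorial shape of $g_{I_0,e}$ was chosen to encode precisely the combined hypotheses of Propositions \ref{prop12} and \ref{prop7}. The most delicate clerical point is matching the $k=0$ factor of $g_{I_0,e}$ with $\mathcal{C}_{\deg,e}^2$: this rests on the identity $s_{\phi_L^+ - \phi_{I_0}^+} = 2\rho_{I_0}$ and the exclusion $j \neq \binom{d}{d-e}$, which corresponds exactly to the highest graded piece $\nabla(\mu^{d-e}_{\binom{d}{d-e}} + \lambda_0 - 2\rho_{I_0})$ singled out in the conclusion of Proposition \ref{prop12}.
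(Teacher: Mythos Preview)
Your proposal is correct and follows essentially the same route as the paper's proof: introduce the auxiliary character $\lambda_0 = \lambda + 2\rho_{I_0} - \mu^{d-e}_{\binom{d}{d-e}}$ (the paper calls it $\lambda'$), unpack the definition of $g_{I_0,e}$ to verify $\lambda_0 \in \mathcal{C}_{\deg}^0 \cap \mathcal{C}_{\deg,e}^1 \cap \mathcal{C}_{\deg,e}^2 \cap \mathcal{C}_{\amp,I_0}$, then combine Propositions \ref{prop7} and \ref{prop12}. Your write-up is in fact more explicit than the paper's on the clerical matching, in particular the $k=0$ case via $s_{\phi_L^+ - \phi_{I_0}^+} = 2\rho_{I_0}$; the paper additionally observes that by monotonicity one may assume $\mathcal{C} = \mathcal{C}_{\van}^{e+1}$, but this is inessential.
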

\begin{proof}
Since $g_{I_0,e}$ is non-decreasing, it suffices to show $g_{I_0,e}(\mathcal{C}_{\van}^{e+1}) \subset \mathcal{C}_{\van}^{e}$. Let $\lambda \in g_{I_0,e}(\mathcal{C}_{\van}^{e+1})$ be a character and define $\lambda^{\prime} := \lambda +2\rho_{I_0}-\mu^{d-e}_{d \choose d-e}$. From the definition of $g_{I_0,e}$, we first deduce that 
\begin{equation*}
\lambda^\prime  \in \mathcal{C}_{\deg}^{0} \cap \mathcal{C}_{\deg}^{1} \cap \mathcal{C}_{\deg}^{2}
\end{equation*}
and 
\begin{equation*}
\lambda^\prime \in \mathcal{C}_{\amp, I_0}.
\end{equation*}
Since the triple $(Y_{I_0}^{\tor},D_{\red},\mathcal{L}_{\lambda^\prime})$ lifts to $\mathbb{Z}/p^2\mathbb{Z}$ and $p \geq d_0$, we apply proposition \ref{prop7} to see that
\begin{equation*}
H^i(\Sh^{\tor},\Omega^{d_0-e}_{Y_{I_0}^{\tor}}(\log D_{\red}) \otimes \mathcal{L}^{\sub}_{\lambda^\prime}) = 0
\end{equation*}
for all $i+d_0-e > d_0$ (i.e. $i>e)$ and we use proposition \ref{prop12} (as $p > d_0 \geq d$) to see that 
\begin{equation*}
H^i(\Sh^{\tor},\nabla^{\sub}({\mu_{{d \choose d-e}}^{d-e} + \lambda^\prime-2\rho_{I_0}})) = H^i(\Sh^{\tor},\nabla^{\sub}({\lambda})) = 0
\end{equation*}
for all $i > e$.
\end{proof}
\begin{rmrk}
The theorem is still valid if we use a subset $\mathcal{C}_{\amp,I_0}^{\prime}$ of $\mathcal{C}_{\amp,I_0}$ instead of $\mathcal{C}_{\amp,I_0}$ in the definition of $g_{I_0,e}$. In particular, by theorem \ref{th_amp_auto}, we can use it with the subset of orbitally $p$-close and $\mathcal{Z}_0$-ample characters.
\end{rmrk}
\subsection{The Siegel threefold case}
In this subsection, we give more details in the case $g = 2$ because we believe it already contains some of the idea of the general method and it requires less notations. Assume that $p$ is a prime larger than $g^2 = 4$. The Siegel threefold $\Sh^{\tor}$ is projective variety of dimension $d = 3$ over $\mathbb{F}_p$. From the Kodaira-Spencer isomorphism, we have an identification
\begin{equation*}
\Omega^1_{\Sh^{\tor}}(\log D_{\red}) = \nabla(0,-2).
\end{equation*} 
From proposition \ref{prop18}, we know that any exterior power of $\Sym^2 \std_{\GL_2}$ admits a $\nabla$-filtration. It direclty implies that we have
\begin{equation*}
\left\{
\begin{aligned}
\Omega^2_{\Sh^{\tor}}(\log D_{\red}) &= \nabla(-1,-3), \\
\Omega^3_{\Sh^{\tor}}(\log D_{\red}) &= \nabla(-3,-3),
\end{aligned}
\right.
\end{equation*}
and the weights of these three automorphic vector bundles are 
\begin{equation*}
\left\{
\begin{aligned}
(\mu^1_j)_j &= \left\{(-2,0),(-1,-1),(0,-2)\right\}, \\
(\mu^2_j)_j &= \left\{(-3,-1),(-2,-2),(-1,-3)\right\}, \\
(\mu^3_j)_j &= \left\{(-3,-3)\right\}.
\end{aligned}
\right.
\end{equation*}
We start with the case $I_0 = \emptyset$. The associated complete flag bundle $\pi : Y^{\tor} \rightarrow \Sh^{\tor}$ parametrizes quotient line bundles of the rank $2$ Hodge bundle $\Omega^{\tor}$. It is a $\mathbb{P}^1$-fibration and we have an identification:
\begin{equation*}
\Omega^1_{Y^{\tor}/\Sh^{\tor}} = \mathcal{L}_{-2\rho} = \mathcal{L}_{(-1,1)}.
\end{equation*}
For any integer $0 \leq e \leq d-1 = 2$, we have an increasing filtration on the bundle $\Omega^{4-e}_{Y^{\tor}}(\log D_{\red})$ given by
\begin{equation*}
F_k = \pi^*\Omega^{4-e-k}_{\Sh^{\tor}} (\log D_{\red}) \wedge \Omega^k_{Y^{\tor}} (\log D_{\red})
\end{equation*}
with graded pieces
\begin{equation*}
\gr_k = \pi^*\Omega^{4-e-k}_{\Sh^{\tor}} (\log D_{\red})\otimes \Omega^k_{Y^{\tor}/\Sh^{\tor}}.
\end{equation*}
For any character $\lambda = (k_1 \geq k_2)$, we have an associated spectral sequence:
\begin{multline*}
E_{2,e,\lambda}^{t,k} = H^{t+k}(Y^{\tor}, \pi^*\Omega^{4-e-k}_{\Sh^{\tor}} (\log D_{\red})\otimes \Omega^k_{Y^{\tor}/\Sh^{\tor}} \otimes \mathcal{L}_{\lambda}^{\sub}) \\ \Rightarrow H^{t+k}(Y^{\tor},\Omega^{4-e}_{Y^{\tor}}(\log D_{\red}) \otimes \mathcal{L}_{\lambda}^{\sub})
\end{multline*}
starting at page $2$. We will study this spectral sequence for each $e$, starting with $e = 0$. In this case (see the corresponding figure), the second page of the spectral sequence is concentrated in one row
\begin{figure}
\centering
\vspace{3.5cm}
\begin{tikzpicture}[transform canvas={scale=0.8}]
  \matrix (m) [matrix of math nodes, nodes in empty cells,nodes={minimum width=5ex, minimum height=5.5ex,outer sep=-5pt}, column sep=1ex,row sep=1ex]{
           k  &   &   &   &   & \\
          2 & 0 &  0 & 0 & 0  & 0\\
          1 &  H^0(\gr_1 \otimes \mathcal{L}_{\lambda}^{\sub})  &  H^1(\gr_1 \otimes \mathcal{L}_{\lambda}^{\sub}) & H^2(\gr_1 \otimes \mathcal{L}_{\lambda}^{\sub}) &  H^3(\gr_1 \otimes \mathcal{L}_{\lambda}^{\sub}) & 0 \\
          0 &  0 &  0  & 0 & 0 & 0 \\
          -1 \ \ &  0 & 0 & 0 & 0 & 0 \\
    \quad\strut &   -1  &  0  &  1  &  2 & 3 & t \strut  \\};
    
  	\draw[-stealth] (m-2-2.south east) -- (m-3-4.north west);
    \draw[-stealth] (m-2-3.south east) -- (m-3-5.north west);
	\draw[-stealth] (m-3-2.south east) -- (m-4-4.north west);
    \draw[-stealth] (m-3-3.south east) -- (m-4-5.north west);
    \draw[-stealth] (m-4-2.south east) -- (m-5-4.north west);
    \draw[-stealth] (m-4-3.south east) -- (m-5-5.north west);

\draw[thick,-stealth] (m-6-1.east) -- (m-1-1.east) ;
\draw[thick,-stealth] (m-6-1.north) -- (m-6-6.north) ;
\end{tikzpicture}
\vspace{2cm}
\caption{$E_2$-page of the spectral sequence when $e = 0$}
\end{figure}
as the only graded piece is $\gr_1 = \pi^*\Omega^3_{\Sh^{\tor}}(\log D_{\red}) \otimes \Omega^1_{Y^{\tor}/\Sh^{\tor}}$. In particular, the spectral sequence degenerates at page 2 and we get
\begin{equation*}
H^i(Y^{\tor},  \pi^*\Omega^3_{\Sh^{\tor}}(\log D_{\red}) \otimes \Omega^1_{Y^{\tor}/\Sh^{\tor}} \otimes \mathcal{L}_{\lambda}^{\sub}) = H^i(Y^{\tor}, \Omega^4_{Y^{\tor}}(\log D_{\red}) \otimes \mathcal{L}_{\lambda}^{\sub})
\end{equation*}
for all $i\geq 0$. Moreover, if we assume that $\lambda - 2\rho$ is dominant (which is equivalent to $k_1 \geq k_2 +2$), we get
\begin{equation*}
\begin{aligned}
&H^i(Y^{\tor},  \pi^*\Omega^3_{\Sh^{\tor}}(\log D_{\red}) \otimes \Omega^1_{Y^{\tor}/\Sh^{\tor}} \otimes \mathcal{L}_{\lambda}^{\sub}) \\
&= H^i(\Sh^{\tor},  \nabla(-3,-3) \otimes \nabla^{\sub}(k_1-1,k_2+1)) \\
&= H^i(\Sh^{\tor}, \nabla^{\sub}(k_1-4,k_2-2))
\end{aligned}
\end{equation*}
for all $i\geq 0$. We assume that $\mathcal{L}_{(k_1,k_2)}$ is $D$-ample on $Y^{\tor}$ and we use the logarithmic Kodaira-Nakano vanishing theorem to see that
\begin{equation*}
H^i(Y^{\tor}, \Omega^4_{Y^{\tor}}(\log D_{\red}) \otimes \mathcal{L}^{\sub}_{\lambda}) = 0
\end{equation*}
for all $i>0$. 
We summarize this discussion by saying that we have
\begin{equation*}
H^i(\Sh^{\tor},\nabla^{\sub}(k_1-4,k_2-2)) = 0
\end{equation*}
for all $i > 0$ and $(k_1,k_2)$ such that
\begin{itemize}
\item $k_1 \geq k_2 + 2$,
\item $(k_1,k_2) \in \mathcal{C}_{\amp,\emptyset}$.
\end{itemize}
Now, we consider the spectral sequence in the case $e = 1$ (see the corresponding figure).
\begin{figure}
\centering
\vspace{3.5cm}
\begin{tikzpicture}[transform canvas={scale=0.8}]
  \matrix (m) [matrix of math nodes, nodes in empty cells,nodes={minimum width=5ex, minimum height=5.5ex,outer sep=-5pt}, column sep=1ex,row sep=1ex]{
           k  &   &   &   &   & & \\
          2 & 0 &  0 & 0 & 0  & 0 & \\
          1 &  H^0(\gr_1 \otimes \mathcal{L}_{\lambda}^{\sub})  &  H^1(\gr_1 \otimes \mathcal{L}_{\lambda}^{\sub}) & H^2(\gr_1 \otimes \mathcal{L}_{\lambda}^{\sub}) &  H^3(\gr_1 \otimes \mathcal{L}_{\lambda}^{\sub}) & H^4(\gr_1 \otimes \mathcal{L}_{\lambda}^{\sub}) \\
          0 &  0 &  H^0(\gr_0 \otimes \mathcal{L}_{\lambda}^{\sub})   & H^1(\gr_0 \otimes \mathcal{L}_{\lambda}^{\sub}) & H^2(\gr_0 \otimes \mathcal{L}_{\lambda}^{\sub}) & H^3(\gr_0 \otimes \mathcal{L}_{\lambda}^{\sub}) \\
          -1 \ \ &  0 & 0 & 0 & 0 & 0 & \\
    \quad\strut &   -1  &  0  &  1  &  2 & 3 & t \strut  \\};
    
  	\draw[-stealth] (m-2-2.south east) -- (m-3-4.north west);
  	  	\draw[-stealth] (m-3-4.south east) -- (m-4-6.north west);

	\draw[-stealth] (m-3-2.south east) -- (m-4-4.north west);
    \draw[-stealth] (m-3-3.south east) -- (m-4-5.north west);
    \draw[-stealth] (m-4-3.south east) -- (m-5-5.north west);

\draw[thick,-stealth] (m-6-1.east) -- (m-1-1.east) ;
\draw[thick,-stealth] (m-6-1.north) -- (m-6-7.north) ;
\end{tikzpicture}
\vspace{2cm}
\caption{$E_2$-page of the spectral sequence when $e = 1$}
\end{figure}
The graded pieces are $\gr_1 = \pi^*\Omega^2_{\Sh^{\tor}}(\log D_{\red}) \otimes \Omega^1_{Y^{\tor}/\Sh^{\tor}}$ and $\gr_0 = \pi^*\Omega^3_{\Sh^{\tor}}(\log D_{\red})$. The limit is $H^{i}(Y^{\tor},\Omega^3_{Y^{\tor}}(\log D_{\red}) \otimes \mathcal{L}_{\lambda}^{\sub})$ and by the logarithmic Kodaira-Nakano theorem, it vanishes for all $i>1$ when $(k_1,k_2) \in \mathcal{C}_{\amp,\emptyset}$. The critical differential is
\begin{multline*}
d^{1,1} : H^2(Y^{\tor},\pi^*\Omega^2_{\Sh^{\tor}}(\log D_{\red}) \otimes \Omega^1_{Y^{\tor}/\Sh^{\tor}} \otimes \mathcal{L}_{\lambda}^{\sub})\\ \rightarrow H^3(Y^{\tor}, \pi^*\Omega^3_{\Sh^{\tor}}(\log D_{\red}) \otimes \mathcal{L}_{\lambda}^{\sub}),
\end{multline*}
because when $d^{1,1} = 0$, we have $E_2^{1,1} = E_{\infty}^{1,1}$ and $E_2^{2,1} = E_{\infty}^{2,1}$. Under the additional hypothesis $k_1 \geq k_2+2$, we deduce that
\begin{multline*}
H^i(Y^{\tor},\pi^*\Omega^2_{\Sh^{\tor}}(\log D_{\red}) \otimes \Omega^1_{Y^{\tor}/\Sh^{\tor}} \otimes \mathcal{L}_{\lambda}^{\sub}) \\ = H^i(\Sh^{\tor},\nabla(-1,-3) \otimes \nabla^{\sub}(k_1-1,k_2+1)) = 0 
\end{multline*}
for all $i > 1$ and $(k_1,k_2) \in \mathcal{C}_{\amp,\emptyset}$. Consider an integer $i = 2$ or $3$. The tensor product of automorphic vector bundles 
\begin{equation*}
\nabla(-1,-3) \otimes \nabla^{\sub}(k_1-1,k_2+1)
\end{equation*}
is filtered by the automorphic bundles
\begin{equation*}
\nabla^{\sub}(\mu_j^2 + (k_1-1,k_2+1))
\end{equation*}
where $j = 1, 2, 3$ and if we ask for the vanishing
\begin{equation*}
H^{i+1}(\Sh^{\tor},\nabla^{\sub}(\mu^2_j+(k_1-1,k_2+1))) = 0
\end{equation*}
for $j = 1,2$, it will imply
\begin{equation*}
H^i(\Sh^{\tor},\nabla^{\sub}((-1,-3)+(k_1-1,k_2+1))) = H^i(\Sh^{\tor},\nabla^{\sub}(k_1-2,k_2-2)) = 0.
\end{equation*}
To see that the critical differential $d^{1,1}$ is zero, it is sufficient to have 
\begin{equation*}
H^3(Y^{\tor}, \pi^*\Omega^3_{\Sh^{\tor}}(\log D_{\red}) \otimes \mathcal{L}_{\lambda}^{\sub}) = H^3(\Sh^{\tor}, \nabla^{\sub}(k_1-3,k_2-3)) = 0.
\end{equation*}
We summarize this discussion by saying that we have
\begin{equation*}
H^i(\Sh^{\tor},\nabla^{\sub}(k_1-2,k_2-2)) = 0
\end{equation*}
for all $i > 1$ and $(k_1,k_2)$ such that
\begin{itemize}
\item $k_1 \geq k_2 + 2$,
\item $(k_1,k_2) \in \mathcal{C}_{\amp,\emptyset}$,
\item $H^3(\Sh^{\tor},\nabla^{\sub}(\mu^2_j+(k_1-1,k_2+1))) = 0$ for $j = 1,2$,
\item $H^3(\Sh^{\tor}, \nabla^{\sub}(k_1-3,k_2-3)) = 0$.
\end{itemize}

Now, we consider the spectral sequence in the case $e = 2$. The graded pieces are\\ $\gr_1 = \pi^*\Omega^1_{\Sh^{\tor}}(\log D_{\red}) \otimes \Omega^1_{Y^{\tor}/\Sh^{\tor}}$ and $\gr_0 = \pi^*\Omega^2_{\Sh^{\tor}}(\log D_{\red})$. The limit is
\begin{equation*}
H^{i}(Y^{\tor},\Omega^2_{Y^{\tor}}(\log D_{\red}) \otimes \mathcal{L}_{\lambda}^{\sub})
\end{equation*}
and by the logarithmic Kodaira-Nakano theorem, it vanishes for all $i>2$ when\\ $(k_1,k_2) \in \mathcal{C}_{\amp,\emptyset}$. The differential
\begin{multline*}
d^{2,1} : H^3(Y^{\tor},\pi^*\Omega^1_{\Sh^{\tor}}(\log D_{\red}) \otimes \Omega^1_{Y^{\tor}/\Sh^{\tor}} \otimes \mathcal{L}_{\lambda}^{\sub})\\ \rightarrow \underbrace{H^4(Y^{\tor}, \pi^*\Omega^2_{\Sh^{\tor}}(\log D_{\red}) \otimes \mathcal{L}_{\lambda}^{\sub})}_{=0}
\end{multline*}
is already $0$ since the Siegel threefold has dimension $3$. Under the additional hypothesis $k_1 \geq k_2+2$, we deduce that
\begin{multline*}
H^i(Y^{\tor},\pi^*\Omega^1_{\Sh^{\tor}}(\log D_{\red}) \otimes \Omega^1_{Y^{\tor}/\Sh^{\tor}} \otimes \mathcal{L}_{\lambda}^{\sub}) \\ = H^i(\Sh^{\tor},\nabla(0,-2) \otimes \nabla^{\sub}(k_1-1,k_2+1)) = 0 
\end{multline*}
for all $i > 2$ and $(k_1,k_2) \in \mathcal{C}_{\amp,\emptyset}$. The tensor product of automorphic vector bundles 
\begin{equation*}
\nabla(0,-2) \otimes \nabla^{\sub}(k_1-1,k_2+1)
\end{equation*}
is filtered by the automorphic bundles
\begin{equation*}
\nabla^{\sub}(\mu_j^1 + (k_1-1,k_2+1))
\end{equation*}
where $j = 1, 2, 3$ and since the vanishing
\begin{equation*}
H^{i+1}(\Sh^{\tor},\nabla^{\sub}(\mu^2_j+(k_1-1,k_2+1))) = 0
\end{equation*}
for $j = 1,2$ is automatic, it implies the vanishing of
\begin{equation*}
H^i(\Sh^{\tor},\nabla^{\sub}((0,-2)+(k_1-1,k_2+1)))=H^i(\Sh^{\tor},\nabla^{\sub}(k_1-1,k_2-1)).
\end{equation*}
We summarize this discussion by saying that we have
\begin{equation*}
H^i(\Sh^{\tor},\nabla^{\sub}(k_1-1,k_2-1)) = 0
\end{equation*}
for all $i > 2$ and $(k_1,k_2)$ such that
\begin{itemize}
\item $k_1 \geq k_2 + 2$,
\item $(k_1,k_2) \in \mathcal{C}_{\amp,\emptyset}$.
\end{itemize}
We consider the case $I_0 = I = \left\{(1,-1)\right\}$. This case corresponds to $Y^{\tor}_{I_0} = \Sh^{\tor}$. In this degenerate case, the spectral sequence is trivial and the $D$-ample automorphic line bundle are powers of the determinant of the Hodge bundle : $\nabla(k,k)$ for all $k < 0$. By the logarithmic Kodaira-Nakano vanishing theorem, we have
\begin{equation*}
H^i(\Sh^{\tor}, \Omega^j_{\Sh^{\tor}}(\log D_{\red}) \otimes \nabla^{\sub}(k,k)) = 0
\end{equation*}
for all $i +j> 3$ and $k < 0$. In the case $j=3$ , we get 
\begin{equation*}
H^i(\Sh^{\tor}, \nabla^{\sub}(k-3,k-3)) = 0
\end{equation*}
for all $i>0$. In the case $j=2$ , we get 
\begin{equation*}
H^i(\Sh^{\tor}, \nabla^{\sub}(k-1,k-3)) = 0
\end{equation*}
for all $i>1$. In the case $j=1$ , we get 
\begin{equation*}
H^i(\Sh^{\tor}, \nabla^{\sub}(k,k-2)) = 0
\end{equation*}
for all $i>2$.

\section{Degeneration algorithm}\label{part7}
\subsection{Presentation}
From the description of the degeneration of the different spectral sequences in the case of the Siegel threefold, it is clear that an algorithm implemented on a computer could be useful to make the vanishing results more explicit. We present an algorithm written in SageMath that uses our main result (theorem \ref{th1}) to compute new vanishing results from known ones.\\ \\
See \href{https://github.com/ThibaultAlexandre/vanishing-results-over-the-siegel-variety}{github.com/ThibaultAlexandre/vanishing-results-over-the-siegel-variety} \\ \\
It depends on the following parameters.
\begin{enumerate}
\item The genus $g \geq 2$ (the case $g = 1$ is obvious).
\item A prime $p$ such that $p > g^2$.
\item A set of known vanishing result $\mathcal{C}_{\van}$ for each cohomological degree.
\item The integer $e$ that appears on the spectral sequence \ref{sp1}.
\item A subset $I_0 \subset \Delta$ for the choice of the flag bundle $Y^{\tor}_{I_0}$ over the Siegel modular variety.
\end{enumerate}
In the special case where $e = 0$, our algorithm does not need any vanishing result for the degeneration process as the spectral sequence \ref{sp1} is concentrated on one row. In the special case where $e = d-1$, the degeneration is automatic as it is given by the vanishing of the coherent cohomology in degree $i > d$. Then, these results can be used to run the algorithm with $e = 1$ and with differents $I_0 \subset \Delta$ and so on. \\

Our SageMath code defines a class SiegelVariety with some methods that can be used to compute vanishing results. We create the Siegel threefold $X$ over $\mathbb{F}_7$.
\begin{pyin}
X = SiegelVariety(g = 2, p = 7)
\end{pyin}
If the next line returns True, it means that the automorphic line bundle $\mathcal{L}_{(-2,-8)}$ is $D$-ample on the complete flag variety $Y$ over $X$.
\begin{pyin}
X.ample([],[-2,-8])
\end{pyin}
\begin{pyout}
True
\end{pyout}
The next line compute vanishing results for characters $\lambda = (k_1,k_2)$ with $ -50 \leq k_2 \leq k_1 \leq 0$ using the function $g_{I_0,e}$ in the case where $I_0 = \emptyset$ and $e = 0$. The results are registered in the list $C_{\text{van}}$. It returns True if the algorithm has found new vanishing results.
\begin{pyin}
X.compute([], e = 0, kmin = -50, kmax = 0)
\end{pyin}
\begin{pyout}
True
\end{pyout}
The next line runs the compute method for each $I_0 \subset I$ and $0 \leq e \leq d-1$. We only need to specify the range of characters $\lambda = (k_1,k_2)$ we want to consider. It returns True if the algorithm has found new vanishing results. You may want to run this command several times until it returns False.
\begin{pyin}
X.computeAll(-50, 0)
\end{pyin}
\begin{pyout}
True
\end{pyout}
The next line returns True if we know that $$H^i(X,\nabla^{\sub}(-4,-6)) = 0$$ for all $i>1$.
\begin{pyin}
X.vanishes(1,(-4,-6))
\end{pyin}
\begin{pyout}
True
\end{pyout}
If the next line returns False, it means we don't know if $$H^i(X,\nabla^{\sub}(-4,-6)) = 0$$ for all $i>0$.
\begin{pyin}
X.vanishes(0,(-4,-6))
\end{pyin}
\begin{pyout}
False
\end{pyout}
\FloatBarrier
\subsection{Explicit vanishing for $G = \Sp_4$}
We plot some vanishing results we have obtained for the Siegel threefold with our algorithm. We have also added the $p$-small characters for $\Sp_4$ with a twist by $-w_0$ to have them in the anti-dominant Weyl chamber. 

\begin{figure}
\centering
\begin{tikzpicture}[scale = 1.5]
\begin{axis}[
 axis lines=middle,
	grid=both,
	grid style={black!10},
	xmin=-44,
	xmax=5,
	ymin=-44,
	ymax=5,
	legend style={at={(0.1,0.65)},anchor=west, font = \tiny},
 	xlabel=$k_2$,
	ylabel=$k_1$,
	minor tick num=9,
	x axis line style = {stealth-},
	y axis line style = {stealth-},
	xticklabel shift={0.0cm},
	xlabel style={yshift=-7.2cm},
	yticklabel shift={-0.9cm},
	ylabel style={xshift=-7.2cm},
]

\addplot [only marks,
	color=green,
	mark=x,
	mark options={scale=0.7, fill=white}]
	table{results/g2p5_psmall.txt};
	\addlegendentry[align = left]{$p$-small weights for $\Sp_4$}
	
\addplot [only marks,
	color=black,
	mark=o,
	mark options={scale=0.6, fill=white}]
	table{results/g2p5_0.txt};
	\addlegendentry{Concentrated in $[0]$}

\addplot [only marks,
	color=blue,
	mark=triangle,
	mark options={scale=0.6, fill=white}]
	table{results/g2p5_1.txt};
	\addlegendentry{Concentrated in $[0,1]$}

\addplot [only marks,
	color=red,
	mark=square,
	mark options={scale=0.6, fill=white}]
	table{results/g2p5_2.txt};
	\addlegendentry{Concentrated in $[0,2]$}

\draw[scale=0.5, domain=-70:70, smooth, variable=\x, black] plot ({\x}, {\x});

\node at (-2.8,1.3) {\tiny $(0,0)$};
\node at (-8.1,-4.0) {\tiny $(-4,-4)$};

\end{axis}

\end{tikzpicture}
\caption{$g = 2, p = 5$. \\ \\
The weights $\lambda = ( k_1 \geq k_2)$ such that the cohomology is concentrated in degree $0$ contains in particular the positive parallel weights $(k,k)$ below $(-4,-4)$. The vanishing results in the region located near the positive parallel line comes from the degeneration with $I = \{(1,-1)\}$ and the rest corresponds to the degeneration with $I = \emptyset$.
}\label{fig2}
\end{figure}
\FloatBarrier
\begin{figure}
\centering

\begin{tikzpicture}[scale = 1.5]

\begin{axis}[
 axis lines=middle,
	grid=both,
	grid style={black!10},
	xmin=-44,
	xmax=5,
	ymin=-44,
	ymax=5,
	legend style={at={(0.1,0.65)},anchor=west, font = \tiny},
 	xlabel=$k_2$,
	ylabel=$k_1$,
	minor tick num=9,
	x axis line style = {stealth-},
	y axis line style = {stealth-},
	xticklabel shift={0.0cm},
	xlabel style={yshift=-7.2cm},
	yticklabel shift={-0.9cm},
	ylabel style={xshift=-7.2cm},
]

\addplot [only marks,
	color=green,
	mark=x,
	mark options={scale=0.7, fill=white}]
	table{results/g2p11_psmall.txt};
	\addlegendentry[align = left]{$p$-small weights for $\Sp_4$}
	
\addplot [only marks,
	color=black,
	mark=o,
	mark options={scale=0.6, fill=white}]
	table{results/g2p11_0.txt};
	\addlegendentry{Concentrated in $[0]$}

\addplot [only marks,
	color=blue,
	mark=triangle,
	mark options={scale=0.6, fill=white}]
	table{results/g2p11_1.txt};
	\addlegendentry{Concentrated in $[0,1]$}

\addplot [only marks,
	color=red,
	mark=square,
	mark options={scale=0.6, fill=white}]
	table{results/g2p11_2.txt};
	\addlegendentry{Concentrated in $[0,2]$}

\draw[scale=0.5, domain=-70:70, smooth, variable=\x, black] plot ({\x}, {\x});

\node at (-2.8,1.3) {\tiny $(0,0)$};
\node at (-8.1,-4.0) {\tiny $(-4,-4)$};

\end{axis}

\end{tikzpicture}
\caption{$g = 2, p = 11$. \\ \\
Notice that since the orbitally $p$-close condition is less restrictive with $p = 11$ than with $p=5$, we are able to access more weights. However, if we look far enough, we notice the same phenomenon with the two regions corresponding to $I = \{(1,-1)\}$ and $I = \emptyset$.
}\label{fig3}
\end{figure}
\begin{figure}
\centering
\begin{tikzpicture}[scale = 1.5]

\begin{axis}[
 axis lines=middle,
	grid=both,
	grid style={black!10},
	xmin=-70,
	xmax=5,
	ymin=-70,
	ymax=5,
	legend style={at={(0.1,0.65)},anchor=west, font = \tiny},
 	xlabel=$k_2$,
	ylabel=$k_1$,
	minor tick num=9,
	x axis line style = {stealth-},
	y axis line style = {stealth-},
	xticklabel shift={0.0cm},
	xlabel style={yshift=-7.2cm },
	xlabel style={xshift = 0.25cm},
	yticklabel shift={-0.9cm},
	ylabel style={xshift=-7.2cm},
	ylabel style={yshift = 0.15cm},
]

\addplot [only marks,
	color=green,
	mark=x,
	mark options={scale=0.7, fill=white}]
	table{results/g2p31_psmall.txt};
	\addlegendentry[align = left]{$p$-small weights for $\Sp_4$}
	
\addplot [only marks,
	color=black,
	mark=o,
	mark options={scale=0.5, fill=white}]
	table{results/Large/g2p31_0.txt};
	\addlegendentry{Concentrated in $[0]$}

\addplot [only marks,
	color=blue,
	mark=triangle,
	mark options={scale=0.5, fill=white}]
	table{results/Large/g2p31_1.txt};
	\addlegendentry{Concentrated in $[0,1]$}

\addplot [only marks,
	color=red,
	mark=square,
	mark options={scale=0.5, fill=white}]
	table{results/Large/g2p31_2.txt};
	\addlegendentry{Concentrated in $[0,2]$}

\draw[scale=0.5, domain=-110:110, smooth, variable=\x, black] plot ({\x}, {\x});

\node at (-3.8,1.6) {\tiny $(0,0)$};

\end{axis}

\end{tikzpicture}
\caption{$g = 2, p = 31$.}\label{fig4}
\end{figure}
\FloatBarrier

\subsection{Explicit vanishing for $G = \Sp_6$}
We plot some vanishing results we have obtained in the case $g = 3$ with our algorithm. The weights live in a three-dimensional space and we need $6$ different labels.

\definecolor{darkgreen}{RGB}{0,128,0}
\definecolor{navy}{RGB}{0,0,128}
\definecolor{darkred}{RGB}{128,0,0}

\begin{figure}
\centering
\begin{tikzpicture}[scale = 1.5]

\begin{axis}[
view={30}{30},
	xmin=-25,
	xmax=5,
	ymin=-25,
	ymax=5,
	zmin=-25,
	zmax=5,
	legend style={at={(0.20,0.77)},anchor=west, font = \tiny},
 	xlabel=$k_1$,
	ylabel=$k_2$,
	zlabel=$k_3$,
	x axis line style = {stealth-},
	y axis line style = {stealth-},
	z axis line style = {stealth-},
	xlabel style={yshift=-0cm },
	xlabel style={xshift = 0.0cm},
	ylabel style={xshift=0.0cm},
	ylabel style={yshift = 0.0cm},
	zlabel style={yshift=0.0cm},
	zlabel style={xshift = 0.0cm},
	grid,
]
	
\addplot3 [only marks,
	color=darkgreen,
	mark=o,
	mark options={scale=0.5, fill=white}]
	table{results/g3p11_0.txt};
	\addlegendentry{Concentrated in $[0]$}

\addplot3 [only marks,
	color=green,
	mark=o,
	mark options={scale=0.5, fill=white}]
	table{results/g3p11_1.txt};
	\addlegendentry{Concentrated in $[0,1]$}
	
\addplot3 [only marks,
	color=navy,
	mark=o,
	mark options={scale=0.5, fill=white}]
	table{results/g3p11_2.txt};
	\addlegendentry{Concentrated in $[0,1,2]$}
	
\addplot3 [only marks,
	color=blue,
	mark=o,
	mark options={scale=0.5, fill=white}]
	table{results/g3p11_3.txt};
	\addlegendentry{Concentrated in $[0,1,2,3]$}

\addplot3 [only marks,
	color=darkred,
	mark=o,
	mark options={scale=0.5, fill=white}]
	table{results/g3p11_4.txt};
	\addlegendentry{Concentrated in $[0,1,2,3,4]$}
	
\addplot3 [only marks,
	color=red,
	mark=o,
	mark options={scale=0.5, fill=white}]
	table{results/g3p11_5.txt};
	\addlegendentry{Concentrated in $[0,1,2,3,4,5]$}
\end{axis}

\end{tikzpicture}
\caption{$g = 3$, $p = 11$.}\label{fig5}
\end{figure}

\begin{figure}
\centering
\begin{tikzpicture}[scale = 1.5]

\begin{axis}[
view={30}{30},
	xmin=-25,
	xmax=5,
	ymin=-25,
	ymax=5,
	zmin=-25,
	zmax=5,
	legend style={at={(0.20,0.77)},anchor=west, font = \tiny},
 	xlabel=$k_1$,
	ylabel=$k_2$,
	zlabel=$k_3$,
	x axis line style = {stealth-},
	y axis line style = {stealth-},
	z axis line style = {stealth-},
	xlabel style={yshift=-0cm },
	xlabel style={xshift = 0.0cm},
	ylabel style={xshift=0.0cm},
	ylabel style={yshift = 0.0cm},
	zlabel style={yshift=0.0cm},
	zlabel style={xshift = 0.0cm},
	grid,
]
	
\addplot3 [only marks,
	color=darkgreen,
	mark=o,
	mark options={scale=0.5, fill=white}]
	table{results/g3p691_0.txt};
	\addlegendentry{Concentrated in $[0]$}

\addplot3 [only marks,
	color=green,
	mark=o,
	mark options={scale=0.5, fill=white}]
	table{results/g3p691_1.txt};
	\addlegendentry{Concentrated in $[0,1]$}
	
\addplot3 [only marks,
	color=navy,
	mark=o,
	mark options={scale=0.5, fill=white}]
	table{results/g3p691_2.txt};
	\addlegendentry{Concentrated in $[0,1,2]$}
	
\addplot3 [only marks,
	color=blue,
	mark=o,
	mark options={scale=0.5, fill=white}]
	table{results/g3p691_3.txt};
	\addlegendentry{Concentrated in $[0,1,2,3]$}

\addplot3 [only marks,
	color=darkred,
	mark=o,
	mark options={scale=0.5, fill=white}]
	table{results/g3p691_4.txt};
	\addlegendentry{Concentrated in $[0,1,2,3,4]$}
	
\addplot3 [only marks,
	color=red,
	mark=o,
	mark options={scale=0.5, fill=white}]
	table{results/g3p691_5.txt};
	\addlegendentry{Concentrated in $[0,1,2,3,4,5]$}
\end{axis}

\end{tikzpicture}
\caption{$g = 3$, $p = 691$.}\label{fig6}
\end{figure}
\FloatBarrier

\bibliography{mybib}{}
\bibliographystyle{alpha}
\end{document}